\documentclass[11pt]{article} 
\usepackage{hyperref}
\usepackage{amssymb}
\usepackage{amsmath}
 \usepackage{color}

%\input{liemacs10.sty} 
%%% liemacs10.sty  
%%% 7.1.2014
%%% renewcommand\phi deleted 
%%% 4.5.2010 
%%% contains proof enviroment for use of \qedhere 
\usepackage{amsmath}
\usepackage{latexsym}
\usepackage{amssymb}

\usepackage{amsthm}

%%%%%%%%%%%%%%%%%%%%%%%%%%%%%%%%
\font\tengoth=eufm10 at 10pt
\font\sevengoth=eufm7 at 6pt
\newfam\gothfam
\textfont\gothfam=\tengoth
\scriptfont\gothfam=\sevengoth

\newcommand{\mlabel}[1]{\marginpar{#1}\label{#1}}

\newcommand{\g}{{\mathfrak g}}

\newcommand{\fg}{{\mathfrak g}}

\renewcommand{\:}{\colon}
\newcommand{\1}{\mathbf{1}}

\newcommand{\cD}{\mathcal{D}}
\newcommand{\cE}{\mathcal{E}}
\newcommand{\cF}{\mathcal{F}}

\newcommand{\cH}{\mathcal{H}}
\newcommand{\cK}{\mathcal{K}}
\newcommand{\cL}{\mathcal{L}}
\newcommand{\cM}{\mathcal{M}}
\newcommand{\cN}{\mathcal{N}}
\newcommand{\cO}{\mathcal{O}}

\newcommand{\cR}{\mathcal{R}}
\newcommand{\cS}{\mathcal{S}}
\newcommand{\cT}{\mathcal{T}}

\newcommand{\cV}{\mathcal{V}}

\newcommand\bx{{\bf{x}}}
\newcommand\by{{\bf{y}}}

\newcommand{\bO}{\mathbf{O}}

\newcommand{\eset}{\emptyset}

\newcommand{\derat}[1]{\frac{d}{dt}\big\vert_{t = #1}}

\newcommand{\subeq}{\subseteq}
\newcommand{\supeq}{\supseteq}

\newcommand{\into}{\hookrightarrow}
\newcommand{\eps}{\varepsilon}

\newcommand{\shalf}{{\textstyle{\frac{1}{2}}}}

\newcommand{\N}{{\mathbb N}}
\newcommand{\Z}{{\mathbb Z}}
\newcommand{\R}{{\mathbb R}}
\newcommand{\C}{{\mathbb C}}

\newcommand{\K}{{\mathbb K}}

\renewcommand{\H}{{\mathbb H}}

\newcommand{\bS}{{\mathbb S}}

\renewcommand{\hat}{\widehat}

\renewcommand{\tilde}{\widetilde}

%%% Notation for matrix groups etc. 

\newcommand{\Aff}{\mathop{{\rm Aff}}\nolimits}

\newcommand{\GL}{\mathop{{\rm GL}}\nolimits}

\newcommand{\PGL}{\mathop{{\rm PGL}}\nolimits}

\newcommand{\AU}{\mathop{{\rm AU}}\nolimits}

\newcommand{\SO}{\mathop{{\rm SO}}\nolimits}

\newcommand{\OO}{\mathop{\rm O{}}\nolimits}

\newcommand{\U}{\mathop{\rm U{}}\nolimits}

\newcommand{\Sp}{\mathop{{\rm Sp}}\nolimits}
\newcommand{\Sym}{\mathop{{\rm Sym}}\nolimits}

%% Notation for Lie algebras 

\newcommand{\fsl} {\mathop{{\mathfrak{sl} }}\nolimits}

\newcommand{\so}  {\mathop{{\mathfrak{so} }}\nolimits}

\newcommand{\Exp}{\mathop{{\rm Exp}}\nolimits}
\newcommand{\Fix}{\mathop{{\rm Fix}}\nolimits}

\newcommand{\ad}{\mathop{{\rm ad}}\nolimits}

\renewcommand{\Re}{\mathop{{\rm Re}}\nolimits}
\renewcommand{\Im}{\mathop{{\rm Im}}\nolimits}

\newcommand{\tr}{\mathop{{\rm tr}}\nolimits}

\newcommand{\Hol}{\mathop{{\rm Hol}}\nolimits}

\newcommand{\Herm}{\mathop{{\rm Herm}}\nolimits}

\newcommand{\Mp}{\mathop{\rm Mp{}}\nolimits}

\newcommand{\Aut}{\mathop{{\rm Aut}}\nolimits}

\newcommand{\diag}{\mathop{{\rm diag}}\nolimits}
\newcommand{\End}{\mathop{{\rm End}}\nolimits}
\newcommand{\id}{\mathop{{\rm id}}\nolimits}
\newcommand{\rk}{\mathop{{\rm rank}}\nolimits}

\renewcommand{\dim}{\mathop{{\rm dim}}\nolimits}

\newcommand{\supp}{\mathop{{\rm supp}}\nolimits}

\newcommand{\sgn}{\mathop{{\rm sgn}}\nolimits}

\newcommand{\ev}{\mathop{{\rm ev}}\nolimits}

\newcommand{\Bil}{\mathop{{\rm Bil}}\nolimits}

\newcommand{\Str}{\mathop{{\rm Str}}\nolimits}

\newcommand{\PSO}{\mathop{{\rm PSO}}\nolimits}

\newcommand{\Rarrow}{\Rightarrow}
\newcommand{\nin}{\noindent} 
\newcommand{\oline}{\overline}

\newcommand{\la}{\langle}
\newcommand{\ra}{\rangle}

\newcommand{\res}{\vert}

\newcommand{\spann}{{\rm span}}

\newcommand{\Spec}{{\rm Spec}}

\newcommand{\ssssarr}{\hbox to 15pt{\rightarrowfill}}
\newcommand{\sssarr}{\hbox to 20pt{\rightarrowfill}}
\newcommand{\ssarr}{\hbox to 30pt{\rightarrowfill}}
\newcommand{\sarr}{\hbox to 40pt{\rightarrowfill}}
\newcommand{\arr}{\hbox to 60pt{\rightarrowfill}}
\newcommand{\sssslarr}{\hbox to 15pt{\leftarrowfill}}
\newcommand{\ssslarr}{\hbox to 20pt{\leftarrowfill}}
\newcommand{\sslarr}{\hbox to 30pt{\leftarrowfill}}
\newcommand{\slarr}{\hbox to 40pt{\leftarrowfill}}
\newcommand{\larr}{\hbox to 60pt{\leftarrowfill}}

\newcommand{\Arr}{\hbox to 80pt{\rightarrowfill}}

%%%%%%%%%% 
\def\theoremname{Theorem}
\def\propositionname{Proposition}
\def\corollaryname{Corollary}
\def\lemmaname{Lemma}
\def\remarkname{Remark}
\def\conjecturename{Conjecture} 

\def\definitionname{Definition}
\def\exercisename{Exercise}
\def\examplename{Example}
\def\examplesname{Examples}
\def\problemname{Problem}
\def\problemsname{Problems}

\def\satzname{Satz} 
\def\koroname{Korollar}
\def\folgname{Folgerung}
\def\bemerkname{Bemerkung}
\def\aufgname{Aufgabe}

\def\beisname{Beispiel}
\def\beissname{Beispiele}
\def\bewname{Beweis}

\def\@thmcounter#1{\noexpand\arabic{#1}}
\def\@thmcountersep{}
\def\@begintheorem#1#2{\it \trivlist \item[\hskip 
\labelsep{\bf #1\ #2.\quad}]}
\def\@opargbegintheorem#1#2#3{\it \trivlist
      \item[\hskip \labelsep{\bf #1\ #2.\quad{\rm #3}}]}
\makeatother
\newtheorem{theor}{\theoremname}[section]
\newtheorem{propo}[theor]{\propositionname}
\newtheorem{coro}[theor]{\corollaryname}
\newtheorem{lemm}[theor]{\lemmaname}

\newenvironment{thm}{\begin{theor}\it}{\end{theor}}
\newenvironment{theorem}{\begin{theor}\it}{\end{theor}}

\newenvironment{prop}{\begin{propo}\it}{\end{propo}}

\newenvironment{cor}{\begin{coro}\it}{\end{coro}}

\newenvironment{lem}{\begin{lemm}\it}{\end{lemm}}
\newenvironment{lemma}{\begin{lemm}\it}{\end{lemm}}

\newtheorem{rema}[theor]{\remarkname}

\newenvironment{rem}{\begin{rema}\rm}{\end{rema}}

\newtheorem{stepnow}[theor]{}

\newtheorem{defin}[theor]{\definitionname} %% write

\newenvironment{definition}{\begin{defin}\rm}{\end{defin}}
\newenvironment{defn}{\begin{defin}\rm}{\end{defin}}

\newtheorem{exerc}{\exercisename}[section]

\newtheorem{exa}[theor]{\examplename}

\newenvironment{ex}{\begin{exa}\rm}{\end{exa}}

\newtheorem{exas}[theor]{\examplesname}

\newenvironment{exs}{\begin{exas}\rm}{\end{exas}}

\newtheorem{conj}[theor]{\conjecturename}

\newtheorem{pro}[theor]{\problemname}

\newtheorem{prs}[theor]{\problemsname}

\newtheorem{aufg}{\aufgname}[section]

\newenvironment{prf}{\begin{proof}}{\end{proof}}
   
%%%%%%%%%%%%%%% Marions Makros.

                                %Nicht\"aquiv, lang 
%%%%%%%%%%%%%%%%%%%%%%%%%%%%%%%%%%%%%%%%%%%%%

%%%%%%%%%%%%%%%%%%% german macros 

% 
{\hfill\qed\end{trivlist}}

\newenvironment{beweis*}{\begin{trivlist}\item[\hskip%
\labelsep{\bf\bewname.\quad}]}% 
{\end{trivlist}}

\newtheorem{satzn}[theor]{\satzname}

\newtheorem{koro}[theor]{\koroname}

\newtheorem{folg}[theor]{\folgname}

\newtheorem{bem}[theor]{\bemerkname}

\newtheorem{aufgn}[theor]{\aufgname}

\newtheorem{beis}[theor]{\beisname}

\newtheorem{beiss}[theor]{\beissname}

\addtolength\textwidth{3cm}
\addtolength\textheight{1cm}
\addtolength\oddsidemargin{-2cm}
\addtolength\evensidemargin{-2cm}

\newcommand{\Cay}{\mathop{{\rm Cay}}\nolimits}
\newcommand{\sH}{{\sf H}}
\newcommand{\sK}{{\sf K}}
\newcommand{\sV}{{\tt V}}
\newcommand{\sW}{{\tt W}}

\newcommand{\ind}{\mathop{{\rm ind}}\nolimits}

\newcommand{\PSU}{\mathop{{\rm PSU}}\nolimits}

\renewcommand{\Hol}{\mathrm{Hol}}

\newcommand\bk{{\bf{k}}}

\renewcommand{\bO}{\mathbb O}

\newcommand{\rank}{\mathop{{\rm rank}}\nolimits}

\renewcommand{\phi}{\varphi}

\newcommand{\dtZ}{\left.\dfrac{d}{dt}\right|_{t=0}}
 
\newcommand{\ip}[2]{\la #1,#2 \ra}

\newcommand{\btau}{\oline{\tau}}
%\numberwithin{theorem}{section}
\numberwithin{equation}{section}

\renewcommand{\Hol}{\mathrm{Hol}}
\renewcommand{\mlabel}{\label}

\begin{document}

\title{Standard subspaces of Hilbert spaces\\
 of holomorphic functions on tube domains }
  \author{Karl-Hermann Neeb, Bent {\O}rsted, Gestur \'Olafsson\thanks{The research of K.-H. Neeb was partially
supported by DFG-grant NE 413/9-1. The research of G. \'Olafsson was partially supported by Simons grant 586106.}} 
 
 \date{}
\maketitle

\begin{abstract} 
In this article we study standard subspaces of 
Hilbert spaces of vector-valued 
holomorphic functions on tube domains 
$E + i C^0$, where $C \subeq E$ is a pointed generating cone invariant 
under $e^{\R h}$ for some endomorphism $h \in \End(E)$, 
diagonalizable with the eigenvalues $1,0,-1$ (generalizing a Lorentz boost).
This data specifies a wedge domain 
$W(E,C,h) \subeq E$  and one of our main results exhibits 
corresponding standard subspaces as being generated using 
test functions on these domains.
We also investigate aspects of reflection 
positivity for the triple $(E,C,e^{\pi i h})$ 
and the support properties of distributions on $E$, arising as Fourier transforms of 
operator-valued measures defining the Hilbert spaces~$\cH$. 
For the imaginary part of these distributions, we find 
similarities to the well known Huygens' principle, 
relating to wedge duality in the Minkowski context.
Interesting examples 
are the Riesz distributions associated to euclidean Jordan algebras. 
\end{abstract}

\section{Introduction} 
\mlabel{sec:1}

In mathematical physics and the theory of algebras of local 
observables, three fundamental concepts play a key role, namely  
(1) modular groups and conjugations (based on the 
Tomita-Takesaki Theorem); 
(2) KMS states of operator algebras, signifying the holomorphic
nature of certain symmetries; and (3)  Reflection Positivity, 
which classically connects euclidean and relativistic quantum 
field theories.
In this paper, we shall revisit and elucidate some of the
interesting connections between these three topics; in particular we shall give in a general
framework of Hilbert spaces of holomorphic functions (already a ubiquitous category)
the crucial construction of the relevant spaces in (3), and also make explicit the connections to
(1) and (2). While the technical details are connected with tube domains, some of the methods
are rather robust, and could be of interest in other geometric situations as well.

We start by reviewing some ideas stemming from physics, in particular quantum field theory and
the Tomita-Takesaki theory. %, starting with the notion of standard subspaces.
A closed real subspace $\sV$ of a complex Hilbert space $\cH$ 
is called {\it standard} 
if $\sV \cap i \sV = \{0\}$ and $\sV + i \sV$ is dense in $\cH$ 
(\cite{Lo08}). A central  goal of this paper is 
to provide explicit descriptions of standard subspaces in 
Hilbert spaces of holomorphic functions on tube domains. 

Standard subspaces arise naturally in the modular theory of 
von Neumann algebras. If $\cM \subeq B(\cH)$ is a von Neumann algebra 
and $\Omega \in \cH$ is a cyclic separating vector for $\cM$, 
i.e., $\cM\Omega$ is dense in $\cH$ and the map 
$\cM \to \cH, m\mapsto m\Omega$, is injective, then 
\[ \sV_\cM := \oline{ \{ m\Omega \: m^* = m, m \in \cM \}} \] 
is a standard subspace of $\cH$. Conversely, 
one can use the functorial process provided by Second Quantization 
(\cite{Si74}) to associate to each standard subspace $\sV \subeq \cH$   
a von Neumann algebra $\cR_\pm(\sV)$ 
on the bosonic/fermionic Fock space $\cF_\pm(\cH)$, for which the 
vacuum vector $\Omega$ is cyclic and separating 
(see \cite[\S\S 4,6]{NO17} and \cite{Lo08} for details).  
This method has been developed by Araki and Woods  in the context of 
free bosonic quantum fields (\cite{AW63}); 
some of the corresponding fermionic results are more recent 
(cf.\ \cite{EO73}, \cite{Fo83}, \cite{BJL02}). 
This establishes a direct 
connection between standard subspaces and pairs 
$(\cM,\Omega)$ of von Neumann algebras with cyclic separating vectors. 
As such pairs play an important role in 
Algebraic Quantum Field Theory (AQFT) 
in the context of Haag--Kastler nets of local observables 
(\cite{Ar99, Ha96, BW92}), we would like to 
understand standard subspaces and their geometric realizations. 
Here a crucial point is that $\sV_\cM$ reflects many important properties 
of the von Neumann algebras $\cM$, related to  inclusions and 
symmetry groups quite faithfully, but in a much simpler environment 
(\cite{Lo08}, \cite[\S 4.2]{NO17}). 
In AQFT, standard subspaces provide 
the basis for the technique of modular localization, developed 
by Brunetti, Guido and Longo in \cite{BGL02}, see \cite{LL15} 
for more recent applications and  \cite{BDFS00} for 
geometric aspects of modular automorphism groups. 

In this article we develop a method for the explicit identification 
of standard subspaces 
in Hilbert spaces $\cH$ of holomorphic functions on tube domains, i.e., 
domains of the form $\cT = E + i C^0$, where $E$ is a finite dimensional 
real vector space, $C \subeq E$ is a closed 
convex cone with non-empty interior $C^0$, and the additive group 
$(E,+)$ acts unitarily by translations. 
Motivated by applications in AQFT, we are interested 
in those standard subspaces for which the 
modular group and the modular conjugation 
act naturally on~$\cT$.  Our construction provides in particular 
a direct conceptual way to Hilbert spaces of distributions on 
Minkowski space by taking boundary values 
of Hilbert spaces of holomorphic functions on suitable complex tube domains. 
In the subsequent papers \cite{NO21a, NO21b, NO21c} and \cite{Oeh21},
some of our results will be used to construct nets of 
standard subspaces on Lie groups and 
causal homogeneous  spaces. 

To explain our key idea, let us describe a context which is 
much more general than needed here, but which may therefore 
exhibit the overall idea more clearly. Suppose that 
$\Xi$ is a complex manifold and that 
$\cH \subeq \Hol (\Xi)$ is a Hilbert space of 
holomorphic functions on~$\Xi$. To implement the action of a modular group, 
we assume a group action 
\[ \sigma \: \R^\times \times \Xi \to \Xi, \quad 
(r,z) \mapsto \sigma_r(z) = \sigma^z(r) = r.z \] 
such that the maps $\sigma_{e^t}$ are holomorphic 
and $\tau_\Xi := \sigma_{-1}$ 
is an antiholomorphic involution 
with non-trivial fixed point space~$\Xi^{\tau_\Xi}$. 
We assume that $\sigma$ corresponds to an antiunitary representation 
$(U,\cH)$ of $\R^\times$ on $\cH$, so that a standard subspace $\sV \subeq \cH$ 
is specified by 
\[ \Delta_\sV^{-it/2\pi} = U(e^t) \quad \mbox{ and } \quad 
J_\sV = U(-1)\] 
(cf.\ Section~\ref{sec:2}). 
Here $\sigma$ provides a geometric implementation 
of the modular group and the modular conjugation on~$\Xi$.
We further assume that $\Xi$ sits in a larger manifold 
containing an $\R^\times$-invariant submanifold 
$M$ in the boundary of $\Xi$, 
such that there exists an injective boundary value map 
\[ b \: \cH \to C^{-\infty}(M).\] 
Then $\Xi^{\tau_\Xi}$ is a totally real submanifold of $\Xi$, 
so that all elements of $\Hol(\Xi)$ 
are determined by their values on $\Xi^{\tau_\Xi}$. 
We shall require that: 
\begin{itemize}
\item[\rm(C)] For $m \in \Xi^{\tau_\Xi}$, the orbit map 
$\sigma^m \: \R \to \Xi, t \mapsto \exp(th).m,$ extends holomorphically 
to a map $\cS_{-\pi/2,\pi/2} \to \Xi$ which further extends continuously 
to a map $\oline{\cS_{-\pi/2,\pi/2}} \to \Xi \cup M$ satisfying 
$\sigma^m(\pm \pi i/2) \in M$. 
\end{itemize}
This should lead to a realization of the standard subspace $\sV$ corresponding 
to $(\Delta_\sV, J_\sV)$ as a space of distributions on the boundary subsets  
$W_\pm := \big\{ \sigma^m\big(\mp\frac{\pi i}{2}\big) 
\: m \in \Xi^{\tau_\Xi}\big\}$, so-called {\it wedge domains} in $M$. 

The concrete environment we study is specified in 
Section~\ref{sec:3} in terms of the following geometric data: 
\begin{itemize}
\item[\rm(A1)] $E$ is a finite dimensional real vector space.
\item[\rm(A2)] $h \in \End(E)$ is 
diagonalizable with eigenvalues $\{-1,0,1\}$ and 
$\tau := e^{\pi i h}$.
\item[\rm(A3)] $C \subeq E$ is a pointed, generating 
closed convex cone invariant under $e^{\R h}$ and $-\tau$.
\end{itemize}
This geometric context includes in particular 
the case where $E$ is Minkowski space, $C$ is the closed forward light 
cone and $h$ generates a Lorentz boost. It also contains a series  
of interesting generalizations in the context of euclidean Jordan algebras 
discussed in Section~\ref{sec:6} and Appendix~\ref{app:b}. 
Minkowski spaces are the simple euclidean 
Jordan algebras of rank~$2$. The other simple euclidean Jordan algebras 
of rank $r$ are 
\[ \Sym_r(\R), \quad \Herm_r(\C), \quad 
\Herm_r(\H) \quad \mbox{ for } \quad r \in \N 
\quad \mbox{ and } \quad \Herm_3(\bO),\] 
where $\bO$ is the alternative algebra of octonions 
(see  \cite{JvNW34}, \cite{FK94} for the classification) and for these 
the cone $C$ of squares 
consists of the positive hermitian matrices. These configurations 
are not Lorentzian for $r \not=2$, but they carry 
causal and conformal structures generalizing those of Minkowski space. 

Besides the Lorentz boost, 
other triples $(E,C,h)$ that appear naturally in AQFT are
$(\R,\R_+, \id_\R)$, which corresponds to positive energy 
representations of the $ax+b$-group and modular inclusions 
of standard subspaces, and the triple 
$(\R^{1,d-1},\oline{V_+}, \id_\R)$ appears in conformal 
field theory on Minkowski space, where the modular group 
acts by dilations and the corresponding wedge domain is the positive 
cone $V_+$ itself (cf.\ \cite{Bu78}). Note that 
the triple $(E,C,h)$ does not encode any Lorentzian structure, 
only the causal structure specified by the cone and the 
``modular'' structure specified by $h$. However, 
Lorentzian structures  
appear indirectly on low-dimensional subspaces 
(see Example~\ref{ex:1}(c)).  

More such triples appear, where $E= \g$ is a non-abelian Lie algebra with 
an invariant cone $C$ and $h \in \g$ is such that 
$\ad h$ defines a $3$-grading of $\g$ 
(in \cite{MN21} these elements are called {\it Euler elements}), 
but then the associated unitary representations are more complicated 
than the ones we consider here (see \cite{NO21a}). Here the Lie algebras arising 
in AQFT are $\fsl_2(\R)$ (from the M\"obius group in CFT), 
the Poincar\'e--Lie algebra and the conformal Lie algebra 
$\so_{2,d}(\R)$ of Minkowski space.

The above setup is supplemented by the following analytic data. 
We fix a complex Hilbert space $\cK$ and a $\Herm^+(\cK)$-valued 
measure $\mu$ on the dual cone $C^\star \subeq E^*$ which defines a 
$\cK$-valued $L^2$-space $\cH := 
L^2(C^\star, \mu;\cK)$ (cf.\ \cite{Ne98}). 
In the special case $(E,C) = (\R^{1,d-1},\oline{V_+})$ 
one identifies $E^*$ with $E$ and $C^\star$ with $\oline{V_+}$. Here 
the Lorentz invariant measures on the mass hyperboloids and the 
mantle of the light cone provide natural examples with $\cK = \C$. 
Higher spin corresponds to non-trivial spaces $\cK$ which carry a 
representation of the Lorentz group (cf.~\cite{SW64}). 

To specify a standard subspace of $\cH$, 
we fix a representation $\rho \: \R^\times \to \GL(\cK)$ by normal 
operators for which $J_\cK := \rho(-1)$ is a conjugation 
and $\rho(e^t) = e^{t\Lambda}$ for a normal operator $\Lambda$ with bounded 
symmetric part. 
The compatibility between $\mu$ and $\rho$ consists in the relations 
\[  (-\tau)_*\mu = J_\cK \mu J_\cK \quad \mbox{ and } \quad 
\sigma(r)_*\mu = \rho(r)^* \cdot \mu \cdot \rho(r)
\quad \mbox{ for } \quad r > 0, \]
where the $\R^\times_+$-action on $E^*$ is given by 
$\sigma(e^t)\lambda := \lambda \circ e^{-th}$. 
We specify a standard subspace $\sV \subeq \cH 
= L^2(E^*,\mu;\cK)$ by 
\[ (\Delta_\sV^{-it/2\pi} f)(\lambda) = \rho(e^t) f(\lambda \circ e^{th}) 
\quad \mbox{ and }\quad 
(J_\sV f)(\lambda) := J_\cK f(-\lambda \circ \tau), \qquad 
\lambda \in C^\star.\] 
Then we have a realization
\[ \Phi \: L^2(C^\star,\mu;\cK) \to \Hol(\cT, \cK), \quad 
\Phi(f)(z) = \int_{E^*} e^{i\lambda(z)} d\mu(\lambda) f(\lambda) 
\] 
of $\cH$ as a Hilbert space $\cH_K$ of $\cK$-valued holomorphic function on 
the tube $\cT = E + i C^0$ whose reproducing kernel is given by 
\[ K \: \cT \times \cT \to B(\cK), \quad 
K(z,w) := \tilde\mu(z - \oline w), \quad \mbox{ where } \quad 
\tilde\mu(z) := \int_{C^\star} e^{i\lambda(z)}\, d\mu(\lambda)\] 
(Lemma \ref{lem:UnitRep}).  Passing to boundary values, we  obtain a 
third realization of $\cH$  as a subspace 
of $\cS'(E,\cK)$, the space  of $\cK$-valued tempered distributions on~$E$. 
Our first main result is the Standard Subspace Theorem (Theorem~\ref{thm:2.1}). 
To formulate it, we introduce the wedge domain 
\[ W = W(E,C,h) := E_0(h) + (C^0 \cap E_1(h)) - (C^0 \cap E_{-1}(h)) \subeq E, \] 
specified by the data in (A1-3), where 
$E_j(h) = \ker(h - j \id_E)$ is the $j$-eigenspace of $h$ in $E$. 
Then the standard subspace $\sV \subeq \cH$ can be 
described in terms of $W$ by 
\begin{equation}
  \label{eq:intro1}
\sV = 
\oline{\spann_\R \{ \tilde\phi \eta \: \phi \in C^\infty_c(W,\R), 
\eta \in \sV_\cK^\sharp\}}, \quad \mbox{ where }\quad 
\sV_\cK^\sharp := \Fix(e^{-\pi i \Lambda} J_\cK) \subeq \cK
\end{equation}
is a standard subspace and $\tilde\phi(\lambda) 
= \int_E e^{i\lambda(x)} \phi(x) \,  d\mu_E(x)$ 
denotes the Fourier transform. 

This theorem has several interesting consequences. 
First it exhibits an interesting relation with reflection positivity 
(Osterwalder--Schrader positivity) and its 
representation theoretic aspects (\cite{NO14,NO18}). 
This starts with the observation that the triple 
$(\cH^\R, \sV,J_\sV)$, where $\cH^\R$ is the real Hilbert space 
underlying $\cH$, is {\it reflection positive}, i.e., 
$\la \xi, J \xi \ra \geq 0$ for $\xi \in \sV$ 
(\cite{Lo08}). 
Further, the multiplication representation $(U,\cH)$ of the 
additive group $(E,+)$ on $L^2(C^\star,\mu;\cK)$ is reflection 
positive for the triple $(E,W,\tau)$ in the sense of \cite{NO18} 
(see Section~\ref{sec:4} for details), so that the general 
theory of reflection positive distributions 
developed in \cite{NO14} suggests that the restriction 
of the positive definite distribution $\tilde\mu \in \cS'(E,B(\cK))$ 
to the open cone $W$ should be positive definite with respect to the 
involution $x^\sharp = - \tau(x)$. 
This is verified in Proposition~\ref{prop:analyticfunc}, which 
even shows that it is represented by an operator-valued function. 

Presently we do not see how this occurrence of reflection positivity 
is related to euclidean models, which was the original intention of 
this concept (\cite{OS73}). This would require a better understanding of the 
group theoretic side of Schlingemann's work \cite{Sc99a, Sc99b}.

From the description of $\sV$ in \eqref{eq:intro1}, 
we draw interesting conclusions on the 
support of the ``imaginary part'' of the operator-valued distribution 
$\tilde\mu$, provided that the infinitesimal generator $\Lambda$ of $\rho$ 
has integral spectrum (Proposition~\ref{prop:supp-control}). 
This aspect is explored in detail 
in Section~\ref{sec:6} for the Riesz measures 
$\mu_s$ of a simple euclidean Jordan algebra~$E$. This is an important 
one-parameter family of scalar-valued measures on the 
dual $C^\star$ of the positive cone $C$ of the Jordan algebra~$E$. 
For their Fourier transforms $\tilde\mu_s$, we analyze the support 
of the imaginary part in detail. Using some tools developed 
in Appendix~\ref{app:b}, it is 
straightforward to decide which connected components of the 
set $E^\times$ of invertible elements are contained in the support 
(Proposition~\ref{prop:opensupp}), but the description of the precise 
 support requires closer inspection 
(Proposition~\ref{prop:7.10}). 
In Theorem~\ref{thm:6.11} we characterize 
the situations when the support of $\Im(\tilde\mu_s)$ 
is contained in the complement of the wedge domain 
$W(E,C,h)$. In the AQFT context on Minkowski space, this 
corresponds to wedge duality.

For the one-dimensional Jordan algebra $E = \R$, 
the Riesz measures $\mu_s$ are multiples of the measures 
$x^{s-1}\, dx$ ($s > 0$) on the positive half line. 
These measures arise naturally in the context of CFT from 
derivatives of the $\U(1)$-current (see \cite[\S 5.2]{Lo08} for details). 
The spaces $L^2(\R_+, \mu_s)$ carry a natural positive energy 
representation of the $2$-dimensional group $\Aff(\R)$. 
These representations are mutually equivalent, but the realization 
by holomorphic functions on the half plane yields extensions 
to the simply connected covering $G$ of the M\"obius group $\PGL_2(\R)$ 
which depend on $s$ (corresponding to 
 the lowest weight). This leads to a one-parameter 
family of M\"obius covariant nets of standard subspaces 
with rather different properties. 

The contents of this paper are as follows: 
In Section~\ref{sec:2} we collect some generalities on 
standard subspaces in Hilbert spaces of holomorphic functions.
An important starting point is Proposition~\ref{prop:standchar}, 
characterizing elements of $\sV$ as those vectors $\xi \in \cH$ for which 
the orbit map 
\[ \alpha^\xi \: \R \to \cH, \quad t \mapsto \Delta_\sV^{-it/2\pi}\xi \] 
extends to a continuous map on the closed strip 
$\{ z \in \C \: 0 \leq \Im z \leq \pi\}$, holomorphic on the interior, and 
satisfying $\alpha^\xi(\pi i) = J\xi$. This condition 
is intimately related to (C) above. 
We also discuss the standard subspaces of $\cK$ naturally 
attached to the representation $\rho$ and provide a direct 
description of the real subspaces $\cH_K^{J_\sV}$ 
in terms of the reproducing kernel~$K$. 

In Section~\ref{sec:3} we turn to our subject proper, 
the spaces $\cH = L^2(C^\star, \mu;\cK)$ and the corresponding 
standard subspace. 
After discussing the geometric implications of the axioms (A1-3) 
(Subsection~\ref{subsec:3.1}), 
we introduce in Subsection~\ref{subsec:3.2} the Hilbert space 
$\cH$ and its realizations by holomorphic functions and tempered distributions. 
The Standard Subspace Theorem (Theorem~\ref{thm:2.1}) 
is proved in Subsection~\ref{subsec:3.3}. 
We end this section with a discussion of the Riesz measures 
$x^{s-1}\, dx$ ($s > 0$) on the positive half line as examples. 
The relation with reflection positivity is 
discussed in Section~\ref{sec:4}, and 
the support properties of the imaginary part of the distribution 
are investigated in Sections~\ref{sec:5} (for the general case), 
and in Section~\ref{sec:6} for Riesz measures. 
Two appendices collect background material on 
standard subspaces (Appendix~\ref{app:a}) and 
structure theoretic 
results on Jordan algebras (Appendix~\ref{app:b}) that are used in 
Section~\ref{sec:6} for the support analysis. They also
find applications in forthcoming work 
(\cite{NO21a}, \cite{Oeh21}). 
For instance, in \cite{NO21a} we construct isotone, covariant 
nets of standard subspaces with the Bisognano--Wichmann property on 
{\it Jordan space-times}, 
i.e., the universal covers of the conformal completion 
of a euclidean Jordan algebra~$E$. 
These are the Jordan space-times in the sense of 
G\"unaydin \cite{Gu93}, resp., the simple space-time manifolds in the sense of 
Mack--de Riese \cite{MdR07}.

We conclude this introduction with a discussion of some 
of the numerous connections of the techniques and the results of the 
present paper with the AQFT literature, where 
boundary values of holomorphic functions on various kinds of tube domains 
play a central role. 

In  \cite{Bo95a, Bo95b}, Borchers also considers analytic extensions of orbit maps to strips. His method 
to derive wedge duality from Lorentz covariance 
could be easily interpreted 
in terms of exhibiting a pair $(\sV,\sV')$ of mutually dual standard 
subspaces. This includes observations relating the standard right wedge 
and the future tube as in our Lemma~\ref{lem:4.2}. 
Here Lorentz covariance provides unitary one-parameter groups 
corresponding to boosts and the orbit maps of these boosts are 
extended analytically to a horizontal strip.
On the level of operators, analytic continuation to the strip 
plays a key role in the identification of unitary endomorphisms 
of von Neumann algebras (\cite[Prop.~2.1]{Bo95a}, \cite[Thm.~A]{Bo95b}). 
This is related to the Araki--Zsid\'o Theorem \cite{AZ05} 
characterizing operators between standard subspaces in the spirit 
of Proposition~\ref{prop:standchar}. We refer to 
 \cite{Ne19a} for applications of this result to the identification 
of endomorphism semigroup of standard subspaces. 

In \cite{Ep67} Epstein uses the action of the complex 
boost group $\SO_{1,1}(\C) \cong \C^\times$ on $\C^4$ (acting in the first 
two components). 
He applies group elements to certain tube domains 
in $\C^4$ to relate distributional boundary values from opposite tubes 
(a generalized retarded function and its CTP counterpart) 
by specializing to $\pm 1$.   
Epstein's matrix-valued ``functions'' $W^+(p)$ 
represent the spectral measure of the energy-momentum operator,  
resp., of the representation of the group of space-time translations. 
This corresponds to our $L^2$-picture in Section~\ref{subsec:3.2}. 
He uses analytic extensions to implement a conjugation 
representing a CTP operator, 
whereas the geometric nature of the modular conjugation   
is built into our approach from the outset because it is implemented 
by the involution $\tau$ on $E$. Accordingly, the nature 
of the corresponding results on boundary values is different. 
Likewise, in \cite{DHR74} similar 
analytic extension techniques are used to implement 
a $C$-operator (charge inversion), which is an antiunitary conjugation, 
but in our context its existence is assumed. 

Mund uses in \cite[Lemma~4]{Mu01} analytic continuation methods 
developed in \cite{BE85} and 
derives 
the Bisognano--Wichmann property from 
localization in spacelike cones~$C$, i.e., 
causal completions of pointed open cones in the time zero hyperplane.
For fields with a positive mass~$m$, he 
%constructs state vectors 
%that  correspond to 
describes certain smooth functions on the mass shell 
\[ H_m = \{ k = (k_0,\bk) \in \R^4\:  k^2 = m^2, k_0 > 0 \} \] 
%considered as distributions on $\R^4$,  
as boundary values of  holomorphic functions on the 
complexification of $H_m$, intersected  with a tube over a spacelike cone. 
In this context no geometric realization 
of the modular group is assumed, but proved. 
All these results extend and refine the 
classical PCT and Spin-Statistics Theorems one finds  
in \cite[Ch.~4]{SW64}. 

These techniques are related to our paper as follows. 
For $E = \R^{1,d-1}$ and $C = \oline{V_+}$, our 
setup covers all positive energy representations of the Poincar\'e group, 
restricted to the subgroup $E \rtimes \R^\times$, consisting 
of space-time translations and the full  $\SO_{1,1}(\R)$-group 
corresponding to a Lorentz boost. The vector-valued 
$L^2$-space then arises from the 
(energy-momentum) spectral measure which takes values in operators 
on multiplicity space. These measures are particularly simple 
for one-particle representations, but they also exist for 
representations defined by quantum fields, where the standard subspaces 
correspond to von Neumann algebras of local observables.  

Our perspective is to start from a linear $\R^\times$-action 
and to determine the nature of the corresponding standard subspaces 
for general representations satisfying a natural spectral condition. 
Jordan algebras $E$ of rank $r$ ($r=2$ corresponds to Minkowski spaces) 
provide in particular interesting measures $\nu_m$ 
on the level sets of the Jordan determinant $\det_E$ on the positive 
cone. Their Fourier transforms $\hat\nu_m$ solve the generalized 
Klein--Gordon equation $(\det_E(\partial) + m^2)\hat\mu_m = 0$, 
where the operator $\det_E(\partial)$ is of order $r$. 
We expect that many of the established techniques used for the 
Klein--Gordon equation can be applied to this context as well. 
As the measures $\nu_m$ live on hypersurfaces, they can be projected 
to hyperplanes which resembles a ``time-zero'' realization. 
One may also write the measues $\nu_m$ in a suitable way as differences 
of boundary values of holomorphic functions 
(see \cite[\S 4.1]{Re16}) and thus obtain decompositions of the imaginary 
part of Fourier transform $\hat\nu_m$ into an 
advanced and a retared fundamental solution 
(\cite[\S 2.2.2]{Ge19}) which can possibly be expressed in 
terms of Riesz distributions on $E$ 
(see \cite[\S 3]{Gi09} for the Minkowski case $r = 2$). 
In conclusion, we have been strongly motivated by the
deep structures in algebraic quantum field theory, 
and there are clear inspirations
in these physical theories to pursue further in the context 
of Jordan algebras, not the least from the basic notions 
of causality and positive energy.

\nin {\bf Acknowledgment:} We are most 
grateful to the  two referees for their constructive criticism and for 
pointing out several interesting references to  the AQFT literature.

\subsection*{Notation} 
Here we collect some notation that we will use in the article.
 
\begin{itemize}
\item $\cH$ is a complex Hilbert space with inner product
$\ip{\cdot }{\cdot } = \ip{\cdot }{\cdot }_\cH$ which is linear in the second argument.  
\item We write $\R_+ = [0,\infty)$ for the closed positive half line. 
\item If $M$ is a smooth manifold, we write $C^\infty_c(M)$ for the space 
of complex-valued test functions on $M$, endowed with the natural LF topology, 
and $C^{-\infty}_c(M)$ for the space of {\bf antilinear} continuous 
functionals  on this space, i.e., the space of 
distributions on~$M$. 
\item We likewise consider tempered distributions 
$D \in \cS'(E)$ on a real finite dimensional vector space $E$ 
as antilinear functionals on the Schwartz space $\cS(E)$. 
The {\it Fourier transform of an $L^1$-function $f$} on $E$ is defined by 
\begin{equation}
  \label{eq:ftrafo}
\hat f(\lambda) := \int_E e^{-i\lambda(x)} f(x) \,  d\mu_E(x), \quad 
\lambda \in E^*, 
\end{equation}
where $\mu_E$ denotes a Lebesgue measure on $E$.   
For tempered distributions $D \in \cS'(E)$, we define the Fourier transform by 
\begin{equation}
  \label{eq:2}
\hat D(\phi) := D(\tilde \phi), \quad \mbox{ where} 
\quad 
\tilde\phi(\lambda) 
:= \hat\phi(-\lambda) 
= \int_E e^{i\lambda(x)} \phi(x) \,  d\mu_E(x).
\end{equation}
For the distribution 
$D_f(\phi) := \int_E \oline{\phi(x)} f(x)\, d\mu_E(x)$ defined by 
an $L^1$-function, we then have 
$\hat{D_f} = D_{\hat f}$. 
\item $C^0$ is the interior of the convex cone $C$ 
and $C^\star = \{ \alpha \in E^* \: (\forall v \in C) \alpha(v) \geq 0\}$ 
its dual cone. 
\end{itemize}

\tableofcontents 

\section{Standard subspaces in Hilbert spaces of holomorphic functions}
\mlabel{sec:2}

We have collected basic facts about standard subspaces in Appendix \ref{app:a}. We
refer to that section for notation, but let us  recall that a 
{\it standard subspace} is a closed real subspace $\sV\subset \cH$ such
that   $\sV\cap i\sV=\{0\}$ and $\sV+i\sV$ is dense in $\cH$. Every standard subspace comes with a conjugation $J_\sV$
and
positive, densely defined operator $\Delta_\sV$ such that 
$T_\sV=J_\sV\Delta_\sV^{1/2}$ is the conjugation $u+iv\mapsto u-iv$, $u,v\in \sV$ and $J_\sV\Delta_\sV=\Delta_\sV^{-1}J_\sV$.
The pair $(\Delta_\sV,J_\sV)$ is {\it the pair of modular objects} associated to 
$\sV$, and $T_\sV$ is the 
\textit{Tomita operator} of $\sV$. In this section we 
study the situation where the Hilbert space is
a reproducing kernel Hilbert space of vector-valued
holomorphic functions on a complex manifold. In Proposition \ref{prop:standchar}
we derive a fundamental connection between the standard subspace 
$\sV$ and the space $\cH^{J_\sV}$ 
of $J_\sV$-fixed vectors is described in 
Lemma \ref{lem:hjvec}. 
These results are needed in Section \ref{subsec:3.3}.

\subsection{Standard subspaces and $J$-fixed points} 
\mlabel{subsec:1.3}
 
In this subsection we derive a characterization of the elements 
of a standard subspace $\sV$ specified by the pair $(\Delta, J)$ in 
terms of analytic continuation of orbit maps of the unitary 
one-parameter group $(\Delta^{it})_{t \in \R}$ and the real space
$\cH^J$.

\begin{prop} \mlabel{prop:standchar} Let $\sV \subeq \cH$ be a standard subspace 
with modular objects $(\Delta, J)$. For 
$\xi \in \cH$, we consider the orbit map $\alpha^\xi \: \R \to \cH, t \mapsto 
\Delta^{-it/2\pi}\xi$. Then the following are equivalent:
\begin{itemize}
\item[\rm(i)] $\xi \in \sV$. 
\item[\rm(ii)] $\xi \in \cD(\Delta^{1/2})$ with $\Delta^{1/2}\xi = J\xi$. 
\item[\rm(iii)] The orbit map $\alpha^\xi \: \R \to \cH$ 
extends to a continuous map  $\oline{\cS_\pi} \to \cH$ which is 
holomorphic on $\cS_\pi$ and satisfies $\alpha^\xi(\pi i) = J\xi$. 
\item[\rm(iv)] There exists an element $\eta \in \cH^J$ 
whose orbit map $\alpha^\eta$ 
extends to a continuous map  \break $\oline{\cS_{-\pi/2, \pi/2}} \to \cH$ which is 
holomorphic on the interior and satisfies $\alpha^\eta(-\pi i/2) = \xi$. 
\end{itemize}
\end{prop}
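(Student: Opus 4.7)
The plan is to establish the cycle (i) $\Leftrightarrow$ (ii) $\Leftrightarrow$ (iii) $\Leftrightarrow$ (iv). The first equivalence is immediate from the definition recalled in Appendix~\ref{app:a}: the standard subspace $\sV$ is the real $(+1)$-eigenspace of the Tomita operator $T_\sV = J \Delta^{1/2}$, so $\xi \in \sV$ is equivalent to $\xi \in \cD(\Delta^{1/2})$ together with $J \Delta^{1/2}\xi = \xi$, and applying $J$ (using $J^2 = \id$) converts the latter relation into the form in~(ii).

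For (ii) $\Leftrightarrow$ (iii), I would pass to the spectral representation $\Delta = \int_{\R_+} \lambda\, dE(\lambda)$. There the orbit map corresponds to multiplication of the representing $L^2$-function by $\lambda^{-iz/2\pi} = \lambda^{-is/2\pi}\lambda^{t/2\pi}$ for $z = s + it$. Since $\lambda^{t/\pi} \leq 1 + \lambda$ for $0 \leq t \leq \pi$ and $\lambda > 0$, the condition $\xi \in \cD(\Delta^{1/2})$ is sufficient to make $z \mapsto \Delta^{-iz/2\pi}\xi$ into a continuous map on $\oline{\cS_\pi}$, holomorphic on $\cS_\pi$, with boundary value $\Delta^{1/2}\xi$ at $z = \pi i$. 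Conversely, if such a continuous extension exists, then $\alpha^\xi(\pi i)$ is a Hilbert-space limit of $\Delta^{-i(s+it)/2\pi}\xi$ as $t \uparrow \pi$, which via the spectral theorem forces $\xi \in \cD(\Delta^{1/2})$ with $\alpha^\xi(\pi i) = \Delta^{1/2}\xi$. The prescribed boundary identity $\alpha^\xi(\pi i) = J\xi$ is then exactly~(ii).

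The equivalence (iii) $\Leftrightarrow$ (iv) is a strip-translation argument. For (iii) $\Rightarrow$ (iv) I set $\eta := \alpha^\xi(\pi i/2) = \Delta^{1/4}\xi$ and define $\alpha^\eta(z) := \alpha^\xi(z + \pi i/2)$ on $\oline{\cS_{-\pi/2,\pi/2}}$, so $\alpha^\eta(-\pi i/2) = \xi$. Membership $\eta \in \cH^J$ follows from the intertwining relation $J\Delta^{1/4} = \Delta^{-1/4}J$ (a consequence of $J\Delta J = \Delta^{-1}$) combined with $\Delta^{1/2}\xi = J\xi$. For the converse (iv) $\Rightarrow$ (iii), set $\xi := \alpha^\eta(-\pi i/2)$ and $\alpha^\xi(z) := \alpha^\eta(z - \pi i/2)$ on $\oline{\cS_\pi}$. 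To verify $\alpha^\xi(\pi i) = J\xi$, note that on the real axis $\alpha^\eta$ stays in $\cH^J$, because $J$ commutes with every $\Delta^{it}$ (from $J\Delta J = \Delta^{-1}$ and the spectral theorem); the holomorphic maps $z \mapsto \alpha^\eta(z)$ and $z \mapsto J\alpha^\eta(\bar z)$ therefore agree on $\R$, hence on the whole strip by the identity theorem, giving $J\xi = J\alpha^\eta(-\pi i/2) = \alpha^\eta(\pi i/2) = \alpha^\xi(\pi i)$.

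The main technical point is the sharp direction of (iii) $\Rightarrow$ (ii): mere continuous $\cH$-valued extendability to the closed strip must actually force $\xi \in \cD(\Delta^{1/2})$ rather than some weaker growth bound. This is handled by identifying $\alpha^\xi(\pi i)$, once it exists in $\cH$, with the strong limit of $\Delta^{t/2\pi}\xi$ as $t \uparrow \pi$, and invoking the spectral theorem to conclude $\Delta^{1/2}\xi \in \cH$. The reflection step in (iv) $\Rightarrow$ (iii) is the other spot requiring care, but it reduces cleanly to the identity $J\Delta^{it}J = \Delta^{it}$ together with uniqueness of analytic continuation.
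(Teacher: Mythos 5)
Your proposal is correct in substance and follows the same overall skeleton as the paper (the definitional equivalence (i)$\LRarrow$(ii), then a cycle through (iii) and (iv)), but the technical engine is different. The paper never argues pointwise with the spectral representation: for (ii)$\Rarrow$(iii) it shows that the kernel $\la f(w),f(z)\ra = \cL(P^\xi)\big(\frac{z-\oline w}{i}\big)$ is continuous on $\oline{\cS_\pi}\times\oline{\cS_\pi}$ and separately (anti)holomorphic, and invokes general results on positive definite kernels to get holomorphy of $f$ itself; and it proves (iv)$\Rarrow$(ii) \emph{directly}, by observing that the translation invariance $K(z+t,w)=K(z,w-t)$ forces $K(z,w)=\phi\big(\frac{z-\oline w}{2}\big)$ and then reading off $\cL(P^\eta)(\pm\pi)<\infty$, i.e.\ $\eta\in\cD(\Delta^{\pm1/4})$, from \cite[Lemma~A.2.5]{NO18}. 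You instead route (iv)$\Rarrow$(iii)$\Rarrow$(ii) and work with the spectral measure throughout. Both routes are legitimate; the kernel argument is what generalizes to the vector-valued/geometric settings used later in the paper, while yours is more elementary.

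One step in your sketch needs more care, precisely at the point you flag as the main technical issue. In (iii)$\Rarrow$(ii) you speak of ``the strong limit of $\Delta^{t/2\pi}\xi$ as $t\uparrow\pi$,'' but hypothesis (iii) does not tell you a priori that $\xi\in\cD(\Delta^{t/2\pi})$ for $0<t<\pi$, nor that the abstract holomorphic extension coincides with $\Delta^{-iz/2\pi}\xi$ on the open strip; assuming this is circular. The standard repair: for $\psi$ with compact spectral support (a core for $\Delta^{1/2}$), both $z\mapsto\la\psi,\alpha^\xi(z)\ra$ and $z\mapsto\la \Delta^{i\oline z/2\pi}\psi,\xi\ra$ are holomorphic on $\cS_\pi$, continuous up to $\R$, and agree there, hence agree on the strip by Schwarz reflection; evaluating at $z=\pi i$ and using self-adjointness of $\Delta^{1/2}$ on its core yields $\xi\in\cD(\Delta^{1/2})$ with $\alpha^\xi(\pi i)=\Delta^{1/2}\xi$. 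A similar (easier) remark applies in (iv)$\Rarrow$(iii): you should note that $\alpha^\eta(z+t)=\Delta^{-it/2\pi}\alpha^\eta(z)$ by the identity theorem (here $\R$ lies in the \emph{interior} of $\cS_{-\pi/2,\pi/2}$, so this is immediate), which is what guarantees that $z\mapsto\alpha^\eta(z-\pi i/2)$ really extends the orbit map of $\xi$. With these two points made explicit, your argument is complete.
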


\begin{prf} The equivalence of (i) and (ii) follows from the definition 
of $\Delta$ and $J$. 

\nin (ii) $\Rarrow$ (iii): If $\xi \in \cD(\Delta^{1/2})$, then 
$\xi \in \cD(\Delta^z)$ for $0 \leq \Re z \leq 1/2$, so that the map 
\[ f \: \oline{\cS_\pi} \to \cH, \quad 
f(z) := \Delta^{-\frac{iz}{2\pi}} \xi \] 
is defined. 
Let $P$ denote the spectral measure of the selfadjoint operator  
\[ H := - \frac{1}{2\pi} \log\Delta \quad \mbox{ and let } \quad 
P^\xi = \la \xi, P(\cdot) \xi\ra \] 
denote the corresponding positive 
measure on $\R$ defined by $\xi \in \cH$. 
Then \cite[Lemma~A.2.5]{NO18}  shows that 
\[ \cL(P^\xi)(2\pi) = \int_\R e^{-2\pi\lambda}\, dP^\xi(\lambda) < \infty.\] 
This implies that the kernel 
\[ \la f(w), f(z) \ra 
= \la \Delta^{-\frac{iw}{2\pi}} \xi, \Delta^{-\frac{iz}{2\pi}} \xi \ra 
= \la  \xi, \Delta^{-\frac{i(z-\oline w)}{2\pi}} \xi \ra 
= \la  \xi, e^{(z-\oline w)i H} \xi \ra 
= \cL(P^\xi)\Big(\frac{z-\oline w}{i}\Big) \] 
is continuous on $\oline{\cS_{\pi}} \times \oline{\cS_{\pi}}$ 
by the Dominated Convergence Theorem,  
holomorphic in $z$, and antiholomorphic in $w$ on 
the interior (\cite[Prop.~V.4.6]{Ne00}). 
This implies (iii) because it shows that 
$f$ is holomorphic on $\cS_\pi$ (\cite[Lemma~A.III.1]{Ne00}) 
and continuous on $\oline{\cS_\pi}$. 

\nin (iii) $\Rarrow$ (iv): For $\alpha^\xi \: \oline{\cS_\pi} \to \cH$ 
as in (iii), we have 
\begin{equation}
  \label{eq:jequiv}
J \alpha^\xi(z) = \alpha^\xi(\pi i + \oline z) 
\end{equation}
by analytic continuation, so that 
\[ \eta := \alpha^\xi(\pi i/2) \in \cH^J \quad \mbox{ with } \quad 
\alpha^\eta(z) = \alpha^\xi\Big(z + \frac{\pi i}{2}\Big).\] 

\nin (iv) $\Rarrow$ (ii): We abbreviate $\cS := \cS_{-\pi/2, \pi/2}$. 
The kernel 
$K(z,w) := \la \alpha^\eta(w), \alpha^\eta(z) \ra$ 
is continuous on $\oline{\cS} \times \oline{\cS}$ and 
holomorphic in $z$ and antiholomorphic in $w$ on $\cS$. 
It also  satisfies 
$K(z + t, w) = K(z,w - t)$ for $t \in \R$. Hence there exists a 
continuous function $\phi$ on $\oline{\cS}$, holomorphic on $\cS$, such that 
\[ K(z,w) = \phi\Big(\frac{z - \oline w}{2}\Big).\] 
For $t \in \R$, we then have 
$\phi(t) = \la \eta, \alpha^\eta(2t) \ra = \int_\R e^{2it\lambda}\, dP^\eta(\lambda)$, 
so that \cite[Lemma~A.2.5]{NO18} yields 
$\cL(P^\eta)(\pm\pi) < \infty$ and 
$\eta \in \cD(\Delta^{\pm 1/4})$. This implies that 
$\alpha^\eta(z) = \Delta^{-iz/2\pi} \eta$ for $z \in \oline{\cS}$. 

From 
$\xi = \alpha^\eta(-\pi i/2) = \Delta^{-1/4} \eta$ we derive that 
\[  \alpha^\xi(z) = \alpha^\eta\Big(z- \frac{\pi i}{2}\Big) 
=  \Delta^{-iz/2\pi} \xi \quad \mbox{ for } \quad z \in \oline{\cS_\pi}.\] 
Further, $J\eta = \eta$ implies  
\[  J\alpha^\xi(z) 
= J\alpha^\eta\Big(z- \frac{\pi i}{2}\Big)
= \alpha^\eta\Big(\oline z+  \frac{\pi i}{2}\Big)
= \alpha^\xi(\pi i + \oline z).\] 
For $z = 0$, we obtain in particular 
$J\xi = \alpha^\xi(\pi i) = \Delta^{1/2}\xi$. 
\end{prf}

\subsection{The general setting for 
spaces of holomorphic functions} 
\mlabel{subsec:1.2}

In Appendix~\ref{subsec:hardy}  
we show that a standard subspace $\sV \subeq \cH$ 
always specifies a realization of the complex Hilbert space 
$\cH$ as a vector-valued Hardy space 
on a strip, even if $\cH$ has no specific geometric structure. 
In this subsection we consider an enriched geometric context. 
A key observation is Lemma \ref{lem:hjvec} that 
will later be applied to the situation where
the complex manifold is an open tube domain $\cT$.

On the geometric side, we consider 
a connected complex manifold $\Xi$, endowed with a smooth action 
\[ \sigma  \: \R^\times \times \Xi \to \Xi, \quad 
(r,m) \mapsto r.m =: \sigma_r(m) =: \sigma^m(r) \] 
for which the diffeomorphisms 
$\sigma_r$ are holomorphic for $r > 0$ and antiholomorphic for $r < 0$. 
In particular, $\tau_\Xi := \sigma_{-1}$ is an antiholomorphic involution of $\Xi$. 
We further assume that $\Xi$ is an open domain in a larger complex manifold 
and that the boundary $\partial \Xi$ contains a real submanifold $M$ with 
the property that, for every fixed point 
$m \in \Xi^{\tau_{\Xi}}$, the orbit map 
$\R \to \Xi, t \mapsto \sigma^m(e^t)$ extends to a holomorphic 
map $\sigma^m \: \cS_{-\pi/2,\pi/2} \to \Xi$ which further 
extends to a continuous map 
\begin{equation}
  \label{eq:extcond}
\sigma^m \: \oline{\cS_{-\pi/2,\pi/2}} \to \Xi \cup M \quad \mbox{ with } \quad 
\sigma^m(\pm i \pi/2) \in M.
\end{equation}

\begin{exs} (Domains in $\C$) \mlabel{ex:g.1} 
In one dimension we have the following standard examples 
of simply connected proper domains in $\C$ with 
their natural $\R^\times$-actions. 

\nin (a) (Strips) On the  strip 
$\cS_\pi = \{ z \in \C \: 0 < \Im z < \pi\}$ 
we have the antiholomorphic involution 
$\tau_{\cS_\pi}(z) = \pi i+ \oline z$ with fixed point set 
\[ \cS_\pi^{\tau_{\cS_\pi}} = \Big\{ z \in \cS_\pi \: \Im z = \frac{\pi}{2}\Big\}.\] 
The group $\R^\times_+$ acts by translations via $\sigma_{e^t}(z) = z + t$, 
$M := \R \cup (\pi i + \R) = \partial \cS_\pi$ is a real submanifold, and 
for $\Im z = \pi/2$, the orbit map $\sigma^z(t)$ extends to the 
closure of the strip $\cS_{-\frac{\pi}{2}, \frac{\pi}{2}}$ with 
$\sigma^z\big(\pm \frac{\pi i}{2}\big) = z \pm \frac{\pi i}{2} \in M$. 

\nin (b) (Upper half plane) On the upper half plane 
$\C_+  = \{ z \in \C \: \Im z > 0\}$, we have the 
antiholomorphic involution $\tau_{\C_+}(z) = -\oline z$ 
and the action of $\R^\times_+$ by dilations $\sigma_r(z) = rz$. 
Here $M := \R = \partial \C_+$ is a real submanifold, and 
for $z = i y$, $y > 0$, the orbit map $\sigma^z(t) = e^t  z$ extends to the 
closure of the strip 
$\cS_{-\frac{\pi}{2}, \frac{\pi}{2}}$ with 
$\sigma^z\big(\pm \frac{\pi i}{2}\big) = \pm i (i y) = \mp y$. 

\nin (c) (Unit disc) On the unit disc 
$\cD = \{ z \in \C \: |z| < 1\}$ we have the antiholomorphic 
involution $\tau_\cD(z) = \oline z$ and the 
action of $\R^\times_+ \cong \SO_{1,1}(\R)_0$ by the maps 
\begin{equation}
  \label{eq:discact}
 \sigma_t(z) = \frac{ \cosh(t/2) z + \sinh(t/2)}{\sinh(t/2)z + \cosh(t/2)}.
\end{equation} 
Here $M := \bS^1 = \partial \cD$ is a real submanifold, and 
for $z \in \cD \cap \R$, the orbit map $\sigma^z(t)$ extends to the 
closure of the strip $\cS_{-\pi/2,\pi/2}$ with 
\[ \sigma^z(\pm \pi i/2) 
= \frac{ \cos(\pi/4) z \pm i \sin(\pi/4)}{\pm i \sin(\pi/4)z + \cos(\pi/4)}
= \frac{z \pm i}{\pm iz + 1} 
= \mp i \cdot \frac{z \pm i}{z \mp i}.\]

The biholomorphic maps 
\begin{equation}
  \label{eq:cayley}
 \Exp \: \cS_\pi \to \C_+, \ \   z \mapsto e^z \quad \mbox{ and } \quad 
\Cay \: \C_+ \to \cD, \ \  \Cay(z) := \frac{z-i}{z+ i} 
\end{equation}
are equivariant for the described $\R^\times$-actions 
on the respective domains. 

The Riemann mapping theorem implies that any 
antiholomorphic $\R^\times$-action on a proper simply connected 
domain $\cO \subeq \C$ %for which $\sigma_{-1}$ has a fixed point 
is equivalent to the examples (a)-(c). 
In fact, we may w.l.o.g.\ assume that $\cO = \cD$ and, 
since every isometric involution of the hyperbolic plane has a fixed 
point, we may also assume that 
 $\sigma_{-1}(0) = 0$. Then $z \mapsto \oline{\sigma_{-1}(z)}$ 
is biholomorphic on $\cD$ fixing $0$, hence of the form 
$z \mapsto e^{it}z$, and from that it follows that, 
up to conjugation with biholomorphic maps, we may assume that 
$\sigma_{-1}(z) = \oline z$. Now we simply observe that the centralizer 
of $\sigma_{-1}$ in the group $\PSU_{1,1}(\C) \cong \Aut(\cD)$ is 
$\PSO_{1,1}(\R)$, and this leads to the action in \eqref{eq:discact}. 
\end{exs}

On the representation theoretic side, we 
consider a Hilbert space $\cH$, realized on a connected complex manifold 
$\Xi$ for another complex Hilbert space $\cK$ as subspace of the 
Fr\'echet space $\Hol(\Xi,\cK)$ of holomorphic functions 
$f \: \Xi \to \cK$. Here we assume that the point evaluations 
\[ K_z \: \cH \to \cK, \quad f \mapsto f(z) \] 
are continuous. Then 
\[ K \: \Xi \times \Xi \to B(\cK), \quad 
K(z,w) := K_z K_w^* \] 
is a positive definite operator-valued kernel which determines 
$\cH$ uniquely, so that we write $\cH_K$ to emphasize the dependence of 
$\cH$ from the kernel~$K$ 
(see \cite[Ch.~I]{Ne00} for details). 

To connect this structure to the antiunitary representation 
$(U^{\sV},\cH)$ of $\R^\times$, corresponding to a standard subspace $\sV$, 
\begin{footnote} {See
Appendix \ref{subsec:1.1} for this correspondence and 
in particular Theorem \ref{thm:bij}.} 
\end{footnote}
we also need a representation of $\R^\times$ on $\cK$. 
This is specified by a conjugation $J_\cK$ on $\cK$ 
and a strongly continuous homomorphism  
\[ \rho \: \R^\times_+ \to \GL(\cK) \] 
whose range commutes with $J_\cK$, so that it extends by 
$\rho(-1) := J_\cK$ to a representation of $\R^\times$ on $\cK$.
We also assume that the operators $\rho(e^t)$ are normal and that 
the hermitian 
one-parameter group $t \mapsto \rho(e^t) \rho(e^t)^*$ is norm-continuous. 
This implies that 
\[ \rho(e^t) = e^{t \Lambda}, \qquad %t \in \R,\quad 
\mbox{ where } \quad \Lambda \: \cD(\Lambda) \to \cK,\ \  \Lambda\xi 
=\dtZ  \rho(e^t)\xi,\]
 is an unbounded operator of the form 
$\Lambda = \Lambda_- + \Lambda_+$, where $\Lambda_+$ is a bounded symmetric 
operator and $\Lambda_-$ is a skew-adjoint operator (possibly unbounded) 
that commutes with $\Lambda_+$. For $t \geq 0$, we have 
\[ \|e^{t\Lambda}\| = \|e^{t\Lambda_+}\| 
= e^{t \sup(\Spec(\Lambda_+))},\] 
showing 
that the boundedness of $\Lambda_+$ is required to obtain a one-parameter 
group of bounded operators. 

%\begin{rem} 
\subsubsection{Three standard subspaces of $\cK$} 
% $\sV_\cK^\sharp$ and $\sV^\flat_\cK$}
\mlabel{rem:3.1}
Standard subspaces in $\cH $ are closely related to standard subspaces in $\cK$. 
In this section we associate three standard subspaces of $\cK$ 
to the representation $(\rho,\cK)$. 
In particular, we construct a modular pair $(\Delta_\cK ,J_\cK)$ on $\cK$ 
from the operator $\Lambda_-$, which specifies a standard subspace $\sV_\cK$ 
(cf.\ Theorem~\ref{thm:bij}). 
Further, the symmetric part $\Lambda_+$ of $\Lambda$ leads to 
twists $\sV_\cK^\sharp$ of $\sV_\cK$ and $\sV_\cK^\flat$ of $\sV_\cK'$. 
These subspaces will be needed below in our description of the standard 
subspace $\sV \subeq \cH$. 
We shall see in Section~\ref{sec:5} that 
the case where  $\sV_\cK^\sharp = \sV_\cK^\flat$ 
(Lemma~\ref{lem:VsharpVflat}) is of particular interest 
for the analysis of support properties.

With the measurable functional calculus of normal operators, 
we obtain the (possibly unbounded) normal operator 
\[ e^{\pi i \Lambda} 
:=  e^{\pi i \Lambda_+} e^{\pi i \Lambda_-}
= e^{\pi i \Lambda_-}  e^{\pi i \Lambda_+}  \quad \mbox{ with } \quad 
\cD(e^{\pi i \Lambda}) = \cD(e^{\pi i \Lambda_-}).\] 
The operator $\Delta_\cK := e^{2\pi i \Lambda_-}$ is strictly positive selfadjoint, 
and, as $J_\cK$ commutes with $\Lambda_\pm$, we obtain the modular relation 
\[ J_\cK \Delta_\cK J_\cK = \Delta_\cK^{-1}.\] 
Hence $T_\cK := J_\cK \Delta_\cK^{1/2} = J_\cK e^{\pi i \Lambda_-}$ 
is the Tomita operator of the standard subspace 
\[ \sV_\cK := \Fix(T_\cK) \subeq \cK,\] 
and its adjoint $T_\cK^* = J_\cK \Delta_\cK^{-1/2}$ 
is the Tomita operator of $\sV_\cK'$ 
(cf.\ Lemma~\ref{lem:incl}(iv)). 
If $\Lambda_+$ is non-zero, 
we have to deal with slight modifications. 
Then $e^{\frac{\pi i}{2} \Lambda_+}$ is a unitary 
operator, and 
\begin{equation}
 \label{eq:fixsp}
\sV_\cK^\sharp := e^{-\frac{\pi i}{2}\Lambda_+} \sV_\cK \quad \mbox{ and } \quad 
\sV_\cK^\flat := e^{\frac{\pi i}{2}\Lambda_+} \sV_\cK' 
\end{equation}
are standard subspaces of $\cK$. The corresponding Tomita operators are 
\begin{equation}
  \label{eq:taujrel}
T_\cK^\sharp :=  e^{-\frac{\pi i}{2}\Lambda_+} T_\cK  e^{\frac{\pi i}{2}\Lambda_+} 
  = T_\cK e^{\pi i \Lambda_+} = J_\cK e^{\pi i \Lambda}  = e^{-\pi i \Lambda} J_\cK 
%=  e^{-\frac{\pi i}{2}\Lambda_+} T_\cK  e^{\frac{\pi i}{2}\Lambda_+} 
\end{equation}
and 
\begin{equation}
  \label{eq:taujrel2}
T_\cK^\flat := 
  e^{\frac{\pi i}{2}\Lambda_+} T_\cK^*  e^{-\frac{\pi i}{2}\Lambda_+} 
 = T_\cK^* e^{-\pi i \Lambda_+}= J_\cK e^{-\pi i \Lambda} = e^{\pi i \Lambda} J_\cK.
\end{equation}
We observe that 
\begin{equation}
 \label{eq:flatvrel}
 (\sV_\cK^\sharp)' 
= e^{-\frac{\pi i}{2}\Lambda_+} \sV_\cK' 
\ {\buildrel \eqref{eq:fixsp} \over =}\ e^{-i \pi \Lambda_+} \sV_\cK^\flat.
\end{equation}
We also note that 
\begin{equation} \label{eq:jflatrel}
J_\cK T_\cK^\sharp J_\cK 
= e^{\pi i \Lambda} J_\cK 
= J_\cK e^{-\pi i \Lambda} = T_\cK^\flat 
\quad \mbox{ implies } \quad 
J_\cK \sV_\cK^\sharp  = \sV_\cK^\flat.
\end{equation}
%\end{rem}

We will need the following lemma in Proposition~\ref{prop:supp-control}: 
\begin{lem}\mlabel{lem:VsharpVflat}
The subspaces $\sV_\cK^\sharp$ and $\sV_\cK^\flat$ coincide if and only
if $\Lambda =\Lambda_+$ and $\Spec(\Lambda_+) \subeq \Z$. 
\end{lem}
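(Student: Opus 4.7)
The plan is to translate the equality of standard subspaces into an equality of their Tomita operators, and then use the joint spectral calculus of the commuting pair $(\Lambda_+,\Lambda_-)$ to pin down what this forces on $\Lambda$.

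First I would use that a standard subspace is uniquely determined by its Tomita operator (as its space of fixed points, cf.\ Theorem~\ref{thm:bij} and the preceding discussion), so $\sV_\cK^\sharp = \sV_\cK^\flat$ is equivalent to $T_\cK^\sharp = T_\cK^\flat$ as closed antilinear operators. The formulas \eqref{eq:taujrel} and \eqref{eq:taujrel2} then reduce the question to the identity $e^{\pi i \Lambda} J_\cK = e^{-\pi i \Lambda} J_\cK$, i.e., to the equality of closed normal operators $e^{\pi i \Lambda} = e^{-\pi i \Lambda}$.

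Next, since $\Lambda_+$ is bounded self-adjoint, $\Lambda_-$ is skew-adjoint, and the two commute, the operator $\Lambda$ is normal. Writing $\Lambda_- = iA$ with $A$ self-adjoint and using the joint spectral resolution, on the joint spectrum we have
\begin{equation*}
e^{\pi i \Lambda} = e^{\pi i \Lambda_+} e^{-\pi A}, \qquad
e^{-\pi i \Lambda} = e^{-\pi i \Lambda_+} e^{\pi A}.
\end{equation*}
Equality of these two normal operators forces, pointwise on the joint spectrum, that $e^{\pi i x}e^{-\pi y} = e^{-\pi i x}e^{\pi y}$ for $(x,y)$ in the support of the joint spectral measure. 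Comparing moduli yields $y=0$ (so $A=0$, hence $\Lambda_-=0$ and $\Lambda=\Lambda_+$), and then comparing arguments yields $e^{2\pi i x}=1$, i.e.\ $x\in\Z$. Thus $\Spec(\Lambda_+)\subseteq \Z$.

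Finally, for the converse one observes that if $\Lambda=\Lambda_+$ with integer spectrum, then $e^{\pi i \Lambda_+}$ is a bounded self-adjoint involution with spectrum in $\{\pm 1\}$, so $e^{\pi i \Lambda} = e^{-\pi i \Lambda}$ trivially, and the two Tomita operators coincide. The only subtle point in the whole argument is the operator-theoretic justification for passing from $T_\cK^\sharp = T_\cK^\flat$ to the pointwise spectral identity — that is, making sure that ``$e^{\pi i \Lambda} = e^{-\pi i\Lambda}$ as unbounded closed operators'' legitimately implies equality of the spectral data; this is handled cleanly by working with the joint spectral measure of the commuting pair $(\Lambda_+,\Lambda_-)$ rather than trying to manipulate unbounded exponentials directly.
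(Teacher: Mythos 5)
Your proposal is correct and follows essentially the same route as the paper: both reduce $\sV_\cK^\sharp=\sV_\cK^\flat$ via the Tomita operators to $e^{2\pi i\Lambda}=\1$ (your $e^{\pi i\Lambda}=e^{-\pi i\Lambda}$ is the same identity) and then invoke spectral calculus for the normal operator $\Lambda$ to conclude $\Lambda_-=0$ and $\Spec(\Lambda_+)\subeq\Z$. Your explicit use of the joint spectral measure of $(\Lambda_+,\Lambda_-)$ is just a slightly more detailed rendering of the paper's appeal to ``spectral calculus for unbounded normal operators.''
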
 
\begin{prf} As $T_\cK^\sharp =e^{-2\pi i\Lambda}T_\cK^\flat$ by 
\eqref{eq:taujrel} and \eqref{eq:taujrel2}, 
the equality $\sV_\cK^\sharp = \sV_\cK^\flat$ is equivalent to $e^{2\pi i \Lambda} = \1$. 
By spectral calculus for unbounded normal operators, 
this is equivalent to $\Spec (\Lambda )\subeq \Z$, 
which immediately translates into the two conditions $\Lambda_- = 0$ 
and $\Spec (\Lambda_+)\subeq \Z$. 
\end{prf} 
 
\begin{ex} \mlabel{ex:3.2}
For $\cK = \C$, $J_\cK(z) = \oline z$, 
and $\Lambda = \Lambda_+=\lambda \1$, $\lambda \in \R$, we have 
\[ \sV_\cK = \R, \quad \sV_\cK^\sharp = e^{-\frac{\pi i}{2}\lambda} \R
\quad \mbox{ and } \quad 
\sV_\cK^\flat = e^{\frac{\pi i}{2}\lambda}\R. \]

Now assume that $ \lambda \in \Z$. If $\lambda =2\mu$ is even, 
then $e^{\frac{\pi i}{2}\Lambda}=e^{\pi i \mu}\1$ so that $\sV^\sharp=\sV^\flat =\sV_\cK$. But if $\lambda =2\mu +1$ is odd, then 
\[ \sV_\cK^\flat   = e^{\frac{ \pi i}{2} \Lambda_+} \sV_\cK'  = i \R = \sV_\cK^\sharp.\] 
So the two subspaces $\sV_\cK^\flat$ and $\sV_\cK^\sharp$ need not be equal to $\sV_\cK$,  
not even if they are equal. 
\end{ex}

\subsubsection{The real space $\cH^J$} 

On a  reproducing kernel space $\cH_K \subeq \Hol(\Xi,\cK)$, we now 
consider an antiunitary representation of the form 
\begin{equation}
  \label{eq:cocyc2}
  (U(r)F)(z) = \rho(r^{-1})^* F(r^{-1}.z), \qquad r \in \R^\times. 
\end{equation}
This means that $K_z U(r) = \rho(r^{-1})^* K_{r^{-1}.z}$. 
Replacing $r$ by $r^{-1}$ and $F$ by $K_z^*\eta$ we get 
\begin{equation}
  \label{eq:genact}
 U(r) K_z^*\eta = K_{r.z}^* \rho(r)\eta \quad \mbox{ for } 
\quad r \in \R^\times, z \in \Xi, \eta \in \cK. 
\end{equation} 
For the kernel $K$, this corresponds to the equivariance condition 
\begin{equation}
  \label{eq:kercov}
  K(r.z, r.w) = \rho(r^{-1})^* K(z,w) \rho(r^{-1}), \qquad 
z,w \in \Xi, r \in \R^\times_+.
\end{equation}

We are interested in a more concrete description of the standard 
subspace $\sV$ associated to the pair $(\Delta, J)$, specified by 
\[ J := U(-1) \quad \mbox{ and }\quad 
\Delta^{-it/2\pi} = U(e^t), \quad t \in \R,  \] 
in terms of an injective boundary value map 
\[ b \: \cH_K \to C^{-\infty}(M)\] 
in distributions on $M \subeq \partial\Xi$. 
In Section~\ref{sec:3} we shall study the case where 
$M$ is a finite dimensional vector space $E$  
and $\Xi = E+i C^0$ a tube domain specified by a convex cone in~$E$ 
(see \cite{NO21a,NO21b, NO21c} for more general situations).

To this end, we would like to use Proposition~\ref{prop:standchar} 
which describes $\sV$ in terms of the real subspace 
$\cH^J_K$ of $J$-fixed elements. 
This space is easily 
characterized by Lemma~\ref{lem:hjvec} below. 
The corresponding standard subspaces are harder to describe 
because they require information on analytic extensions of orbit maps 
of elements of $\cH_K^J$ to the closure of the strip~$\cS_{-\pi/2,\pi/2}$. 

\begin{lem} \mlabel{lem:hjvec} 
Suppose that the submanifold $\Xi^{\tau_\Xi}$ of $\tau_\Xi$-fixed 
points is not empty. Then, for every open subset 
$\cO \subeq \Xi^{\tau_\Xi}$, we have 
\begin{align*}
\cH_K^J &
=\{F\in\cH_K\: (\forall z\in \Xi^{\tau_\Xi}) \, F(z)\in \cK^{J_\cK}\}
= \oline{ \spann_\R \{ K_z^* \eta  \: z \in \cO, \eta \in\cK^{J_\cK} \}}.
\end{align*}
\end{lem}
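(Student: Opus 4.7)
The plan is to reduce both equalities to a single analytic fact: unique continuation of a holomorphic function from the fixed-point set of an antiholomorphic involution.

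For the first equality, the condition $F \in \cH_K^J$ unwinds via \eqref{eq:cocyc2} at $r = -1$ to $F(z) = J_\cK F(\tau_\Xi(z))$ for all $z \in \Xi$, which at $z \in \Xi^{\tau_\Xi}$ reads $F(z) \in \cK^{J_\cK}$; this gives the inclusion $\subseteq$. For the converse, given $F$ with $F(z) \in \cK^{J_\cK}$ on $\Xi^{\tau_\Xi}$, I would form
\[ G(z) := J_\cK F(\tau_\Xi(z)) - F(z). \]
Since $\tau_\Xi$ is antiholomorphic and $J_\cK$ is antilinear, the composition $J_\cK \circ F \circ \tau_\Xi$ is holomorphic, so $G \in \Hol(\Xi,\cK)$. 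By hypothesis $G$ vanishes on $\Xi^{\tau_\Xi}$, and the identity theorem discussed below forces $G \equiv 0$ on the connected manifold $\Xi$, i.e., $JF = F$.

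For the second equality, the inclusion $\supseteq$ is immediate from \eqref{eq:genact}: for $z \in \cO$ and $\eta \in \cK^{J_\cK}$ one gets $J K_z^* \eta = K_{\tau_\Xi(z)}^* J_\cK \eta = K_z^* \eta$, so every such $K_z^* \eta$ lies in the closed real subspace $\cH_K^J$. For $\subseteq$, I would verify that the real-orthogonal complement inside $\cH_K^J$ is trivial: if $F \in \cH_K^J$ satisfies $\Re \langle K_z^* \eta, F\rangle = 0$ for all $z \in \cO$ and $\eta \in \cK^{J_\cK}$, then the reproducing property rewrites this as $\Re \langle \eta, F(z)\rangle_\cK = 0$. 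By the first equality applied at $z \in \cO \subeq \Xi^{\tau_\Xi}$, the vector $F(z)$ already lies in $\cK^{J_\cK}$, and the $J_\cK$-invariance of the inner product makes $\langle \eta, F(z)\rangle$ real; hence $\langle \eta, F(z)\rangle = 0$ for every $\eta \in \cK^{J_\cK}$, forcing $F(z) = 0$ throughout the open set $\cO$.

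The only real obstacle is the unique-continuation fact used in both halves: a function $H \in \Hol(\Xi,\cK)$ that vanishes on a nonempty open subset of $\Xi^{\tau_\Xi}$ must vanish on all of the connected manifold $\Xi$. I expect this to be a routine consequence of $\Xi^{\tau_\Xi}$ being the fixed set of an antiholomorphic involution, hence a totally real submanifold of maximal real dimension: in local holomorphic coordinates it appears as $\R^n \subseteq \C^n$, so the restriction of $H$ to $\R^n$ is real analytic and vanishes on an open subset, whence all Taylor coefficients at some real point vanish and the standard identity theorem propagates the vanishing throughout $\Xi$. Granted this, $F \equiv 0$ in $\cH_K$ in the last step and the proof is complete.
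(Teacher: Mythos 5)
Your proof is correct and follows essentially the same route as the paper: the identity $ (JF)(z)=J_\cK F(\tau_\Xi z)$, the relation \eqref{eq:genact} at $r=-1$, and unique continuation of holomorphic functions from an open subset of the totally real submanifold $\Xi^{\tau_\Xi}$. The only cosmetic difference is that you compute the real-orthogonal complement of the span inside $\cH_K^J$, whereas the paper shows the span is total in all of $\cH_K$ over $\C$; both reduce to the same computation $\la \eta, F(z)\ra = 0$ forcing $F|_\cO = 0$.
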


\begin{prf} We have $JF(z)=J_\cK F(\tau_\Xi z)$. Hence
$\cH_K^J =\{F\in\cH_K\: (\forall z\in \Xi^{\tau_\Xi}) \, F(z)\in \cK^{J_\cK}\}$,  
as any holomorphic function on $\Xi$ is uniquely determined by 
its restriction to the totally real submanifold $\Xi^{\tau_\Xi}$ 
because $\Xi$ is connected.

To verify the second equality, let 
$\cE \subeq \cH_K$ denote the right hand side. Then \eqref{eq:genact}, 
applied to $r = -1$, 
implies that $\cE \subeq \cH_K^J$. It remains to show that $\cE$ is total 
in $\cH_K$. To verify this claim, suppose that $F \in \cH_K$ is orthogonal 
to $\cE$. Then $\la K_z^*\eta, F \ra = \la \eta, F(z)\ra= 0$ for $z \in \cO$, 
$\eta \in \cK^{J_\cK}$. As $\cK^{J_\cK}$ generates $\cK$ as a complex 
Hilbert space, it follows that $F\res_{\cO} = 0$. Since $\cO$ is open 
in the totally real submanifold $\Xi^{\tau_\Xi}$ of 
the connected complex manifold $\Xi$,  
it follows that $F = 0$. % (\cite[Lemma~A.III.6]{Ne00}). 
Therefore the closed real subspace 
$\cE$ is total in $\cH_K$, hence coincides with~$\cH_K^J$. 
\end{prf}

\section{Standard subspaces and  tube domains} 
\mlabel{sec:3}

This is the core 
section of the article, culminating in Theorem \ref{thm:2.1}, where
we characterize the standard subspace $\sV$ corresponding to an 
anti-unitary representation
$U$ of $\R^\times$ in a vector-valued $L^2$-space. 
The setup is as follows. We
consider tube domains $\cT := E+iC^0\subset E_\C$ 
in the following environment: 
\begin{itemize}
\item[\rm(A1)] $E$ is a finite dimensional real vector space.
\item[\rm(A2)] $h \in \End(E)$ 
is diagonalizable with eigenvalues $\{-1,0,1\}$ and 
$\tau := e^{\pi i h}$.
\item[\rm(A3)] $C \subeq E$ is a pointed, generating 
closed convex cone invariant under $e^{\R h}$ and $-\tau$.
\end{itemize}

After discussing these conditions in Subsection~\ref{subsec:3.1}, we 
study in this section standard subspaces of vector-valued 
$L^2$-spaces $\cH = L^2(E^*, \mu;\cK)$, where 
$\cK$ is a Hilbert space and $\mu$ is a $\Herm^+(\cK)$-valued tempered 
measure supported in the dual cone $C^\star$. Then we have a natural realization of 
$\cH$ as a Hilbert space $\cH_{\hat\mu} \subeq \cS'(E;\cK)$ of $\cK$-valued 
tempered distributions and all these distributions extend to holomorphic 
functions~$\cT \to \cK$. Under suitable invariance conditions on the 
measure $\mu$, all these Hilbert spaces carry a natural antiunitary representation 
of $\R^\times$, corresponding to the geometric action on $E$ specified by the pair 
$(h,\tau)$ (Subsection~\ref{subsec:3.2}). 
Our main results are obtained in Subsection~\ref{subsec:3.3}, where 
we identify the standard subspace $\sV \subeq \cH_{\hat\mu} \subeq \cS'(E;\cK)$ 
as the real subspace generated by acting with real-valued test functions 
supported on a certain wedge domain $W \subeq E$ on a real subspace $\sV_\cK \subeq \cK$ 
(Theorem~\ref{thm:2.1}).

Writing $E_\lambda = E_\lambda(h) := \ker(h - \lambda\1)$ for the $h$-eigenspaces 
and $E^\pm := \ker(\tau \mp \1)$ for the $\tau$-eigenspaces, 
(A2) implies 
\begin{equation}\label{eq:decomp}
E=E_{1}\oplus E_0 \oplus E_{-1}, \quad
E^- = E_1\oplus E_{-1}, \quad \mbox{ and } \quad
E^+ =E_0.
\end{equation}
Accordingly, we write $x= x_1 + x_0+ x_{-1}$ for the decomposition of $x\in E$ 
into $h$-eigenvectors. 
As we shall see below, (A3) implies in particular that the wedge domain 
\begin{equation}
  \label{eq:w-decomp0}
W := W(E,C,h) := C_+^0 \oplus E_0 \oplus C_-^0 \quad \mbox{ for } \quad 
C_\pm := \pm C \cap E_{\pm 1}
\end{equation}
is nonempty. For generalizations of such configurations 
to non-abelian Lie groups and their properties, 
we refer to \cite{Ne19a, Ne19b, NO21b, NO21c, Oeh21}.

\begin{exs} \mlabel{ex:1} 
(a) The simplest examples arise for $h = \id_E$, 
$\tau = - \id_E$, and a pointed generating closed convex cone $C \subeq E$. 
Then $W = C^0 = C_+^0$. 

\nin (b) An example of importance in physics arises from 
$d$-dimensional Minkowski space $E = \R^d$ with the Lorentzian scalar product 
\[ (x_0,\bx)(y_0,\by) = x_0 y_0 - \bx \by \quad \mbox{ for } \quad 
x_0, y_0 \in \R, \bx,\by \in \R^{d-1}.\] 
Then the {\it upper light cone}
\[ C := \{ (x_0,\bx) \: x_0 \geq 0, x_0^2 \geq \bx^2 \} \] 
is pointed and generating. 
We consider the 
generator $h \in \so_{1,d-1}(\R)$ of the Lorentz boost in the 
$(x_0,x_1)$-plane 
\[ h(x_0,x_1, \ldots, x_{d-1}) = (x_1, x_0, 0,\ldots, 0).\] 
It is diagonalizable with the eigenvalues $0,-1,1$, and the eigenspaces are
\[ E_{\pm } = \R(e_0 \pm e_1) \quad \mbox{ and }\quad 
E_0 = \{(0,0)\} \times \R^{d-2}.\] 
For $\tau =e^{\pi i h}$, we obtain 
\[ \tau(x_0,x_1, \ldots, x_{d-1}) = (-x_0, -x_1, x_2, \ldots, x_{d-1}) 
\quad \mbox{ and } \quad E^- = \R^2\times \{0\}.\] 
The two cones
$ \pm C_\pm = C \cap E_{\pm } = \R_+ (e_0 \pm e_1)$ 
are simply half-lines, so that 
\[ W = \R^\times_+ (e_1 + e_0) \oplus \R^\times_+ (e_1 - e_0) \oplus \R^{d-2},\] 
is the standard open right wedge. 

\nin (c) On $E = \R^3$ with basis $e_1, e_2, e_3$, we consider the matrices 
\[ h = \diag(1,-1,0) \quad \mbox{ and } \quad 
\tau = \diag(-1,-1,1).\] 
We now describe all cones $C$ satisfying (A3).
Up to sign choices, we may assume that 
\[ C_+ = \R_+ e_1 \quad \mbox{ and }  \quad C_- = -\R_+ e_2\] 
(cf.\ Lemma~\ref{lem:Project} below).
As $C$ is generating and $-\tau$-invariant, it is determined by the cone 
$\{x \in C \: x_3 \geq 0\}$, which in turn is determined by 
the closed convex subset $D \subeq \R^2$, given by 
\[ D = \{ (x_1, x_2) \in \R^2 \: (x_1, x_2, 1) \in C \}.\]
This set has to be closed, convex, 
contained in $\R_+ e_1 \oplus \R_+ e_2$ (Lemma~\ref{lem:Project}), 
not containing $(0,0)$ (to ensure that $C$ is pointed), 
and invariant under $e^{\R h}$. 
This only leaves the sets 
\[ %D_0 := \R_+ e_1 \oplus \R_+ e_2 \quad \mbox{ and } \quad 
D_m := \{ (x_1, x_2) \: x_1, x_2 > 0, x_1 x_2 \geq  m\}, \ m > 0.\] 
Then 
\[ C^m := \R_+ (e_3 + D_m) \cup \R_+ (-e_3 + D_m) \] 
is a closed convex pointed generating 
cone satisfying (A3). Up  to sign changes, 
the cones satisfying (A3) are all of this form. 
They are Lorentzian with respect to the Lorentzian quadratic form 
\[ q(x_1, x_2, x_3) = x_1 x_2 - m x_3^2, \] 
so they all arise from a $3$-dimensional Minkowski space as in~(b).
\end{exs}

\subsection{The tube and associated wedge domains} 
\mlabel{subsec:3.1} 

In this section we focus on the tube domain $\cT =E +iC^0$, the wedge  $
W = C_+^0 \oplus E_0 \oplus C_-^0$ introduced in (\ref{eq:w-decomp0}) 
and the holomorphic extension of the one-parameter group 
$(U(e^t))_{t \in \R}$. The main result is Lemma \ref{lem:4.2}.

The
tube domain $\cT$
is obviously invariant under $e^{\R h}$ and $-\tau$, where we use the same 
notation for the complex linear extensions to $E_\C$. 
Denote by $\btau : E_\C\to E_\C$ the \textit{conjugate linear} extension of
$\tau $ to $E_\C$. Then
\[E^c:=(E_\C)^{\btau}=E^+ + iE^-.\]
As $\btau$ acts on $iE$ as $-\tau$ and $C$ is $-\tau $ invariant, 
$\btau (\cT)=\cT$, and 
\[\cT^{\btau} =  \cT \cap E^c = E^+ + i (C^0 \cap E^-) \] 
is the cone of $\btau$-fixed points in $\cT$, a real tube domain.

\begin{lemma}\mlabel{lem:Project}
For the projections 
\[ p_{\pm 1}: E\to E_{\pm 1}, x \mapsto x_{\pm 1}, \quad \mbox{ and } \quad 
p^-:E\to E_1\oplus E_{-1}=E^-, x \mapsto x_1 + x_{-1} = \frac{1}{2}(x-\tau x),\] 
the  following assertions hold:
\begin{itemize}
\item[\rm (i)] $p_{\pm 1}(C)=\pm C_\pm$ 
and $p_{\pm 1}(C^0)=\pm C_\pm^0\not=\eset$. 
\item[\rm (ii)] $p^- (C)=C\cap E^-=C_+\oplus -C_-$ and 
$p^- (C^0)=C^0\cap E^-=C^0_+\oplus -C_-^0$,
\item[\rm (iii)] $C\subeq C_+\oplus E_0 \oplus -C_-$. 
\end{itemize}
\end{lemma}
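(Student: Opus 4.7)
My plan is to exploit the two symmetries of $C$ from (A3) separately: the one-parameter group $e^{\R h}$ is used to extract the individual $h$-eigenspace projections, while the involution $-\tau$ controls the projection onto $E^-$.

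For (i), given $x = x_1+x_0+x_{-1} \in C$, the fact that $C$ is a cone and is $e^{\R h}$-invariant implies $e^{-t}e^{th}x = x_1 + e^{-t}x_0 + e^{-2t}x_{-1} \in C$ for every $t \in \R$. Letting $t \to \infty$ and using closedness of $C$ gives $x_1 \in C \cap E_1 = C_+$; the analogous computation with $e^{-t}e^{-th}x$ yields $x_{-1} \in -C_-$. Thus $p_{\pm 1}(C) \subeq \pm C_\pm$, and the reverse inclusions are immediate since $\pm C_\pm \subeq C$ and $p_{\pm 1}$ restricts to the identity on $E_{\pm 1}$. For the interior, the linear surjection $p_{\pm 1}\colon E \to E_{\pm 1}$ is an open map, so $p_{\pm 1}(C^0)$ is a nonempty open subset of $\pm C_\pm$, hence of $\pm C_\pm^0$; in particular $\pm C_\pm^0 \neq \eset$. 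The reverse inclusion is an absorption argument: fixing $x_0 \in C^0$ and $y \in C_+^0$, interiority of $y$ in $E_1$ gives $y - \eps p_1(x_0) \in C_+ \subeq C$ for sufficiently small $\eps > 0$, so $(y - \eps p_1(x_0)) + \eps x_0 \in C + C^0 \subeq C^0$ and its $p_1$-image equals~$y$.

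For (ii), writing $p^- = \frac{1}{2}(\1 + (-\tau))$, the $-\tau$-invariance of $C$ together with convexity directly gives $p^-(C) \subeq C$, and the same reasoning with the open set $C^0$ (using that $-\tau$ is a homeomorphism) yields $p^-(C^0) \subeq C^0$. Since the images lie in $E^-$ and $p^-$ fixes $E^-$ pointwise, we obtain $p^-(C) = C \cap E^-$ and $p^-(C^0) = C^0 \cap E^-$. The decomposition $C \cap E^- = C_+ \oplus -C_-$ follows from (i) in one direction and from convexity together with $\pm C_\pm \subeq C$ in the other. Finally, (iii) is immediate from (i): any $x \in C$ has $x_1 \in C_+$ and $x_{-1} \in -C_-$, and $x_0 \in E_0$ trivially.

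The step I expect to be the main obstacle is the identification $C^0 \cap E^- = C_+^0 \oplus -C_-^0$, specifically the inclusion $\supeq$: given $y = y_1 + y_{-1}$ with $y_1 \in C_+^0$ and $y_{-1} \in -C_-^0$, it is easy to see $y \in C$, but one must upgrade this to membership in $C^0$, which is an interior condition in all of $E$, not just in $E^-$. I would use the dual cone criterion that, for a pointed generating closed convex cone, $y \in C^0$ iff $\phi(y) > 0$ for every $\phi \in C^\star \setminus \{0\}$. The key observation is that no such $\phi$ can vanish identically on $E^-$: for any $x = x_1+x_0+x_{-1} \in C$, applying $\phi$ to $x$ and to $-\tau x = x_1 - x_0 + x_{-1} \in C$ yields $\phi(x_0) \geq 0$ and $-\phi(x_0) \geq 0$, so $\phi$ would vanish on $C$ and, by the generating assumption in (A3), on all of $E$. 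Hence any $\phi \in C^\star \setminus \{0\}$ is nonzero on at least one of $E_1, E_{-1}$, and interiority of $y_{\pm 1}$ in $\pm C_\pm$ forces strict positivity $\phi(y_{\pm 1}) > 0$ in that summand (with the other summand contributing nonnegatively), giving $\phi(y) > 0$ as required.
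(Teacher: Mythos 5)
Your proof is correct, and its skeleton matches the paper's: the limit argument $e^{-t}e^{th}x \to x_1$ for $p_{\pm 1}(C)\subeq \pm C_\pm$ and the averaging $p^-=\frac{1}{2}(\1+(-\tau))$ for (ii) are exactly what the paper does. Where you genuinely diverge is on the interior statements, which are indeed the only delicate point. The paper handles both at once by asserting $C_+^0\oplus -C_-^0\subeq (E^-\cap C)^0\subeq C^0$, i.e., by the convexity fact that the relative interior of the slice $C\cap E^-$ lies in the interior of $C$ once $C^0\cap E^-\neq\eset$ (which the $-\tau$-invariance supplies). You instead (a) obtain $C_+^0\subeq p_1(C^0)$ by the direct perturbation $(y-\eps p_1(x_0))+\eps x_0\in C+C^0\subeq C^0$, which is self-contained and does not presuppose the harder inclusion, and (b) prove $C_+^0\oplus -C_-^0\subeq C^0$ via the dual-cone criterion, the key observation being that no nonzero $\phi\in C^\star$ can vanish on $E^-$ (again by $-\tau$-invariance together with the generating hypothesis). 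Both routes are sound; yours is longer but makes explicit where pointedness, generation and the $-\tau$-symmetry each enter, whereas the paper's relative-interior shortcut leaves the verification of $(E^-\cap C)^0\subeq C^0$ essentially to the reader.
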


\begin{prf}  (i) As $\pm C_\pm \subset C$, we have 
$\pm C_\pm \subset p_{\pm 1}(C)$. Using the $e^{th}$-invariance of $C$
and writing $x=x_1+x_0+x_{-1}$ as before, 
$e^{th}x=e^{t}x_1 + x_0 + e^{-t} x_{-1}.$ 
Now take the limit $t\to \infty$ to see that 
\[ C \ni e^{-t}e^{th}x = x_1 +e^{-t}x_0+e^{-2t}x_{-1}\to x_1  \quad \mbox{as } \quad t\to \infty.\]
We likewise get 
$x_{-1} = \lim_{t \to -\infty}   e^{t}e^{th}x \in C$. 
It follows that $x_\pm \in \pm C_\pm$, so that 
$p_{\pm 1}(C) = \pm C_\pm$. 
As $p_{\pm 1}$ are projections and $C^0 \not=\eset$, it follows that 
$p_{\pm 1}(C^0) \subeq \pm C_{\pm}^0$. To obtain equality, it suffices to observe that 
$C_+^0 \oplus - C_-^0 \subeq (E^- \cap C)^0 \subeq C^0$ follows from 
$-\tau(C) = C$. 

\nin (ii) The two leftmost equalities follow from $\tau(C) = - C$, 
and the second two rightmost equalities from (i) and $p^- = p_1 + p_{-1}$. 

\nin (iii) follows from (ii). 
\end{prf}

We now describe the wedge $W$ and its closure $\oline W=C_+\oplus E_0\oplus C_-$
in terms of the tube domain $\cT$:

\begin{lem}\mlabel{lem:4.2} 
The wedge $W$ and the tube $\cT$ are related as follows: 
\begin{itemize}
\item[\rm(1)] $W=\{x\in E\: (\forall z\in \cS_\pi)\, e^{zh}x\in \cT\}$
\item[\rm(2)] $\oline W=\{x\in E\: (\forall z\in \cS_\pi)\, e^{zh}x\in E+iC\}.$
\end{itemize}
\end{lem}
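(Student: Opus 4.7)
\begin{prf}[Proof plan for Lemma \ref{lem:4.2}]
The plan is to reduce both statements to the structure of the cones $C$ and $C^{0}$ on the subspace $E^{-}=E_{1}\oplus E_{-1}$ described in Lemma \ref{lem:Project}(ii), by directly computing $e^{zh}x$ in eigenspace coordinates and reading off the imaginary part. Write $x=x_{1}+x_{0}+x_{-1}$ with $x_{j}\in E_{j}$, and $z=s+it$ with $0<t<\pi$. Since $h$ is diagonalizable with eigenvalues $1,0,-1$, one computes
\begin{equation*}
e^{zh}x \;=\; e^{s}(\cos t+i\sin t)\,x_{1} \;+\; x_{0} \;+\; e^{-s}(\cos t-i\sin t)\,x_{-1},
\end{equation*}
so the real part lies in $E$ automatically, and
\begin{equation*}
\Im(e^{zh}x) \;=\; \sin t \cdot\bigl(e^{s}x_{1}-e^{-s}x_{-1}\bigr).
\end{equation*}

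For (1), I would argue as follows. The condition $e^{zh}x\in \cT=E+iC^{0}$ for every $z\in \cS_{\pi}$ is, in view of the above formula and $\sin t>0$, equivalent to $e^{s}x_{1}-e^{-s}x_{-1}\in C^{0}$ for every $s\in\R$ (the factor $\sin t>0$ can be absorbed since $C^{0}$ is a cone). Setting $y:=x_{1}-x_{-1}\in E^{-}$, one has $e^{s}x_{1}-e^{-s}x_{-1}=e^{sh}y$, so the requirement becomes $e^{sh}y\in C^{0}$ for all $s\in\R$, which by $e^{\R h}$-invariance of $C^{0}$ is equivalent to $y\in C^{0}\cap E^{-}$. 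Lemma \ref{lem:Project}(ii) then gives $C^{0}\cap E^{-}=C_{+}^{0}\oplus -C_{-}^{0}$, which translates into $x_{1}\in C_{+}^{0}$ and $x_{-1}\in C_{-}^{0}$, i.e.\ $x\in W$. The converse direction is obtained by reversing these equivalences.

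Statement (2) follows by repeating the same argument with $C^{0}$ replaced by $C$: the membership $e^{zh}x\in E+iC$ for all $z\in \cS_{\pi}$ is equivalent to $e^{sh}y\in C$ for all $s\in\R$, which by $e^{\R h}$-invariance of $C$ is equivalent to $y\in C\cap E^{-}=C_{+}\oplus -C_{-}$, i.e.\ $x\in C_{+}\oplus E_{0}\oplus C_{-}=\oline{W}$.

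The only genuinely substantive point is recognizing that the parameter $z$ sweeping over $\cS_{\pi}$ is effectively richer than just varying $s$: the angle $t\in(0,\pi)$ contributes only a positive scalar $\sin t$ that may be absorbed into the cone, while varying $s\in\R$ amounts to applying the flow $e^{sh}$, after which invariance collapses everything to a condition on $y=x_{1}-x_{-1}$ alone. Once this is seen, both (1) and (2) follow cleanly from Lemma \ref{lem:Project}(ii); no further obstacle is expected.
\end{prf}
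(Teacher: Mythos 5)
Your proposal is correct and follows essentially the same route as the paper: compute $e^{zh}x$ in $h$-eigenspace coordinates, observe that membership in $\cT$ (resp.\ $E+iC$) is decided by the imaginary part $\sin(b)\,(e^{a}x_{1}-e^{-a}x_{-1})$, and conclude via Lemma~\ref{lem:Project}(ii). Your extra step of packaging the condition as $e^{sh}y\in C^{0}$ for $y=x_{1}-x_{-1}$ and invoking $e^{\R h}$-invariance is a harmless reformulation of the same argument.
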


 \begin{prf} For $z=a+ib \in \cS_\pi$ and $x = x_1 + x_0 + x_{-1}$, we have
 \begin{align}
 e^{z h}(x_1+x_0+x_{-1})&= e^{z}x_1 + x_0 +e^{-z}x_{-1}\nonumber\\
&=\cos (b)(e^ax_1 + e^{-a}x_{-1}) +x_0 +i\sin (b)(e^ax_1-e^{-a}x_{-1}),\label{eq:eih}
\end{align}
with $\sin(b)> 0$. 
As the imaginary part determines whether this element is contained 
in $\cT$, we see with 
Lemma~\ref{lem:Project}(ii) that $e^{zh} \in \cT$ holds for every $z \in \cS_\pi$ 
if and only if $x_1 \in C_+^0$ and $x_{-1} \in C_-^0$. 
Likewise $e^{zh} \in \oline{\cT} = E + i C$ holds for every $z \in \cS_\pi$ 
if and only if $x_1 \in C_+$ and $x_{-1} \in C_-$. 
\end{prf}

\subsection{Operator-valued 
measures and the corresponding Hilbert spaces} 
\mlabel{subsec:3.2}

In this subsection we construct the Hilbert spaces $\cH$ that we are 
interested in, first as vector-valued $L^2$-spaces $ L^2(E^*,\mu;\cK)$, 
defined by $\Herm^+(\cK)$-valued measures $\mu$ on the dual 
cone $C^\star \subeq E^*$ (cf.\ \cite[Thm.~B.3]{NO15}, \cite{Ne98}). 
Any such Hilbert space $\cH$ carries a natural 
antiunitary representation of the group $G=E\rtimes_\sigma\R^\times$ 
(Lemma \ref{lem:UnitRep}), but this representation has several 
other interesting realizations. 
In Lemma \ref{lem:4.13} we realize it as 
a reproducing kernel Hilbert space of holomorphic functions on $\cT$
and use that realization in Lemma~\ref{lem:hjv} to describe the space of
$J$-fixed elements. We also describe a third realization of
this representation in the space of distributions generated by the 
positive definite operator-valued distribution $\hat\mu$ on~$E$. 
The latter two realizations are connected by taking 
suitable boundary values. In particular, the Fourier transform 
$\tilde{\mu}(z)=\int_{C^*} e^{i\lambda (z)}d\mu (\lambda)$ will play an
important role in the proof of Theorem~\ref{thm:2.1}, 
both, as a holomorphic function on $\cT$ and as a distribution on~$E$.

Let $\cK$ be a separable Hilbert space and 
$\mu$ be a tempered $\Herm^+(\cK)$-valued Borel measure on $E^*$,  
supported in the dual cone 
\[ C^\star = \{ \lambda \in E^* \: \lambda(C) \subeq [0,\infty) \}.\] 
We then define the Hilbert space 
\[ \cH := L^2(E^*,\mu;\cK)  = L^2(C^\star,\mu;\cK)  \] 
 of measurable functions
$f: E^*\to \cK$ such that the norm of $f$ with respect to the  scalar product 
\[ \la f, g \ra = \int_{E^*} \la f(\lambda), d\mu(\lambda) g(\lambda)\ra_\cK\] 
is finite. We refer to \cite{Ne98} for more details on operator-valued measures 
and the corresponding $L^2$-spaces. 

We let $G:=E\rtimes_\sigma \R^\times$ 
with Lie algebra $\fg = E\rtimes_h \R$, where 
$\sigma(e^t)(v) = e^{th} v$ and $\sigma(-1) = \tau$.
The involution $\tau$ extends naturally to $G$ 
by $\tau_G(v,r) = (\tau(v),r)$. 
We assume that 
\begin{equation}
  \label{eq:mug}
 (-\tau)_*\mu = J_\cK \mu J_\cK
\quad \mbox{ and } \quad 
\sigma(r)_*\mu = \rho(r)^* \cdot \mu \cdot \rho(r)
\quad \mbox{ hold for } \quad r > 0. 
\end{equation}

\begin{ex} If $h = 0$ and $\tau = \id_E$, then 
the assumption that $C$ is pointed and generating, combined with 
$-C = \tau(C) = C$, leads to $C = \{0\}$ and hence to $E = \{0\}$. 
Then we may assume that $\mu(\{0\}) = \1_\cK$, and 
\eqref{eq:mug} means that the operators $\rho(e^t)$, $t \in\R$, 
are unitary. So $(\rho,\cK)$ is an antiunitary 
representation of $\R^\times$ on $\cK$ which coincides 
with $U^{\sV_\cK}$ (cf.\ Section~\ref{rem:3.1}).  
\end{ex}

\begin{lem}\mlabel{lem:UnitRep}
 We obtain an antiunitary representation of 
$G$ on $\cH = L^2(E^*,\mu;\cK)$ by 
\begin{align}
 (U(x,\1) f)(\lambda) &= e^{i\lambda(x)}f(\lambda), \label{e10} \\
(U(0,e^t) f)(\lambda) &
= \rho(e^t) f(\lambda \circ e^{th}) 
= \rho(e^t) ((e^{th})_* f) (\lambda), \label{e11} \\ 
(U(0,-1) f)(\lambda) &= J_\cK f(-\lambda \circ \tau).\label{e12} 
\end{align}
\end{lem}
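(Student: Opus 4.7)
The plan is to verify the three ingredients of an antiunitary representation in turn: (1) that each $U(g)$ is a well-defined unitary (for $g \in E \rtimes \R^\times_+$) or antiunitary (for $g \in E \times \{-1\} \cdot \R^\times_+$) operator on $\cH$, (2) that $U$ respects the semidirect product multiplication, and (3) that $U$ is strongly continuous. I would handle the three formulas \eqref{e10}--\eqref{e12} separately, since each rests on a different piece of the compatibility data between $\mu$ and $\rho$.

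For (1), the translation operators $U(x,\1)$ act by pointwise multiplication with the $\cK$-scalar $e^{i\lambda(x)}$ of modulus one and are therefore manifestly unitary on any vector-valued $L^2$-space. For $U(0,e^t)$, I would compute $\|U(0,e^t)f\|^2$, make the substitution $\lambda' := \lambda \circ e^{th}$, and invoke the covariance relation $\sigma(e^t)_*\mu = \rho(e^t)^* \mu\, \rho(e^t)$ from \eqref{eq:mug}, using the convention $\sigma(e^t)\lambda = \lambda \circ e^{-th}$ from the introduction; the operators $\rho(e^t)$ and $\rho(e^t)^*$ then cancel inside the inner product, recovering $\|f\|^2$. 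For $U(0,-1)$, the substitution $\lambda' := -\lambda \circ \tau$ (which preserves $C^\star$ because $-\tau(C)=C$) together with $(-\tau)_*\mu = J_\cK \mu J_\cK$ brings the expression to the form $\int \la J_\cK f(\lambda'), J_\cK\, d\mu(\lambda')\, g(\lambda')\ra_\cK$, and the antiunitarity identity $\la J_\cK a, J_\cK b\ra = \overline{\la a,b\ra}$ yields $\la U(0,-1)f,U(0,-1)g\ra = \overline{\la f,g\ra}$, showing antiunitarity.

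For (2), I would check the defining relations of the semidirect product $G = E \rtimes_\sigma \R^\times$: additivity in the $E$-variable is immediate from multiplying the exponentials, multiplicativity on $\R^\times_+$ follows from $\rho$ being a homomorphism combined with $(\lambda \circ e^{s h}) \circ e^{th} = \lambda \circ e^{(s+t)h}$, and $U(0,-1)^2 = \1$ follows from $J_\cK^2 = \1$ and $\tau^2 = \1$. The crossed relation $U(0,r)\,U(x,\1) = U(\sigma(r)x,\1)\,U(0,r)$ reduces for $r = e^t$ to $(\lambda \circ e^{th})(x) = \lambda(e^{th}x)$, and for $r = -1$ one uses the antilinearity of $J_\cK$ to convert the scalar factor $e^{i(-\lambda\circ\tau)(x)}$ into $e^{i\lambda(\tau(x))}$, matching $U(\tau(x),\1)\,U(0,-1)$ as required. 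Commutativity between $J_\cK$ and $\rho(e^t)$ inside the formulas is guaranteed by the standing assumption that the range of $\rho$ commutes with $J_\cK$.

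For (3), strong continuity is inherited from the continuity of the scalar multipliers $\lambda \mapsto e^{i\lambda(x)}$ and from the norm-continuity of $t \mapsto \rho(e^t)\rho(e^t)^*$ combined with a standard approximation argument on the dense subspace of compactly supported $\cK$-valued step functions in $L^2(C^\star,\mu;\cK)$, invoking dominated convergence. The main obstacle is really a bookkeeping one: keeping track of the sign conventions in the dual action on $E^*$ and, most delicately, tracking the antilinearity of $J_\cK$ when it is commuted past the scalar factor $e^{i\lambda(x)}$ and when it interacts with the measure $\mu$. Once these conventions are fixed, the verification is a sequence of substitutions driven entirely by the two identities in \eqref{eq:mug}.
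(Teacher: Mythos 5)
Your proposal is correct and follows essentially the same route as the paper: the (anti)unitarity of $U(0,r)$ is obtained by the change of variables $\lambda\mapsto\sigma(r)\lambda$, resp.\ $\lambda\mapsto-\lambda\circ\tau$, combined with the two compatibility relations in \eqref{eq:mug}, and the homomorphism property is a direct verification (which the paper dismisses as a ``standard calculation'' and you spell out, correctly tracking the antilinearity of $J_\cK$ past the scalar $e^{i\lambda(x)}$ and its commutation with $\rho(e^t)$). Your additional remarks on strong continuity go slightly beyond what the paper records but are consistent with its standing hypotheses on $\rho$ and $\mu$.
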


\begin{prf}
The relations \eqref{eq:mug} lead for $f,h \in L^2(E^*,\mu;\cK)$ 
to the transformation formulas 
\begin{align}
  \int_{E^*} \la f(\sigma(r)\lambda), d\mu(\lambda) h(\sigma(r)\lambda)\ra
& =   \int_{E^*} \la \rho(r) f(\lambda), d\mu(\lambda) \rho(r) h(\lambda)\ra  \nonumber\\
&= \int_{E^*} \la f(\lambda),\rho(r) ^* d\mu(\lambda) \rho(r) h(\lambda)\ra 
\label{eq:trag} 
\end{align}
and 
\begin{equation}
  \label{eq:trag2}
  \int_{E^*} \la f(-\tau\lambda), d\mu(\lambda) h(-\tau\lambda)\ra 
=   \int_{E^*} \la f(\lambda), J_\cK d\mu(\lambda) J_\cK h(\lambda)\ra 
=   \int_{E^*} \la J_\cK h(\lambda), d\mu(\lambda) J_\cK f(\lambda)\ra. 
\end{equation}
This implies that $U(r)$ is (anti-)unitary for $r \in \R^\times$. 
That $U$ is a homomorphism is a standard calculation. 
\end{prf}

\begin{rem} 
The assumption $\supp(\mu) \subeq C^\star$ is equivalent to 
\[  C \subeq C_U := \{ x \in \fg \: -i \partial U(x) \geq 0\},\]
where $\partial U(x) = \derat0 U(\exp tx)$ is the infinitesimal generator 
of the unitary one-parameter group $(U(\exp tx))_{t \in \R}$. 
\end{rem}

\begin{rem} \mlabel{rem:4.4} 
All Schwartz functions in $\cS(E^*;\cK)$ define elements of $L^2(E^*,\mu;\cK)$ because 
$\mu$ is tempered. This leads to the embedding 
\[ \Psi \: L^2(E^*,\mu;\cK) \to \cS'(E^*;\cK), \qquad 
\Psi(f) =  \mu f,\quad \mbox{ resp.,} \quad 
\Psi(f)(\phi) = \int_{E^*} \oline{\phi(\lambda)}\, d\mu(\lambda) f(\lambda).\] 
As the function $e_{iz}(\alpha) =e^{i\alpha(z)}$ on $C^\star$ satisfies 
$|e_{iz}| \leq 1$ for $z \in E + i C = \oline{\cT}$, the map 
\[ \Gamma \: E + i C  \to \cS'(E^*;B(\cK)), \quad z \mapsto e_{iz} \mu \] 
is defined, weakly continuous 
(by the Dominated Convergence Theorem), 
and weakly holomorphic on the interior $\cT$. 
We further obtain with \eqref{eq:mug} 
\[ \Psi(U(0,e^{t})f) 
= \mu \cdot (\rho(e^{t}) e^{th}_*f) 
\buildrel {\eqref{eq:mug}}\over {=} \rho(e^{-t})^* (e^{th}_*\mu) \cdot e^{th}_*f 
= \rho(e^{-t})^* e^{th}_*\Psi(f).\] 
As 
\[ \big((e^{th}_*) e_{iz}\big)(\lambda) 
= e_{iz}(\lambda \circ e^{th})
= e^{i \lambda(e^{th}z)} = e_{i e^{th}z}(\lambda),\] 
we have the equivariance relation 
\begin{equation}
  \label{eq:gammatrans}
\Gamma(e^{th}z) = \rho(e^{-t})^* (e^{th})_* \Gamma(z).
\end{equation}
Therefore Lemma~\ref{lem:4.2}(b) implies that 
$\Gamma$ maps the closed wedge 
$\oline W$ into distributions which may produce elements of 
the standard subspace $\sV$ 
when smeared with suitable test functions (cf.~Proposition~\ref{prop:standchar}). 
\end{rem}

\begin{lem} \mlabel{lem:4.8}
 The Fourier transform \begin{footnote}{Note that 
$\tilde\mu(z) = \hat\mu(-z)$, also on the level of distribution 
boundary values. In our context it minimizes the number of artificial 
minus signs to work with $\, \tilde{\,\, }\ $ instead of  $\, \hat{\,\, }\ $. 
As 
$\tilde\mu(\phi) = \int_{E^*} \tilde{\oline\phi}\, d\mu$ and 
$\hat\mu(\phi) 
= \int_{E^*} \oline{\tilde\phi}\, d\mu
= \int_{E^*} \hat{\oline\phi}\, d\mu$, we have on real-valued test functions 
$\phi \in C^\infty_c(E^*,\R)$ the relations  
$\tilde\mu(\phi) = \int_{E^*} \tilde\phi\, d\mu$ and 
$\hat\mu(\phi)= \int_{E^*} \hat\phi\, d\mu.$ This means that 
$\hat\mu = \oline{\tilde\mu}$ as distributions on~$E$.
}   
 \end{footnote}

\begin{equation}
  \label{eq:muft}
 \tilde\mu(z) 
:= \int_{E^*} e^{i\lambda(z)}\, d\mu(\lambda) 
= \int_{C^\star} e^{i\lambda(z)}\, d\mu(\lambda) \in B(\cK), \quad 
z \in \cT = E + i C^0, 
\end{equation}
defines a holomorphic  function on $\cT$ with  the following properties: 
\begin{itemize}
\item[\rm(a)] The holomorphic function $\tilde\mu$ on $\cT$ has distributional 
boundary values in $\cS'(E;B(\cK))$ in the sense that the 
 tempered distribution $\tilde\mu$, defined by 
$\tilde\mu(\phi) := \int_{E^*} \tilde{\oline \phi}\, d\mu$ 
satisfies 
\begin{equation}
  \label{eq:pw}
 \tilde\mu(\phi) = \lim_{C^0 \ni y \to 0} 
\int_E \oline{\phi(x)} \tilde\mu(x + i y)\, d\mu_E(x) \quad \mbox{ for } \quad 
\phi \in \cS(E). 
\end{equation}
\item[\rm(b)]
For the antilinear extension $\btau (x + i y) = \tau(x) - i \tau(y)$ 
of $\tau $ to $E_\C$, 
we have 
\begin{equation}
  \label{eq:hatmutra}
\tilde\mu(e^{-th}z)  = \rho(e^t)^* \tilde\mu(z) \rho(e^t) 
\quad \mbox{ and } \quad 
\tilde\mu(\btau z) = J_\cK \tilde\mu(z) J_\cK. \end{equation}
\end{itemize}
\end{lem}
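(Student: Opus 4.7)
The overall strategy has three steps: first verify holomorphy of $\tilde\mu$ on $\cT$ using exponential-decay estimates; then derive the equivariance identities in (b) by change of variable combined with \eqref{eq:mug}; finally deduce (a) from Fubini together with dominated convergence.

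For holomorphy, for any fixed $y \in C^0$ the linear function $\lambda\mapsto\lambda(y)$ is continuous and strictly positive on $C^\star\setminus\{0\}$, so
$c_y := \inf\{\lambda(y) \: \lambda\in C^\star,\ \|\lambda\|=1\} > 0$ and $e^{-\lambda(y)}\le e^{-c_y\|\lambda\|}$ on $C^\star$. The constant $c_y$ can be chosen uniformly on compact subsets of $C^0$, so this exponential decay, combined with the temperedness of the total variation $\|\mu\|$, shows that the integral defining $\tilde\mu(z)$ converges in operator norm, locally uniformly in $z\in\cT$. Holomorphy then follows either by differentiating under the integral (each derivative introduces a polynomial factor which is absorbed by the exponential) or by Morera's theorem applied to the weakly holomorphic matrix coefficients $\la\xi,\tilde\mu(\cdot)\eta\ra$.

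For (b), the first identity is immediate from $\lambda(e^{-th}z) = (\sigma(e^t)\lambda)(z)$: pushing the measure forward by $\sigma(e^t)$ and invoking \eqref{eq:mug} yields
\begin{equation*}
\tilde\mu(e^{-th}z)
= \int_{E^*} e^{i\lambda(z)}\,d(\sigma(e^t)_*\mu)(\lambda)
= \rho(e^t)^*\tilde\mu(z)\rho(e^t).
\end{equation*}
For the $\btau$-identity, the key observation is that $\tau$ commutes with complex conjugation on $E_\C$, so $\tau(\oline z) = \btau z$ and consequently $\lambda(\btau z) = (\lambda\circ\tau)(\oline z)$. Substituting $\lambda \mapsto -\lambda\circ\tau$ (the involution of $E^*$ whose pushforward on measures is $(-\tau)_*$) inside the defining integral and using $(-\tau)_*\mu = J_\cK\mu J_\cK$ gives
\begin{equation*}
\tilde\mu(\btau z)
= \int_{E^*} e^{-i\lambda(\oline z)}\,J_\cK\, d\mu(\lambda)\, J_\cK.
\end{equation*}
Since $J_\cK$ is antilinear, pulling the scalar $e^{-i\lambda(\oline z)}$ through $J_\cK$ replaces it by its complex conjugate $e^{i\lambda(z)}$ (using $\oline{\lambda(\oline z)} = \lambda(z)$), yielding $\tilde\mu(\btau z) = J_\cK\tilde\mu(z) J_\cK$.

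For (a), Fubini's theorem (justified by the integrability from the first step together with the rapid decay of $\phi \in \cS(E)$) converts
\begin{equation*}
\int_E \oline{\phi(x)}\,\tilde\mu(x+iy)\,d\mu_E(x)
= \int_{E^*} e^{-\lambda(y)}\,\tilde{\oline\phi}(\lambda)\,d\mu(\lambda).
\end{equation*}
As $y \to 0$ in $C^0$, the factors $e^{-\lambda(y)}$ converge pointwise to $1$ on $C^\star$ and remain bounded by $1$; since $\tilde{\oline\phi}\in\cS(E^*)$ and $\mu$ is tempered, dominated convergence delivers the claimed limit $\tilde\mu(\phi) = \int_{E^*}\tilde{\oline\phi}\,d\mu$. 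The principal technical nuisance throughout is bookkeeping for the operator-valued measure, which can be routinely handled by reducing to the scalar complex measures $\la\xi,\mu(\cdot)\eta\ra_\cK$ for $\xi,\eta \in \cK$ and invoking the standard theory from \cite{Ne98}.
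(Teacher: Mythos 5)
Your proof is correct and follows essentially the same route as the paper: Fubini plus dominated convergence (using $e^{-\lambda(y)}\leq 1$ on $C^\star$) for (a), and the change of variables $\lambda\mapsto -\lambda\circ\tau$ combined with \eqref{eq:mug} and the antilinearity of $J_\cK$ for (b). The only difference is that you spell out the locally uniform exponential-decay estimate establishing holomorphy of $\tilde\mu$ on $\cT$, which the paper leaves implicit.
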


\begin{prf} (a) This follows easily from Fubini's theorem and 
Lebesgue's theorem on dominated convergence because 
$e^{-\lambda(y)} \leq 1$ for $\lambda\in C^\star$ and $y \in C$:
\begin{align*}
 \int_E \oline{\phi(x)} \tilde\mu(x + i y)\, d\mu_E(x) 
&=  \int_E \oline{\phi(x)} \int_{C^\star} e^{i\lambda(x + i y)}\, d\mu(\lambda)\, d\mu_E(x) \\
&=  \int_E \int_{C^\star} e^{i\lambda(x)}\oline{\phi(x)} e^{-\lambda(y)}\, d\mu(\lambda)\, d\mu_E(x) \\
&=  \int_{C^\star} \tilde{\oline\phi}(\lambda)\, e^{-\lambda(y)}\, d\mu(\lambda)
\ \overset{y \to 0}{\ssarr}\ 
\int_{C^\star} \tilde{\oline\phi}(\lambda)\, d\mu(\lambda)
= \tilde\mu(\phi).\end{align*}

\nin (b) The first formula is a direct consequence of the transformation 
properties \eqref{eq:mug}. The second formula follows from 
\[ \tilde\mu(\oline{\tau}(z)) = \int_{E^*} e^{i \lambda(\tau\oline z)}\, d\mu(\lambda)
%= \int_{E^*} e^{i (-\tau)\lambda(-\oline z)}\, d\mu(\lambda)
= \int_{E^*} e^{-i \lambda(\oline z)} J_\cK\ d\mu(\lambda) J_\cK
= J_\cK \Big(\int_{E^*} e^{i \lambda(z)} \ d\mu(\lambda)\Big) J_\cK. 
%= J_\cK \tilde\mu(z) J_\cK. 
\qedhere\]
\end{prf}

\begin{rem} The covariance relation in Lemma~\ref{lem:4.8} has an interesting 
consequence. 
For $x = x_0 + x_1 + x_{-1} \in W$, 
we have $e^{zh} x \in \cT$ for $z \in \cS_\pi$ by Lemma~\ref{lem:4.2}. 
In particular 
$\iota(x) := e^{\frac{\pi i}{2}h}x = x_0 + i (x_1 - x_{-1}) \in \cT$. 
If $\Lambda$ is a bounded operator, we therefore 
expect for the boundary values of $\tilde\mu$ on the wedge domain $W$ a 
relation of the form 
\[  \tilde \mu(x) 
= \tilde\mu\big(e^{-\frac{\pi i}{2}h}\iota(x)\big)
= e^{\frac{\pi i}{2}\Lambda^*} \tilde\mu(\iota(x)) e^{\frac{\pi i}{2}\Lambda} 
= e^{\frac{\pi i}{2}\Lambda_+}e^{-\frac{\pi i}{2}\Lambda_-} 
\tilde\mu(\iota(x)) e^{\frac{\pi i}{2}\Lambda_-} e^{\frac{\pi i}{2}\Lambda_+}.\]
We shall see in Proposition~\ref{prop:analyticfunc} below 
that such a relation holds indeed in the sense 
that the distributional boundary values of $\tilde\mu$ on $E$ are represented 
on the open cone $W$ by an operator-valued function.
\end{rem}

For the proof of the Standard Subspace Theorem~\ref{thm:2.1} below  
we shall use the following realization of the unitary 
representation $(U,\cH)$ by holomorphic 
functions on the tube domain $\cT = E + i C^0$:

\begin{lem} \mlabel{lem:4.13}
The map 
\begin{align*}
\Phi &\: L^2(E^*,\mu;\cK) \to \Hol(\cT, \cK), \quad 
\Phi(f)(z) = \int_{E^*} e^{i\lambda(z)} d\mu(\lambda) f(\lambda), \\ 
&\la \xi, \Phi(f)(z) \ra  = \la e_{-i \oline z} \xi, \mu \cdot f \ra 
\quad \mbox{ for } \quad \xi  \in \cK, z \in \cT, 
\end{align*}
maps $L^2(E^*,\mu;\cK)$ injectively 
onto a reproducing kernel Hilbert space $\cH_K$ with $B(\cK)$-valued kernel 
\[ K(z,w) = \tilde\mu(z- \oline w).\]
In particular, $\cH_K$ is generated by the functions 
\begin{equation}
  \label{eq:5star}
(K_w^*\xi)(z) = K(z,w)\xi = \tilde\mu(z - \oline w)\xi, \quad 
w \in \cT, \xi \in \cK, 
\end{equation}
satisfying 
\begin{equation}
  \label{eq:Phirel}
\Phi(e_{-i \oline w} \xi) = K_w^* \xi.
\end{equation}
The antiunitary representation $(U,L^2(E^*,\mu;\cK))$ of $G$ 
is intertwined by $\Phi$ with the 
anti\-unitary representation $U'$ on $\cH_K$ given by 
\begin{align}
(U'(x,\1) F)(z) &= F(z +x), \label{e15} \\  
(U'(0,e^t) F)(z) &= \rho(e^{-t})^* F(e^{-t}z), \label{e16}  \\ 
(U'(0,-1) F)(z) &= J_\cK F(\btau z). \label{e17} 
\end{align}
On the generators $K_w^*\eta$, this leads to 
\begin{align}
U'(x,\1) K_w^*\eta &= K_{w-x}^*\eta, \label{e18} \\
U'(0,e^t) K_w^*\eta &= K_{e^{th}w}^*\rho(e^t)\eta \label{e19}\\
U'(0,-1) K_w^*\eta &= K_{\btau w}^* J_\cK \eta. \label{e20} 
\end{align}
\end{lem}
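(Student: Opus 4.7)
The plan is to establish the statements in the order: well-definedness and holomorphy of $\Phi(f)$; identification $\Phi(e_{-i\oline w}\xi) = K_w^*\xi$; unitary bijectivity onto $\cH_K$; and the intertwining formulas.

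For the first step I would check that for $z = x + iy$ with $y \in C^0$, the function $\lambda \mapsto e^{-i\lambda(\oline z)}\xi$ lies in $L^2(E^*,\mu;\cK)$. Indeed, on $C^\star$ we have $|e^{-i\lambda(\oline z)}|^2 = e^{-2\lambda(y)} \leq 1$, and the operator $\tilde\mu(2iy) = \int e^{-2\lambda(y)}\, d\mu(\lambda) \in B(\cK)$ is well-defined by Lemma~\ref{lem:4.8}. Setting $\la \xi, \Phi(f)(z)\ra := \la e_{-i\oline z}\xi, f\ra_{L^2(\mu;\cK)}$ defines $\Phi(f)(z) \in \cK$ via Cauchy--Schwarz, and holomorphy on $\cT$ follows from dominated convergence applied to the weak formulation.

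For the kernel identification, a direct computation gives
\[ \Phi(e_{-i\oline w}\xi)(z) = \int e^{i\lambda(z - \oline w)}\, d\mu(\lambda)\,\xi = \tilde\mu(z - \oline w)\xi = K(z,w)\xi = (K_w^*\xi)(z), \]
proving \eqref{eq:Phirel}. Comparing inner products,
\[ \la e_{-i\oline w}\xi, e_{-i\oline z}\eta\ra_{L^2(\mu;\cK)} = \int e^{i\lambda(w - \oline z)}\la \xi, d\mu(\lambda)\eta\ra = \la \xi, \tilde\mu(w - \oline z)\eta\ra = \la K_w^*\xi, K_z^*\eta\ra_{\cH_K}, \]
so $\Phi$ is isometric on the linear span of the $e_{-i\oline w}\xi$. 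Totality of this span in $L^2(E^*,\mu;\cK)$ follows from Fourier uniqueness: if $f$ is orthogonal to every $e_{-i\oline w}\xi$, then $\Phi(f)$ vanishes identically on $\cT$, and its distributional boundary value on $E$ (obtained as in Lemma~\ref{lem:4.8}(a)) vanishes; since this boundary value agrees up to conjugation with the Fourier transform of the $\cK$-valued tempered distribution $\mu\cdot f$, we get $\mu\cdot f = 0$, hence $f = 0$ $\mu$-a.e. Thus $\Phi$ extends to an isometric isomorphism onto~$\cH_K$.

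Finally, the intertwining relation \eqref{e15} is immediate because the factor $e^{i\lambda(x)}$ in $U(x,\1)f$ translates the argument of $\Phi(f)$. For \eqref{e16}, substituting $\lambda \mapsto \lambda \circ e^{-th}$ in $\Phi(U(0,e^t)f)(z)$ and invoking $\sigma(e^{-t})_*\mu = \rho(e^{-t})^*\mu\,\rho(e^{-t})$ yields $\rho(e^{-t})^*\Phi(f)(e^{-th}z)$ after cancelling $\rho(e^{-t})\rho(e^t) = \1$. For the antiunitary \eqref{e17}, the substitution $\lambda \mapsto -\lambda \circ \tau$ combined with $(-\tau)_*\mu = J_\cK\mu J_\cK$ and the antilinearity of $J_\cK$ transforms $J_\cK\Phi(f)(\btau z)$ into $\Phi(U(0,-1)f)(z)$; this is the most delicate step, requiring one to pull the antilinear $J_\cK$ through the operator-valued integral (using that conjugation by $J_\cK$ on the fibre commutes with pushforward along real maps of $E^*$) and to track the complex conjugation induced by $z \mapsto \btau z = \tau\oline z$. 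The formulas \eqref{e18}--\eqref{e20} on generators follow by applying $\Phi$ to $U(g)e_{-i\oline w}\eta$ and using $\oline{e^{th}w} = e^{th}\oline w$ and $\oline{\btau w} = \tau\oline w$ together with \eqref{eq:Phirel}.
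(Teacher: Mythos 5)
Your proposal is correct and follows essentially the same route as the paper: the defining identity $\la \xi,\Phi(f)(z)\ra = \la e_{-i\oline z}\xi, f\ra_{L^2}$, the resulting kernel computation $K(z,w)=\tilde\mu(z-\oline w)$, and direct verification of the intertwining relations via the invariance conditions \eqref{eq:mug}. The only difference is that you spell out the totality/injectivity step via Fourier uniqueness of boundary values, which the paper delegates to \cite[Thm.~III.9]{Ne98}; your argument for that step is sound.
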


\begin{prf} As $\supp(\mu) \subeq C^\star$, for $z \in \cT$ we have 
$|e_{iz}(\lambda)| = |e^{i\lambda(z)}| \leq 1$ for $\mu$-almost all $\lambda \in E^*$ 
and the temperedness of $\mu$ implies that 
$e_{iz} \eta\in L^2(E^*,\mu;\cK)$ for $z \in \cT$ and $\eta \in \cK$ 
(\cite[Lemma~B.1]{HN01}). 
This implies that $\Phi$ is determined by the relation 
\begin{equation}
  \label{eq:evalform}
\la \eta, \Phi(f)(z) \ra 
= \la e_{-i\oline z} \eta, f \ra_{L^2} 
\quad \mbox{ for } \quad f \in L^2(E^*, \mu;\cK). 
\end{equation}
Hence the point evaluations on $\cH_K$ are continuous, and given by the 
scalar product with $\Phi(e_{-i\oline z}\eta)$, so that the reproducing 
kernel is given by 
\[ K(z,w) = K_z K_w^* = \int_{E^*} e^{i\lambda(z-\oline w)}\, d\mu(\lambda) 
= \tilde\mu(z - \oline w),\] 
resp., 
\begin{equation}
  \label{eq:kerform}
\la \xi, K(z, w) \eta \ra 
= \la e_{-i\oline z} \xi, e_{-i\oline w} \eta\ra 
 \quad \mbox{ for } \quad 
z,w\in \cT, \xi, \eta \in \cK.
\end{equation}

For $z,w \in \cT$ and $\xi,\eta \in \cK$, we derive from 
\eqref{eq:evalform}  the relation 
\[ \la \eta, K_z \Phi(e_{-i \oline w} \xi) \ra 
= \la \eta, \Phi(e_{-i \oline w} \xi)(z) \ra 
=  \la e_{-i \oline z} \eta, e_{-i \oline w} \xi \ra 
= \la \eta, \tilde\mu(z - \oline w) \xi \ra 
= \la \eta, K_z K_w^* \xi \ra, \] 
which implies \eqref{eq:Phirel}. 
The remaining assertions are easily verified. 
We refer to \cite[Thm.~III.9]{Ne98} for further details. 
\end{prf}
  From the general Lemma~\ref{lem:hjvec}, we obtain in particular: 

\begin{lem} \mlabel{lem:hjv} 
For $J := U'(0,-1)$ as in \eqref{e17} and 
$\eset\not= \cO \subeq E^+ + i (E^- \cap C^0)$ open, we have 
\[ \cH_K^J = \oline{ \spann_\R \{ K_z^* \eta  \: z \in \cO, \eta \in 
\cK^{J_\cK}  \}}=\{F\in \cH_K\: F(\cO) \subeq  \cK^{J_\cK}\}.\] 
\end{lem}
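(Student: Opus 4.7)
The statement is a direct specialization of the general Lemma~\ref{lem:hjvec} to the concrete geometric data of this section. My plan is therefore to translate the abstract setup of Subsection~\ref{subsec:1.2} into the present context and invoke that lemma; no genuinely new argument is needed.

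First, I take $\Xi := \cT = E + iC^0$, equipped with the $\R^\times$-action from Lemma~\ref{lem:4.13}: holomorphic dilations $e^t.z = e^{th}z$ together with the antiholomorphic involution $\tau_\Xi := \btau\big|_\cT$, which is well defined because $\cT$ is $\btau$-invariant (as observed at the beginning of Subsection~\ref{subsec:3.1}). I then check that the reproducing kernel Hilbert space $\cH_K$ from Lemma~\ref{lem:4.13} fits the framework of Subsection~\ref{subsec:1.2}: comparing the equivariance relations \eqref{e16}--\eqref{e17} with \eqref{eq:cocyc2}, one sees that the representation $\rho$ on $\cK$ and the conjugation $J_\cK = \rho(-1)$ are exactly those entering the general setup, so that $J = U'(0,-1)$ is indeed the modular conjugation of that framework.

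Next, I identify the fixed-point manifold. Since $\btau$ acts as $\tau$ on the real part and as $-\tau$ on the imaginary part and $-\tau(C) = C$, the computation at the start of Subsection~\ref{subsec:3.1} gives
\[
\cT^{\tau_\Xi} \;=\; \cT \cap E^c \;=\; E^+ + i(E^- \cap C^0).
\]
This set is non-empty: by Lemma~\ref{lem:Project}(ii) one has $C^0 \cap E^- = C_+^0 \oplus (-C_-^0)$, and both $C_{\pm}^0$ are non-empty by Lemma~\ref{lem:Project}(i). Hence the assumption $\emptyset \neq \cO \subseteq E^+ + i(E^- \cap C^0) = \cT^{\tau_\Xi}$ of the present lemma matches the hypothesis of Lemma~\ref{lem:hjvec} exactly.

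With these identifications, applying Lemma~\ref{lem:hjvec} produces both equalities in one stroke. I do not anticipate any obstacle beyond the notational matching above; the substantive content already resides in Lemma~\ref{lem:hjvec}, whose proof relies only on two standard facts: holomorphic functions on the connected complex manifold $\cT$ are determined by their restriction to the totally real submanifold $\cT^{\btau}$, and $\cK^{J_\cK}$ generates $\cK$ as a complex Hilbert space, so that a vector orthogonal to all $K_z^*\eta$ with $z\in\cO$, $\eta\in\cK^{J_\cK}$ must vanish on $\cO$ and hence everywhere.
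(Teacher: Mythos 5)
Your proposal is correct and follows exactly the route the paper takes: the paper derives Lemma~\ref{lem:hjv} by the one-line remark ``From the general Lemma~\ref{lem:hjvec}, we obtain in particular,'' and your write-up simply makes explicit the required identifications ($\Xi=\cT$, $\tau_\Xi=\btau|_\cT$, $\cT^{\btau}=E^++i(E^-\cap C^0)\neq\emptyset$ via Lemma~\ref{lem:Project}, and the matching of \eqref{e16}--\eqref{e17} with \eqref{eq:cocyc2}). No gaps.
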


\begin{defn} \mlabel{def:2.3} 
For the positive definite tempered distribution 
$D := \hat\mu \in \cS'(E, B(\cK))$, defined by 
\[ D(\phi) := \int_{E^*} \oline{\tilde\phi(\lambda)}\, d\mu(\lambda) \] 
(cf.~\eqref{eq:2}), 
we write $\cH_D \subeq \cS'(E;\cK)$ for the 
corresponding reproducing kernel Hilbert space whose $B(\cK)$-valued 
kernel is given on 
$\cS(E)$ by 
\begin{equation}
  \label{eq:dag5}
\la \eta,  K_D(\phi, \psi) \xi \ra := 
\la \eta, D(\psi^* * \phi) \xi \ra 
= \int_{E^*} \oline{\tilde\phi(\lambda)} \tilde\psi(\lambda)\, \la \eta, d\mu(\lambda) \xi \ra
= \la \tilde\phi \eta, \tilde\psi \xi\ra_{L^2}.
\end{equation}
The Hilbert space $\cH_D$ is generated by the $\cK$-valued distributions 
\[ (\psi * D\eta)(\phi) := D(\psi^* * \phi)\eta
\quad \mbox{ for } \quad \psi \in C^\infty_c(E), \eta \in \cK\] 
satisfying
\begin{equation}
  \label{eq:pos-def-hilb}
\la \phi * D\eta, \psi * D\xi\ra = \la \eta, D(\psi^* * \phi)\xi\ra 
= \la \eta, K_D(\phi, \psi) \xi \ra 
\end{equation}
(cf.\ \cite[Def.~7.1.5]{NO18}). 
\end{defn}

\begin{rem} \mlabel{rem:4.16}
For the Hilbert space $\cH$ we now have four pictures: 
  \begin{itemize}
  \item[\rm(a)] as the $L^2$-space $\cH = L^2(E^*,\mu;\cK)$, 
  \item[\rm(b)] as a subspace $\Psi(L^2(E^*,\mu;\cK)) \subeq \cS'(E^*,\mu;\cK)$ 
(distributions on $E^*$) (Remark~\ref{rem:4.4}), and 
  \item[\rm(c)] as $\cH_D \subeq \cS'(E;\cK)$ (distributions on $E$) 
(Definition~\ref{def:2.3}), and 
  \item[\rm(d)] as the reproducing kernel space 
$\cH_K = \Phi(L^2(E^*,\mu;\cK)) \subeq \Hol(\cT;\cK)$ 
(Lemma~\ref{lem:4.13}))
 \end{itemize}
The realizations (b) and (c) are connected by the Fourier transform 
\[ \cF \: \cS'(E^*;\cK) \to \cS'(E;\cK), \quad 
D \mapsto \hat D, \qquad 
\hat D(\phi) := D(\tilde \phi).\] 
For a Schwartz function $\phi \in \cS'(E^*)$ and $\eta \in \cK$, we have 
\[ \cF(\Psi(\tilde\phi \eta)) = \cF(\mu \tilde\phi \eta) = \phi * \hat\mu \eta,\] 
so that 
\[ \cF \: \cS'(E^*;\cK) \supeq \Psi(L^2(E^*,\mu;\cK)) \to \cH_{\hat\mu} 
\subeq \cS(E;\cK) \] 
is unitary by \eqref{eq:dag5} and \eqref{eq:pos-def-hilb}. 
\end{rem}

\subsection{Standard subspaces from wedge domains} 
\mlabel{subsec:3.3}

In this section we prove one of the main results of this paper 
(Theorem \ref{thm:2.1}). It describes the standard subspace $\sV$ corresponding to 
$J=U(-1)$ and $\Delta^{-it/2\pi } = U(e^{t})$ for the antiunitary 
representation of $\R^\times$ on $L^2(E^*,\mu;\cK)$, introduced in 
Lemma \ref{lem:UnitRep}. In Corollary \ref{cor:2.1b} we also 
describe its symplectic complement $\sV^\prime$ in similar terms.
Most of the section is devoted to the proof of Theorem \ref{thm:2.1}.

Recall the notation from Section \ref{subsec:1.2}: We have
$\rho (e^t ) = e^{t\Lambda}$ with $\Lambda=\Lambda_+ + \Lambda_-$
with $\Lambda_+$ bounded and symmetric, $\Lambda_-$
skew-adjoint and possibly  unbounded, and $[\Lambda_+,\Lambda_-]=0$.
On $\cK$ we define the Tomita operator  $T_\cK = J_\cK \Delta_\cK^{1/2}$.
The corresponding standard subspace is  $\sV_\cK$, and we also 
consider 
\[\sV_\cK^\sharp=e^{-\frac{\pi i}{2}\Lambda_+}\sV_\cK\qquad \text{and}\qquad 
\sV_\cK^\flat=e^{\frac{\pi i}{2}\Lambda_+}\sV_\cK .\]

The goal of this subsection is to prove Theorem~\ref{thm:2.1} below. 
To formulate it, we introduce for a standard subspace $\sW\subset \cK$ and an open subset 
$\eset\not=\cO\subeq E$, the real subspace 
\begin{equation}
  \label{eq:realsubs}
\sH_{\sW}(\cO)  := 
\oline{\spann_\R \{ \tilde\phi \eta \: \phi \in C^\infty_c(\cO,\R), 
\eta \in \sW\}} \subeq L^2(E^*,\mu;\cK),
\end{equation}
where $\tilde\phi(\lambda) = \int_E e^{i\lambda(x)}\phi(x)\, d\mu_E(x).$ 

\begin{thm} \mlabel{thm:2.1} {\rm(Standard Subspace Theorem)} 
Let $U$ be as in  \eqref{e10}-\eqref{e12} and recall the wedge domain 
\[ W = C_+^0 \oplus E_0 \oplus C_-^0 \subeq E. \]  
Then the standard subspace $\sV \subeq L^2(E^*,\mu;\cK)$, 
defined by $J_\sV = U(0,-1)$ and $\Delta_\sV^{-it/2\pi} = U(0,e^{t})$   
is 
\begin{equation}
  \label{eq:standloc}
 \sV =\sH_{\sV_\cK^\sharp}(W) =e^{-\frac{\pi i}{2}\Lambda_+} \sH_{\sV_\cK}(W).
\end{equation}
\end{thm}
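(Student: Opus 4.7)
The plan is to invoke Proposition~\ref{prop:standchar}(iv): $\xi\in\cH$ lies in $\sV$ iff there is $\eta\in\cH^J$ whose orbit map $\alpha^\eta(z)=U(0,e^z)\eta=\Delta_\sV^{-iz/2\pi}\eta$ extends continuously to $\oline{\cS_{-\pi/2,\pi/2}}$, holomorphically on the interior, with $\alpha^\eta(-\pi i/2)=\xi$. I transfer to the reproducing-kernel picture $\cH_K\subeq\Hol(\cT,\cK)$ via the intertwiner $\Phi$ of Lemma~\ref{lem:4.13}; in this picture Lemma~\ref{lem:hjv} identifies $\cH_K^J$ with the closed real span of $\{K_w^*\eta_0 : w\in E^+ + i(E^-\cap C^0),\ \eta_0\in\cK^{J_\cK}\}$. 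The second equality in the theorem is immediate: the bounded unitary $e^{-\pi i\Lambda_+/2}$ acts pointwise in $\lambda$ on $L^2(E^*,\mu;\cK)$ and commutes with the scalar Fourier transform $\tilde\phi$, so $\tilde\phi\,\eta = e^{-\pi i\Lambda_+/2}(\tilde\phi\,\eta')$ whenever $\eta = e^{-\pi i\Lambda_+/2}\eta'$. The substance of the theorem is the first equality.

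For $\sH_{\sV_\cK^\sharp}(W)\subeq\sV$ I smear the generators from Lemma~\ref{lem:hjv}. From \eqref{e19}, the real orbit map is $\alpha^{K_w^*\eta_0}(t)=K_{e^{th}w}^*\,e^{t\Lambda}\eta_0$. Writing $w=u+i(v_1+v_{-1})$ with $u\in E_0$, $v_1\in C_+^0$, $v_{-1}\in -C_-^0$ (so both $v_1,v_{-1}\in C$, via Lemma~\ref{lem:Project}), a computation mirroring the proof of Lemma~\ref{lem:4.2} gives
\[
\Im(e^{zh}w)=\cos(\Im z)\bigl(e^{\Re z}v_1+e^{-\Re z}v_{-1}\bigr)\in C^0 \quad\text{for}\quad z\in\cS_{-\pi/2,\pi/2},
\]
so $z\mapsto e^{zh}w$ extends continuously from $\oline{\cS_{-\pi/2,\pi/2}}$ into $\oline\cT$ and at $z=-\pi i/2$ lands on the real point $u+v_1-v_{-1}\in W$. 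On the $\cK$-side, spectral calculus extends $e^{z\Lambda}=e^{z\Lambda_+}e^{z\Lambda_-}$ to the same closed strip on the dense domain $\cD(\Delta_\cK^{-1/4})$; a short calculation using $[J_\cK,\Lambda_\pm]=0$ (with $J_\cK$ antilinear) shows $\Delta_\cK^{-1/4}\eta_0\in\sV_\cK$ for $\eta_0\in\cK^{J_\cK}\cap\cD(\Delta_\cK^{-1/4})$, whence $e^{-\pi i\Lambda/2}\eta_0=e^{-\pi i\Lambda_+/2}\Delta_\cK^{-1/4}\eta_0\in\sV_\cK^\sharp$. Smearing $w$ against $\psi\in C^\infty_c(E^+ + i(E^-\cap C^0),\R)$ and transporting to $L^2$ via $\Phi^{-1}$, the change of variables $x=-e^{-\pi i h/2}\oline w$ is a real-linear bijection onto $W$ of unit Jacobian, and it converts the smeared boundary value $\alpha^{\eta_\psi}(-\pi i/2)$ into $\tilde\phi\,\zeta$ for some $\phi\in C^\infty_c(W,\R)$ and $\zeta=e^{-\pi i\Lambda/2}\eta_0\in\sV_\cK^\sharp$. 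The exchange of smearing with holomorphic continuation is justified by dominated convergence using the uniform bound $|e^{-i\lambda(e^{zh}\oline w)}|\le 1$ on $\lambda\in C^\star$, $z\in\oline{\cS_{-\pi/2,\pi/2}}$, which follows from $v_1,v_{-1}\in C$ (hence $\lambda(v_1),\lambda(v_{-1})\ge 0$) and $\cos(\Im z)\ge 0$. Thus $\tilde\phi\,\zeta\in\sV$, and closedness of $\sV$ gives $\sH_{\sV_\cK^\sharp}(W)\subeq\sV$.

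For the reverse inclusion I exploit the real-linear bijection $\eta\mapsto\Delta^{-1/4}\eta$ between $\cH^J\cap\cD(\Delta^{-1/4})$ and $\sV$, which is just a reformulation of Proposition~\ref{prop:standchar}(iv). The smearings $\eta_\psi$ built above all lie in $\cH^J\cap\cD(\Delta^{-1/4})$, and their $\Delta^{-1/4}$-images fill precisely the subspace $\sH_{\sV_\cK^\sharp}(W)$ as $(\psi,\eta_0)$ varies. Density of these $\eta_\psi$ in $\cH^J$ in the $\cH$-norm is immediate from Lemma~\ref{lem:hjv}, since every generator $K_w^*\eta_0$ is itself a limit of such smearings (by letting $\psi$ concentrate near $w$). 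The main obstacle is to upgrade this to density in the graph norm of the unbounded closed operator $\Delta^{-1/4}$, which is what is needed to conclude that the $\Delta^{-1/4}$-images are dense in $\sV$. Here the uniform bound $|e^{-i\lambda(e^{zh}\oline w)}|\le 1$ across the closed strip again supplies the analytic input: it produces uniform $L^2(E^*,\mu;\cK)$-bounds on the entire family of holomorphically extended orbit maps, so $\cH$-norm convergence of $\eta_\psi\to\eta$ forces $L^2$-convergence of the boundary values at $-\pi i/2$, i.e.\ convergence of $\Delta^{-1/4}\eta_\psi$ to $\Delta^{-1/4}\eta$, closing the argument.
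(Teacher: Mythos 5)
Your first inclusion $\sH_{\sV_\cK^\sharp}(W)\subeq\sV$ is essentially the paper's Lemma~\ref{lem:2.4}: extend the orbit map of a smeared generator holomorphically to the strip, control the boundary behaviour by the uniform bound $|e^{i\lambda(e^{zh}x)}|\le 1$ for $\lambda\in C^\star$, $x\in W$ (this is Lemma~\ref{lem:esti}), and invoke Proposition~\ref{prop:standchar}. That part is sound, as is the trivial second equality in \eqref{eq:standloc}.

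The reverse inclusion, however, has a genuine gap. You correctly identify that what is needed is density of your smeared vectors $\eta_\psi$ in $\cH^J\cap\cD(\Delta^{-1/4})$ \emph{in the graph norm} of $\Delta^{-1/4}$, but the argument you offer for it cannot work: you claim that uniform bounds on the extended orbit maps force ``$\cH$-norm convergence of $\eta_\psi\to\eta$'' to imply ``convergence of $\Delta^{-1/4}\eta_\psi$ to $\Delta^{-1/4}\eta$.'' That assertion says precisely that $\Delta^{-1/4}$ is bounded on $\cH^J$, which is false whenever $\Delta\neq\1$ (the pointwise bound $|e^{-i\lambda(e^{zh}\oline w)}|\le 1$ controls integrands, not the $L^2(\mu)$-norms of boundary values of an arbitrary approximating family; at best, Vitali-type arguments give locally uniform convergence on the \emph{open} strip, never on its boundary). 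The paper closes the argument by an entirely different mechanism, which is the missing idea here: one shows that $\sV_0:=\sH_{\sV_\cK^\sharp}(W)$ is itself a \emph{standard} subspace. Injectivity $\sV_0\cap i\sV_0=\{0\}$ is free from $\sV_0\subeq\sV$; totality is obtained by noting that $\cV_0=\oline{\sV_0+i\sV_0}$ is invariant under the modular group, hence contains the values $\alpha^{f}(\pi i/2)$ of the already-constructed analytic continuations for $f\in\sV_0$ (this is the ``easy'' direction, going \emph{into} the tube rather than to its boundary); letting the test functions concentrate at points $x_0\in W$ produces the generators $K^*_{\zeta^{-1}(-x_0)}\cK^{J_\cK}$ of $\cH^J$ (Lemma~\ref{lem:hjv}), so $\cV_0\supeq\cH^J$ and hence $\cV_0=\cH$. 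One then concludes $\sV_0=\sV$ from Proposition~\ref{prop:lo081}: a standard subspace contained in a standard subspace and invariant under the larger one's modular group must coincide with it. Without this (or an equivalent substitute for your graph-norm density claim), your proof establishes only one inclusion.
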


Combining Theorem~\ref{thm:2.1} with  Example~\ref{ex:3.2}, we obtain: 

\begin{cor} \mlabel{cor:2.1} 
If $\cK = \C$, $J_\cK(\eta) = \oline \eta$, and $\Lambda_- = 0$, then 
$\sV = e^{-\frac{\pi i \Lambda}{2}}\sH_\R(W).$ 
\end{cor}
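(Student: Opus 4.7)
The plan is to obtain the corollary as a direct specialization of Theorem~\ref{thm:2.1}, with no new analytic work required beyond tracking the data through the definitions.

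First I would identify the standard subspace $\sV_\cK$ of $\cK = \C$ under the hypotheses of the corollary. Since $\Lambda_- = 0$, the positive selfadjoint operator $\Delta_\cK = e^{2\pi i \Lambda_-}$ equals $\1$, so the Tomita operator $T_\cK = J_\cK \Delta_\cK^{1/2}$ reduces to ordinary complex conjugation. Its fixed point set is $\sV_\cK = \R \subeq \C$. With $\Lambda_- = 0$ we also have $\Lambda = \Lambda_+$, which on $\C$ is a bounded symmetric operator, i.e.\ multiplication by some real scalar $\lambda$. Hence by the definition \eqref{eq:fixsp},
\[
\sV_\cK^\sharp \;=\; e^{-\frac{\pi i}{2}\Lambda_+}\sV_\cK \;=\; e^{-\frac{\pi i}{2}\Lambda}\,\R.
\]

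Next I would unpack the real subspace $\sH_{\sV_\cK}(W)$ appearing in Theorem~\ref{thm:2.1}. Using the definition \eqref{eq:realsubs} with $\sW = \sV_\cK = \R$ and $\cO = W$, and noting that the map $(\phi,r)\mapsto \tilde\phi\cdot r$ already factors through $\phi \in C^\infty_c(W,\R)$ via the real scalar $r$, we get
\[
\sH_{\sV_\cK}(W)
\;=\; \overline{\spann_\R\{\,\tilde\phi\,r \:\phi \in C^\infty_c(W,\R),\ r\in\R\,\}}
\;=\; \overline{\spann_\R\{\,\tilde\phi \:\phi \in C^\infty_c(W,\R)\,\}}
\;=\; \sH_\R(W),
\]
where the last equality is simply the notational abbreviation used in the statement of the corollary.

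Finally, I would invoke Theorem~\ref{thm:2.1}, which asserts
\[
\sV \;=\; e^{-\frac{\pi i}{2}\Lambda_+}\,\sH_{\sV_\cK}(W).
\]
Substituting $\Lambda_+ = \Lambda$ and $\sH_{\sV_\cK}(W) = \sH_\R(W)$ from the previous two steps yields $\sV = e^{-\frac{\pi i \Lambda}{2}}\sH_\R(W)$, as claimed. There is no genuine obstacle here; the only thing to be careful about is verifying that the two uses of the symbol $\Lambda_+$ (as the symmetric part of $\Lambda$ in the general setup, and as the scalar $\Lambda$ itself under the corollary's hypotheses) are consistently tracked — which is immediate from $\Lambda_- = 0$.
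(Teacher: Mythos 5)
Your argument is correct and is essentially the paper's own proof: the paper derives Corollary~\ref{cor:2.1} by combining Theorem~\ref{thm:2.1} with the computation in Example~\ref{ex:3.2} (which gives $\sV_\cK=\R$ and $\sV_\cK^\sharp=e^{-\frac{\pi i}{2}\Lambda}\R$ exactly as you do). Nothing is missing.
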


\begin{cor}\mlabel{cor:2.1b} Let the assumptions and notation be as in 
{\rm Theorem \ref{thm:2.1}}. Then the
symplectic complement of $\sV$ is 
$\sV^\prime = \sH_{\sV_\cK^\flat}(-W)
=e^{\frac{\pi i}{2}\Lambda_+} \sH_{\sV_\cK}(-W).$ %=\sH^\prime$. 
\end{cor}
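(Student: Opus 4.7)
I would exploit the universal identity $\sV' = J_\sV\sV$, valid for any standard subspace: from $T_{\sV'} = T_\sV^*$ and $J_\sV\Delta_\sV J_\sV = \Delta_\sV^{-1}$ one checks that $T_{\sV'}(J_\sV\eta) = \Delta_\sV^{1/2}\eta = J_\sV\eta$ whenever $\eta \in \sV$, so $J_\sV\sV \subeq \sV'$ and (by symmetry) equality holds. Applied to the description $\sV = \sH_{\sV_\cK^\sharp}(W)$ from Theorem~\ref{thm:2.1}, this reduces the problem to computing $J_\sV$ on a typical generator $\tilde\phi\,\eta$ with $\phi \in C^\infty_c(W,\R)$ and $\eta \in \sV_\cK^\sharp$.

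Using \eqref{e12} together with the identity $\overline{\tilde\phi(\lambda)} = \tilde\phi(-\lambda)$ valid for real $\phi$, one would compute pointwise
\[ (J_\sV(\tilde\phi\,\eta))(\lambda) = J_\cK\bigl(\tilde\phi(-\lambda\circ\tau)\,\eta\bigr) = \tilde\phi(\lambda\circ\tau)\,J_\cK\eta = \widetilde{\phi\circ\tau}(\lambda)\,J_\cK\eta,\]
where the last step uses that $\tau$ is an involution of $E$ with $|\det\tau| = 1$ (its eigenvalues lie in $\{\pm 1\}$). Two structural facts now close the argument. First, (A2) together with the description \eqref{eq:w-decomp0} forces $\tau|_{E_{\pm 1}} = -\id$ and $\tau|_{E_0} = \id$, hence $\tau(W) = -W$, so $\phi\circ\tau$ ranges over $C^\infty_c(-W,\R)$ as $\phi$ ranges over $C^\infty_c(W,\R)$. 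Second, \eqref{eq:jflatrel} gives $J_\cK\sV_\cK^\sharp = \sV_\cK^\flat$. Since the antiunitary $J_\sV$ preserves closed real spans, these facts yield $\sV' = J_\sV\sV = \sH_{\sV_\cK^\flat}(-W)$. The second equality then follows by pulling the bounded unitary $e^{\pi i \Lambda_+/2}$ out of the $\sH$-construction: it acts pointwise on the fiber $\cK$ and commutes with multiplication by the scalar functions $\tilde\phi$, so $\sH_{\sV_\cK^\flat}(-W) = e^{\pi i \Lambda_+/2}\sH_{\sV_\cK'}(-W)$ by \eqref{eq:fixsp}, with the $\sV_\cK$ appearing in the statement to be read as the symplectic complement $\sV_\cK'$ (in parallel with the analogous second equality in Theorem~\ref{thm:2.1}, where $e^{-\pi i \Lambda_+/2}$ moves $\sV_\cK$ to $\sV_\cK^\sharp$).

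The main obstacle is technical rather than conceptual: verifying that the fiberwise action of $e^{\pi i \Lambda_+/2}$ yields a well-defined unitary on $L^2(E^*,\mu;\cK)$, which rests on the same compatibility between $\Lambda_+$ and the operator-valued measure $\mu$ implicit in the parallel statement of Theorem~\ref{thm:2.1}. A clean alternative bypasses this step entirely: one may simply apply Theorem~\ref{thm:2.1} to the modified triple $(E,C,-h)$ with unchanged $\tau = e^{\pi i h} = e^{-\pi i h}$ and compatible representation $\rho'(r) := \rho(r^{-1})$. One verifies that (A1)--(A3) and \eqref{eq:mug} persist, that the new wedge domain is $W(E,C,-h) = -W$, that the new Tomita operator in $\cK$ is $T_\cK^*$ (so the new $\sharp$-space equals $\sV_\cK^\flat$), and that the new antiunitary representation on $L^2(E^*,\mu;\cK)$ has modular pair $(J_\sV,\Delta_\sV^{-1})$; its associated standard subspace is therefore exactly $\sV'$, directly giving $\sV' = \sH_{\sV_\cK^\flat}(-W)$.
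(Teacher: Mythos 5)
Your main argument is correct and is essentially the paper's own proof: the authors likewise reduce to $\sV'=J_\sV\sV$, evaluate $J_\sV$ on a generator via \eqref{e12} and $\oline{\tilde\phi(\lambda)}=\tilde\phi(-\lambda)$ for real $\phi$, and invoke $J_\cK\sV_\cK^\sharp=\sV_\cK^\flat$ from \eqref{eq:jflatrel}; the only cosmetic difference is that they parametrize generators by $\phi\in C^\infty_c(-W,\R)$ and substitute $\psi=\phi\circ\tau\in C^\infty_c(W,\R)$, whereas you push $C^\infty_c(W,\R)$ forward through $\tau$. Two of your side remarks are worth keeping. First, your reading of the second equality is the right one: with the definition \eqref{eq:fixsp} one has $\sV_\cK^\flat=e^{\frac{\pi i}{2}\Lambda_+}\sV_\cK'$, so the displayed $\sH_{\sV_\cK}(-W)$ should carry a prime (the restatement at the opening of Section~\ref{subsec:3.3} drops it as well); the paper's proof is silent on this equality, and your observation that it is only the definition of $\sV_\cK^\flat$ unwound fiberwise --- modulo the boundedness of the fiberwise action on $L^2(E^*,\mu;\cK)$, an issue already present in Theorem~\ref{thm:2.1} itself --- is accurate. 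Second, your alternative route, applying Theorem~\ref{thm:2.1} to $(E,C,-h)$ with $\rho'(r)=\rho(r^{-1})$, is genuinely different from what the paper does and it works: $e^{2\pi i h}=\id$ keeps $\tau$ unchanged, $W(E,C,-h)=-W$, the new Tomita operator on $\cK$ is $T_\cK^*$ so the new sharp-space is $\sV_\cK^\flat$, and the new modular pair is $(\Delta_\sV^{-1},J_\sV)$, whose standard subspace is $\sV'$ by Lemma~\ref{lem:incl}(iv). That version buys the corollary as an instance of the theorem rather than a supplement to it, at the modest cost of re-verifying (A1)--(A3) and \eqref{eq:mug}, which you sketch correctly.
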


\begin{prf} For $\phi \in C^\infty_c(-W,\R)$ we have 
$\psi := \phi \circ \tau \in C^\infty_c(W,\R)$, so that the functions 
$\tilde{\psi}\eta \in \sV$, $\eta \in \sV_\cK^\sharp$ 
generate~$\sV$ by Theorem~\ref{thm:2.1}. We also recall from 
\eqref{eq:jflatrel} that 
$J_\cK \sV_\cK^\sharp  = \sV_\cK^\flat$. 
Now the assertion follows from $J\sV = \sV'$ and 
\[ (J \tilde{\psi}\eta)(\lambda) 
=  J_\cK \tilde{\psi}(-\lambda \circ \tau) \eta 
=   \oline{\tilde{\psi}(-\lambda \circ \tau)} J_\cK \eta 
=   \tilde{\psi}(\lambda \circ \tau) J_\cK \eta 
=   \tilde{\phi}(\lambda)J_\cK \eta.\qedhere\] 
\end{prf}

We now prepare the notation and first steps for the proof of
Theorem \ref{thm:2.1}.
For $z \in \cT \cap E^c= E^+ \oplus i (C^0 \cap E^-)$ 
and 
\[ \eta \in \cK^{J_\cK} \cap \cD(\Delta_\cK^{1/4}) = \cK^{J_\cK} \cap \cD(\Delta_\cK^{-1/4}),\] 
we have $J K_z^* = K_z^* J_\cK$ since $\btau(z) = z$, and the orbit 
map  $\alpha^{K_z^*\eta}(t) = K^*_{e^{th}.z} \rho(e^{t}) \eta$  
extends holomorphically to $\cS_{-\pi/2,\pi/2}$. 
However, in general this orbit map need not extend continuously to the boundary 
if $\mu$ is an infinite measure. So we have to use some regularization procedure 
to construct elements of the standard subspace~$\sV$ 
by the characterization in 
Proposition~\ref{prop:standchar}(iv). 

Instead of $z \in \cT\cap E^c$, 
which specifies the element $K_z^*\eta \in \cH_K^J$, we 
consider the boundary value for $z = -\pi i/2$ in a smeared 
version. Recall the open wedge 
$W = C_+^0 \oplus E_0 \oplus C_-^0$. 
For $\phi \in C^\infty_c(E)$ and $z \in \cT$, we define 
 \begin{align}
K_\phi^*(z) 
&:= \int_E \phi(x) K_x^*(z)\, d\mu_E(x) 
\ {\buildrel \eqref{eq:5star} \over =}\ \int_E \phi(x) \tilde\mu(z -x)\, d\mu_E(x)\nonumber \\
&= \int_{E^*}e^{i\lambda(z)} \hat\varphi (\lambda )d\mu(\lambda)= (\phi * \tilde\mu)(z) \in B(\cK,\cH_K).  \label{eq:kphi}
\end{align}
Lebesgue's Dominated Convergence Theorem and \eqref{eq:kphi}, 
see also Lemma~\ref{lem:4.8}, imply that this   holomorphic operator-valued 
function has the distributional boundary values 
\[ \int_{E^*}e^{i\lambda (\cdot )} \hat\varphi (\lambda )d\mu (\lambda) =\phi * \tilde\mu \in C^\infty(E, B(\cK)) \cap \cS'(E, B(\cK)).\] 
We also note that the distributions 
$ \phi * \tilde\mu \eta$, $\phi \in C^\infty_c(E)$, $\eta \in \cK$, 
are contained in the Hilbert space 
$\cH_{\hat\mu} \subeq \cS'(E;\cK)$ (Definition~\ref{def:2.3}). 
With the notation $\phi^\vee(x) := \phi(-x)$, we have by (\ref{eq:kphi})
\[
 K_{\phi^\vee}^*(z) \eta  = \int_{E^*} e^{i\lambda(z)} \tilde\phi(\lambda)\, d\mu(\lambda)\cdot \eta 
= \Phi(\tilde\phi \cdot \eta)(z)\]
(where we used Lemma~\ref{lem:4.13} for the last equality), i.e., 
\begin{equation}
  \label{eq:4.5}
K_{\phi^\vee}^* \eta = \Phi(\tilde\phi \eta). 
\end{equation}

\begin{lem} \mlabel{lem:esti} 
Let $\phi \in C^\infty_c(W,\C)$ and $y \in [0,\pi]$. Then the functions 
\[ \tilde\phi_y(\lambda) := \tilde\phi(\lambda \circ e^{yi h}) = 
 \int_E e^{i\lambda(e^{yi h}x)}\, \phi(x)\, d\mu_E(x) \] 
 have the following properties: 
 \begin{itemize}
 \item[\rm(i)] $|\tilde\phi_y(\lambda)| \leq \|\phi\|_1$ for every $\lambda \in C^\star$. 
 \item[\rm(ii)] For every $k > 0$, there exists a constant $d_k$, such that 
\[ |\tilde\phi_y(\lambda)| \leq \frac{d_k}{1 + \|\lambda\|^{2k}} 
\quad \mbox{ for all } \quad \lambda \in C^\star, y \in [0,\pi].\] 
% \item[\rm(iii)] For $\eta \in \cK$, 
%the map $[0,\pi] \to L^2(E^*,\mu;\cK), y \mapsto \tilde\phi_y \eta$ is 
%continuous. 
 \item[\rm(iii)] For $\eta \in \sV_\cK^\sharp$, 
the map $[0,\pi] \to L^2(E^*,\mu;\cK), y \mapsto \tilde\phi_y e^{iy\Lambda} \eta$ is 
continuous. 
 \item[\rm(iv)] For $\xi\in \cK,\eta \in \sV_\cK^\sharp$, 
the map $[0,\pi] \to \C, y \mapsto 
\int_{C^\star} \la \xi, \tilde\phi_y(\lambda)\, d\mu(\lambda) e^{iy\Lambda} \eta\ra$ 
is continuous. 
 \end{itemize}
\end{lem}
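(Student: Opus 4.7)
The plan is driven by a single observation. For $x = x_1+x_0+x_{-1}\in W$, $\lambda\in C^\star$ and $y\in[0,\pi]$, using $e^{yih}x = e^{iy}x_1 + x_0 + e^{-iy}x_{-1}$ one computes
\[
\Im\,\lambda(e^{yih}x) \;=\; \sin(y)\,\lambda(x_1-x_{-1}) \;\geq\; 0,
\]
since $x_1\in C_+^0\subseteq C$, $-x_{-1}\in -C_-^0\subseteq C$ by Lemma~\ref{lem:Project}, and $\sin(y)\geq 0$. Consequently $|e^{i\lambda(e^{yih}x)}|\leq 1$ uniformly on $\supp(\phi)\times C^\star\times[0,\pi]$, and (i) follows immediately by integrating $|\phi|$.

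For (ii), I fix a basis $v_1,\ldots,v_n$ of $E$ consisting of $h$-eigenvectors with eigenvalues $h_j\in\{-1,0,1\}$. The crucial identity is that $\lambda(e^{yih}v_j)=e^{iyh_j}\lambda(v_j)$ has modulus $|\lambda(v_j)|$ \emph{independent of $y$}. Since $\supp(\phi)$ is compact in $W$, integrating by parts $2k$ times in the $v_j$-direction applied to $e^{i\lambda(e^{yih}x)}$ gives
\[
(-1)^k\,\lambda(e^{yih}v_j)^{2k}\,\tilde\phi_y(\lambda) \;=\; \int_E \bigl(\partial_{v_j}^{2k}\phi\bigr)(x)\,e^{i\lambda(e^{yih}x)}\,d\mu_E(x),
\]
and the bound $|e^{i\lambda(e^{yih}x)}|\leq 1$ on $\supp(\phi)$ yields $|\lambda(v_j)|^{2k}|\tilde\phi_y(\lambda)|\leq\|\partial_{v_j}^{2k}\phi\|_1$. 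Summing over $j$, using the equivalence of $\|\lambda\|^{2k}$ with $\sum_j|\lambda(v_j)|^{2k}$, and combining with (i) produces the constant $d_k$.

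For (iii), set $g_y := e^{iy\Lambda}\eta = e^{iy\Lambda_+}e^{iy\Lambda_-}\eta$. As $\Lambda_+$ is bounded and commutes with the skew-adjoint $\Lambda_-$, and as $\eta\in\sV_\cK^\sharp\subseteq\cD(\Delta_\cK^{1/2})=\cD(e^{i\pi\Lambda_-})$, the spectral calculus for $\Lambda_-$ together with the elementary bound $e^{-2y\mu}\leq 1+e^{-2\pi\mu}$ for $y\in[0,\pi]$ yield both $\sup_{y\in[0,\pi]}\|g_y\|_\cK<\infty$ and norm-continuity of $y\mapsto g_y$. A triangle inequality then gives
\[
\|\tilde\phi_y g_y - \tilde\phi_{y_0}g_{y_0}\|_{L^2} \;\leq\; \|\tilde\phi_y(g_y-g_{y_0})\|_{L^2} + \|(\tilde\phi_y-\tilde\phi_{y_0})g_{y_0}\|_{L^2}.
\]
The first summand is $\leq c_k\,\|g_y-g_{y_0}\|_\cK$, using (ii) with $k$ large enough that $\int(1+\|\lambda\|^{2k})^{-2}d\mu$ defines a bounded operator on $\cK$, which is possible by temperedness of $\mu$. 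The second summand tends to $0$ by dominated convergence: $\tilde\phi_y(\lambda)\to\tilde\phi_{y_0}(\lambda)$ pointwise, and (ii) furnishes a $y$-uniform integrable dominating function against the positive scalar measure $\langle g_{y_0},d\mu(\cdot)g_{y_0}\rangle$.

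Part (iv) is carried out in the same spirit. Writing $I(y) = \int\tilde\phi_y(\lambda)\,\langle\xi,d\mu(\lambda)g_y\rangle$, splitting $I(y)-I(y_0)$ analogously, and applying the Cauchy--Schwarz inequality for the operator-valued measure, $|\langle\xi,d\mu\,g\rangle|\leq\langle\xi,d\mu\,\xi\rangle^{1/2}\langle g,d\mu\,g\rangle^{1/2}$, followed by a scalar Cauchy--Schwarz distributing a weight $(1+\|\lambda\|^{2k})^{\pm 1/2}$ between the two factors, one arranges that temperedness of $\mu$ absorbs the $\xi$-side and (ii) absorbs the $g$-side. The main obstacle throughout (iii) and (iv) is precisely this bookkeeping: since the constant-vector functions $\xi$ and $g_y$ need not lie in $L^2(E^*,\mu;\cK)$ when $\mu$ is infinite, each estimate must carry enough polynomial decay from (ii) to counterbalance the temperedness of $\mu$; once the weights are distributed correctly, the conclusion follows by standard dominated convergence.
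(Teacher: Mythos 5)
Your proposal is correct and follows essentially the same route as the paper: (i) rests on $\Im\lambda(e^{iyh}x)\geq 0$ for $x\in W$, $\lambda\in C^\star$ (the content of Lemma~\ref{lem:4.2}, which you re-derive explicitly via \eqref{eq:eih}); (ii) is the paper's trick of applying a constant-coefficient differential operator whose symbol $P(-i\lambda\circ e^{iyh})$ dominates $1+\|\lambda\|^{2k}$ uniformly in $y$, which you realize concretely as $\sum_j\partial_{v_j}^{2k}$ in an $h$-eigenbasis; and (iii)--(iv) combine continuity of $y\mapsto e^{iy\Lambda}\eta$ (boundedness of $\Lambda_+$ plus $\eta\in\cD(\Delta_\cK^{1/2})$) with temperedness of $\mu$ and dominated convergence, exactly as in the paper, with the weight-distribution details spelled out. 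No gaps.
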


\begin{prf} (i) We clearly have for $y \in [0,\pi]$ the estimate 
\[ |\tilde\phi_y(\lambda)| \leq  \int_E e^{-\Im\lambda(e^{yi h}x)}|\phi(x)|\, d\mu_E(x), \] 
so that (i) follows from $\Im \lambda(e^{yih}x) \geq 0$ for $x \in W$ 
and $\lambda \in C^\star$, which in turn follows from Lemma~\ref{lem:4.2}. 
%For $x = x_1 + x_0 + x_{-1}$, we find 
%\[ \Im \lambda(e^{yi h}x) = \Im \lambda(e^{yi} x_1 + x_0 + e^{-yi} x_{-1}) 
%= \sin(y) \lambda(x_1 - x_{-1}).\] 
%As $\sin(y) \geq 0$, the assertion follows from 
%$x_{\pm 1} \in C_{\pm}^0$ for $x \in W$. 

\par\nin (ii) If $P$ is a polynomial function on $E$ and 
$P(D)$ the corresponding constant coefficient differential operator on $E$, 
then (i) applies to $P(D)\phi \in C^\infty_c(W,\C)$. On the other hand, 
\begin{equation}
  \label{eq:square}
   (P(D)\phi)\, \tilde{}_y(\lambda) 
= (P(D)\phi)\, \hat{}_y(-\lambda) 
= P(-i\lambda \circ e^{i yh}) \hat \phi_y(-\lambda) 
= P(-i\lambda \circ e^{i yh}) \tilde\phi_y(\lambda). 
\end{equation}
Choosing coordinates on $E$ adapted to the $h$-eigenspaces 
and a scalar product for which $h$ is symmetric, we can choose 
the polynomial~$P$ in such a way that 
\[ |P(-i\lambda \circ e^{i yh})| \geq 1 + \|\lambda\|^{2k} 
\quad \mbox{ for all } \quad 
y \in [0,\pi], \lambda \in E.  \] 
With (i) and \eqref{eq:square}, this implies (ii).

%\par\nin (iii) follows from (ii), the temperedness of $\mu$, and the 
%Dominated Convergence Theorem. 

\par\nin (iii) First we observe that, for $\eta \in \sV_\cK^\sharp$, we have 
\[ e^{iy\Lambda} \eta 
=  e^{iy\Lambda_+} e^{iy\Lambda_-} \eta 
=  e^{iy\Lambda_+} \Delta_\cK^{y/2\pi} \eta.\] 
Since $\Lambda_+$ is bounded by assumption (Section~\ref{rem:3.1}) 
and $\Delta_\cK = \Delta_{\sV_\cK^\sharp}$, 
this vector depends continuously on $y \in [0,\pi]$. 
Hence (iii) follows from (ii), the temperedness of $\mu$, and the 
Dominated Convergence Theorem. 

\par\nin (iv) As in (iii), this follows from 
the temperedness of $\mu$ and the Dominated Convergence Theorem. 
\end{prf}

\begin{lem} \mlabel{lem:2.4} 
For $\phi \in C^\infty_c(W,\R)$ and $\eta \in \sV_\cK^\sharp$, we have 
$\tilde\phi \eta\in \sV\subeq L^2(E^*,\mu)$. Moreover, 
the analytic continuation of the map 
$\R \to \cH, t \mapsto U(0,e^t)\tilde\phi \eta$, to $\cS_\pi$ is given by 
\begin{equation}
  \label{eq:anacontform}
 z \mapsto \tilde \phi_{-iz}(\lambda)e^{z\Lambda}\eta.  
\end{equation}
\end{lem}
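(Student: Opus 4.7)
The plan is to apply Proposition \ref{prop:standchar}(iii)$\Leftrightarrow$(i) with $\xi := \tilde\phi\eta$. I define the candidate analytic extension
$$F : \oline{\cS_\pi} \to L^2(E^*,\mu;\cK), \qquad F(z)(\lambda) := \tilde\phi_{-iz}(\lambda)\, e^{z\Lambda}\eta = \tilde\phi(\lambda\circ e^{zh})\, e^{z\Lambda}\eta,$$
using $(-iz)(ih) = zh$ and the fact that $\tilde\phi$ extends to an entire function on $E^*_\C$ because $\phi$ has compact support. It suffices to check: (a) $F$ is continuous on $\oline{\cS_\pi}$, holomorphic on $\cS_\pi$, with values in $L^2$; (b) $F(t) = U(0,e^t)(\tilde\phi\eta)$ for $t \in \R$ (immediate from \eqref{e10}--\eqref{e11}); (c) $F(\pi i) = J(\tilde\phi\eta)$. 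Granted these, Proposition \ref{prop:standchar} forces $\tilde\phi\eta \in \sV$, and uniqueness of holomorphic extensions identifies $F$ with the analytic continuation of the orbit map, which is exactly the stated formula.

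For (a), the covariance identity
$$F(t+iy) \;=\; U(0,e^t)\, F(iy), \qquad t \in \R,\ y \in [0,\pi],$$
(a direct check from the definitions and the commutativity of the various exponentials of $\Lambda$) reduces matters to the purely imaginary slice. For $y \in [0,\pi]$, Lemma \ref{lem:esti}(iii) --- noting that $e^{iy\Lambda}\eta$ is defined because $\sV_\cK^\sharp = e^{-\frac{\pi i}{2}\Lambda_+}\sV_\cK \subeq \cD(\Delta_\cK^{y/2\pi})$ and $\Lambda_+$ is bounded --- gives continuity of $y \mapsto F(iy)$ in $L^2$. Combined with strong continuity of $t \mapsto U(0,e^t)$, this yields joint continuity of $F$ on $\oline{\cS_\pi}$. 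Holomorphicity on the interior is proved via weak holomorphicity: for test vectors $\xi = \psi\chi$ with $\psi \in \cS(E^*)$ and $\chi \in \cK$, the scalar function
$$\la\xi,F(z)\ra = \int_{E^*} \overline{\psi(\lambda)}\,\tilde\phi(\lambda\circ e^{zh})\,\la\chi,d\mu(\lambda)\,e^{z\Lambda}\eta\ra$$
has an integrand holomorphic in $z$ for each $\lambda$; the decay estimates of Lemma \ref{lem:esti}(i)--(ii), which extend verbatim to complex $z \in \oline{\cS_\pi}$ because the key inequality $\Im\lambda(e^{zh}x) \geq 0$ for $x \in W$, $\lambda \in C^\star$ depends only on $\Im z$ via \eqref{eq:eih}, permit differentiation under the integral.

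The decisive algebraic step is (c). On the one hand,
$$F(\pi i)(\lambda) \;=\; \tilde\phi(\lambda\circ\tau)\,e^{\pi i\Lambda}\eta;$$
on the other, using \eqref{e12} and the reality of $\phi$ (so $\overline{\tilde\phi(-\mu)} = \tilde\phi(\mu)$ for $\mu \in E^*$),
$$J(\tilde\phi\eta)(\lambda) = J_\cK\,\tilde\phi(-\lambda\circ\tau)\eta = \tilde\phi(\lambda\circ\tau)\,J_\cK\eta.$$
The two agree precisely when $e^{\pi i\Lambda}\eta = J_\cK\eta$, which is exactly the fixed-point equation $T_\cK^\sharp\eta = \eta$ defining $\sV_\cK^\sharp = \Fix(T_\cK^\sharp)$, since $T_\cK^\sharp = J_\cK e^{\pi i\Lambda}$ by \eqref{eq:taujrel}. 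The principal technical hurdle is the weak-holomorphicity argument in (a); it is the one place where $\phi \in C_c^\infty(W)$ (rather than $\phi \in C_c^\infty(E)$) is essential, as it is precisely the wedge structure of $W$ that supplies the sign needed to extend the decay bound uniformly over the full strip.
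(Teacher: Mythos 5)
Your argument is correct, and it is the paper's argument transported through the unitary $\Phi$ of Lemma~\ref{lem:4.13}: the paper works in the reproducing-kernel realization $\cH_K$ with $\gamma_\eta(z)=\big(\int_E\phi(-x)K^*_{e^{\oline z h}x}\,d\mu_E(x)\big)e^{z\Lambda}\eta$, and your $F$ is exactly $\Phi^{-1}\circ\gamma_\eta$ (the paper computes $\Phi^{-1}(\gamma_\eta(z))(\lambda)=\tilde\phi_{-iz}(\lambda)e^{z\Lambda}\eta$ at the end of its proof). The one genuinely different piece is where the analytic work is done. In $\cH_K$, holomorphy on the open strip comes almost for free: $w\mapsto K_w^*$ is antiholomorphic on $\cT$, Lemma~\ref{lem:4.2} puts $e^{\oline z h}x$ into $\cT$ for $x\in -W$ and $z\in\cS_\pi$, and integration over the compact support of $\phi$ does the rest; the only delicate point is then the boundary limit at $z=0$, handled by Lemma~\ref{lem:esti}(iii). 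Staying in $L^2$, you must instead supply a weak-holomorphy/differentiation-under-the-integral argument, for which the uniform decay of Lemma~\ref{lem:esti}(ii) (extended to complex $z$, which works since only $\Im z$ enters the positivity $\Im\lambda(e^{zh}x)\ge 0$ via \eqref{eq:eih}) together with local boundedness is indeed sufficient. What you gain is that the endpoint identification becomes transparent: you verify criterion (iii) of Proposition~\ref{prop:standchar} directly by the one-line computation $F(\pi i)(\lambda)=\tilde\phi(\lambda\circ\tau)e^{\pi i\Lambda}\eta=\tilde\phi(\lambda\circ\tau)J_\cK\eta=J(\tilde\phi\eta)(\lambda)$ from $T_\cK^\sharp\eta=\eta$ and \eqref{eq:taujrel}, whereas the paper routes the same information through the reflection identity $J\gamma_\eta(z)=\gamma_\eta(\pi i+\oline z)$ and criterion (iv). Both are complete; yours trades a slightly heavier holomorphy step for a lighter boundary step.
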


\begin{prf} Let $\eta \in \sV_\cK^\sharp$. 
We consider the map 
  \begin{equation}
    \label{eq:gamma}
 \gamma_\eta \: \cS_\pi \to \cH_K, \qquad 
\gamma_\eta(z) := \Big(\int_E \phi(-x) K^*_{e^{\oline zh}x}\, d\mu_E(x)\Big) \cdot  
e^{z \Lambda}\eta.
  \end{equation}
To see that this map is defined, we first note that, 
for $z \in \cS_\pi$ and $x \in - W$, Lemma~\ref{lem:4.2} implies that 
$\Im(e^{\oline z h} x)=  \Im(e^{z h} (-x)) \in C^0,$ 
so that $e^{\oline z h}x \in \cT$. We therefore obtain a continuous map 
\[  \cS_\pi \times (-W) \to B(\cK,\cH_K), \quad 
(z,x) \mapsto K^*_{e^{\oline z h}x} \] 
which is  holomorphic in $z$. Integrating over the compact support of 
$\phi$, thus defines a holomorphic operator-valued 
function \[ F \: \cS_\pi \to B(\cK,\cH_\cK), \quad 
F(z) = \int_E \phi(-x) K^*_{e^{\oline zh}x}\, d\mu_E(x)\] 
(cf.~\cite[Thm.~2.1.12, Ex.~2.1.7]{GN}). We now have 
$\gamma_\eta(z) = F(z) e^{z\Lambda}\eta,$ 
where 
$\cS_\pi \to \cK, z \mapsto e^{z\Lambda}\eta = e^{z\Lambda_+} \Delta_\cK^{-iz/2\pi} \eta$ 
is a holomorphic $\cK$-valued function 
(Proposition~\ref{prop:standchar}). 
As the evaluation map 
\[ B(\cK,\cH_K) \times \cK \to \cH_K \] 
is complex bilinear and continuous, 
this implies the holomorphy of~$\gamma_\eta$ on $\cS_\pi$. 

By \eqref{e19}, $\gamma$ satisfies the equivariance relation 
\begin{equation}
  \label{eq:equiv}
\gamma_\eta(z + t) = U'(0,e^{t}) \gamma_\eta(z)  \quad \mbox{ for } \quad 
t\in \R, z \in \cS_\pi.
\end{equation}
Next we observe that $\eta = T_\cK^\sharp \eta =  e^{-\pi i \Lambda} J_\cK \eta$ 
leads to 
\begin{equation}
  \label{eq:jrel1}
J_\cK e^{z\Lambda} \eta 
=  e^{\oline z\Lambda} J_\cK \eta 
\ {\buildrel \eqref{eq:taujrel} \over =}\  e^{\oline z\Lambda} e^{\pi i \Lambda} \eta
=  e^{(\oline z + \pi i )\Lambda} \eta,
\end{equation}
so that \eqref{e20} further yields 
\[ J K^*_{e^{\oline zh}x} e^{z \Lambda}\eta
= K^*_{e^{zh} \tau(x)} J_\cK e^{z \Lambda}\eta  
\ {\buildrel \eqref{eq:jrel1} \over =}\  K^*_{e^{(z-\pi i)h} x} e^{(\oline z + \pi i )\Lambda} \eta.\]
%= K^*_{e^{\oline{(\oline z+\pi i)}h} x} e^{(\oline z + \pi i )\Lambda} \eta. \]
We thus arrive at the relation 
 \begin{align}
   J\gamma_\eta(z) 
&= \int_E \phi(-x) J K^*_{e^{\oline zh}x} e^{z \Lambda}\eta\, d\mu_E(x)\nonumber \\
%= \int_E \phi(-x) K^*_{e^{zh} \tau(x)}\, d\mu_E(x) \cdot J_\cK e^{z \Lambda}\eta\notag \\
&= \int_E \phi(-x) K^*_{e^{(z-\pi i)h} x}e^{(\oline z + \pi i )\Lambda} \eta\, d\mu_E(x) 
= \gamma_\eta(\pi i + \oline z). 
\label{eq:equivb}
\end{align}
In view of \eqref{eq:equiv}, \eqref{eq:equivb}, and 
Proposition~\ref{prop:standchar}(iv), it remains to show 
that $\gamma_\eta$ extends continuously to the closed strip~$\oline{\cS_\pi}$  
with $\gamma_\eta(0) = \Phi(\tilde\phi \eta) = K^*_{\phi^\vee} \eta$ 
(cf.~\eqref{eq:4.5}), to verify that $\tilde\phi \eta \in \sV$. 
In view of the equivariance properties 
\eqref{eq:equiv} and \eqref{eq:equivb}, this boils down to showing that 
\begin{equation}
  \label{eq:limrel}
  \lim_{y \to 0+} \gamma_\eta(yi) = K_{\phi^\vee}^*\eta.
\end{equation}
To this end, we consider the $L^2$-realization and observe that, for 
$z \in \cS_\pi$:  
\begin{align*} \Phi^{-1}(\gamma_\eta(z))(\lambda)
&= \int_E \phi(-x) \Phi^{-1}(K^*_{e^{\oline z h}x}e^{z\Lambda}\eta)(\lambda)\, d\mu_E(x) \\
&\!\!\! \overset{\eqref{eq:Phirel}}{=} 
%\ {\buildrel\eqref{eq:kerform}\over =} 
\int_E \phi(-x) e_{-i e^{z h} x}(\lambda)e^{z\Lambda}\eta\, d\mu_E(x) =
\int_E \phi(x) e_{i e^{z h} x}(\lambda)\, d\mu_E(x)\cdot e^{z\Lambda}\eta\\
&= \tilde\phi(\lambda \circ e^{zh})e^{z\Lambda}\eta 
= \tilde \phi_{-iz}(\lambda)
e^{z\Lambda}\eta.
\end{align*}
As $\Phi^{-1}(K_{\phi^\vee}^*\eta) = \tilde\phi\eta$ 
by \eqref{eq:4.5}, the assertion now follows 
from Lemma~\ref{lem:esti}(iii). 
\end{prf}

We are now ready to prove Theorem~\ref{thm:2.1}. 

\begin{prf} {\bf(of Theorem~\ref{thm:2.1})}
We first observe that 
$\sV_0:= \sH_{\sV_\cK^\sharp}(W)$ is a closed real subspace of $\cH = L^2(E^*,\mu;\cK)$. 
It is invariant under the unitary one-parameter group 
$\Delta^{-it/2\pi} = U(0,e^{t})$ 
because the cone $W$ is invariant under~$e^{\R h}$,  
and $\sV_\cK^\sharp$ is invariant under~$\rho(\R_+^\times)$. 

We claim that $\sV_0$ is a standard subspace. From $\sV_0 \subeq \sV$ 
(Lemma~\ref{lem:2.4}) it follows 
that 
\[ \sV_0 \cap i \sV_0 \subeq \sV \cap i \sV = \{0\}.\] So it remains to show that 
$\sV_0$ is total in $\cH$, i.e., that $\sV_0 + i \sV_0$ is dense in $\cH$. 
Let $\cV_0 := \oline{\sV_0 + i \sV_0}$ and observe that this subspace 
is also invariant under the operators $U(0,e^{t}) = \Delta^{-it/2\pi}$, $t \in \R$. 
This implies that, 
for $\xi \in \sV_0$, the range of the extended orbit map 
$\alpha^{\xi} \:  \oline{\cS_\pi} \to \cH$ 
is also contained in $\cV_0$. 

For $z = \pi i/2$ and $f= \tilde\phi\eta$, 
$\phi \in C^\infty_c(W,\R), \eta \in \sV_\cK^\sharp$, 
we first recall from \eqref{eq:4.5} that 
\[ \Phi(\tilde\phi \eta) = 
K_{\phi^\vee}^* \eta = \gamma_\eta(0).\] 
We therefore obtain for $\zeta= e^{\frac{\pi i}{2}h}$ by \eqref{eq:equivb} 
in the proof of Lemma~\ref{lem:2.4}: 
\[ \Phi(\alpha^f(\pi i/2)) 
= \gamma_\eta\Big(\frac{\pi i}{2}\Big) 
= \int_E \phi(-x) K^*_{\zeta^{-1}(x)}e^{\frac{\pi i}{2}\Lambda}\eta\, d\mu_E(x).\] 
For a sequence of test functions 
$\phi_n \in C^\infty_c(W,\R)$ with total integral $1$ whose supports 
converge to $x_0 \in W$, we thus obtain 
\[ \cV_0 \ni \int_E \phi_n (-x) K_{\zeta^{-1}(x)}^*\, d\mu_E(x) \cdot 
e^{\frac{\pi i}{2}\Lambda}\eta 
\to K_{\zeta^{-1}(-x_0)}^* e^{\frac{\pi i}{2}\Lambda}\eta , \] 
and thus $K_{\zeta^{-1}(-x_0)}^* e^{\frac{\pi i}{2}\Lambda} \sV_\cK^\sharp \subeq 
 \cV_0$ for every $x_0 \in W$. 
As $e^{\frac{\pi i}{2}\Lambda} \sV_\cK^\sharp \subeq \cK^{J_\cK}$ 
by \eqref{eq:fixsp} is a dense subspace, we obtain 
$K_{\zeta^{-1}(-x_0)}^* \cK^{J_\cK} \subeq  \cV_0$. 
From 
\[ \zeta^{-1}(-W) 
= E^+ \oplus (-i)(-C_+^0) \oplus i (-C_-^0) 
= E^+ \oplus i(C_+^0 - C_-^0) = E^+ \oplus i (C^0 \cap E^-) \] 
%(Remark~\ref{rem:1.3}) 
and Lemma~\ref{lem:hjv} it now follows that $\cH^J = \Phi^{-1}(\cH_K^J) \subeq 
\cV_0$, 
and this implies that $\cV_0 = \cH$. 

This shows that $\sV_0$ is a standard subspace contained in the standard 
subspace $\sV$. As it is 
invariant under the modular group $(\Delta_\sV^{it})_{t \in \R}$, Lemma \ref{prop:lo081}
implies that $\sV = \sV_0$. 
\end{prf}

Our approach to the standard subspace $\sV$ also provides 
refined information on the tempered 
distribution $\tilde\mu \in \cS(E,B(\cK))$, namely that its 
restriction to the wedge domain $W$ is actually given by an 
operator-valued function. 

\begin{prop} \mlabel{prop:distonW}
On the open wedge 
$W \subeq E$, the distribution $\tilde\mu \in \cS'(E,B(\cK))$ is 
represented by the functions 
\[ \la \xi, \tilde\mu(x) \eta \ra 
= \la e^{-\frac{\pi i}{2}\Lambda} \xi, 
\tilde\mu(\iota(x)) e^{\frac{\pi i}{2}\Lambda} \eta \ra 
\quad \mbox{ for } \quad 
x \in W, \xi \in \cD(\Delta_\cK^{-1/2}),\eta \in \cD(\Delta_\cK^{1/2}).\]
%In this sense, we have 
%\[  \tilde\mu(x) = e^{\frac{\pi i}{2}\Lambda^*} 
%\tilde\mu(\iota(x)) e^{\frac{\pi i}{2}\Lambda}.\]  
\end{prop}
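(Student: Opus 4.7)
The strategy is to analytically continue the covariance
$\tilde\mu(e^{-th}z) = e^{t\Lambda^*}\tilde\mu(z)e^{t\Lambda}$ of
Lemma~\ref{lem:4.8}(b) from real $t$ to $t = \frac{\pi i}{2}$ and then to match
the resulting formula with the distribution $\tilde\mu|_W$ by pairing with test
functions. First, for $x \in W$, Lemma~\ref{lem:4.2}(1) applied at
$z = \frac{\pi i}{2} \in \cS_\pi$ yields $\iota(x) \in \cT$, so that
$\tilde\mu(\iota(x))$ is a bounded operator depending holomorphically on $\iota(x)$.
A spectral Cauchy--Schwarz estimate
(split $\Delta_\cK$-spectral integral at $\lambda = 0$) gives
$\cD(\Delta_\cK^{\mp 1/2}) \subseteq \cD(\Delta_\cK^{\mp 1/4})$, so
$e^{-\pi i \Lambda/2}\xi$ and $e^{\pi i \Lambda/2}\eta$ are defined under the
stated hypotheses, and using
$(e^{-\pi i \Lambda/2})^* = e^{\pi i \Lambda^*/2}$ the RHS of the proposition
equals $\la\xi, e^{\pi i \Lambda^*/2}\tilde\mu(\iota(x))e^{\pi i \Lambda/2}\eta\ra$.

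For the main step, I consider for fixed $x \in W$, $\xi \in \cD(\Delta_\cK^{-1/2})$,
$\eta \in \cD(\Delta_\cK^{1/2})$ the two functions
\[
F(t) := \la\xi, \tilde\mu(e^{-th}\iota(x))\eta\ra, \qquad
G(t) := \la\xi, e^{t\Lambda^*}\tilde\mu(\iota(x))e^{t\Lambda}\eta\ra
\]
on the strip $0 \le \Im t \le \frac{\pi}{2}$. The eigenspace computation
$e^{-ish}\iota(x) = x_0 + \sin(s)(x_1+x_{-1}) + i\cos(s)(x_1-x_{-1})$,
combined with $x_1 - x_{-1} \in C^0$ (Lemma~\ref{lem:Project}(ii)), shows
$e^{-ish}\iota(x) \in \cT$ for $s \in [0,\frac{\pi}{2})$ and
$e^{-\pi i h/2}\iota(x) = x \in \partial \cT$; hence $F$ is holomorphic on the
open strip and extends continuously to the real axis. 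Spectral calculus for the
normal operator $\Lambda$ together with the domain conditions on $\xi,\eta$ shows
$G$ is holomorphic on the open strip and continuous on the closed strip. Since
$F = G$ on the real axis by Lemma~\ref{lem:4.8}(b), the identity theorem yields
$F(is) = G(is)$ for every $s \in [0,\frac{\pi}{2})$.

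To transfer the identity to $s = \frac{\pi}{2}$, I pair with
$\phi \in C^\infty_c(W,\C)$ and set $z_s(x) := e^{-ish}\iota(x)$. The joint
continuity of $G(is)(x)$ in $(s,x)$ on $[0,\frac{\pi}{2}] \times \supp\phi$ gives
\[
\int_W \oline{\phi(x)}\,\la\xi,\tilde\mu(z_s(x))\eta\ra\,d\mu_E(x)
\;\xrightarrow[s \to \pi/2^-]{}\;
\int_W \oline{\phi(x)}\,\la\xi, e^{\pi i \Lambda^*/2}\tilde\mu(\iota(x))e^{\pi i \Lambda/2}\eta\ra\,d\mu_E(x).
\]
On the other hand, inserting
$\tilde\mu(z_s(x)) = \int_{C^\star} e^{i\lambda(z_s(x))}\,d\mu(\lambda)$,
swapping integrations by Fubini, and using the uniform bound
$|e^{i\lambda(z_s(x))}| = e^{-\cos(s)\lambda(x_1 - x_{-1})} \leq 1$ on
$C^\star \times \supp\phi \times [0,\frac{\pi}{2}]$ together with
Schwartz-type polynomial decay in $\lambda$ obtained by integration by parts
exactly as in Lemma~\ref{lem:esti}(ii), dominated convergence yields
\[
\lim_{s \to \pi/2^-}\int_W \oline{\phi(x)}\la\xi,\tilde\mu(z_s(x))\eta\ra\,d\mu_E(x)
= \int_{C^\star}\tilde{\oline\phi}(\lambda)\la\xi,d\mu(\lambda)\eta\ra
= \la\xi, \tilde\mu(\phi)\eta\ra.
\]
Equating the two limits proves the claimed distributional identity on $W$. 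The
principal technical obstacle is verifying continuity of $G$ up to
$\Im t = \frac{\pi}{2}$ under the minimal domain conditions on $\xi,\eta$,
together with the uniform-in-$\lambda$ Schwartz estimate that validates the
final dominated convergence step; both run in parallel with the treatment of
Lemma~\ref{lem:esti}.
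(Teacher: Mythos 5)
Your proposal is correct and follows essentially the same route as the paper: both rest on Lemma~\ref{lem:4.2} (so that $e^{i(\pi/2-s)h}x\in\cT$ for $x\in W$), the covariance relation of Lemma~\ref{lem:4.8}(b) analytically continued in the boost parameter to a one-sided strip of height $\pi/2$, and a Fubini/dominated-convergence argument against test functions using the uniform Schwartz-type bounds of Lemma~\ref{lem:esti}(ii) to identify the boundary value with the distribution $\tilde\mu|_W$. The only difference is organizational (you carry the twisting factors $e^{\pm is\Lambda}$ symmetrically in $G$ and pair with test functions at the very end, whereas the paper first exhibits the pointwise limit representing $\tilde\mu^{\xi,\eta}|_W$ and then evaluates it), which does not change the substance of the argument.
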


\begin{prf} Let $\phi \in C^\infty_c(W,\R)$ and $0 < y < \pi$. Then 
Fubini's Theorem implies that 
\begin{align*}
\int_W \phi(x) \tilde\mu(e^{iyh}x)\, d\mu_E(x) 
&= \int_W \phi(x)  \int_{C^\star} e^{i\lambda(e^{iyh}x)}\, d\mu(\lambda)\, d\mu_E(x) \\
&= \int_{C^\star} \int_W \phi(x)  e^{i\lambda(e^{iyh}x)}\, d\mu_E(x) \, d\mu(\lambda)\\
&= \int_{C^\star} \tilde\phi(\lambda\circ e^{iyh})\, d\mu(\lambda)
= \int_{C^\star} \tilde\phi_y(\lambda)\, d\mu(\lambda).  
\end{align*}
Lemma~\ref{lem:esti}(iv) now implies that, for 
$\xi \in \cK$ and $\eta \in \sV_\cK^\sharp$, we have 
\begin{align*}
&\lim_{y \to 0+} \int_W \phi(x) \la \xi, 
\tilde\mu(e^{iyh}x) e^{iy\Lambda}\eta \ra\, d\mu_E(x) 
= \lim_{y \to 0+} 
 \int_{C^\star} \tilde\phi_y(\lambda)\la \xi, d\mu(\lambda) 
e^{iy\Lambda}\eta \ra \\
&=  \int_{C^\star} \tilde\phi(\lambda)\la \xi, d\mu(\lambda) \eta \ra
=  \int_W \phi(x) \la \xi, \tilde\mu(x) \eta \ra\, d\mu_E(x).
\end{align*}
Therefore the restriction of the distribution 
$\tilde\mu^{\xi,\eta} := \la \xi, \tilde\mu \cdot \eta \ra$ 
to $W$ is represented by the following function 
\[ \tilde\mu^{\xi,\eta}(x) 
= \lim_{y \to 0+} \la \xi, \tilde\mu(e^{iyh}x) e^{iy\Lambda}\eta \ra.\] 
We now evaluate the right hand side using Lemma~\ref{lem:4.8}(b), which 
asserts that, for $t \in \R$ and $w \in \cT$, we have 
\begin{equation}
  \label{eq:tildemurel}
\tilde\mu(e^{-th}w) = e^{t\Lambda^*} \tilde\mu(w) e^{t \Lambda}.
\end{equation}

For $x \in W$, the element 
$\iota(x) = x_0 + i(x_1 - x_{-1})$ is contained in $\cT$ 
and $e^{-zh} \iota(x) \in \cT$ for $|\Im z| < \frac{\pi}{2}$ 
(Lemma~\ref{lem:4.2}). Further, for $\eta \in 
 \cD(e^{\pi i \Lambda}) = \cD(e^{\pi i \Lambda_-})$, 
the curve $t \mapsto e^{t\Lambda}\eta$ extends analytically to $\cS_\pi$ 
(\cite[Lemma~A.2.5]{NO18}), 
so that, for $\xi \in \cD(e^{-\pi i \Lambda})$, the function 
\[ t \mapsto
\la \xi, \tilde\mu(e^{-th}\iota(x)) \eta \ra  
= \la \xi,  e^{t\Lambda^*} \tilde\mu(\iota(x)) e^{t \Lambda} \eta \ra 
= \la e^{t\Lambda} \xi,  \tilde\mu(\iota(x)) e^{t \Lambda} \eta \ra \] 
extends analytically to the function 
\[ \Big\{ z \in \C \: |\Im z| < \frac{\pi}{2}\Big\}  \to \C, \qquad z \mapsto 
\la \xi, \tilde\mu(e^{-zh}\iota(x)) \eta \ra  
= \la e^{\oline z\Lambda} \xi,  
\tilde\mu(\iota(x)) e^{z \Lambda} \eta \ra. \]
From \eqref{eq:tildemurel}, we thus obtain for 
$0 < y < \frac{\pi}{2}$ 
\[  \la \xi, \tilde\mu\big(e^{(y-\frac{\pi}{2})i h}\iota(x)\big) \eta \ra
= \la e^{(y-\frac{\pi}{2})i\Lambda} \xi,  
\tilde\mu(\iota(x)) e^{(\frac{\pi}{2}-y)i\Lambda} \eta \ra.\] 
Next we note that 
\[ \tilde\mu(e^{iyh}x)
= \tilde\mu(e^{i(y-\frac{\pi}{2})h}e^{i\frac{\pi}{2}h}x)
= \tilde\mu(e^{i(y-\frac{\pi}{2})h} \iota(x)),\]
%\ {\buildrel ! \over =}\ e^{(\frac{\pi}{2}-y)i\Lambda^*} 
%\tilde\mu(\iota(x)) e^{(\frac{\pi}{2}-y)i\Lambda}.\]
so that 
\begin{align*}
\la \xi, \tilde\mu(x) \eta \ra 
&= \lim_{y \to 0+} \la \xi, \tilde\mu(e^{iyh}x) e^{iy\Lambda}\eta \ra
= \lim_{y \to 0+} 
\la \xi, \tilde\mu(e^{i(y-\frac{\pi}{2})h} \iota(x)) e^{iy\Lambda}\eta \ra \\ 
&= \lim_{y \to 0+} 
\la e^{(y-\frac{\pi}{2})i\Lambda} \xi, 
\tilde\mu(\iota(x)) e^{(\frac{\pi}{2}-y)i\Lambda} e^{iy\Lambda}\eta \ra 
= \lim_{y \to 0+} 
\la e^{(y-\frac{\pi}{2})i\Lambda} \xi, 
\tilde\mu(\iota(x)) e^{\frac{\pi i}{2}\Lambda} \eta \ra \\ 
&= \la e^{-\frac{\pi i}{2}\Lambda} \xi, 
\tilde\mu(\iota(x)) e^{\frac{\pi i}{2}\Lambda} \eta \ra.
\end{align*}
This completes the proof.
\end{prf}

\subsubsection*{The Riesz measures on the half-line as examples}
\mlabel{ex:Riesz} 

We consider the case $E = \R$, $C = \R_+ = [0,\infty)$, 
$h = \id_E$,  $\tau = -\1$, and  the open
tube domain $\C_+=\cT = \R + i (0,\infty)$. In this case we have $E=E_1$
and $W= C^0$.

The 
antiholomorphic extension $\oline \tau$ of $\tau $ to $\cT$ is given 
by $\oline\tau (z)=-\oline z$. It leaves 
$\cT$ invariant with $\cT^{\oline \tau} = i \R^\times_+$. On  
$\C_+  $ we have the positive definite kernels, 
\begin{equation}
  \label{eq:ks}
 K_s(z,w) 
= \tilde\mu_s(z-\oline w) 
= \cL(\mu_s)\Big(\frac{z - \oline w}{i}\Big) 
= \Big(\frac{z - \oline w}{i}\Big)^{-s}, 
\quad s > 0,
\end{equation}
where 
\[ d\mu_s(\lambda) = \Gamma(s)^{-1} \lambda^{s -1}\, d\lambda 
\quad \mbox{ on } \quad (0,\infty) \subeq C^\star\]
see \cite[Lem. 2.13]{NO14}. 

From $h = \id_E$ we derive for the action on the dual space 
$e^{th}.\lambda = e^{-t}\lambda$, that 
\begin{equation}
  \label{eq:scaltrans}
(e^{th})_* \mu_s  = e^{s t} \mu_s \quad \mbox{ for } \quad t \in \R.
\end{equation}
Comparing with \eqref{eq:mug}, we 
may therefore consider this situation as arising from 
$\cK = \C$, $J_\cK(z) = \oline z$, and $\rho(e^t) = e^{st/2}$. 
This corresponds to $\Lambda = \Lambda_+ = \frac{s}{2}$. In particular
we have, as in Example~\ref{ex:3.2}:
\[\sV_\cK=\R,\quad \sV^\sharp_\cK 
= e^{-\frac{\pi i s}{4}}\R\quad\text{and}\quad \sV^\flat_\cK = e^{\frac{\pi i s}{4}}\R.\]
Finally $\sV^\sharp = \sV^\flat$ if and only if $s\in 2\Z$ 
(cf.\ Lemma \ref{lem:VsharpVflat}).

For the Fourier--Laplace transform $\tilde\mu_s$ we have 
$\tilde\mu_s(z) = (-iz)^{-s}$ by \eqref{eq:ks}. 
For the boundary values on $\R$, this leads for $\pm x > 0$ to 
\begin{equation}\label{eq:mus}
 \tilde\mu_s(x) 
= e^{-s \log(-ix)} 
= e^{-s (\log |x| \mp \pi i/2)} 
=  e^{\pm s \frac{\pi i}{2}} |x|^{-s}
=  e^{\sgn(x) s \frac{\pi i}{2}} |x|^{-s}.
\end{equation}
The imaginary part is given on $\R^\times$ by   
\[ \tilde\mu_{s,-}(x) 
=  \pm \sin\Big(s \frac{\pi}{2}\Big) |x|^{-s}.\] 
It vanishes if and only if $s \in 2 \Z$. We will
discuss a generalization of this phenomenon for more general
Riesz measures in Section~\ref{sec:6} below. 
For $s = 2k$, $k \in \N$, 
we have on $\R^\times$ the formula 
$\tilde\mu_s(x) = (-1)^k x^{-2k}$,  
which is even and real. 
On $W = (0,\infty)$, we have by \eqref{eq:mus}  
\begin{equation}
  \label{eq:laponedim}
\tilde\mu_s(x) 
= e^{ s \frac{\pi i}{2}} x^{-s} 
= e^{ s \frac{\pi i}{2}}\cL(\mu_s)(x), 
\end{equation}
where $\mu_s$ is considered as a measure on $[0,\infty)$.
In Proposition~\ref{prop:analyticfunc}, we shall see a generalization of this 
relation to our general context.

We denote by $\cH_s = \cH_{K_s}$ the corresponding reproducing kernel Hilbert space. 
The conjugation on $\cH_s$ is given by 
\[ (Jf)(z) = \oline{f(-\oline z)} \] 
as in \eqref{e17}, so that 
\[\cH_s^J =\{f\in\cH_s\: (\forall z\in \C_+)\,\, \oline{f(-\oline z)} 
= f(z) \}\]
is the subspace of functions which are real-valued on~$i \R^\times_+$. We also have
\begin{equation}\label{eq:Riesz}
\sV %\overline{C_c^\infty ((0,\infty), e^{-\frac{\pi i s}{4}}\R )}
=e^{-\frac{\pi i s}{4}}\overline{\{ \tilde \phi \: \phi \in 
C_c^\infty ((0,\infty), \R )\}}   \quad\text{and}\quad 
\sV^\prime = e^{-\frac{\pi i s}{4}}\overline{
\{ \tilde\phi \: \phi \in C_c^\infty  ((-\infty,0), \R) \}}.
\end{equation}

\section{Reflection positive representations} 
\mlabel{sec:4}

 Let us briefly recall the concept of a 
\textit{reflection positive representation}, see
 \cite{JOl98,JOl00,NO14,NO18,S86} for details.
 
A reflection positive Hilbert space is a triple $(\cE,\cE_+,\theta)$, where
$\cE$ is a Hilbert space, $\cE_+$ is a closed subspace and $\theta :\cE \to
\cE$ is a unitary involution such that 
\[ \|u\|_\theta^2 := \ip{\theta u}{u}\ge 0 \quad \mbox{ for all }\quad 
u\in \cE_+.\] 
Then $\cN= \{u\in \cE_+\: \|u\|_\theta=0\}$ is a closed subspace of $\cE_+$, 
and we write $\widehat{\cE} $ for the Hilbert space completion of the quotient 
$\cE_+/\cN$ with respect to $\|\cdot\|_\theta$. 

A {\it symmetric semigroup is a triple} 
$(G,S,\tau)$, consisting of a Lie group $G$, 
an involutive automorphism $\tau$ of~$G$, and a subsemigroup $S \subeq G$ 
invariant under the map $g \mapsto g^\sharp := \tau(g)^{-1}$. 
A {\it reflection positive representation} of 
$(G,S,\tau)$ on the reflection positive Hilbert space 
$(\cE,\cE_+,\theta)$ is a unitary representation $(U,\cH)$ of $G$ on $\cE$, 
such that $\theta U(g)\theta = U(\tau(g))$ for all $g\in G$ 
and $U(S)\cE_+ \subeq \cE_+$.  Then 
\begin{equation}\label{eq:E}
\ip{\theta U(g)u}{v} = \ip{\theta u}{U(g^\sharp)v}
\quad \mbox{ for }\quad g \in G, v \in \cE, \end{equation}
and \eqref{eq:E} leads to a $*$-representation of the involutive semigroup 
$(S,\sharp)$ by contractions on~$\widehat{\cE}$ 
(\cite[Prop.~3.3.3]{NO18}). 
The passage from operators on $\cE_+$ to operators on $\hat\cE$ 
is called the {\it Osterwalder--Schrader transform}. 
 
In the articles cited above, $\cE$ and $\cE_+$ are complex Hilbert spaces 
and  $\theta$ is complex linear. In the context of standard subspaces, 
we encounter reflection positivity in the context of 
real Hilbert spaces. 
Any standard subspace $\sV$ of a complex Hilbert space $\cH$ 
satisfies by Lemma \ref{lem:incl} the reflection positivity condition 
$\la \xi, J \xi \ra \geq 0$ for $\xi \in \sV$, so that 
we obtain the real reflection positive Hilbert space 
\[ (\cE,\cE_+,\theta) = (\cH^\R, \sV, J).\] 
%(Lemma~\ref{lem:incl}(ii)). 

In this section we take a closer look at the 
reflection positivity of 
the unitary representation $U$ of the translation group $(E,+)$ on 
$\cH = L^2(E^*,\mu;\cK)$, the additive subsemigroup 
$S = (W,+)$ which is invariant under $x \mapsto x^\sharp = - \tau(x)$, 
and the standard subspace $\sV=\sH_{\sV_\cK^\sharp}(W)$. 
Here $\hat\cE  \cong \cH^J$ by Proposition~\ref{prop:12.1}, and 
we connect our description of $\sV$ with the Osterwalder--Schrader transform 
from the isometric representation of $(W,+)$ on $\sV$ 
to the $*$-representation of $(W,\sharp)$ on $\cH^J$.

\subsection*{Reflection positivity and the wedge semigroup $W$} 

\mlabel{subsec:4.1}

To see that the unitary representation $U$ of the vector group $(E,+)$ on 
$\cH = L^2(E^*,\mu;\cK)$ defines a real reflection positive unitary representation 
of the triple $(E,W,\tau)$, we first observe that 
\begin{itemize}
\item $J U(v) J = U(\tau(v))$ for $v \in E$ 
(Lemma~\ref{lem:UnitRep}), 
\item  $W$ is obviously invariant under the involution 
$x^\sharp = -\tau(x) = x_1 + x_{-1} - x_0$,  
hence inherits the structure of an open involutive semigroup $(W,\sharp)$, 
\item the subsemigroup $W\subeq E$ satisfies 
$U(W)\sV \subeq \sV$. 
\end{itemize}
The last item easily follows from Theorem~\ref{thm:2.1}, 
where we have seen that 
\[ \sV = \oline{\spann_\R 
\{\tilde\phi \eta \: \phi \in C^\infty_c(W,\R), \eta \in \sV_\cK^\sharp\}}.\] 
Now 
\begin{equation}
  \label{eq:star}
 (U(x,1)\tilde\phi \eta)(\lambda)
= e^{i\lambda(x)} \tilde\phi(\lambda)
= (\phi(\cdot - x))\,\tilde{}(\lambda) \eta
\end{equation}
for $x \in E$ shows that $\sV$ is invariant 
under~$U(\oline W)$.\begin{footnote}{Alternatively, this can 
also be derived from the Borchers--Wiesbrock Theorem; 
see \cite{NO17} and \cite[Thm.~4.1]{Ne19a}.}  
\end{footnote}
We thus obtain a reflection positive representation 
of $(E, W, \tau)$ on the real reflection positive 
Hilbert space~$(\cH^\R, \sV,J)$. 
Now the Osterwalder--Schrader transform of the representation 
of $W$ on $\sV$ by isometries is a $*$-representation  
by contractions on the real Hilbert space $\hat \cE \cong \cH^J$ 
 (Proposition~\ref{prop:12.1}; \cite[Prop.~3.3.3]{NO18}). 
From \cite[Rem.~4.13, Prop.~3.6]{Ne19a} we know already that 
the so obtained representation of $(W,\sharp)$ on $\cH^J$ 
coincides with the representation 
$x \mapsto U(\iota(x))$, obtained from the embedding 
\begin{equation}
  \label{eq:iota}
 \iota \: W \into \cT = E + i C^0, \quad x  = x_1 + x_0 + x_{-1} 
\mapsto  i x_1 + x_0 - i x_{-1}
\end{equation}
and the holomorphic extension of $U$ to $\cT$ by 
\[ U(x + iy) := U(x) e^{i \partial U(y)} \quad \mbox{ for } \quad  
x\in E, y \in C^0, \quad 
\partial U(y) = \frac{d}{dt}\Big|_{t = 0} U(ty).\]

\begin{rem} From \cite{NO14} we recall some of the
background concerning bounded representations of the involutive semigroup 
$(W,\sharp)$. The involution $\sharp$ defines on 
the convolution algebra $C^\infty_c(W)$ the structure of a $*$-algebra 
by 
\begin{equation}
  \label{eq:phisharp}
\phi^\sharp(x) := \oline{\phi(x^\sharp)} = (\tau_*\phi^*)(x).
\end{equation}
The corresponding Fourier--Laplace transform is 
\begin{equation}
  \label{eq:fltrafor}
\cF\cL(\phi)(\lambda) 
= \int_W e^{-\lambda^-(x^-) + i \lambda^+(x^+)}\phi(x)\, dx 
\quad \mbox{ for } \quad \phi \in C^\infty_c(W).
\end{equation}
It defines a morphism of complex $*$-algebras 
$C^\infty_c(W) \to C_0(\hat W)$ (Gelfand-Transform), where 
\begin{equation}
  \label{eq:hatw}
\hat W \cong C_+^\star \times E_0^* \times  C_-^\star
\end{equation}
is the cone of hermitian bounded characters of $W$ and 
$C_0(\hat W)$ carries the canonical structure of a $C^*$-algebra. 
\end{rem}

For the unitary representation 
$(U,L^2(E^*,\mu;\cK))$ and the positive definite distribution 
$D = \hat\mu \in C^{-\infty}(E;B(\cK))$, we therefore expect that 
the real subspace $\cH^J$ can be identified with an 
$L^2$-space on the character cone $\hat W$ 
(see the Generalized Bochner Theorem, \cite[Thm.~4.11]{NO14}).
To see which measure on the cone $\hat W$ occurs here, we first 
observe that the projection of $\mu$ to $\hat W$ is closely related to 
$\tilde\mu$:

\begin{lem} \mlabel{lem:flaprel}
The holomorphic function 
$\tilde\mu(z) = \int_{C^\star} e^{i\lambda(z)}\, d\mu(z)$ 
on $\cT$ defines by composition with $\iota \: W \into \cT, 
\iota(x) = x_0 + i x_1 - i x_{-1}$,  
a positive definite analytic function on $(W,\sharp)$ which 
can be expressed as a Fourier--Laplace transform by 
\begin{equation}
  \label{eq:osrel}
\tilde\mu \circ \iota = \cF\cL(p_*\mu),
\end{equation}
where 
\[ p \: C^\star \to \hat W, \quad 
p(\lambda_1 + \lambda_0 + \lambda_{-1}) 
= \lambda_1 + \lambda_0 - \lambda_{-1} \] 
is a slightly modified restriction map. 
\end{lem}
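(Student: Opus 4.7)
The plan is a direct computation. The only real content is the bookkeeping needed to match the two integral representations; positive definiteness will then fall out for free.

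First, I would introduce the eigenspace decomposition of $E^*$ under the transpose of $h$: since $h$ is diagonalizable on $E$ with eigenvalues $-1,0,1$, the same holds for its transpose on $E^*$, and the $j$-eigenspace $E^*_j$ is canonically identified with the dual of $E_j$ (it is the annihilator of $\bigoplus_{k\ne j}E_k$). Any $\lambda\in E^*$ decomposes accordingly as $\lambda=\lambda_1+\lambda_0+\lambda_{-1}$ with $\lambda_j\in E^*_j$, and pairs with $x=x_1+x_0+x_{-1}\in E$ via $\lambda(x)=\lambda_1(x_1)+\lambda_0(x_0)+\lambda_{-1}(x_{-1})$.

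Next I would verify that this decomposition sends $C^\star$ into the cone parametrising $\hat W$. Since $C_+\subset C$ and $-C_-\subset C$ by Lemma~\ref{lem:Project}, a functional $\lambda\in C^\star$ satisfies $\lambda_1(C_+)\ge 0$ and $-\lambda_{-1}(C_-)\ge 0$; thus $\lambda_1\in C_+^\star$, $\lambda_{-1}\in -C_-^\star$, and $\lambda_0\in E_0^*$ is unrestricted. In particular
\[
p(\lambda)=\lambda_1+\lambda_0-\lambda_{-1}\in C_+^\star\times E_0^*\times C_-^\star\cong \hat W,
\]
so $p_*\mu$ is a well-defined (tempered, operator-valued) Borel measure on $\hat W$.

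With this in hand, the desired identity becomes a one-line change of variables. For $x\in W$,
\[
\lambda(\iota(x))=\lambda_0(x_0)+i\lambda_1(x_1)-i\lambda_{-1}(x_{-1}),
\]
so
\[
e^{i\lambda(\iota(x))}=e^{-\lambda_1(x_1)+i\lambda_0(x_0)+\lambda_{-1}(x_{-1})}=e^{-(p\lambda)_1(x_1)+i(p\lambda)_0(x_0)-(p\lambda)_{-1}(x_{-1})},
\]
which is exactly the character $\chi_{p(\lambda)}(x)$ appearing in the definition \eqref{eq:fltrafor} of $\cF\cL$. Integrating against $\mu$ and pushing forward yields
\[
\tilde\mu(\iota(x))=\int_{C^\star}\chi_{p(\lambda)}(x)\,d\mu(\lambda)=\int_{\hat W}\chi_\alpha(x)\,d(p_*\mu)(\alpha)=\cF\cL(p_*\mu)(x).
\]
Real-analyticity on $W$ is inherited from the holomorphy of $\tilde\mu$ on $\cT$, since $\iota$ is the restriction to $W$ of the $\R$-linear embedding into $E+iC^0$. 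Finally, positive definiteness on the involutive semigroup $(W,\sharp)$ follows immediately from the integral representation: for any finite family $(x^{(k)})$ in $W$ and coefficients $(c_k)$ in $\C$ (or in $\cK$ for the operator-valued version),
\[
\sum_{k,\ell}\bigl\langle c_\ell,\tilde\mu(\iota(x^{(k)}+(x^{(\ell)})^\sharp))c_k\bigr\rangle=\int_{\hat W}\Bigl\langle\sum_\ell \chi_\alpha(x^{(\ell)})c_\ell,\sum_k\chi_\alpha(x^{(k)})c_k\Bigr\rangle\,d(p_*\mu)(\alpha)\ge 0,
\]
because $p_*\mu$ is $\Herm^+(\cK)$-valued and $\chi_\alpha(y^\sharp)=\overline{\chi_\alpha(y)}$ by the hermiticity of the characters in $\hat W$.

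There is no real obstacle; the only thing to watch is the sign in the definition of $p$ on the $(-1)$-component, which is precisely what turns the $e^{+\lambda_{-1}(x_{-1})}$ factor coming from $\iota$ into the $e^{-\alpha_{-1}(x_{-1})}$ factor required by the hermitian character on $W$.
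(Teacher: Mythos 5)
Your proposal is correct and follows essentially the same route as the paper: the proof there is exactly the one-line computation $\tilde\mu(\iota(x))=\int_{C^\star}e^{-\lambda(x_1)+i\lambda(x_0)+\lambda(x_{-1})}\,d\mu(\lambda)$ matched against the definition \eqref{eq:fltrafor}. The extra verifications you supply (that $p$ maps $C^\star$ into $\hat W$, and the positive definiteness via $\chi_\alpha(y^\sharp)=\overline{\chi_\alpha(y)}$) are routine and are left implicit in the paper's surrounding discussion of the Gelfand transform on $(W,\sharp)$.
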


\begin{prf} This follows directly from the definition of $\tilde\mu$ in 
Lemma~\ref{lem:4.8}: 
\[
\tilde\mu(\iota(x_1 + x_0 + x_{-1})) 
= \tilde\mu(i x_1 + x_0 - i x_{-1}) 
= \int_{C^\star} e^{-\lambda(x_1) + i \lambda(x_0) + \lambda(x_{-1})}\, d\mu(\lambda)
\quad \mbox{ for }\quad x \in W. \qedhere\] 
\end{prf}
For $\phi \in C^\infty_c(W,\R)$ and $\eta \in \sV_\cK^\sharp$, we observe that 
\begin{equation}
  \label{eq:jact}
  J(\tilde\phi\eta) 
= \oline{((-\tau)_*\tilde\phi)} J_\cK \eta 
= (\tau_*\tilde\phi) J_\cK \eta 
= (\tau_*\phi)\,\tilde{} \cdot  J_\cK \eta.
\end{equation}
Accordingly, we have for the positive definite distribution $D = \hat\mu$ 
the relation: 
\begin{equation}
  \label{eq:jact2}
  J(\phi * D \eta) 
= (\tau_*\phi) * D  J_\cK \eta.
\end{equation}
This leads to 
\begin{align*}
\la \phi * D\eta, J(\psi * D \xi) \ra 
&\ {\buildrel{\eqref{eq:jact2}} \over =}\ 
 \la \phi * D\eta, (\tau_*\psi) * D J_\cK \xi \ra 
\ {\buildrel {\eqref{eq:pos-def-hilb}} \over =}\ 
 \la \eta, D((\tau_*\psi)^* * \phi)J_\cK \xi \ra \notag \\
&\ {\buildrel \eqref{eq:phisharp} \over = }\ 
\la \eta, D(\psi^\sharp* \phi)J_\cK \xi \ra,
\end{align*}
so that 
\begin{equation}  \label{eq:rprel1}
\la \phi * D\eta, J(\psi * D \xi) \ra 
=\la \eta, D(\psi^\sharp* \phi)J_\cK \xi \ra 
\quad \mbox{ for } \quad 
\phi, \psi \in C^\infty_c(W,\R), \xi, \eta \in \sV_\cK^\sharp.
\end{equation}
Equation \eqref{eq:rprel1} expresses a reflection positivity condition 
for the distribution $D = \hat\mu\in C^{-\infty}(E;B(\cK))$ with respect 
to the involutive semigroup $(W,\sharp)$. 

We also know from Proposition~\ref{prop:distonW} that 
\[ \la \xi, \tilde\mu(x) \eta \ra 
= \la e^{-\frac{\pi i}{2}\Lambda} \xi, 
\tilde\mu(\iota(x)) e^{\frac{\pi i}{2}\Lambda} \eta \ra 
\quad \mbox{ for } \quad 
\xi \in \sV_\cK^\flat, \eta \in \sV_\cK^\sharp.\] 
For $\xi \in \sV_\cK^\sharp$ we have 
$J_\cK \xi \in \sV_\cK^\flat$, so that this leads to 
\begin{equation}
  \label{eq:4.1.x}
\la J_\cK \xi, \tilde\mu(x) \eta \ra 
= \la e^{-\frac{\pi i}{2}\Lambda} J_\cK\xi, 
\tilde\mu(\iota(x)) e^{\frac{\pi i}{2}\Lambda} \eta \ra.
\end{equation}
Since $T_\cK^\sharp = e^{-\pi i \Lambda} J_\cK$ by \eqref{eq:jflatrel}, 
\eqref{eq:taujrel2} implies 
\[ J_\cK \xi = e^{\pi i \Lambda} \xi \quad \mbox{ for } 
\quad \xi \in \sV_{\cK}^\sharp.\] 
Using 
\[ \tilde\mu(\iota(x)) = \cF\cL(p_*\mu)(x) \quad \mbox{ for }\quad 
x \in W\] 
from Lemma~\ref{lem:flaprel}, 
we obtain with \eqref{eq:4.1.x}:

\begin{prop} \mlabel{prop:analyticfunc}
On the open cone $W \subeq E$, 
the distribution $\tilde\mu$ is given by a density, an 
analytic operator-valued function determined by 
\[  \la J_\cK \xi, \tilde\mu(x) \eta \ra 
= \la e^{\frac{\pi i}{2}\Lambda} \xi, 
\tilde\mu(\iota(x)) e^{\frac{\pi i}{2}\Lambda} \eta \ra
= \la e^{\frac{\pi i}{2} \Lambda} \xi,
\cF\cL(p_*\mu)(x)\, e^{\frac{\pi i}{2} \Lambda}\eta\ra
\quad \mbox{ for }\quad \xi,\eta \in \sV_\cK^\sharp, x \in W.\]
\end{prop}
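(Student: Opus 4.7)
The plan is to derive this proposition directly from Proposition~\ref{prop:distonW} combined with the algebraic identities for $\sV_\cK^\sharp$ from Section~\ref{rem:3.1} and the Fourier--Laplace identity of Lemma~\ref{lem:flaprel}. Essentially everything needed is already in place, so the work is to assemble the pieces and verify the relevant domain conditions.

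First I would take $\xi,\eta \in \sV_\cK^\sharp$ and substitute $J_\cK \xi$ in place of $\xi$ into Proposition~\ref{prop:distonW}. This substitution is legal because \eqref{eq:jflatrel} gives $J_\cK\sV_\cK^\sharp = \sV_\cK^\flat$, and the standard subspace $\sV_\cK^\flat$ consists of vectors in $\cD(\Delta_\cK^{-1/2})$ (the appropriate half-domain of its own Tomita operator $T_\cK^\flat = e^{\pi i \Lambda}J_\cK$), which is the hypothesis needed on the first argument; similarly $\sV_\cK^\sharp \subeq \cD(\Delta_\cK^{1/2})$ covers the second. This yields
\[
\la J_\cK \xi, \tilde\mu(x) \eta \ra
= \la e^{-\frac{\pi i}{2}\Lambda} J_\cK\xi, \tilde\mu(\iota(x)) e^{\frac{\pi i}{2}\Lambda} \eta \ra.
\]

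Next I would rewrite the left-most factor using the fact that $\xi \in \sV_\cK^\sharp = \Fix(T_\cK^\sharp)$. Combined with \eqref{eq:jflatrel}, the relation $T_\cK^\sharp = e^{-\pi i \Lambda}J_\cK$ gives $\xi = e^{-\pi i \Lambda}J_\cK \xi$, whence $J_\cK\xi = e^{\pi i \Lambda}\xi$. Then by the spectral calculus for the normal operator $\Lambda$,
\[
e^{-\frac{\pi i}{2}\Lambda} J_\cK \xi
= e^{-\frac{\pi i}{2}\Lambda} e^{\pi i \Lambda}\xi
= e^{\frac{\pi i}{2}\Lambda}\xi,
\]
yielding the first of the claimed equalities. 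The second equality is then immediate from Lemma~\ref{lem:flaprel}, which identifies $\tilde\mu(\iota(x)) = \cF\cL(p_*\mu)(x)$ for every $x \in W$.

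Finally, I would observe that the map $\iota \: W \to \cT$ is the restriction of a linear map (hence real-analytic), and $\tilde\mu$ is holomorphic on $\cT$ by Lemma~\ref{lem:4.8}; composing gives a real-analytic $B(\cK)$-valued function on $W$. Its sesquilinear pairings with vectors in $\sV_\cK^\sharp$ thus produce the density representing $\tilde\mu$ on $W$. The only real point to be cautious about is the overall direction of the argument: Proposition~\ref{prop:distonW} already establishes that $\tilde\mu\res_W$ is a function rather than merely a distribution; here we are simply rewriting the scalar values of that function in a form adapted to $\sV_\cK^\sharp$ using the Tomita-operator fixed-point relation. No estimate or new convergence argument is required, so there is no substantive obstacle.
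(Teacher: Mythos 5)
Your argument is correct and follows essentially the same route as the paper: the paper also obtains the proposition by substituting $J_\cK\xi$ (for $\xi\in\sV_\cK^\sharp$, using $J_\cK\sV_\cK^\sharp=\sV_\cK^\flat$) into Proposition~\ref{prop:distonW}, simplifying $e^{-\frac{\pi i}{2}\Lambda}J_\cK\xi=e^{\frac{\pi i}{2}\Lambda}\xi$ via the fixed-point relation $J_\cK\xi=e^{\pi i\Lambda}\xi$ coming from $T_\cK^\sharp=e^{-\pi i\Lambda}J_\cK$, and then invoking Lemma~\ref{lem:flaprel}. Your domain checks ($\sV_\cK^\flat\subeq\cD(\Delta_\cK^{-1/2})$, $\sV_\cK^\sharp\subeq\cD(\Delta_\cK^{1/2})$) and the analyticity remark are the right finishing touches.
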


For $\phi, \psi \in C^\infty_c(W,\R)$ and $\xi, \eta \in \sV_\cK^\sharp$, 
the preceding proposition 
leads in particular to 
\begin{align} \label{eq:b}
\la \tilde\psi \xi, J(\tilde\phi \eta) \ra 
&{\buildrel {\eqref{eq:jact}} \over =}\   
\la \tilde\psi \xi, (\tau_*\phi)\,\tilde{} J_\cK \eta \ra 
= \la \xi, \oline{\tilde\psi} (\tau_*\phi)\,\tilde{} J_\cK \eta \ra 
= \la \xi, \tilde{\psi^*} (\tau_*\phi)\,\tilde{}\  J_\cK \eta \ra \notag\\
&= \la \xi, (\psi^* * \tau_*\phi)\,\tilde{}\ J_\cK \eta \ra 
{\buildrel{!}\over=}\ 
 \la \xi, \tilde\mu( \psi^* * (\tau_*\phi))\ J_\cK \eta \ra \notag \\
&{\buildrel {\eqref{eq:jact2}} \over =}  
\la \xi, J_\cK \tilde\mu( \psi^\sharp *\phi) \eta \ra
= \la J_\cK \xi, \tilde\mu( \psi^\sharp *\phi) \eta \ra \notag\\
&= \int_W (\psi^\sharp *\phi)(x) 
\la e^{\frac{\pi i}{2} \Lambda} \xi,
\cF\cL(p_*\mu)(x)\, e^{\frac{\pi i}{2} \Lambda}\eta\ra.
\end{align}
As $\sV_\cK^\sharp = e^{-\frac{\pi i}{2}\Lambda_+} \sV_\cK$, we have 
\[   e^{\frac{\pi i}{2} \Lambda}\sV_\cK^\sharp
= e^{\frac{\pi i}{2}\Lambda_-} \sV_\cK
= \Delta_\cK^{1/4} \sV_\cK \subeq \cK^{J_\cK}.\] 
Therefore the above formula contains the main information 
of the Osterwalder--Schrader transform that passes from 
the $J$-twisted scalar product on $\sV$ to the real 
scalar product on $\cH^J$.

\section{Support properties of the imaginary part of $\hat{\mu}$} 
\mlabel{sec:5} 

For $D = \hat\mu \in \cS'(E;B(\cK))$, we  consider the Hilbert space 
$\cH =\cH_D \subeq \cS'(E;\cK)$ (Definition~\ref{def:2.3}). 
Then, as before, for an open subset $\cO \subeq E$ and a closed 
real subspace $\sK\subeq \cK$, we
define the  closed real subspaces 
\begin{equation}\label{eq:HD}
 \sH_\sK (\cO) := \oline{\spann_\R\{ \phi * D\eta \: \phi \in C^\infty_c(\cO,\R), 
\eta \in \sK\}}.
\end{equation}
In Theorem~\ref{thm:2.1}, 
we have seen that, for the open wedge 
$W \subeq E$, the subspace $\sH := \sH_{\sV_\cK^\sharp}(W)$
is standard. By Corollary~\ref{cor:2.1b} its symplectic complement is 
$\sH' = \sH_{\sV_\cK^\flat}(-W)$. Furthermore, by Lemma \ref{lem:VsharpVflat} we have $\sV_\cK^\sharp=
\sV_\cK^\flat$ if and only if $\Lambda = \Lambda_+$ has spectrum contained 
in~$\Z$.

For real-valued test functions $\phi$, we have 
\[ D(\phi) 
= \int_{E^*} \oline{\tilde\phi}\, d\mu 
= \int_{E^*} \hat\phi\, d\mu \] 
and thus 
\begin{equation}\label{eq:DHerm}
 D(\phi)^* 
= \Big(\int_{E^*} \oline{\tilde\phi(\alpha)}\, d\mu(\alpha)\Big)^* 
= \int_{E^*} \tilde\phi(\alpha)\, d\mu(\alpha) 
= \int_{E^*} \oline{\tilde\phi(-\alpha)}\, d\mu(\alpha) 
= \int_{E^*} \oline{\tilde\phi(\alpha)}\, d\mu(-\alpha).
\end{equation}
Hence the operators $D(\phi)$, $\phi \in C^\infty_c(E,\R)$, 
are all hermitian if and only if 
$\mu$ is invariant under the reflection $r(x) = -x$ in the sense that 
$r_*\mu = \mu$.  As $\supp(\mu) \subeq C^\star$ and 
$C$ is generating, this condition implies $\supp(\mu) = \{0\}$.

In general, the distribution $D$ decomposes as 
\[ D = D_+ + i D_-, \] 
where $D_+$ is the Fourier transform of the measure 
$\shalf(\mu + r_*\mu)$, and 
$D_-$ is the Fourier transform of $\frac{1}{2i}(\mu - r_*\mu)$. 

\begin{lem} The distributions $D_{\pm}$ are hermitian in the sense that
\begin{equation}
  \label{eq:dpmherm}
D_\pm(\phi) \in \Herm(\cK) \quad \mbox{ for } \quad \phi \in \cS(E,\R).
\end{equation}
Furthermore, if  $\phi \in C_c^\infty (E,\R)$, then
\[D(\phi\circ (-\tau))J_\cK =J_\cK D(\phi )^*\quad \text{and} 
\quad D_\pm (\phi \circ (-\tau)  )J_\cK =
J_\cK D_\pm ( \phi).\] 
\end{lem}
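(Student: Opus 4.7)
The plan is to prove both assertions by carefully transferring the invariance/symmetry properties of the measure $\mu$ into corresponding identities for $D = \hat\mu$, exploiting the observation already recorded in \eqref{eq:DHerm} together with \eqref{eq:mug}. The first assertion reduces to identifying taking the adjoint with a pushforward by $r$, and the second assertion reduces to the pushforward formula $(-\tau)_*\mu = J_\cK \mu J_\cK$ once one decomposes $\mu$ into its $r$-symmetric and $r$-antisymmetric parts.

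For the hermiticity claim, I would first rewrite \eqref{eq:DHerm} as $D(\phi)^* = \widehat{r_*\mu}(\phi)$ for $\phi \in \cS(E,\R)$, i.e., the adjoint of $D(\phi)$ is obtained by pushing the defining measure forward under $r$. Applied to $\mu_+ := \shalf(\mu + r_*\mu)$, which is $r$-invariant by construction, this immediately gives $D_+(\phi)^* = D_+(\phi)$. For $D_-$ one writes
\[
 D_-(\phi) = \frac{1}{2i}\bigl(D(\phi) - \widehat{r_*\mu}(\phi)\bigr) = \frac{1}{2i}\bigl(D(\phi) - D(\phi)^*\bigr),
\]
which is manifestly hermitian.

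For the second batch of identities, I would begin with the substitution $y = -\tau x$ in the Fourier integral to get $(\phi\circ(-\tau))\,\tilde{}(\lambda) = \tilde\phi(-\tau^*\lambda)$, so that for real $\phi$,
\[
 D(\phi\circ(-\tau)) = \int_{E^*} \tilde\phi(\tau^*\lambda)\, d\mu(\lambda).
\]
A second substitution $\lambda' = -\tau^*\lambda$ converts this into $\int \oline{\tilde\phi(\lambda')}\, d((-\tau)_*\mu)(\lambda')$. Applying \eqref{eq:mug} and the identity $\int \oline f\, d(J_\cK\nu J_\cK) = J_\cK\bigl(\int f\, d\nu\bigr)J_\cK$ (which encodes the antilinearity of $J_\cK$), I obtain $D(\phi\circ(-\tau)) = J_\cK D(\phi)^* J_\cK$, and right-multiplication by $J_\cK$ yields the first identity. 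For $D_\pm$ the same computation applies, provided one first verifies the corresponding pushforward formulas for the measures $\mu_\pm$: since $r$ and $-\tau$ commute on $E^*$, $r_*$ commutes with $(-\tau)_* = J_\cK(\cdot)J_\cK$, giving $(-\tau)_*\mu_+ = J_\cK \mu_+ J_\cK$; however the antilinearity of $J_\cK$ conjugates the scalar factor $\tfrac{1}{2i}$ in the definition of $\mu_-$, producing $(-\tau)_*\mu_- = -J_\cK \mu_- J_\cK$. Running the same change-of-variable argument and using that $D_\pm(\phi)$ is hermitian (from the first part), together with $r_*\mu_- = -\mu_-$, the two sign flips cancel and deliver $D_\pm(\phi\circ(-\tau))J_\cK = J_\cK D_\pm(\phi)$.

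The main obstacle is purely bookkeeping: the antilinearity of $J_\cK$ combined with the imaginary scalar $1/i$ in $\mu_-$ creates sign flips that must cancel precisely against the $r$-antisymmetry of $\mu_-$. There is no conceptual difficulty, but the signs deserve a careful verification to make sure the $D_-$-identity does not pick up an unwanted minus.
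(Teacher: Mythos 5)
Your proof is correct and follows essentially the same route as the paper's: establish $D(\phi)^* = \int \tilde\phi\, d\mu$ from the hermiticity of $\mu$, then combine the change of variables $\lambda \mapsto -\tau^*\lambda$ with the covariance $(-\tau)_*\mu = J_\cK \mu J_\cK$ and the antilinearity of $J_\cK$ to get $D(\phi\circ(-\tau)) = J_\cK D(\phi)^* J_\cK$. The only minor difference is in the last step: the paper deduces the $D_\pm$ identities from the $D$ identity via uniqueness of the decomposition into hermitian and anti-hermitian parts, whereas you re-run the measure-level computation with $(-\tau)_*\mu_\pm = \pm J_\cK \mu_\pm J_\cK$ and check that the two sign flips cancel; both arguments are sound.
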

\begin{prf} That $D_\pm(\phi )^*=D_\pm (\phi )$ follows from
(\ref{eq:DHerm}) and the relation $r_*\mu_\pm = \pm \mu_\pm$.

For $\phi \in C^\infty_c(E,\R)$, we have 
\[ D (\phi \circ (-\tau)) 
= \int_{E^*} \oline{\tilde\phi}\, d((-\tau)_*\mu) 
= \int_{E^*} \oline{\tilde\phi}\, J_\cK d\mu J_\cK 
= J_\cK \Big(\int_{E^*} \tilde\phi\, d\mu\Big) J_\cK 
\ {\buildrel \eqref{eq:DHerm}\over =}\  J_\cK D(\phi)^* J_\cK\] 
which by the first part implies that
$D_\pm(\phi \circ(-\tau)) = J_\cK D_\pm(\phi) J_\cK$.
\end{prf}

\begin{cor}\mlabel{cor:DpmCom}
If $\mu$ is invariant under the involution $-\tau$ and 
$\phi = \phi^\sharp$, then
the hermitian operators $D_\pm(\phi)$ commute with $J_\cK$. 
\end{cor}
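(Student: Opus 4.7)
The plan is to reduce to the real case by decomposing $\phi$ into real and imaginary parts and invoking the preceding lemma on each piece. Writing $\phi = \phi_R + i\phi_I$ with $\phi_R, \phi_I \in C_c^\infty(E,\R)$, the identity $\phi = \phi^\sharp = \overline{\phi \circ (-\tau)}$ from \eqref{eq:phisharp} separates into the two conditions $\phi_R \circ (-\tau) = \phi_R$ and $\phi_I \circ (-\tau) = -\phi_I$, i.e.\ $\phi_R$ is $-\tau$-invariant while $\phi_I$ is $-\tau$-antiinvariant. Applying the second identity of the preceding lemma to $\phi_R$ yields $D_\pm(\phi_R) J_\cK = J_\cK D_\pm(\phi_R)$ (commutation), and to $\phi_I$ it yields $D_\pm(\phi_I) J_\cK = -J_\cK D_\pm(\phi_I)$ (anti-commutation). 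Combining these two relations via the antilinearity $J_\cK\circ i = -i \circ J_\cK$ and writing $D_\pm(\phi) = D_\pm(\phi_R) + i D_\pm(\phi_I)$, a short direct calculation will produce $J_\cK D_\pm(\phi) = D_\pm(\phi) J_\cK$, which is the claimed commutation.

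The hypothesis that $\mu$ is $-\tau$-invariant enters separately, to justify calling $D_\pm(\phi)$ hermitian in the first place. Since the reflection $r$ on $E^*$ commutes with the dual action $\tau^*$, the invariance $(-\tau)_*\mu = \mu$ passes to the symmetric and antisymmetric parts underlying $D_\pm$ in the form $\tau^*_*\mu_+ = \mu_+$ and $\tau^*_*\mu_- = -\mu_-$. Combined with the Fourier-side identity $\overline{\tilde\phi(\lambda)} = \tilde\phi(\tau^*\lambda)$, which is the Fourier translate of $\phi = \phi^\sharp$, a change of variables then gives $D_\pm(\phi)^* = D_\pm(\phi)$. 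The main care required will be bookkeeping the antilinearity of $J_\cK$ and the distinction between the $-\tau$-action on $E$ and its dual action on $E^*$, but no serious obstacle is anticipated: the entire argument is a direct consequence of the preceding lemma and the two structural identities just mentioned.
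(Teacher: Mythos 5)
Your proof is correct, but it takes a longer route than the paper, which in fact gives no proof at all: the corollary is meant to be immediate from the second displayed identity of the preceding lemma. For real-valued $\phi$ (the only case in which that lemma asserts hermiticity of $D_\pm(\phi)$, and surely the intended reading), the condition $\phi=\phi^\sharp=\oline{\phi\circ(-\tau)}$ collapses to $\phi\circ(-\tau)=\phi$, so $D_\pm(\phi\circ(-\tau))J_\cK=J_\cK D_\pm(\phi)$ literally \emph{is} the asserted commutation --- one line, and the hypothesis that $\mu$ be $-\tau$-invariant is not even used (hermiticity of $D_\pm$ on real test functions already follows from $r_*\mu_\pm=\pm\mu_\pm$, as the lemma's proof notes). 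Your decomposition $\phi=\phi_R+i\phi_I$ handles the broader case of complex $\sharp$-real $\phi$; the parity conditions $\phi_R\circ(-\tau)=\phi_R$ and $\phi_I\circ(-\tau)=-\phi_I$ are right, and the resulting commutation/anticommutation relations combine with the antilinearity of $J_\cK$ to give the claim. Two small corrections: the paper's distributions are \emph{antilinear} functionals, so $D_\pm(\phi)=D_\pm(\phi_R)-iD_\pm(\phi_I)$ rather than $+i$ (the final commutation is unaffected either way, but the bookkeeping you flag should land on the minus sign); and your second paragraph slightly misattributes the role of the $-\tau$-invariance of $\mu$ --- it is not needed to make $D_\pm(\phi_R)$, $D_\pm(\phi_I)$ hermitian, only to upgrade hermiticity to the complex $\sharp$-real $\phi$ itself, where your computation via $\tau^*_*\mu_+=\mu_+$, $\tau^*_*\mu_-=-\mu_-$ and $\oline{\tilde\phi}=\tilde\phi\circ\tau^*$ does check out.
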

\begin{rem}
By \eqref{eq:mug}, $\mu$ is invariant under $-\tau$ if and only if 
all values of the measure $\mu$ commute with $J_\cK$.
\end{rem}

To explore the support properties of $D_-$, we observe that, 
for $\phi, \psi \in C^\infty_c(E,\R)$, by \eqref{eq:dpmherm} 
\begin{equation}
  \label{eq:sympfc}
\omega(\phi * D\xi, \psi * D\xi)  
= \Im \la \xi, D(\psi^* * \phi) \xi \ra 
= \la \xi, D_-(\psi^* * \phi) \xi \ra
= \la \xi, (\psi * D_-)(\phi) \xi \ra. 
\end{equation}

\begin{prop}\mlabel{prop:supp-control}  
Assume that $\sV_\cK^\sharp = \sV_\cK^\flat$. For
$\xi \in \sV_\cK^\sharp $ define $D^{\xi}_-(\phi )=\la \xi ,D_-(\phi)\xi \ra$. Then
$\supp D^\xi_-\subset W^c$. 
\end{prop}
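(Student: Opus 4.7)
The plan is to combine the characterization of $\sH=\sH_{\sV_\cK^\sharp}(W)$ and $\sH'=\sH_{\sV_\cK^\flat}(-W)$ from Theorem~\ref{thm:2.1} and Corollary~\ref{cor:2.1b} with the defining orthogonality of the symplectic complement, and then promote the resulting vanishing on convolutions to a pointwise statement via a mollification argument. Under the hypothesis $\sV_\cK^\sharp=\sV_\cK^\flat$, both standard subspaces are generated by the same real subspace of $\cK$, so for every $\xi\in\sV_\cK^\sharp$, $\phi\in C^\infty_c(W,\R)$ and $\psi\in C^\infty_c(-W,\R)$ we have $\phi*D\xi\in\sH$ and $\psi*D\xi\in\sH'$. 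The defining property $\omega(\sH,\sH')=0$ together with \eqref{eq:sympfc} then gives
\[0=\omega(\phi*D\xi,\psi*D\xi)=\langle\xi,D_-(\psi^**\phi)\xi\rangle=D_-^\xi(\psi^**\phi).\]
Since $\psi$ is real-valued, $\psi^*(x)=\psi(-x)$ lies in $C^\infty_c(W,\R)$, so after renaming I would conclude $D_-^\xi(f*g)=0$ for all $f,g\in C^\infty_c(W,\R)$.

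It remains to deduce $D_-^\xi(\theta)=0$ for every $\theta\in C^\infty_c(W,\R)$, which yields $\supp D_-^\xi\subeq W^c$. Here I would exploit that $W=C_+^0\oplus E_0\oplus C_-^0$ is an open convex cone, so $W+W=W$ and $0\in\overline W$ (as $W$ is open and invariant under positive scalars). Given $\theta$ with compact support $K\subset W$, choose $\epsilon>0$ with $K+\overline{B(0,\epsilon)}\subset W$ (possible since $W$ is open and $K$ is compact) and pick $y_0\in W\cap B(0,\epsilon)$ (nonempty since $0\in\overline W$). Then the translate $\theta_{y_0}:=\theta(\cdot+y_0)$ has support $K-y_0\subset K+B(0,\epsilon)\subset W$, so $\theta_{y_0}\in C^\infty_c(W,\R)$. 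For an approximate identity $\rho_\delta$ at $0$ of shrinking support, the shifted bumps $\rho_\delta^{y_0}(x):=\rho_\delta(x-y_0)$ also lie in $C^\infty_c(W,\R)$ once $\delta<\dist(y_0,\partial W)$, and a direct change of variables gives
\[\rho_\delta^{y_0}*\theta_{y_0}=\rho_\delta*\theta\;\longrightarrow\;\theta\quad\text{in }C^\infty_c(E,\R).\]
Since $\rho_\delta^{y_0}*\theta_{y_0}$ is a convolution of two elements of $C^\infty_c(W,\R)$, $D_-^\xi$ annihilates it, and continuity of $D_-^\xi$ then forces $D_-^\xi(\theta)=0$.

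The heart of the argument is the symplectic pairing between the wedge $W$ and its opposite $-W$, which converts the algebraic identity $\sV_\cK^\sharp=\sV_\cK^\flat$ into analytic vanishing of $D_-^\xi$ on all convolutions from $C^\infty_c(W,\R)$. The one genuine obstacle is the final approximation step: since $0\notin W$, one cannot mollify $\theta$ against a bump centered at the origin, and it is the cone structure of $W$ (specifically $0\in\overline W$ and $W+W=W$) that permits the workaround of first translating $\theta$ slightly deeper into $W$ before smoothing with a bump supported in~$W$.
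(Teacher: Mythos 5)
Your proposal is correct and follows essentially the same route as the paper: both use \eqref{eq:sympfc} together with the identification $\sH_{\sV_\cK^\sharp}(W)'=\sH_{\sV_\cK^\flat}(-W)=\sH_{\sV_\cK^\sharp}(-W)$ from Theorem~\ref{thm:2.1} and Corollary~\ref{cor:2.1b} to get $D_-^\xi(\psi^**\phi)=0$, and then remove the convolution by an approximate identity supported in the cone (the paper takes $\psi_n\to\delta_0$ in $C^\infty_c(-W,\R)$, exploiting $0\in\oline{W}$ exactly as you do). Your extra care in translating $\theta$ slightly into $W$ before mollifying is a fully explicit version of the same limiting step.
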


In particular, if $\cK = \C$ with $J_\cK(z)=\overline{z}$
and $\Lambda \in \Z \1$, we have $\supp D_-\subset W^c$. 
As the examples in Section~\ref{sec:6} (with $\cK = \C$) show, 
we do not expect substantial restrictions on the support of the 
distributions $D^\xi_-$ if $\sV_\cK^\sharp \not= \sV_\cK^\flat$, i.e., 
if $\Spec(\Lambda) \not\subeq \Z$.

\begin{prf} By \eqref{eq:sympfc} and the fact 
that $\sH_D(W, \sV_\cK^\sharp)^{\bot_\omega} = \sH_D(-W, \sV_\cK^\sharp)$
under our assumption $\sV^\sharp_\cK = \sV_\cK^\flat$,
 it follows that   the real-valued distribution $D^\xi_-$ 
satisfies 
\[ \supp(\psi * D_-^{\xi}) \subeq W^c := E\setminus W  \quad \mbox{ for } \quad 
\psi \in C^\infty_c(-W).\] 
As $C^\infty_c(-W,\R)$ contains an approximate identity $\psi_n \to \delta_0$, 
we obtain from 
$\supp(\psi_n * D_-^{\xi}) \subeq W^c$ for every $n$, that 
$\supp(D_-^{\xi}) \subeq W^c.$ 
\end{prf}

\begin{ex} \mlabel{ex:lorentz} 
In the Lorentz context $E = \R^{1,d-1}$, where 
$\mu$ is a Lorentz invariant scalar-valued measure ($\cK = \C$), 
the distribution 
$D_-$ is also Lorentz invariant, hence in particular 
invariant under the rotation group $\SO_{d-1}(\R)$. Therefore 
\[ \supp(D_-) \subeq \bigcap_{g \in \SO_{d-1}(\R)} g W^c 
= C \cup  - C,\] 
so that $\supp(D_-)$ is contained in the closed  double light cone.  
We refer to \cite[\S X.7, p.~215]{RS75} for a different derivation of this result 
from concrete information on the nature of the distribution~$\hat\mu$. 
\end{ex}

\section{The Fourier transform of Riesz measures} 
\mlabel{sec:6}

In this section we specialize the setting of Section \ref{sec:3} to 
simple euclidean Jordan algebras. We briefly recall the relevant concepts. 
 A {\it Jordan algebra} 
is a, not necessarily associative, algebra $E$ such that 
the product satisfies 
\[ xy = yx \quad \mbox{ and } \quad x(x^2y)=x^2 (xy) \quad \mbox{  for all } \quad x,y\in E.\]
 We then define 
\[ L(x)y =xy \quad \mbox{ and }  \quad P(x)=2L(x)^2-L(x^2).\]
$P$ is called \textit{the quadratic representation } of $E$. 
We always assume that $E$ has an identity $e$, which means that $L(e)=\id_E$. 

A Jordan algebra $E$ over $\R$ is called 
\textit{euclidean} if there exists an inner product 
$(\cdot,\cdot)$ on
$E$ such that $L(x)$ is symmetric for all $x\in E$.  If $E$ is euclidean, then
the interior $C^0$ of the closed convex cone 
$C=\{x^2\: x\in E\}$ of squares in $E$ is an open symmetric cone. 
It is the connected component of $e$ in the set $E^\times$ of 
invertible elements in $E$, as well as the
set of all $x\in E$ such that $L(x)$ is strictly positive 
(\cite[Thm. III.2.1]{FK94}).

An element $c$ in $E$ is
\textit{idempotent} if $c^2 = c$. 
The idempotent $c$ is \textit{primitive} if it can not
be written as a sum of two 
non-zero idempotents. The idempotents $c_1,\ldots ,c_r$
form a  \textit{Jordan frame} if each $c_j$ is primitive, 
$c_ic_j =0$ if $i\not= j$, 
and $e=c_1+ \cdots +c_r$. Jordan frames always exist and the group 
$\Aut(E)$ of unital automorphisms of $E$ acts transitively on the set 
of Jordan frames (\cite[Cor.~IV.2.7]{FK94}). 
In particular, the number $r$ of elements in a Jordan frame 
is independent of the frame. It is called the {\it rank of $E$}. 

\begin{exs} (a) Minkowski space $E = \R^{1,d-1}$ carries the 
structure of a euclidean Jordan algebra of rank $r = 2$ which is simple 
for $d \not=2$. The product is given by 
\[ (x_0, \bx)(y_0, \by) := (x_0 y_0 + \bx\by, x_0\by + y_0 \bx).\] 
Here $e = (1,0)$ is a unit element and 
\[ c_1 = \frac{1}{2}(1,1,0,\ldots,0), 
\quad c_2 = \frac{1}{2}(1,-1,0,\ldots,0) \]
form a Jordan frame. 

\nin (b) The other simple euclidean Jordan algebras of rank $r$ are 
\[ \Sym_r(\R), \quad \Herm_r(\C), \quad 
\Herm_r(\H) \quad \mbox{ for } \quad r \in \N 
\quad \mbox{ and } \quad \Herm_3(\bO),\] 
where $\bO$ is the alternative algebra of octonions 
(see  \cite{JvNW34}, \cite{FK94} for the classification). 
Here the Jordan product is given by 
\[ x * y := \frac{xy+yx}{2}, \] 
the euclidean form is $(x,y) = \tr(xy)$, 
the identity matrix $e = \1$ is the unit, 
and the diagonal matrices $c_j := E_{jj}$ form a Jordan frame.
\end{exs}

In this section $E$ is a simple euclidean Jordan algebra of rank $r$ whose Pierce subspaces \eqref{eq:pierce} 
are of dimension~$d$. 
For $z\in E_\C$ (the complexified Jordan algebra), 
we define the \textit{Jordan determinant} by  
\[ \Delta (z) = \det \big(L(z)|_{\C[z]}\big),\] 
 where
$\C [z] \subeq E$ is the unital subalgebra 
generated by $z$. 

For 
\begin{equation}
  \label{eq:walachset}
s \in \Big\{0, \cdots, (r-1)\frac{d}{2}\Big\} \cup 
\Big((r-1)\frac{d}{2}, \infty\Big), 
\end{equation} 
we consider the corresponding Riesz measure $\mu_s$ whose 
Fourier (Laplace) transform satisfies 
\[ \tilde\mu_s(z) = \Delta(-iz)^{-s} \quad \mbox{ for } \quad 
z \in E + i C^0\] 
(\cite[Thm.~VII.3.1]{FK94}). 

\begin{rem} The structure group $\Str (E)$ is the group 
of all $g\in \GL (E)$ such that for $x\in E$, we
have $P(gx)=gP(x)g^\top$ where $P(x)=2L(x)^2-L(x^2)$ is the quadratic representation of~$E$. The structure
group contains the automorphism group 
\[ G(C^0) = \{ g \in \GL(E) \: gC^0= C^0\} \] 
of the open cone $C^0$ as a subgroup of index~$2$.

For $g\in  \Str (E)$, and $x,y\in E$, we have 
\[ \Delta(g.x) = \det(g)^{r/n} \Delta(x)\quad \mbox{ and } \quad 
\Delta(P(y)x) =\Delta(y)^2 \Delta(x) 
\quad \mbox{ for }\quad g \in \Str(E), x,y \in E \] 
(\cite[Prop.~III.4.2]{FK94}). It follows that $\mu_s$ and its Fourier transform 
are semi-invariant under the identity component $\Str(E)_0$ 
of the structure group, so that the 
support of real and imaginary part are closed unions of orbits of this group.  
More concretely, $\cL(g_*\mu_s) = g_*\cL(\mu_s)$ and 
\[ (g_*\cL(\mu_s))(z) = 
\cL(\mu_s)(g^{-1}z) = \Delta(g^{-1}z)^{-s} 
= |\det(g)|^{rs/n} \Delta(z)^{-s} 
= |\det(g)|^{rs/n} \cL(\mu_s)(z) \] 
imply that 
\begin{equation}
  \label{eq:mstrafo}
 g_*\mu_s = |\det(g)|^{rs/n}\mu_s.
\end{equation}
\end{rem}

\begin{prop} \mlabel{prop:opensupp} 
The imaginary part of the tempered distribution 
$\tilde\mu_s$ vanishes on the connected component 
$E^\times_j = \{ x \in E^\times \: \ind(x) = j\}$ of the set 
$E^\times$ of invertible elements of $E$ if and only if $s j \in 2 \Z$.   
\end{prop}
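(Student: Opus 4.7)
The plan is to show that $\tilde\mu_s$, a priori a tempered distribution, is represented on $E^\times$ by a real-analytic function; to compute this function at a representative of each connected component $E^\times_j$; and to invoke the transitivity of the $\Str(E)_0$-action to reduce the vanishing condition to a single point.

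First, I would show that the holomorphic function $z \mapsto \Delta(-iz)^{-s}$ on the tube $E + iC^0$ (with branch fixed by positivity on $iC^0$) extends continuously to $E^\times$ and represents $\tilde\mu_s$ there. For $x \in E^\times$ and any $y \in C^0$, the path $t \mapsto \Delta(ty - ix)$ on $[0,1]$ never vanishes: for $t > 0$ this is the standard nonvanishing fact on the tube, and at $t = 0$ we have $\Delta(-ix) = (-i)^r\Delta(x) \neq 0$ since $x \in E^\times$. Hence $\Delta(-iz)^{-s}$ has a continuous limit as $z = x + ity \to x$, identifying the boundary value of $\tilde\mu_s$ on $E^\times$ with a real-analytic function. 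The semi-invariance $\tilde\mu_s(gz) = \det(g)^{-rs/n}\tilde\mu_s(z)$ derived from \eqref{eq:mstrafo} extends by continuity to $x \in E^\times$, and since $\det(g) > 0$ for $g \in \Str(E)_0 \subseteq G(C^0)_0$ the multiplier is a positive real. Combined with the transitivity of the $\Str(E)_0$-action on each component $E^\times_j$ (Appendix~\ref{app:b}), the imaginary part of $\tilde\mu_s$ vanishes on $E^\times_j$ iff it vanishes at a single representative.

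Second, I would choose the representative $e_\epsilon := \sum_{i=1}^r \epsilon_i c_i$ with $\epsilon_i \in \{\pm 1\}$ and $j := \sum_i \epsilon_i = \ind(e_\epsilon)$ (the signature). Along the path $z(t) = t e_\epsilon + i(1-t)e$, which lies in $E + iC^0$ for $t \in [0, 1)$ and tends to $e_\epsilon$ as $t \to 1$, the orthogonality of the idempotents $c_i$ yields
\[ \Delta(-iz(t)) = \prod_{i=1}^r \bigl((1-t) - it\epsilon_i\bigr). \]
Each factor stays in the closed right half-plane and traces a circular arc from $1$ to $-i\epsilon_i$, contributing a continuous argument change of $-\epsilon_i\pi/2$. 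Starting from the normalization $\Delta(-iz(0))^{-s} = \Delta(e)^{-s} = 1$, continuous branch tracking gives
\[ \tilde\mu_s(e_\epsilon) = \exp\bigl(-s \cdot (-ij\pi/2)\bigr) = e^{ijs\pi/2}, \]
with modulus $\prod_i |{-i\epsilon_i}| = 1$. Hence $\Im \tilde\mu_s(e_\epsilon) = \sin(js\pi/2)$, which vanishes if and only if $sj \in 2\Z$.

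The main obstacle is the careful branch tracking of $\Delta(-iz(t))^{-s}$ along $z(t)$, together with verifying that the component label $\ind(x)$ from Appendix~\ref{app:b} is the signature $p - q$ rather than the number $q$ of negative eigenvalues. This identification is pinned down by the rank-one case at the end of Section~\ref{subsec:3.3}, where \eqref{eq:mus} gives $\tilde\mu_s(x) = e^{\sgn(x)s\pi i/2}|x|^{-s}$ with $\sgn(x) = \pm 1 = j$, in perfect agreement with the formula $e^{ijs\pi/2}$ derived above.
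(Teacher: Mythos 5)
Your proposal is correct and takes essentially the same approach as the paper: both identify the distribution on $E^\times$ with the continuous boundary value of $\Delta(-iz)^{-s}$ and track the argument of each spectral factor through $-\epsilon_i\pi/2$, arriving at $\tilde\mu_s(x)=|\Delta(x)|^{-s}e^{\ind(x)s\pi i/2}$ and hence the condition $s\,\ind(x)\in 2\Z$. The only difference is organizational --- the paper performs the factor-by-factor computation directly at a general $x\in E^\times$ via its spectral decomposition, whereas you compute at one representative $e_\epsilon$ per component and propagate by $\Str(E)_0$-equivariance --- which amounts to the same calculation.
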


\begin{prf} As a distribution on $E$, the Fourier transform $\tilde\mu_s$ is given on 
the open subset $E^\times$ of invertible elements by the limit 
\[ \tilde\mu_s(x) = \lim_{y \to 0, y > 0} \Delta(-ix + y)^{-s}.\] 
For $s \geq 0$ and $x \in \R^\times$ with $\pm x > 0$, we have 
\begin{equation}
  \label{eq:stars1}
\lim_{y \to 0+} (-ix + y)^{-s} 
= e^{-s\log(-ix)} 
= e^{-s(\log|x| \mp \frac{\pi i}{2})} 
= |x|^{-s} e^{\pm  \frac{s\pi i}{2}}.
\end{equation}
As $\Delta(x) = \prod_{j = 1}^r x_j$ is the product of the spectral 
values of $x$, this leads to 
\begin{equation}
  \label{eq:indform}
\tilde\mu_s(x) =  |\Delta(x)|^{-s} e^{\ind(x) \cdot \frac{s\pi i}{2}}
\quad \mbox{ for } \quad x \in E^\times.
\end{equation}
It follows in particular that 
\begin{equation}
  \label{eq:s-index-cond}
\tilde\mu_s(x) \in \R \quad \Leftrightarrow \quad 
s \ind(x) \in 2 \Z.
\end{equation}
This completes the proof. 
\end{prf}

\begin{rem}  (a) (Analogy with Huygen's principle) 
Proposition~\ref{prop:opensupp}  shows that 
the support properties of $\tilde\mu_s$ depend crucially 
on the parity of the rank~$r$. 
If $r$ is even, then there exist invertible elements of index~$0$ and 
\[  \supp(\Im \tilde\mu_s) \cap E^\times_0 = \eset.\] 
If $r$ is odd, then $\ind(x)$ is odd for every invertible element, so that 
there exist parameters $s$ as in \eqref{eq:walachset}, 
for which $\Im(\tilde\mu_s)$ has full support. 

\nin (b) The Riesz measures $\mu_s$ satisfy the differential equation 
\[  \Delta(\partial) \mu_s = \mu_{s-1} \] 
(\cite[Thm.~VII.2.2]{FK94}), so that $s \in \N$ implies 
\[  \Delta(\partial)^s \mu_s = \delta_0,  \] 
i.e., $\mu_s$ is a fundamental solution of the differential operator 
$\Delta(\partial)^s$ of order $rs$. 
This relation also provides information on the Fourier transform: 
\[  \Delta(-ix)^s \tilde\mu_s(x) = 1.  \] 
As $\Delta$ is homogeneous of degree $r$, this can also be written as 
\begin{equation}
  \label{eq:multform}
 (-i)^{rs}  \Delta^s \tilde\mu_s = 1.  
\end{equation}
As a consequence, $\supp(\tilde\mu_s) = E$ for $s \in \N_0$.
\end{rem}

\begin{ex} \mlabel{ex:6.4} 
(a) $r = 2$ (Minkowski space of dimension $n = d + 2$). 
Then the admissible positive values of $s$ are given by 
$s \geq \frac{d}{2} = \frac{n-2}{2}$ and the possible values 
of the index are $2,0,-2$. 

As $\supp(\Im \tilde\mu_s) \cap E^\times_0 = \eset$ 
(Proposition~\ref{prop:opensupp}), the support is always contained 
in the closed double cone $C \cup - C$. 
It is contained in the boundary of the closed double cone if and only if 
$s \in \Z$. 

For $n = 4$, resp., $d = 2$, we have $s \geq 1$. For 
$s = 1$ the distribution $\Im(\tilde\mu_s)$ is supported in the boundary 
of the double cone. 

\nin (b) $\Im(\tilde\mu_s)$ is supported in the complement of the open double 
cone $C^0 \cup - C^0$ if and only if $rs \in 2 \Z$ 
(Proposition~\ref{prop:opensupp}). 

\nin (c)  %(Minimal representations) 
We consider the case $s = \frac{d}{2}$ (the minimal positive value). 
Then $\Im(\tilde\mu_s)$ vanishes on $E^\times_j$ if and only if $jd \in 4 \Z$ 
(Proposition~\ref{prop:opensupp}). 
For $d = 1$, this means that $j \in 4 \Z$ which can only happen for 
$r \in 2 \Z$. Here the Hilbert space is the even part of the Fock space, 
carrying the metaplectic representation of the $2$-fold covering 
group $\Mp_{2r}(\R)$ of $\Sp_{2r}(\R)$ (\cite[Sect.~V]{HNO96}). 
\end{ex} 

\begin{rem} {\rm(Locality condition)} 
The closed convex cone $C \subeq E$ defines an order structure on~$E$. 
In terms of this order, the requirement on the distribution 
$\Im \tilde\mu_s$ that corresponds in the case of Minkowski space 
(Example~\ref{ex:6.4}(a)) 
to the locality condition of corresponding quantum fields is 
\begin{equation}
  \label{eq:loc-cond}
\supp(\Im \tilde\mu_s) \subeq C \cup - C.   
\end{equation}
If this condition is satisfied, then $\Delta(-ix)^s$ is real 
on the components $E^\times_j$ for $j \not=\pm r$. 
This is equivalent to $sj \in 2 \Z$ for $j = r-2,r-4,\ldots, 2-r$ 
(Proposition~\ref{prop:opensupp}). For $r \geq 3$ odd, this implies for $j= 1$ that 
$s \in 2\N_0$, and if $r \geq 4$ is even, we obtain for $j = 2$ that $s \in \N$. 
In both cases we obtain with Proposition~\ref{prop:opensupp} that 
$\supp(\Im\tilde\mu_s) \cap E^\times = \eset$. In Proposition~\ref{prop:7.10} 
below we show that 
$\supp(\Im\tilde\mu_s) = E \setminus  E^\times$ in this case. 
In particular, the locality condition \eqref{eq:loc-cond} 
is never satisfied for $r \geq 3$ because there exist non-invertible elements 
$x \not\in \pm C$, i.e., at least one spectral value is positive and another one 
negative. 
\end{rem}

\subsection{The case $E = \R$}
%Subsection~\ref{ex:Riesz} continued} \mlabel{se:ExAgain}

We now specialize the result from the previous section to the case $E=\R$ considered in Subsection~\ref{ex:Riesz}. We have already seen in the 
introduction how this related to the $\U(1)$-current in CFT.
In this case  the Riesz measures are given by 
\[ d\mu_s(x) = \Gamma(s)^{-1} x^{s-1}\, dx \quad \mbox{ on } \quad 
C^\star = [0,\infty) \quad \mbox{ for } \quad s > 0 \] 
and by $\mu_0 := \delta_0$ (Dirac measure in $0$). 
For the Fourier--Laplace transform $\tilde\mu_s$ we have 
$\tilde\mu_s(z) = (-iz)^{-s}$ for $\Im z > 0$. 
For the boundary values on $\R^\times$, we obtain with \eqref{eq:stars1}
\begin{equation}
  \label{eq:1d-impart}
\tilde\mu_s(x) 
=  e^{\pm s \frac{\pi i}{2}} |x|^{-s} 
\quad \mbox{ and } \quad  \Im\tilde\mu_s(x) 
=  \pm \sin\Big(s \frac{\pi}{2}\Big) |x|^{-s}.
\end{equation}
This shows that $\Im\tilde\mu_s$ vanishes on $\R^\times$ 
if and only if $s \in 2 \Z$. 

\begin{lem} \mlabel{lem:6.9} 
For $E = \R$, the following assertions hold: 
  \begin{itemize}
  \item[\rm(i)] $\supp(\Im\tilde\mu_s) = \R$ for $s \not\in 2 \Z$. 
  \item[\rm(ii)] $\supp(\Im\tilde\mu_s) = \R$ and 
$\Re\tilde\mu_s = c_s \delta_0^{(s-1)}$ with $c_s \in \R^\times$ if $s \in \N$ is odd.
  \item[\rm(iii)] $\supp(\Re\tilde\mu_s) = \R$ and 
$\Im\tilde\mu_s = c_s \delta_0^{(s-1)}$ with $c_s \in \R^\times$ if $s \in \N$ is even. 
  \end{itemize}
\end{lem}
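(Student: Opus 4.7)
The plan is to combine the explicit formula \eqref{eq:1d-impart} for $\tilde\mu_s$ on $\R^\times$ with a homogeneity argument that pins down the behavior at the origin. On the open dense subset $\R^\times\subseteq\R$, the tempered distribution $\tilde\mu_s$ agrees with the smooth function $e^{\sgn(x)s\pi i/2}|x|^{-s}$, so
\[ \Re\tilde\mu_s\big|_{\R^\times}=\cos(s\pi/2)|x|^{-s},\qquad \Im\tilde\mu_s\big|_{\R^\times}=\sin(s\pi/2)\sgn(x)|x|^{-s}. \]
For (i), when $s\notin 2\Z$ we have $\sin(s\pi/2)\ne 0$, so the restriction of $\Im\tilde\mu_s$ to $\R^\times$ is continuous and nowhere vanishing; hence $\supp(\Im\tilde\mu_s)\supeq\R^\times$, and taking closure in $\R$ yields equality. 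The same reasoning will settle the ``support equals $\R$'' halves of (ii) and (iii): $s\in\N$ odd makes $\sin(s\pi/2)=\pm 1\ne 0$, and $s\in\N$ even makes $\cos(s\pi/2)=\pm 1\ne 0$.

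To identify the origin pieces in (ii) and (iii), I will reduce to a single $\delta$-derivative by homogeneity. From $\tilde\mu_s(z)=(-iz)^{-s}$, the distribution $\tilde\mu_s$ is homogeneous of degree $-s$, as a one-line change of variables in
\[ \tilde\mu_s(\phi)=\Gamma(s)^{-1}\int_0^\infty \hat\phi(\lambda)\lambda^{s-1}\,d\lambda\qquad(\phi\in\cS(\R,\R)) \]
confirms. On the other hand $\delta_0^{(k)}$ is homogeneous of degree $-k-1$. Since any distribution on $\R$ supported in $\{0\}$ is a finite linear combination of derivatives of $\delta_0$, the homogeneous ones of degree $-s$ with $s\in\N$ are precisely the scalar multiples of $\delta_0^{(s-1)}$. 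Thus, for $s\in\N$ odd, the relation $\cos(s\pi/2)=0$ forces $\Re\tilde\mu_s$ to vanish on $\R^\times$, hence to have the form $c_s\delta_0^{(s-1)}$ with $c_s\in\R$; for $s\in\N$ even, $\sin(s\pi/2)=0$ similarly yields $\Im\tilde\mu_s=c_s\delta_0^{(s-1)}$.

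It remains to check that these coefficients $c_s$ are nonzero, which I plan to verify by evaluating on a single well-chosen real Schwartz function. For (ii) I will test against the Gaussian $\phi(x)=e^{-x^2/2}$, whose Fourier transform $\sqrt{2\pi}\,e^{-\lambda^2/2}$ is strictly positive real; the displayed integral then lies in $\R_{>0}$, so $\tilde\mu_s(\phi)\in\R_{>0}$, forcing $\Re\tilde\mu_s\not\equiv 0$ and hence $c_s\ne 0$. For (iii) I will test against the odd real function $\phi(x)=xe^{-x^2/2}$, whose Fourier transform is purely imaginary and of constant nonzero sign on $(0,\infty)$; the integral is then purely imaginary and nonzero, forcing $\Im\tilde\mu_s\not\equiv 0$ and $c_s\ne 0$. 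The only real obstacle is careful bookkeeping of signs and of the real/imaginary versus $\cos$/$\sin$ correspondence; the substantive content is the classification of homogeneous distributions supported at the origin.
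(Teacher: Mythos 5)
Your proof is correct, and for the point-support statements in (ii) and (iii) it takes a genuinely different route from the paper. The paper also starts from the explicit boundary values on $\R^\times$, but it identifies the part supported at the origin by the recursion $\tilde\mu_{s+1} = \frac{1}{is}\tilde\mu_s'$ (from $\frac{d}{dz}(-iz)^{-s} = is(-iz)^{-s-1}$), anchored by a hands-on Sokhotski--Plemelj-type computation of the base case $\Re\tilde\mu_1 = \pi\delta_0$ via $\lim_{\eps\to 0^+} i\int_\R \phi(x)(x+i\eps)^{-1}dx$; iterating then yields $\delta_0^{(s-1)}$ together with an explicit coefficient. You instead combine the structure theorem for distributions supported at a point with homogeneity of degree $-s$ -- which legitimately passes to $\Re\tilde\mu_s$ and $\Im\tilde\mu_s$ because the scaling factor $t^{s-1}$ is real for $t>0$ -- to pin down the answer up to a constant, and then certify $c_s\neq 0$ by testing against $e^{-x^2/2}$, resp.\ $xe^{-x^2/2}$. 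Your route is shorter and avoids the base-case limit computation, at the price of not producing the value of $c_s$, which the lemma does not require. Two minor points: in the paper's conventions the displayed integral should involve $\tilde\phi(\lambda)=\int_\R e^{i\lambda x}\phi(x)\,dx$ rather than $\hat\phi$ (this can only flip the sign of $c_s$, not its nonvanishing), and you are implicitly using, exactly as the paper does in \eqref{eq:1d-impart}, that on $\R^\times$ the distributional boundary value of $(-iz)^{-s}$ is the pointwise limit function; this follows by dominated convergence for test functions supported away from $0$.
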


\begin{prf} (i) follows from \eqref{eq:1d-impart}. 

\nin (ii),(iii) For $z \in \C_+$ we have 
$\tilde\mu_s(z) = (-iz)^{-s}$. Taking derivatives, we get 
\[ \tilde\mu_s'(z) = is (-iz)^{-s-1} = is \cdot \tilde\mu_{s+1}(z),\] 
so that we obtain for the boundary values in $\cS'(\R)$ 
\begin{equation}
  \label{eq:recform}
 \tilde\mu_{s+1} = \frac{1}{i s} \tilde\mu_s' \quad \mbox{ for } \quad s > 0.
\end{equation}
As $s \in \N$, we have to take a closer look at $\mu_1$. 
We know already that $\supp(\Im\tilde\mu_1) = \R$. We claim that $\Re\tilde\mu_1 = \pi \delta_0$. 
In fact, if the real test function $\phi$ vanishes in $0$, then 
$\tilde\mu_1(z) = i z^{-1}$ for $\Im z> 0$ yields 
\[ \tilde\mu_1(\phi) 
= \lim_{\eps \to 0} i \int_\R \frac{\phi(x)}{x + i \eps}\, dx 
=  i \int_\R \frac{\phi(x)}{x}\, dx \in i \R, \] 
so that $\Re\tilde\mu_1(\phi) = 0$. We conclude that $\Re \tilde\mu_1 = c_1 \delta_0$ 
for some $c_1 \in \R$. To evaluate this constant, we consider 
a test function $\phi$ constant on an interval $[-\delta, \delta]$ for 
$\delta > 0$. Then 
\[ c_1 \phi(0)
= \Re \lim_{\eps \to 0+} i \int_\R \frac{\phi(x)}{x + i \eps}\, dx 
= \Re \lim_{\eps \to 0+} i \int_{-\delta}^\delta \frac{\phi(x)}{x + i \eps}\, dx 
= \Re  \Big(i \phi(0) 
\lim_{\eps \to 0}\int_{-\delta}^\delta \frac{1}{1+ i \eps}\, dx\Big).\] 
This integral is easily evaluated using the holomorphic logarithm 
on $\C \setminus (-\infty,0]$: 
\[ \int_{-\delta}^\delta \frac{1}{1+ i \eps}\, dx 
= \log(\delta + i \eps) - \log(-\delta + i \eps) 
= \log(\delta + i \eps) - i \pi - \log(\delta - i \eps) \] 
which tends to $-\pi i$ for $\eps \to 0$. We thus obtain 
$c_1 = \pi$. This shows that $\Re\tilde\mu_1 = \pi \delta_0$. 

With the recursion formula \eqref{eq:recform}, we obtain
\[ \Re\tilde\mu_s 
= \frac{(-1)^k\pi }{2k(2k-1)\cdots 1}  \delta_0^{(2k)}
= \frac{(-1)^k\pi}{(2k)!} \delta_0^{(2k)}
\quad \mbox{ for } \quad s = 1 + 2k \in 1 + 2 \N_0,\] 
and
\[ \Im\tilde\mu_s 
= \frac{(-1)^{k+1}\pi}{(2k-1)(2k-2)\cdots 2}  \delta_0^{(2k+1)}
= \frac{(-1)^{k+1}\pi}{(2k-1)!}  \delta_0^{(2k+1)}
\quad \mbox{ for } \quad s = 2k, k \in \N.\] 

The remaining assertions now follow immediately from the recursion formula~\eqref{eq:recform}.
\end{prf}

\subsection{The generalization to   $E = \R^r$}
In this subsection we extend the results from the last section to the euclidean space $E=\R^r$. It turns
out that, as for the Huygens principle, there is a fundamental
difference between $r$ even and odd. 
We have already noted above that, for $r = 1$, this case relates 
to the $\U(1)$-current in CFT. For $r = 2$, it relates to 
$2$-dimensional Minkowski case, considered as a Jordan algebra, 
where coordinates refer to a Jordan frame $(c_1, c_2)$, consisting 
of future pointing lightlike vectors. For $r > 2$, it corresponds to 
the diagonal matrices in the Jordan algebras 
$\Herm_r(\K)$.

\begin{prop} \mlabel{prop:7.7} Let $E = \R^r$ with pointwise multiplication and 
$s \in \N$. 
If either $r$ is even, or if $r$ is odd and $s$ is even, then 
\[ \supp(\Im \tilde\mu_s) = E \setminus E^\times.\] 
\end{prop}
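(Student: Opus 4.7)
My plan is to exploit the product structure of the Jordan algebra $E = \R^r$ under pointwise multiplication. Since $\Delta(z) = \prod_{j=1}^r z_j$ factors, the Laplace transform $\tilde\mu_s(z) = \Delta(-iz)^{-s}$ factors on the tube $\R^r + i(0,\infty)^r$. Consequently $\mu_s$ decomposes as $\bigotimes_{j=1}^r \mu_s^{(j)}$ of one-variable Riesz measures on $[0,\infty)$, and passing to distributional boundary values on $\R^r$ yields the tensor product factorization $\tilde\mu_s = \bigotimes_{j=1}^r \tilde\mu_s^{(j)}$ of tempered distributions in separated variables, with each factor analyzed by Lemma~\ref{lem:6.9}.

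Writing $\tilde\mu_s^{(j)} = R_j + i I_j$ with $R_j := \Re\,\tilde\mu_s^{(j)}$ and $I_j := \Im\,\tilde\mu_s^{(j)}$, bilinearity gives
\[ \Im\,\tilde\mu_s = \sum_{\substack{S \subeq \{1,\ldots,r\}\\|S|\text{ odd}}} (-1)^{(|S|-1)/2} \Big(\bigotimes_{j\in S} I_j\Big) \otimes \Big(\bigotimes_{j\notin S} R_j\Big). \]
Lemma~\ref{lem:6.9} shows that for $s \in \N$, exactly one of $R_j, I_j$ is a nonzero multiple of $\delta_0^{(s-1)}$ and the other has full support $\R$: it is $R_j$ for $s$ odd, and $I_j$ for $s$ even. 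Under either hypothesis of the proposition, every summand above contains at least one $\delta$-supported tensor factor: for $s$ even, the odd-sized set $S$ is nonempty; for $r$ even and $s$ odd, the complement $\{1,\ldots,r\}\setminus S$ has odd cardinality $r - |S| \geq 1$. Each summand is therefore supported in a coordinate hyperplane, so $\supp(\Im\,\tilde\mu_s) \subeq E \setminus E^\times$.

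For the reverse inclusion, fix $x_0 \in E \setminus E^\times$ with nonempty zero set $Z = \{j : (x_0)_j = 0\}$ and test against $\phi = \phi_1 \otimes \cdots \otimes \phi_r$ with $\phi_j$ supported near $(x_0)_j$. Then $\Im\,\tilde\mu_s(\phi) = \Im \prod_j \tilde\mu_s^{(j)}(\phi_j)$. For $j \notin Z$, the $\delta$-factor annihilates $\phi_j$ (supported away from $0$), so $\tilde\mu_s^{(j)}(\phi_j)$ lies in $\R^\times$ or $i\R^\times$ and can be made any nonzero element thereof. For $j \in Z$, the functional $\phi_j \mapsto \tilde\mu_s^{(j)}(\phi_j)$ on test functions supported near $0$ is nonzero (since $0 \in \supp\tilde\mu_s^{(j)}$), hence surjects onto $\C$. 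Prescribing phases then makes $\prod_j \tilde\mu_s^{(j)}(\phi_j)$ non-real, proving $x_0 \in \supp(\Im\,\tilde\mu_s)$.

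The main conceptual step is recognizing the tensor factorization, after which the problem reduces to the one-dimensional Lemma~\ref{lem:6.9}. The principal bookkeeping hurdle is verifying that the parity assumptions force a $\delta$-supported tensor factor in \emph{every} term of the combinatorial expansion of $\Im\,\tilde\mu_s$; the reverse inclusion is then a routine phase-adjustment argument with product test functions.
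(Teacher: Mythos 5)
Your proof is correct, and it rests on the same two pillars as the paper's: the factorization of $\mu_s$ into one-dimensional Riesz measures and the one-variable Lemma~\ref{lem:6.9}. The execution differs in both directions, though. For the inclusion $\supp(\Im\tilde\mu_s)\subeq E\setminus E^\times$, the paper does not expand the tensor product at all; it simply quotes Proposition~\ref{prop:opensupp} (the index formula $\tilde\mu_s(x)=|\Delta(x)|^{-s}e^{\ind(x)s\pi i/2}$), which gives vanishing on every component $E^\times_j$ because $sj\in 2\Z$ under either parity hypothesis. Your combinatorial expansion over odd subsets $S$ proves the same thing from scratch and is a nice self-contained alternative, at the cost of the parity bookkeeping you describe. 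For the reverse inclusion, the paper only tests at points $(\bx,0)$ with $\bx$ invertible in $\R^{r-1}$ (using that $\ind(\bx)$ is odd there, so $\tilde\mu_s^{\R^{r-1}}(\phi_1)\in i^s\R$), and then invokes permutation invariance together with the density of such points in the union of coordinate hyperplanes; your argument works directly at an arbitrary non-invertible point with zero set $Z$, which is more general but requires the phase-adjustment step.

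One justification in that step is too quick: for $j\in Z$ you assert that the functional $\phi_j\mapsto\tilde\mu_s^{(j)}(\phi_j)$ on real test functions near $0$ ``is nonzero, hence surjects onto $\C$.'' Nonvanishing of a real-linear map into $\C\cong\R^2$ does not imply surjectivity (its image could be a real line, e.g.\ if the germ at $0$ were a complex multiple of a single real distribution), and surjectivity is exactly what your phase argument needs. The claim is nevertheless true here: near $0$ one of $R_j,I_j$ is $c_s\delta_0^{(s-1)}$ (supported at $\{0\}$) and the other has full support, so their restrictions to any interval $(-\eps,\eps)$ are linearly independent real distributions, whence $\phi_j\mapsto(R_j(\phi_j),I_j(\phi_j))$ is onto $\R^2$. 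With that sentence added, your proof is complete.
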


\begin{prf} {\bf $r$ even:} 
Proposition~\ref{prop:opensupp} implies that 
$\Im\tilde\mu_s$ vanishes on $E^\times$ because $\ind(x) \in 2 \Z$. 
It remains to show that any element of the form 
$x = (\bx, 0)$ with $\bx$ invertible in $\R^{r-1}$ is contained in 
$\supp(\Im\tilde\mu_s)$. 
We use the relation 
$\tilde\mu_s(\bx,x_r)= \tilde\mu_1^{\R^{r-1}}(\bx) \tilde\mu_1^\R(x_r)$ 
in the sense of distributions. 
If $\phi_1 \in C^\infty_c((\R^{r-1})^\times)$ and $\phi_2 \in C^\infty_c(\R)$, then 
\begin{equation}
  \label{eq:prodform}
\tilde\mu_s(\phi_1 \otimes \phi_2) 
= \underbrace{\tilde\mu_s^{\R^{r-1}}(\phi_1)}_{\textstyle{\in i^s \R}} \tilde\mu_s^\R(\phi_2)
\end{equation}
by \eqref{eq:indform} because $\ind(\bx) \in r -1 + 2 \Z$ is odd. 
If $s$ is even, this leads to 
\[\Im \tilde\mu_s(\phi_1 \otimes \phi_2) 
= \tilde\mu_s^{\R^{r-1}}(\phi_1) \cdot \Im  \tilde\mu_s^\R(\phi_2) 
=  \tilde\mu_s^{\R^{r-1}}(\phi_1) c_s (-1)^{s-1} \phi_2^{(s-1)}(0)\]
(Lemma~\ref{lem:6.9}(iii)), 
so that $(\bx,0) \in \supp(\Im\tilde\mu_s)$. 
If $s$ is odd, \eqref{eq:prodform} leads to 
\[\Im \tilde\mu_s(\phi_1 \otimes \phi_2) 
= \Im \tilde\mu_s^{\R^{r-1}}(\phi_1) \Re \tilde\mu_s^\R(\phi_2) 
= \Im \tilde\mu_s^{\R^{r-1}}(\phi_1) c_s (-1)^{s-1} \phi_2^{(s-1)}(0)\]
(Lemma~\ref{lem:6.9}(ii)), 
so that $(\bx,0) \in \supp(\Im\tilde\mu_s)$. 
Using the invariance of $\tilde\mu_s$ under permutations of the coordinates, 
the assertion follows. 

\nin {\bf $r$ odd:} Then $s$ is even by assumption, so that 
Proposition~\ref{prop:opensupp} implies that 
$\Im\tilde\mu_s$ vanishes on $E^\times$. 
The same argument as in the case where $r$ is even now shows that 
$\supp(\Im\tilde\mu_s) = E \setminus E^\times$. 
\end{prf}

\subsection{The support of the Fourier transform of 
Riesz measures}

Let $(c_1, \ldots, c_r)$ be a Jordan frame in $E$ and 
$E_0 := \spann \{ c_1, \ldots, c_r\}$ denote the corresponding euclidean 
Jordan subalgebra isomorphic to $\R^r$, endowed with the componentwise 
multiplication. 
We write $p \: E^* \to E_0^*$ for the restriction map and observe that this map 
is proper on $C^\star$ with $p(C^\star) = (C_0)^\star \cong (\R_+)^r$, where 
$C_0 := E_0 \cap C$ is the closed positive cone in $E_0$. 
For a Riesz measure $\mu_s$, the measure 
\[ \mu_s^0 := p_*\mu_s \] 
then satisfies 
\[ \cL(\mu_s)(z) = \Delta(z)^{-s} = \Delta_0(z)^{-s} 
= (z_1 \cdots z_r)^{-s} = \cL(\mu_s^0)(z) \quad \mbox{ for }\quad 
z = \sum_j z_j c_j \in E_0 + i C_0^0.\] 

To transfer information on the support of $\Im(\tilde\mu_s)$ from 
$E_0$ to $E$, we need the following lemma.

\begin{lem} \mlabel{lem:project} 
Let $E_0 \subeq E$ be a real subspace, 
let $p \: E^* \to E_0^*$ be the restriction map, 
and $\mu$ be a tempered measure on $E^*$ for which 
$\mu_0 := p_* \mu$ is also tempered. Then 
\[ \supp(\Im\tilde\mu_0) \subeq \supp(\Im\tilde\mu).\] 
\end{lem}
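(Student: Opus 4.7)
\medskip
\noindent\textbf{Proof plan.} I would prove the contrapositive: if $x_0 \in E_0$ is not in $\supp(\Im\tilde\mu)$, then $x_0 \notin \supp(\Im\tilde\mu_0)$. Fix a linear complement $F$ with $E = E_0 \oplus F$, and choose open neighborhoods $V \subeq E_0$ of $x_0$ and $W \subeq F$ of $0$ such that $\Im\tilde\mu$ vanishes on the open set $U := V + W \subeq E$.

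To test $\Im\tilde\mu_0$ on an arbitrary $\psi \in C^\infty_c(V,\R)$, the key idea is to approximate its ``extension by a delta on $F$'' by real product test functions $\phi_n := \psi \otimes g_n \in C^\infty_c(E,\R)$, where $g_n \in C^\infty_c(F,\R)$ is a non-negative approximate identity at $0 \in F$ (e.g.\ $g_n(f) = n^{\dim F} g(nf)$ with a fixed $g \geq 0$ satisfying $\int g = 1$). Because $\supp(g_n)$ shrinks to $\{0\}$, for $n$ sufficiently large one has $\supp(\phi_n) \subeq V + W = U$, so the choice of $U$ forces
\[ \Im\tilde\mu(\phi_n) = 0. \]

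Next I would compute $\lim_{n \to \infty} \tilde\mu(\phi_n)$ and verify that it equals $\tilde\mu_0(\psi)$. Identifying $E^* = E_0^* \oplus F^*$ via the splitting, so that $p$ becomes projection onto the first factor and $\lambda = (\alpha,\eta)$, the Fourier transform of a product factors as
\[ \tilde{\phi_n}(\alpha,\eta) = \tilde\psi(\alpha)\,\tilde{g_n}(\eta), \]
and a change of variable yields $\tilde{g_n}(\eta) = \tilde g(\eta/n) \to 1$ pointwise with $|\tilde{g_n}| \leq \|g\|_1 = 1$. The dominating function $\lambda \mapsto |\tilde\psi(p(\lambda))|$ is $\mu$-integrable because
\[ \int_{E^*} |\tilde\psi \circ p|\, d\mu \;=\; \int_{E_0^*} |\tilde\psi|\, d\mu_0 \;<\; \infty, \]
the finiteness being exactly the temperedness of $\mu_0$ paired with the Schwartz function $|\tilde\psi|$. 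Lebesgue's dominated convergence theorem then gives
\[ \tilde\mu(\phi_n) \;=\; \int_{E^*} \tilde\psi(\alpha)\,\tilde{g_n}(\eta)\, d\mu(\alpha,\eta) \;\longrightarrow\; \int_{E_0^*} \tilde\psi(\alpha)\, d\mu_0(\alpha) \;=\; \tilde\mu_0(\psi). \]

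Taking imaginary parts and combining with the vanishing from the previous step gives $\Im\tilde\mu_0(\psi) = \lim_n \Im\tilde\mu(\phi_n) = 0$ for every $\psi \in C^\infty_c(V,\R)$, hence $V \cap \supp(\Im\tilde\mu_0) = \eset$, completing the contrapositive. The delicate point is the dominated convergence step: the function $\tilde\psi \circ p$ is constant along the fibers of $p$, so has no decay in the kernel directions, and its $\mu$-integrability is not automatic from the temperedness of $\mu$ alone. It is precisely the extra hypothesis that the pushforward $\mu_0$ is tempered which makes the argument work, explaining why that condition appears in the statement of the lemma.
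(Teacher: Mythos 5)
Your proposal is correct and is essentially the paper's own argument, merely phrased contrapositively: the paper starts from $x_0\in\supp(\Im\tilde\mu_0)$, picks $\phi_0$ with $\Im\tilde\mu_0(\phi_0)\neq 0$, and shows $\Im\tilde\mu(\phi_0\otimes\delta_n)\to\Im\tilde\mu_0(\phi_0)$ for a $\delta$-sequence $(\delta_n)$ on a complement, using exactly the same dominated convergence step ($|\tilde\delta_n|\leq 1$, $\tilde\delta_n\to 1$ pointwise, integrability of $\tilde\phi_0\circ p$ from the temperedness of $\mu_0$). Your identification of the temperedness of $\mu_0$ as the hypothesis that makes the domination work matches the paper's reasoning.
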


\begin{prf} Pick a vector space complement $E_1 \subeq E$ for $E_0$, 
so that $E = E_0 \oplus E_1$. 
For $x_0 \in \supp(\Im\tilde\mu_0)$ and an open neighborhood $U_0$ of $x_0$,  
there exists real a test function $\phi_0$ on $E_0$ with 
\[ 0 \not = \Im\tilde\mu_0(\phi_0) 
= \Im\int_{E_0^*} \oline{\tilde\phi_0(-\lambda_0)}\, d\mu_0(\lambda_0)
= \Im\int_{E_0^*}  \tilde\phi_0(\lambda_0)\, d\mu_0(\lambda_0)
= \Im\int_{E^*}  \tilde\phi_0(\lambda_0)\, d\mu(\lambda_0, \lambda_1),\] 
where the existence of the integral follows from the temperedness 
of $\mu_0 = p_*\mu$. 

Now let $(\delta_n)_{n \in \N}$ be a  $\delta$-sequence 
in $C^\infty_c(E_1,\R)$, i.e., 
\[ \supp(\delta_n) \to \{0\}, \quad 0 \leq \delta_n, \quad \mbox{ and }  \quad 
\int_{E_1} \delta_n(x)\, dx = 1.\] 
Then $|\tilde\delta_n| \leq 1$ and the sequence 
$\tilde\delta_n$ converges pointwise to $1$. This shows that 
\[ 0 \not 
= \Im\int_E \tilde\phi_0(\lambda_0)\, d\mu(\lambda)
= \lim_{n \to \infty} \Im\int_E \tilde\phi_0(\lambda_0)\tilde{\delta_n(\lambda_1)}\, d\mu(\lambda)
= \lim_{n \to \infty} \tilde\mu(\phi_0 \otimes \delta_n).\] 
This shows that $(x_1,0) \in \supp(\Im\tilde\mu)$.
\end{prf}

\begin{prop}\mlabel{prop:7.10} Let $E$ be a simple euclidean Jordan algebra 
or rank $r$ and $s \in \N$. 
If either $r$ is even, or if $r$ is odd and $s$ is even, then 
\[ \supp(\Im \tilde\mu_s) = E \setminus E^\times.\] 
\end{prop}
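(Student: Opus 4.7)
My approach is to prove the two inclusions separately, combining Propositions \ref{prop:opensupp} and \ref{prop:7.7} with Lemma \ref{lem:project} and the semi-invariance of $\mu_s$ under the structure group $\Str(E)$.

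First, for the inclusion $\supp(\Im \tilde\mu_s) \subeq E \setminus E^\times$, I would check that $\Im \tilde\mu_s$ vanishes on the open set $E^\times$. Every $x \in E^\times$ has $\ind(x) = r - 2k(x) \equiv r \pmod 2$, so under either parity hypothesis of the statement, $s \cdot \ind(x)$ is even, and Proposition \ref{prop:opensupp} gives the vanishing of $\Im \tilde\mu_s$ on every connected component $E_j^\times$, hence on all of $E^\times$.

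For the reverse inclusion, I would fix a Jordan frame $(c_1, \ldots, c_r)$ and work with the diagonal subalgebra $E_0 := \spann\{c_1, \ldots, c_r\} \cong \R^r$. The restriction map $p \: E^* \to E_0^*$ sends $\mu_s$ to the Riesz measure $\mu_s^{\R^r}$ on $E_0$, since both are supported in the respective dual positive cones and share the Laplace transform $(z_1 \cdots z_r)^{-s}$ on $E_0 + i C_0^0$ (using that $\Delta$ restricts to the coordinate-product on $E_0$). In particular $p_*\mu_s$ is tempered, so Lemma \ref{lem:project} applies, and combined with Proposition \ref{prop:7.7} it yields
\[
E_0 \setminus E_0^\times = \supp(\Im \widetilde{p_*\mu_s}) \subeq \supp(\Im \tilde\mu_s).
\]

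To propagate this inclusion from $E_0$ to all of $E$, I would invoke the action of the structure group. The semi-invariance relation $g_*\mu_s = |\det g|^{rs/n}\mu_s$ Fourier-transforms to $g_*\tilde\mu_s = c_g\, \tilde\mu_s$ with a positive scalar $c_g$, so $\supp(\Im \tilde\mu_s)$ is invariant under every $g \in \Str(E)$, and a fortiori under $\Aut(E) \subeq \Str(E)$. Since $\Aut(E)$ acts transitively on Jordan frames, the spectral decomposition shows that every $x \in E$ is sent by some $g \in \Aut(E)$ into $E_0$, with $x \in E^\times$ precisely when $gx \in E_0^\times$. Therefore $E \setminus E^\times = \Aut(E) \cdot (E_0 \setminus E_0^\times) \subeq \supp(\Im \tilde\mu_s)$, which completes the reverse inclusion.

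The only real obstacle is administrative: one must verify that $p_*\mu_s$ really equals $\mu_s^{\R^r}$ (via injectivity of the Laplace transform on cone-supported measures), and check that Fourier-conjugating the semi-invariance produces a positive scalar so that the real and imaginary parts of $\tilde\mu_s$ transform cleanly under $\Str(E)$. Once these points are in hand, the three cited results chain together mechanically.
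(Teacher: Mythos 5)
Your proposal is correct and follows essentially the same route as the paper: project to the diagonal subalgebra $E_0\cong\R^r$ via $p_*$, identify $p_*\mu_s$ with the rank-$r$ product Riesz measure through its Laplace transform, apply Lemma~\ref{lem:project} together with Proposition~\ref{prop:7.7} to get $E_0\setminus E_0^\times\subeq\supp(\Im\tilde\mu_s)$, and then use the semi-invariance \eqref{eq:mstrafo} under the structure group (so the support is a union of orbits, each meeting $E_0$) plus Proposition~\ref{prop:opensupp} for the vanishing on $E^\times$. You are in fact somewhat more explicit than the paper about the forward inclusion and about why the imaginary part transforms by a positive scalar, but these are exactly the points the paper's terser argument implicitly relies on.
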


\begin{prf} As the measure $\mu_s^0$ is a tensor product of tempered measures, 
it is tempered. Therefore Lemma~\ref{lem:project} implies that 
\begin{equation}
  \label{eq:suppcond2}
\supp(\Im \tilde\mu_s^0) \subeq \supp(\Im \mu_s). 
\end{equation}

Since $\mu_s$ is semi-invariant with respect to 
$\Str(E)_0$, the support of its imaginary part is a closed union 
of orbits of this group. Any such orbit meets the 
Jordan subalgebra $E_0$. Therefore the support of $\Im\tilde\mu_s$ can 
is determined completely by the support of $\Im\tilde{\mu_s^0}$, 
which corresponds to Riesz measures on the associative Jordan algebra 
$E_0 \cong \R^r$. 
\end{prf}

\begin{exs}  For $r= 3$, the possible indices of invertible elements are $\pm 1, \pm 3$. 
Hence $\Im\tilde\mu_s$ vanishes on some $E^\times_j$ if 
and only if vanishes on $E^\times_3$, which is equivalent to 
$s \in \frac{2}{3} \Z$ (Proposition~\ref{prop:opensupp}). 
It vanishes on all of $E^\times$ if and only if $s \in 2 \N_0$. 
In the latter case, 
\[ \supp(\Im\tilde\mu_s) = E\setminus E^\times\] 
by Proposition~\ref{prop:7.10}.
If $s \in \frac{2}{3}\Z \setminus \Z$, we immediately obtain 
\[ \supp(\Im\tilde\mu_s) 
= E\setminus (E^\times_3 \cup E^\times_{-3}) = E\setminus (C^0 \cup - C^0)\] 
because $E^\times_1 \cup E^\times_{-1}$ is dense in this set. 
\end{exs}

\subsection{Jordan wedges}

For $k \in\{0,\ldots, r\}$, we consider the endomorphisms 
\[ h_k := L(c_1 + \cdots + c_k) - L(c_{k+1} + \cdots + c_r)\in \End(E)  \] 
(cf.\ Appendix~\ref{app:b} for the notation). 
For the Riesz measures $\mu_s$, we obtain with Lemma~\ref{lem:trace-hk} 
\[ (e^{th_k})_* \mu_s 
= e^{t \tr(h_k) \frac{rs}{n}} \mu_s
= e^{t s (2k-r)}\mu_s,\] 
which leads to 
\[ \rho(e^{t}) = e^{t\nu_k} \quad \mbox{ with } \quad 
\nu_k = s\Big(k-\frac{r}{2}\Big).\]
Assume that $s \in \N_0$. The factor $\nu_k$ is either integral for each 
$k \in \{0,\ldots, r\}$ (if $r$ or $s$ is even) 
or never (if $r$ and $s$ are odd). If $\nu_k$ is integral, 
then 
\[ \supp(\Im \tilde\mu_s) = E \setminus E^\times\] 
by Proposition~\ref{prop:7.10}. 
We now relate this to the support conditions 
derived from Proposition~\ref{prop:supp-control}. 

\begin{thm} \mlabel{thm:6.11} 
The following assertions are equivalent for the wedge 
domains $W(h_k) \subeq E$: 
  \begin{itemize}
  \item[\rm(i)] $\nu_k \in \Z$.  
  \item[\rm(ii)] $\supp(\Im \tilde\mu_s) \subeq W(h_k)^c$.
  \item[\rm(iii)] $\supp(\Im \tilde\mu_s) \cap E^\times_{2k-r} = \eset$. 
  \end{itemize}
\end{thm}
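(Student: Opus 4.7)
The plan is to prove the cyclic implications (i)$\Rarrow$(ii)$\Rarrow$(iii)$\Rarrow$(i), pivoting on the explicit witness element
\[
e_k := c_1 + \cdots + c_k - c_{k+1} - \cdots - c_r,
\]
which I claim lies in $W(h_k) \cap E^\times_{2k-r}$. Throughout, I am in the scalar setting $\cK = \C$ with $J_\cK$ complex conjugation, so the formulas $\rho(e^t) = e^{t\nu_k}$ give $\Lambda = \Lambda_+ = \nu_k\cdot\1$ and $\Lambda_- = 0$.

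For (iii)$\Rarrow$(i): Since $e_k$ has spectrum $\{+1,\ldots,+1,-1,\ldots,-1\}$ with $k$ ones and $r-k$ minus-ones, its index is $2k-r$, so $E^\times_{2k-r}$ is one of the connected components of $E^\times$ considered in Proposition~\ref{prop:opensupp}. That proposition gives directly: $\Im \tilde\mu_s$ vanishes on $E^\times_{2k-r}$ if and only if $s(2k-r) \in 2\Z$, i.e., $\nu_k = s(k - r/2) \in \Z$.

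For (i)$\Rarrow$(ii): Lemma~\ref{lem:VsharpVflat} tells us that $\sV_\cK^\sharp = \sV_\cK^\flat$ is equivalent to $\Lambda_- = 0$ and $\Spec(\Lambda_+)\subeq\Z$, which in our scalar case reduces exactly to $\nu_k \in \Z$. Hence Proposition~\ref{prop:supp-control} applies, yielding $\supp D_-^\xi \subeq W(h_k)^c$ for every $\xi \in \sV_\cK^\sharp$. Since $\sV_\cK^\sharp$ is one-dimensional, $D_-^\xi = |\xi|^2 D_-$ where $D_- = \Im\hat\mu_s$; and $\supp(\Im\hat\mu_s) = \supp(\Im\tilde\mu_s)$ because $\hat\mu = \overline{\tilde\mu}$. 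Picking any nonzero $\xi$ gives (ii).

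For (ii)$\Rarrow$(iii): I first verify that $e_k \in W(h_k) \cap E^\times_{2k-r}$. Using the Peirce decomposition with respect to the idempotent $c_1 + \cdots + c_k$, the $h_k$-eigenspaces $E_{\pm 1}(h_k)$ are themselves euclidean Jordan subalgebras whose unit elements are $c_1 + \cdots + c_k$ and $c_{k+1} + \cdots + c_r$; these units lie in the interiors of the respective positive cones, giving $c_1+\cdots+c_k \in C_+^0$ and $c_{k+1}+\cdots+c_r \in -C_-^0$. Hence
\[
e_k \in C_+^0 \oplus E_0(h_k) \oplus C_-^0 = W(h_k),
\]
and it is invertible of index $2k-r$. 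Assuming (ii), a neighborhood of $e_k$ lies in $W(h_k) \subeq (\supp\Im\tilde\mu_s)^c$, so $\Im\tilde\mu_s$ vanishes near $e_k$. But by \eqref{eq:indform} the restriction of $\Im\tilde\mu_s$ to the connected open set $E^\times_{2k-r}$ is the smooth function $|\Delta(x)|^{-s}\sin\big((2k-r)s\pi/2\big)$, which is either identically zero or nowhere zero; vanishing near $e_k$ forces the identically zero alternative, i.e., $\supp(\Im\tilde\mu_s)\cap E^\times_{2k-r} = \eset$.

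The main obstacle is the geometric identification $e_k \in W(h_k)$: it depends on recognizing that the $h_k$-eigenspaces inherit euclidean Jordan structures whose positive cones are $C_+$ and $-C_-$, a Peirce-theoretic fact that should be available from Appendix~\ref{app:b}. Once this is in hand, the rest of the argument is a clean combination of Propositions~\ref{prop:opensupp} and~\ref{prop:supp-control} with Lemma~\ref{lem:VsharpVflat}.
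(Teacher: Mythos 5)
Your proof is correct and follows essentially the same route as the paper: the cyclic implications (i)$\Rightarrow$(ii) via Lemma~\ref{lem:VsharpVflat} and Proposition~\ref{prop:supp-control}, (ii)$\Rightarrow$(iii) via the geometric relation between $W(h_k)$ and $E^\times_{2k-r}$, and (iii)$\Rightarrow$(i) via Proposition~\ref{prop:opensupp}. The only (harmless) variation is in (ii)$\Rightarrow$(iii): the paper invokes Corollary~\ref{cor:j2} for the full inclusion $W(h_k)\subeq E^\times_{2k-r}$, whereas you exhibit the single witness $e_k\in W(h_k)\cap E^\times_{2k-r}$ and then use the rigidity of $\Im\tilde\mu_s=|\Delta(\cdot)|^{-s}\sin((2k-r)s\pi/2)$ on the connected component — a step the paper leaves implicit and which you correctly make explicit.
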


\begin{prf} ``(i) $\Rarrow$ (ii)'' follows from Proposition~\ref{prop:supp-control}. 

\par\nin (ii) $\Rarrow$ (iii): 
As $W(h_k) \subeq E^\times_{2k-r}$ by  Corollary~\ref{cor:j2}, condition (ii) 
implies that $\Im\tilde\mu_s$ vanishes on $E^\times_{2k-r}$.

\par\nin (iii) $\Rarrow$ (i):  
By Proposition~\ref{prop:opensupp}, (iii) 
implies that $2\nu_k = (2k-r)s \in 2 \Z$, i.e., that $\nu_k \in \Z$ 
\end{prf}

The preceding theorem shows that 
Proposition~\ref{prop:supp-control} does not provide any information 
on non-invertible elements in the support of $\Im(\tilde\mu_s)$. 
In particular, if every $\nu_k$ is integral, it only shows that 
$\Im(\tilde\mu_s) \cap E^\times = \eset$, so that 
Proposition~\ref{prop:supp-control} provides strictly finer information 
if $r > 2$. 

Theorem~\ref{thm:6.11} also shows that, 
if $\sV_\cK^\sharp \not= \sV_\cK^\flat$, i.e., 
if $\nu_k$ is not integral (Lemma~\ref{lem:VsharpVflat}), 
then we do not expect restrictions on the support of 
$\Im(\tilde\mu_s)$.

\appendix

\section{Standard subspaces}
\mlabel{app:a} 

In this appendix we collect some facts about 
standard subspaces $\sV \subeq \cH$. 
In particular we
describe the connection to antiunitary representations of 
the multiplicative group $\R^\times$, 
and the connection to KMS conditions and modular objects. Most of the material 
in this section is standard and well known. 
We refer to \cite{Lo08} for the basic theory of standard subspaces, 
other references 
are \cite{NO17,NO19}. Proofs are sometimes included 
for the sake of clarity of exposition.

\subsection{Standard subspaces and antiunitary 
representations} 
\mlabel{subsec:1.1}

\begin{definition}
A closed real subspace $\sV$ of a complex Hilbert space $\cH$ 
is called {\it standard} if 
\begin{equation}
  \label{eq:stansub}
  \sV \cap i \sV = \{0\}\quad \mbox{ and } \quad \cH = \oline{\sV + i \sV}.
\end{equation} 
\end{definition}
\nin If $\sV \subeq \cH$ is a standard subspace, 
then 
\begin{equation}
  \label{eq:tomitaop}
 T_\sV \: \cD(T_\sV) := \sV + i \sV \to \cH, \quad 
x + i y \mapsto x- iy 
\end{equation}
defines a closed operator with $\sV = \Fix(T_\sV)$. It is called the 
\textit{Tomita operator} of~$\sV$. Its polar decomposition can be written as 
$T_\sV = J_\sV \Delta_\sV^{1/2}$, 
where $J_\sV$ is a {\it conjugation} (an antiunitary 
involution) and $\Delta_\sV$ is a positive selfadjoint operator such that the 
{\it modular relation} 
\begin{equation}
  \label{eq:modrel}
 J_\sV \Delta_\sV J_\sV = \Delta_\sV^{-1} 
\end{equation}
holds. We call  $(\Delta_\sV, J_\sV)$ the {\it pair of modular 
objects} associated to~$\sV$. 

Denote the inner product on $\cH$ by $\ip{\cdot }{\cdot}$ and
let $\omega (u,v)=\Im \ip{u}{v}$. Then $\omega$ is a symplectic form on $\cH$.
For a real subspace $\sW\subset \cH$ let 
\[\sW^\prime =\{v\in \cH\: (\forall w\in \sW)\, \omega (w,v)=0\}.\]
Then $\sW^\prime $ is also a real subspace and $\sW^{\prime\prime} = \overline{\sW}$, the
closure of $\sW$.

In the following lemma we collect several properties of standard subspaces 
that will be used in this article: 
\begin{lemma}\mlabel{lem:incl}
Let $\sV,\sV_1,\sV_2$ be standard subspaces. Then the following assertions 
hold:
\begin{itemize}
\item[\rm (i)]  $\sV_1\subseteq \sV_2$ implies $\sV_2^\prime \subseteq \sV_1^\prime$. 
\item[\rm (ii)] $\ip{\xi}{J_\sV\xi}\ge 0$ for all $\xi \in\sV$.
\item[\rm (iii)] $\sV$ is standard if and only if $\sV^\prime$ is
standard. 
\item[\rm (iv)] $J_\sV = J_{\sV^\prime}$ and $\Delta_{\sV^\prime} = \Delta_{\sV}^{-1}$.
\item[\rm (v)] $\sV=\sV^\prime$ if and only if $\Delta_\sV =\1$.
\item[\rm (vi)] $J_\sV \sV=\sV^\prime$. 
\item[\rm (vii)] $(\sV^\prime)^\prime = \sV$.
\end{itemize}
\end{lemma}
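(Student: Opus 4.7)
The plan is to reduce every clause to two underlying facts: the characterization of $\sV$ as the fixed-point set of its Tomita operator with polar decomposition $T_\sV = J_\sV\Delta_\sV^{1/2}$, and the elementary identity $\sV' = (i\sV)^{\perp_\R}$, where $\perp_\R$ denotes the real orthogonal complement with respect to $\Re\ip{\cdot}{\cdot}$. The latter is immediate from $\Re\ip{iw}{v} = \Im\ip{w}{v} = \omega(w,v)$ since the inner product is linear in the second argument. One also uses the interaction $i\sW^{\perp_\R} = (i\sW)^{\perp_\R}$ and the standard duality $\sW^{\perp_\R\perp_\R} = \oline\sW$ for real subspaces.

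With this setup, items (i) and (vii) are formal: (i) reverses inclusions of real orthogonal complements, and (vii) follows from $(\sV')' = (i(i\sV)^{\perp_\R})^{\perp_\R} = (\sV^{\perp_\R})^{\perp_\R} = \sV$, using that $\sV$ is closed. Item (iii) falls out of direct computations with the same identity: one checks
\[ \sV' \cap i\sV' = (i\sV)^{\perp_\R} \cap \sV^{\perp_\R} = (\sV + i\sV)^{\perp_\R}, \qquad \oline{\sV' + i\sV'} = (\sV \cap i\sV)^{\perp_\R}, \]
so the two defining properties of standardness interchange when $\sV$ is replaced by $\sV'$. Item (ii) is equally direct: for $\xi\in\sV$ we have $T_\sV\xi = \xi$, i.e.\ $J_\sV\Delta_\sV^{1/2}\xi = \xi$, so $J_\sV\xi = \Delta_\sV^{1/2}\xi$ and therefore $\ip{\xi}{J_\sV\xi} = \ip{\xi}{\Delta_\sV^{1/2}\xi}\ge 0$ by positivity of $\Delta_\sV^{1/2}$.

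The main obstacle is (iv), which rests on the identity $T_{\sV'} = T_\sV^*$; I would prove this by a direct inner-product computation on the dense domains $\sV + i\sV$ and $\sV' + i\sV'$, checking that $\ip{T_\sV u}{v} = \oline{\ip{T_{\sV'}v}{u}}$ for $u\in\cD(T_\sV)$, $v\in\cD(T_{\sV'})$ via the definition of $\sV'$. Granting this, the adjoint of the polar decomposition gives
\[ T_{\sV'} = T_\sV^* = \Delta_\sV^{1/2} J_\sV = J_\sV\,(J_\sV\Delta_\sV^{1/2}J_\sV) = J_\sV\Delta_\sV^{-1/2}, \]
where the last step uses the modular relation $J_\sV\Delta_\sV J_\sV = \Delta_\sV^{-1}$ (extended via functional calculus to $\Delta_\sV^{1/2}$; since $\Delta_\sV^{1/2}$ has positive real spectrum, antilinearity of $J_\sV$ causes no extra conjugation). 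Uniqueness of the polar decomposition then yields $J_{\sV'} = J_\sV$ and $\Delta_{\sV'} = \Delta_\sV^{-1}$.

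Items (vi) and (v) follow at once. For (vi), given $\xi \in \sV$, use (iv) together with the modular relation and the Tomita equation to compute $T_{\sV'}(J_\sV\xi) = J_\sV\Delta_\sV^{-1/2}J_\sV\xi = \Delta_\sV^{1/2}\xi = J_\sV\xi$, so $J_\sV\sV \subseteq \sV'$; the reverse inclusion is the same argument applied to $\sV'$, combined with (vii) and $J_{\sV'} = J_\sV$. For (v), $\sV = \sV'$ is equivalent to $T_\sV = T_{\sV'}$, and by uniqueness of the polar decomposition together with (iv) this forces $\Delta_\sV^{1/2} = \Delta_\sV^{-1/2}$, hence $\Delta_\sV = \1$; the converse is immediate because $\Delta_\sV = \1$ makes $T_\sV = J_\sV = T_{\sV'}$.
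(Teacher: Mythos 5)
Your architecture is genuinely different from the paper's: the paper simply cites Longo for (iii) and (iv) and gets (vi) from the $J$-positivity statement (ii), whereas you make everything self-contained by reducing (i), (iii), (vii) to the identity $\sV'=(i\sV)^{\perp_\R}$ and hanging (iv)--(vi) on the single operator identity $T_{\sV'}=T_\sV^*$. That is a legitimate (indeed the standard textbook) route, and items (i), (ii), (iii), (vii) are fine as written. But the step you yourself flag as the main obstacle is where the write-up breaks.

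First, the identity you propose to verify, $\ip{T_\sV u}{v}=\oline{\ip{T_{\sV'}v}{u}}$, is false. Writing $u=a+ib$, $v=c+id$ with $a,b\in\sV$, $c,d\in\sV'$, every cross pairing $\ip{a}{c},\ip{a}{d},\ip{b}{c},\ip{b}{d}$ is real (this is exactly what $\omega(\sV,\sV')=0$ says), and one finds $\ip{T_\sV u}{v}=\ip{a}{c}-\ip{b}{d}+i\big(\ip{a}{d}+\ip{b}{c}\big)$, which is not real in general; your identity would force it to equal its own conjugate (take $u=ib$, $v=c$ to see it fail). The relation that actually holds, and which is the defining relation for the adjoint of an \emph{antilinear} operator, is $\ip{T_{\sV'}v}{u}=\ip{T_\sV u}{v}$ with no conjugation. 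Second, even after this correction the computation only yields the inclusion $T_{\sV'}\subseteq T_\sV^*$, and an inclusion of closed antilinear involutions does not force equality -- that is precisely the situation of a proper inclusion of standard subspaces. You still need $\cD(T_\sV^*)\subseteq \sV'+i\sV'$: if $T_\sV^*v=v$, then for $a\in\sV$ the adjoint relation gives $\ip{v}{a}=\ip{T_\sV^*v}{a}=\ip{T_\sV a}{v}=\ip{a}{v}$, so $\ip{a}{v}$ is real and $v\in\sV'$; since $T_\sV^*$ is a closed antilinear involution mapping its domain onto itself, $\cD(T_\sV^*)=\Fix(T_\sV^*)+i\,\Fix(T_\sV^*)\subseteq\sV'+i\sV'$, which gives the equality. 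With these two repairs the deduction of (iv), and then of (v) and (vi), goes through exactly as you describe.
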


\begin{prf} (i) is obvious. 

\nin (ii) Let $\xi\in\sV$. Then $\xi = T_\sV(\xi) = J_\sV \Delta_\sV^{1/2} \xi$ 
implies that $\Delta^{1/2}_\sV\xi = J_\sV \xi$. As $\Delta^{1/2}_\sV$ is
positive selfadjoint, it follows that $\ip{\xi}{J_\sV\xi}\ge 0$.

\nin (iii) follows from \cite[\S 3.1]{Lo08} and 
(iv) is contained in \cite[Prop.~3.3]{Lo08}. 

\nin (v) follows from (iii), the fact that the pair $(\Delta_\sV, J_\sV)$ 
determines~$\sV$ and the observation that $\Delta_\sV = \Delta_\sV^{-1}$ 
is equivalent to $\Delta_\sV = \1$. 

\nin (vi) As $ \ip{\xi}{J_\sV\xi}$ is real by (ii), it follows that 
$J_\sV\sV \subseteq \sV^\prime$. 
Applying this argument to $\sV'$ and using (iii), we also obtain 
$J_\sV \sV' \subeq \sV$, so that (v) follows from the fact that $J_\sV$ is 
an involution. 

\nin (vii) follows from (iv) which entails 
$J_{\sV''} = J_\sV$ and $\Delta_{\sV''} = \Delta_\sV$. 
\end{prf}

We have already seen that every standard subspace 
$\sV$ determines a pair $(\Delta_\sV, J_\sV)$ of modular objects 
and that $\sV$ can be recovered from this pair by $\sV = \Fix(J_\sV \Delta_\sV^{1/2})$. 
This observation can be used to obtain a representation theoretic 
parametrization of the set of standard subspaces of $\cH$ 
(cf.~\cite{BGL02}, \cite{NO17}):
Each standard subspace $\sV$ specifies a homomorphism 
$U^\sV : \R^\times \to \AU(\cH)$ by
\begin{equation}
  \label{eq:uv-rep}
U^\sV(e^t) := \Delta_\sV^{-it/2\pi} =e^{it H_\sV}, \quad 
U^\sV(-1) := J_\sV, \quad\text{where } H_\sV =-\frac{1}{2\pi} \log \Delta_\sV .
\end{equation}

\begin{theorem}\mlabel{thm:bij}
The map $\sV\mapsto U^\sV$ defines a bijection between standard subspaces  and
antiunitary representations of the graded group $(\R^\times, \eps_{\R^\times})$. 
The inverse is given by assigning to the antiunitary representation $U \: \R^\times \to \AU(\cH)$ the operators  
\[ H =- i \frac{d}{dt}\Big|_{t = 0} U(e^t), \quad 
 \Delta := e^{-2\pi H}, \quad \text{and}\quad  J := U(-1).\] 
\end{theorem}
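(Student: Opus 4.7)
The plan is to split the theorem into three pieces: (a) the assignment $\sV\mapsto U^\sV$ is well defined (produces an antiunitary representation), (b) the assignment is injective because $\sV$ is recovered from $(\Delta_\sV,J_\sV)$ as $\Fix(J_\sV\Delta_\sV^{1/2})$, and (c) it is surjective because every antiunitary representation of $\R^\times$ arises in this way. The link between the two worlds is the bijection between standard subspaces of $\cH$ and pairs $(\Delta,J)$, where $\Delta$ is a positive selfadjoint operator and $J$ is a conjugation, satisfying the modular relation $J\Delta J=\Delta^{-1}$; this is the algebraic fact behind the theorem.

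For (a), I would start from a standard subspace $\sV$ with Tomita operator $T_\sV=J_\sV\Delta_\sV^{1/2}$. The operators $U^\sV(e^t):=\Delta_\sV^{-it/2\pi}$ form a unitary one-parameter group by the spectral theorem, and $U^\sV(-1):=J_\sV$ is antiunitary of square one. What remains is the homomorphism property across the parity grading: from the modular relation $J_\sV\Delta_\sV J_\sV=\Delta_\sV^{-1}$ one obtains $J_\sV\Delta_\sV^{-it/2\pi}J_\sV=\Delta_\sV^{it/2\pi}$ by antiunitary functional calculus (the antilinearity of $J_\sV$ converts $i$ to $-i$ while the modular relation converts $H_\sV$ to $-H_\sV$). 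This is exactly what is needed so that $U^\sV((-1)\cdot e^t)=U^\sV(-1)U^\sV(e^t)$, verifying that $U^\sV$ is an antiunitary representation.

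For (b), injectivity, the point is that $\sV$ is completely encoded in the pair $(\Delta_\sV,J_\sV)$, since $\sV=\Fix(T_\sV)=\Fix(J_\sV\Delta_\sV^{1/2})$. Distinct standard subspaces therefore give distinct modular pairs and a fortiori distinct representations (the pair is read off from $U$ via $\Delta=e^{-2\pi H}$ with $H=-iU'(1)$, and $J=U(-1)$). For (c), the surjectivity, I would start from an antiunitary representation $U$ of $\R^\times$ and set $H=-i\,\frac{d}{dt}\big|_{t=0}U(e^t)$, $\Delta=e^{-2\pi H}$, $J=U(-1)$; by Stone's theorem $H$ is selfadjoint, so $\Delta$ is positive selfadjoint, and $J$ is a conjugation because $U(-1)^2=U(1)=\1$. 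The homomorphism property of $U$ over the parity grading gives $JU(e^t)J=U(e^t)$, and expanding $U(e^t)=e^{-2\pi itH/2\pi}$ together with the antilinearity of $J$ yields $JHJ=-H$, hence by functional calculus $J\Delta J=\Delta^{-1}$.

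The one step that is substantive (and where I expect the main obstacle) is showing that the pair $(\Delta,J)$ arising in (c) actually comes from a standard subspace; equivalently, that for every pair $(\Delta,J)$ with $\Delta$ positive selfadjoint, $J$ a conjugation, and $J\Delta J=\Delta^{-1}$, the operator $T:=J\Delta^{1/2}$ (with $\cD(T)=\cD(\Delta^{1/2})$) is a closed, densely defined antilinear involution, and $\sV:=\Fix(T)$ is standard with $T_\sV=T$. Closedness is immediate since $J$ is antiunitary and $\Delta^{1/2}$ is selfadjoint; that $T$ restricts to an involution on its range follows from $J\Delta J=\Delta^{-1}$, which gives $T^2\xi=J\Delta^{1/2}J\Delta^{1/2}\xi=\Delta^{-1/2}\Delta^{1/2}\xi=\xi$ on a core. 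The density of $\sV+i\sV$ and the triviality of $\sV\cap i\sV$ then follow from the Rieffel--van Daele type argument: $J\Delta^{1/2}$ is the polar decomposition of the closed antilinear involution $T$, and the real and imaginary parts of a vector with respect to $T$ provide the decomposition into $\sV\oplus i\sV$ on $\cD(T)$. Once this is in place, unwinding the definitions shows that $\Delta_\sV=\Delta$ and $J_\sV=J$, so that $U^\sV=U$, completing the bijection.
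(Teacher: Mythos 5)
Your proposal is correct. Note that the paper itself states Theorem~\ref{thm:bij} without proof, referring to \cite{BGL02} and \cite{NO17}; your argument is precisely the standard one given there: the bijection between standard subspaces and modular pairs $(\Delta,J)$ with $J\Delta J=\Delta^{-1}$ via the closed antilinear involution $T=J\Delta^{1/2}$, combined with the observation that such pairs are the same data as antiunitary representations of $(\R^\times,\eps_{\R^\times})$. The one step worth writing out in full is that $T$ maps $\cD(\Delta^{1/2})$ into itself (using $\Delta^{1/2}J=J\Delta^{-1/2}$ one gets $\Delta^{1/2}(J\Delta^{1/2}\xi)=J\xi$, so $T^2=\1$ holds on all of $\cD(T)$, not merely on a core), after which the decomposition $\xi=\tfrac12(\xi+T\xi)+i\cdot\tfrac{1}{2i}(\xi-T\xi)$ yields $\cD(T)=\sV+i\sV$ and standardness exactly as you describe.
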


\begin{lemma}\mlabel{lem:UtV} Let $\sV$ be a standard subspace. Then the following 
assertions hold: 
  \begin{itemize}
  \item[\rm(a)] $U^\sV(e^t)\sV=\sV$ for all $t\in \R$.
  \item[\rm(b)] $U^{\sV^\prime}(r)=U^{\sV}(r^{-1})$ for $r \in \R^\times$.
\item[\rm(c)] $\sV\cap \sV^\prime =\cH^{U^\sV}$.
  \end{itemize}
\end{lemma}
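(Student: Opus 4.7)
The plan is to reduce everything to the spectral/modular identities already established in Lemma \ref{lem:incl} together with the defining polar decomposition $T_\sV = J_\sV \Delta_\sV^{1/2}$. The key structural fact I will use throughout is that the relation $J_\sV \Delta_\sV J_\sV = \Delta_\sV^{-1}$, combined with the antilinearity of $J_\sV$, upgrades via the Borel functional calculus to
\begin{equation*}
J_\sV f(\Delta_\sV) J_\sV = \bar f(\Delta_\sV^{-1}),
\end{equation*}
so in particular $J_\sV \Delta_\sV^{it} J_\sV = \Delta_\sV^{it}$, i.e.\ $J_\sV$ and the modular unitary one-parameter group commute.

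For (a), I would take $\xi \in \sV = \Fix(T_\sV)$ and compute, using that $\Delta_\sV^{it}$ commutes with $\Delta_\sV^{1/2}$ as well as (by the preceding remark) with $J_\sV$,
\begin{equation*}
T_\sV \Delta_\sV^{it}\xi = J_\sV \Delta_\sV^{it}\Delta_\sV^{1/2}\xi = \Delta_\sV^{it} J_\sV \Delta_\sV^{1/2}\xi = \Delta_\sV^{it} T_\sV\xi = \Delta_\sV^{it}\xi,
\end{equation*}
so $\Delta_\sV^{it}\xi \in \sV$; the reverse inclusion follows on replacing $t$ by $-t$. This gives $U^\sV(e^t)\sV = \sV$, and combined with $J_\sV \sV = \sV'$ from Lemma \ref{lem:incl}(vi) also records the expected action of $U^\sV(-e^t)$.

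For (b), I would simply feed Lemma \ref{lem:incl}(iv), which says $J_{\sV'} = J_\sV$ and $\Delta_{\sV'} = \Delta_\sV^{-1}$, into the definition \eqref{eq:uv-rep}: on the positive component,
\begin{equation*}
U^{\sV'}(e^t) = \Delta_{\sV'}^{-it/2\pi} = \Delta_\sV^{it/2\pi} = U^\sV(e^{-t}),
\end{equation*}
while on the antiunitary component $U^{\sV'}(-1) = J_{\sV'} = J_\sV = U^\sV(-1) = U^\sV((-1)^{-1})$. Writing an arbitrary $r \in \R^\times$ as $r = \eps e^t$ with $\eps \in \{\pm 1\}$ closes the argument.

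Part (c) is the most structural of the three, though still elementary. The inclusion ``$\supseteq$'' is immediate: if $\xi \in \cH^{U^\sV}$, then $J_\sV \xi = \xi$ and $\Delta_\sV^{it}\xi = \xi$ for all $t$, the latter forcing $\Delta_\sV \xi = \xi$ by the spectral theorem, so $T_\sV \xi = J_\sV \Delta_\sV^{1/2}\xi = J_\sV \xi = \xi$ shows $\xi \in \sV$, and the same computation with $\Delta_\sV^{-1/2}$ (using (b) to identify $T_{\sV'} = J_\sV \Delta_\sV^{-1/2}$) gives $\xi \in \sV'$. For ``$\subseteq$'', if $\xi \in \sV \cap \sV'$ then $J_\sV \Delta_\sV^{1/2}\xi = \xi = J_\sV \Delta_\sV^{-1/2}\xi$, and injectivity of $J_\sV$ yields $\Delta_\sV^{1/2}\xi = \Delta_\sV^{-1/2}\xi$, hence $\Delta_\sV \xi = \xi$; feeding this back gives $J_\sV \xi = \xi$, so $\xi$ is $U^\sV$-invariant. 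The only mild obstacle to watch out for is keeping the antilinearity of $J_\sV$ straight when commuting it past bounded Borel functions of $\Delta_\sV$; once that calculus is recorded at the start, all three parts are essentially one-line manipulations.
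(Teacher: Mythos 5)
Your proof is correct. For part (a) it coincides with the paper's own argument: both compute $T_\sV U^\sV(e^t)\xi$ and commute $J_\sV$ past $\Delta_\sV^{it}$ using the modular relation $J_\sV\Delta_\sV J_\sV=\Delta_\sV^{-1}$ together with antilinearity (the identity $J_\sV f(\Delta_\sV)J_\sV=\bar f(\Delta_\sV^{-1})$ you record at the outset). The only divergence is in (b) and (c): the paper simply cites \cite[Lemma~3.7]{NO17}, whereas you supply self-contained arguments. Your (b) is the intended one-liner from Lemma~\ref{lem:incl}(iv) and the definition \eqref{eq:uv-rep}. Your (c) is a clean spectral argument: the inclusion $\cH^{U^\sV}\subeq \sV\cap\sV'$ uses that $\Delta_\sV^{it}\xi=\xi$ for all $t$ forces $P^\xi$ to be concentrated at $\{1\}$, hence $\Delta_\sV^{\pm 1/2}\xi=\xi$ and $T_\sV\xi=T_{\sV'}\xi=J_\sV\xi=\xi$; the reverse inclusion compares $T_\sV=J_\sV\Delta_\sV^{1/2}$ with $T_{\sV'}=J_\sV\Delta_\sV^{-1/2}$ and cancels $J_\sV$. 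Both directions are sound (the step $\Delta_\sV^{1/2}\xi=\Delta_\sV^{-1/2}\xi\Rightarrow\Delta_\sV\xi=\xi$ is justified since $\Delta_\sV^{-1/2}\xi\in\cD(\Delta_\sV^{1/2})$ with $\Delta_\sV^{1/2}\Delta_\sV^{-1/2}\xi=\xi$). What your route buys is independence from the external reference; what the paper's citation buys is brevity. No gaps.
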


\begin{proof} (a) Let $\xi \in\sV$ and $t\in\R$. Then
\[T_\sV(U^\sV(e^t)\xi)= J_\sV\Delta_\sV^{\frac{1}{2}-it/2\pi}\xi 
= \Delta_\sV^{-it/2\pi}(J_\sV\Delta_\sV^{1/2}\xi)=
U^\sV (e^t)T_\sV \xi =  U^{\sV}(e^t)\xi .\]

\nin (b), (c) follow from \cite[Lemma~3.7]{NO17}. 
\end{proof}

\begin{defn}
Let $\sV \subeq \cH$ be a real subspace and $J$ be a conjugation on $\cH$. 
We say that~$\sV$ is {\it $J$-positive} if 
$\la \xi, J \xi \ra \geq 0$ for $\xi \in \sV$. 
\end{defn}

Recall that a conjugation on $\cH$ is an antiunitary involution. 
The following lemma explores the question when the positivity of a conjugation $J$ 
on a real subspace $\sV$ implies that $\sV$ is standard with $J=J_\sV$. 

 \begin{lem} \mlabel{lem:8.2} 
For a closed real subspace $\sV \subeq \cH$ and a 
conjugation $J$, the following assertions hold:
\begin{itemize}
\item[\rm(i)] If $\sV$ is $J$-positive, then $J\sV \subeq \sV^\prime$. 
\item[\rm(ii)] If $\sV + i \sV$ is dense in $\cH$
and $J\sV \subeq \sV^\prime$, then  $\sV\cap i\sV =\{0\}$. 
\item[\rm(iii)] Assume that $\sV$ is standard. Then the following are equivalent
\begin{itemize}
\item[\rm (a)] $J=J_\sV$.
\item[\rm (b)] $\sV^\prime$ is $J$-positive and $J\sV \subseteq \sV^\prime$.
\item[\rm (c)] $\sV$ and $\sV^\prime$ are both $J$-positive.
\end{itemize}
\end{itemize}
\end{lem}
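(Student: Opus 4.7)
The plan is to handle (i) by polarizing the $J$-positivity, (ii) by a short density argument using the antilinearity of $J$, and (iii) by establishing the cyclic chain (a)$\Rightarrow$(b)$\Rightarrow$(c)$\Rightarrow$(a); the last implication is the main obstacle.

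For (i), the identities $\langle Ju,v\rangle = \overline{\langle u, Jv\rangle}$ and $J^2 = 1$ yield the symmetry $\langle \xi_2, J\xi_1\rangle = \langle\xi_1, J\xi_2\rangle$ for $\xi_1,\xi_2 \in \sV$. Expanding $\langle \xi_1 + t\xi_2, J(\xi_1 + t\xi_2)\rangle \geq 0$ for $t \in \R$ shows that the cross-term $2t\,\langle\xi_1, J\xi_2\rangle$ is real, so $\omega(\xi_1, J\xi_2) = 0$, i.e.\ $J\sV \subseteq \sV'$. For (ii), let $\xi = i\eta \in \sV \cap i\sV$ with $\eta \in \sV$. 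For $\zeta \in \sV$, the hypothesis $J\zeta \in \sV'$ gives $\Im\langle\xi, J\zeta\rangle = 0$, which after substituting $\xi = i\eta$ becomes $\Re\langle\eta, J\zeta\rangle = 0$; together with $\Im\langle\eta, J\zeta\rangle = 0$ (from $\eta \in \sV$ and $J\zeta \in \sV'$), we conclude $\langle\eta, J\zeta\rangle = 0$. Thus $\eta$ is complex-orthogonal to the real subspace $J\sV$, hence to its complex span $J\sV + iJ\sV = J(\sV + i\sV)$ (the last equality using antilinearity of $J$ and $-\sV = \sV$), which is dense by hypothesis. So $\eta = 0$ and $\xi = 0$.

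For (iii), (a)$\Rightarrow$(b) is immediate from Lemma~\ref{lem:incl}(ii), (iv), (vi). For (b)$\Rightarrow$(c), fix $\xi \in \sV$; by hypothesis $J\xi \in \sV'$, so $J$-positivity of $\sV'$ yields $0 \leq \langle J\xi, J(J\xi)\rangle = \langle J\xi, \xi\rangle = \overline{\langle\xi, J\xi\rangle}$, forcing $\langle\xi, J\xi\rangle \geq 0$, so $\sV$ is $J$-positive as well.

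The crux is (c)$\Rightarrow$(a). Applying (i) to $\sV$ and $\sV'$ gives $J\sV \subseteq \sV'$ and $J\sV' \subseteq \sV'' = \sV$, hence $J\sV = \sV'$. The unitary $V := JJ_\sV$ therefore satisfies $V\sV = J\sV' = \sV$, so it commutes with the Tomita operator $T_\sV = J_\sV \Delta_\sV^{1/2}$; uniqueness of polar decomposition forces $V$ to commute with both $J_\sV$ and $\Delta_\sV^{1/2}$. Commutation of $V$ with $J_\sV$ is equivalent to $V^2 = 1$, so $V$ is a unitary involution and $\sV$ splits orthogonally as $\sV^+ \oplus \sV^-$ along its $\pm 1$-eigenspaces. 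For $\xi \in \sV^-$ we have $J\xi = V J_\sV \xi = -J_\sV \xi$, so $0 \leq \langle\xi, J\xi\rangle = -\langle\xi, J_\sV\xi\rangle = -\langle\xi, \Delta_\sV^{1/2}\xi\rangle$, whereas Lemma~\ref{lem:incl}(ii) forces $\langle\xi, J_\sV\xi\rangle \geq 0$. Thus $\langle\xi, \Delta_\sV^{1/2}\xi\rangle = 0$, and strict positivity of $\Delta_\sV$ gives $\xi = 0$. Hence $V = 1$ on $\sV$, then on the dense subspace $\sV + i\sV$, and finally $J = J_\sV$. The main difficulty lies in extracting the intertwining relations for $V$ via uniqueness of polar decomposition and then exploiting the strict positivity of the modular operator.
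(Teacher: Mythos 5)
Your proof is correct, and for parts (i), (ii) and the implication (a)$\Rightarrow$(b) it runs essentially parallel to the paper's argument (polarization of the $J$-positivity for (i); a complex-orthogonality/density argument for (ii)). The genuine difference lies in how the equivalences in (iii) are closed up. The paper uses the cycle (a)$\Rightarrow$(b),(c), then (c)$\Rightarrow$(b) via part (i), and finally (b)$\Rightarrow$(a) by observing $J\sV' \supeq \sV$ and invoking \cite[Prop.~3.9]{Lo08} as a black box. You instead prove (c)$\Rightarrow$(a) directly: from $J$-positivity of both $\sV$ and $\sV'$ you get $J\sV = \sV'$, so the unitary $V = JJ_\sV$ preserves $\sV$, hence commutes with $T_\sV$ on its domain, and uniqueness of the polar decomposition of the closed antilinear operator $T_\sV$ forces $V$ to commute with $J_\sV$ and $\Delta_\sV^{1/2}$; the $(-1)$-eigenspace of the resulting involution $V$ inside $\sV$ is then killed by playing the $J$-positivity of $\sV$ against $\la \xi, J_\sV\xi\ra = \|\Delta_\sV^{1/4}\xi\|^2$ and the injectivity of $\Delta_\sV^{1/4}$. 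This is a self-contained replacement for the citation to Longo, at the cost of the (routine but nontrivial) polar-decomposition and eigenspace bookkeeping; the paper's route is shorter but imports the key fact from the literature. All the individual steps you use (in particular $VT_\sV = T_\sV V$ including domains, the equivalence of $[V,J_\sV]=0$ with $V^2=\1$, and $VJ_\sV\xi = J_\sV V\xi = -J_\sV\xi$ on $\sV^-$) check out.
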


\begin{prf} (i) The form $\beta(\xi,\eta) := \la J\xi, \eta \ra$ on 
$\cH$ is complex bilinear and symmetric. That $\sV$ is $J$-positive 
implies that $\beta$ is real on all pairs $(\xi,\xi)$, $\xi \in \sV$, 
hence by polarization also on $\sV \times \sV$. This means that 
$J\sV \subeq \sV'$. 

\nin (ii)  The subspace $\sV_0 := \sV \cap i \sV$ of $\cH$ is complex 
and satisfies $J\sV_0 \subeq \sV'$. Since $J\sV_0$ is also a complex subspace, 
it follows that $J\sV_0$ is orthogonal to the total subset $\sV$, hence 
trivial. 

\nin (iii) That (a) implies (b),(c) follows from Lemma~\ref{lem:incl}(ii),(iv),(vi). 
Further, (b) implies 
$J\sV' \supeq JJ\sV = \sV$, so that the $J$-positivity of $\sV'$ implies by 
\cite[Prop.~3.9]{Lo08} that $J = J_{\sV'} = J_\sV$, hence~(a). 
If (c) holds, then (i) shows that the $J$-positivity of 
$\sV$ implies $J\sV \subeq \sV'$. Hence (c) implies~(b). 
This proves (iii). 
\end{prf}

\begin{prop} \mlabel{prop:12.1} 
{\rm(Reflection positivity and standard subspaces)} 
Let $\sV \subeq \cH$ be a standard subspace with modular objects 
$(\Delta, J)$. Then the following assertions hold: 
\begin{itemize}
\item[\rm(i)] $(\cE,\cE_+, \theta) := (\cH^\R, \sV,J_\sV)$ is a 
real reflection positive Hilbert space.
\begin{footnote}
{See the introduction to Section~\ref{sec:4} for details.}
\end{footnote}
\item[\rm(ii)] The map $\Delta^{1/4} \: \sV \to \cH^J$ extends to an 
isometric isomorphism $\hat \sV \to \cH^J$, where 
$\hat \sV$ is the completion of $\sV$ with respect to scalar product 
$\la v,w \ra_J := \la v, J w \ra$ for $v,w \in \sV$.
\end{itemize}
\end{prop}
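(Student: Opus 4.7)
For (i), the subspace $\sV$ is closed and real by hypothesis, $J = J_\sV$ is a conjugation hence acts as a unitary involution on $\cH^\R$, and the reflection positivity inequality $\la J\xi, \xi\ra \geq 0$ for $\xi \in \sV$ is, by conjugate symmetry of the inner product, equivalent to the nonnegativity of $\la \xi, J\xi\ra$ asserted in Lemma~\ref{lem:incl}(ii). So (i) is immediate from the definitions.

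For (ii), I would first verify the isometry on $\sV$ itself. For $w \in \sV$, the identity $T_\sV w = w$ combined with $T_\sV = J\Delta^{1/2}$ gives $Jw = \Delta^{1/2}w$, and the selfadjointness of $\Delta^{1/4}$ then yields, for $v, w \in \sV \subeq \cD(\Delta^{1/2})$,
\begin{equation*}
\la v, w\ra_J = \la v, Jw\ra = \la v, \Delta^{1/2}w\ra = \la \Delta^{1/4}v, \Delta^{1/4}w\ra.
\end{equation*}
Hence $\Phi \: v \mapsto \Delta^{1/4}v$ embeds $(\sV, \la\cdot,\cdot\ra_J)$ isometrically into $\cH$. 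Next, the modular relation \eqref{eq:modrel} is equivalent, via functional calculus, to $J\Delta^{1/4} = \Delta^{-1/4}J$, and combined with $Jv = \Delta^{1/2}v$ gives $J\Phi(v) = \Delta^{-1/4}Jv = \Delta^{-1/4}\Delta^{1/2}v = \Phi(v)$, so $\Phi(\sV) \subeq \cH^J$. Consequently $\Phi$ extends by continuity to an isometric embedding $\hat\Phi \: \hat\sV \to \cH^J$.

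The essential third step is the density of $\Phi(\sV)$ in $\cH^J$, which yields surjectivity of $\hat\Phi$. Set $H := -\frac{1}{2\pi}\log\Delta$ and denote by $P_n$ the spectral projection of $H$ onto $[-n,n]$. The modular relation forces $JHJ = -H$, whence $JP_nJ = P_n$ by functional calculus (the interval $[-n,n]$ being symmetric). Thus $P_n\cH^J \subeq \cH^J$, and for $\xi \in \cH^J$ the vector $P_n\xi$ lies in a spectral subspace on which all powers $\Delta^s$ act boundedly. Setting $v_n := \Delta^{-1/4}P_n\xi$, the commutations $JP_n = P_nJ$, $J\Delta^{-1/4} = \Delta^{1/4}J$, together with $J\xi = \xi$, give
\begin{equation*}
J\Delta^{1/2}v_n = J\Delta^{1/4}P_n\xi = \Delta^{-1/4}JP_n\xi = \Delta^{-1/4}P_n\xi = v_n,
\end{equation*}
so $v_n \in \sV$ with $\Phi(v_n) = P_n\xi \to \xi$ in $\cH$ as $n \to \infty$. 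Density follows.

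The main obstacle is the careful handling of the unbounded operator $\Delta^{-1/4}$ when constructing the preimages $v_n$; the spectral cutoff $P_n$, together with the commutation $JP_nJ = P_n$ coming from $JHJ = -H$, is precisely what makes the computation legitimate while keeping $v_n$ inside $\sV \subeq \cD(\Delta^{1/2})$.
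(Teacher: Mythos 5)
Your proof is correct and follows the same route as the paper: the isometry identity $\la v,Jw\ra=\la\Delta^{1/4}v,\Delta^{1/4}w\ra$, the inclusion $\Delta^{1/4}\sV\subeq\cH^J$ via $J\Delta^{1/4}=\Delta^{-1/4}J$, and density of $\Delta^{1/4}\sV$ in $\cH^J$ from the spectral decomposition of $\Delta$. The only difference is that the paper dismisses the density step as "easy," whereas you correctly spell it out with the spectral cutoffs $P_n$ and the commutation $JP_nJ=P_n$ coming from $JHJ=-H$.
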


\begin{prf} (i) follows directly from 
$\la v, J v \ra = \la v, \Delta^{1/2} v \ra = \|\Delta^{1/4} v\|^2.$

\nin (ii) Next we note that $\sV \subeq \cD(\Delta^{1/2}) \subeq \cD(\Delta^{1/4})$ 
implies that $\Delta^{1/4}$ is defined on $\sV$. 
For $v \in \sV$, we have 
\[ J \Delta^{1/4}v = \Delta^{-1/4} Jv = \Delta^{-1/4} \Delta^{1/2}v 
=  \Delta^{1/4}v, \] 
so that $\Delta^{1/4}\sV \subeq  \cH^J$. 
Using the spectral decomposition of $\Delta$ it follows 
easily that $\Delta^{1/4}\sV$ is dense in $\cH^J$. This implies (ii). 
\end{prf} 

The following simple observation  is  taken from \cite{MN21}.
It slightly extends \cite[Prop.~3.10]{Lo08}. 

\begin{prop} \mlabel{prop:lo081} 
Suppose that  $\sV_1 \subeq \sV_2$ are standard subspaces of $\cH$. 
If 
\begin{itemize}
\item[\rm(a)] $\Delta_{\sV_2}^{it} \sV_1 = \sV_1$ for $t \in \R$, or 
\item[\rm(b)] $\Delta_{\sV_1}^{it} \sV_2 = \sV_2$ for $t \in \R$, 
\end{itemize}
then $\sV_1 = \sV_2$. 
\end{prop}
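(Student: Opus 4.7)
My plan is to reduce case (b) to case (a) via symplectic complements, and then handle (a) directly from the polar decomposition $T_{\sV_2} = J_{\sV_2}\Delta_{\sV_2}^{1/2}$ and the modular relation \eqref{eq:modrel}. For the reduction, I would note that $\sV_1 \subseteq \sV_2$ gives $\sV_2' \subseteq \sV_1'$ by Lemma \ref{lem:incl}(i), while $\Delta_{\sV_1'}^{it} = \Delta_{\sV_1}^{-it}$ by (iv). Since $\Delta_{\sV_1}^{it}$ is unitary, the hypothesis that it preserves $\sV_2$ forces it to preserve the symplectic complement $\sV_2'$, so the pair $(\sV_2',\sV_1')$ satisfies the hypothesis (a). Once (a) is established, it yields $\sV_2' = \sV_1'$ and Lemma \ref{lem:incl}(vii) translates this back to $\sV_1 = \sV_2$.

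For case (a), I would exploit that $T_{\sV_2}$ extends $T_{\sV_1}$ (since $\sV_1 + i\sV_1 \subseteq \sV_2 + i\sV_2$) together with the intertwining
\[
T_{\sV_2}\,\Delta_{\sV_2}^{it} \;=\; \Delta_{\sV_2}^{-it}\,T_{\sV_2}
\]
on $\cD(T_{\sV_2})$, which is a direct consequence of \eqref{eq:modrel} applied to the one-parameter group $\Delta_{\sV_2}^{it}$. For any $v \in \sV_1$ we have $T_{\sV_2}v = v$, while the invariance hypothesis puts $\Delta_{\sV_2}^{it}v$ back in $\sV_1 \subseteq \sV_2$, giving $T_{\sV_2}(\Delta_{\sV_2}^{it}v) = \Delta_{\sV_2}^{it}v$; comparing with the intertwining forces $\Delta_{\sV_2}^{2it}v = v$ for every $t \in \R$, whence $\Delta_{\sV_2}v = v$. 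Substituting back into $T_{\sV_2}v = v$ then yields $J_{\sV_2}v = v$, so Lemma \ref{lem:UtV}(c) gives $\sV_1 \subseteq \cH^{U^{\sV_2}} = \sV_2 \cap \sV_2'$.

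To close, I would observe that the fixed-point set $\cH^{\Delta_{\sV_2}}$ is a closed complex subspace (complex linearity of $\Delta_{\sV_2}$) containing $\sV_1 + i\sV_1$; standardness of $\sV_1$ forces $\cH^{\Delta_{\sV_2}} = \cH$, hence $\Delta_{\sV_2} = \1$ and $\sV_2 = \cH^{J_{\sV_2}}$ by Lemma \ref{lem:incl}(v). Using the real orthogonal decomposition $\cH = \cH^{J_{\sV_2}} \oplus i\,\cH^{J_{\sV_2}}$ into the $(\pm 1)$-eigenspaces of the conjugation $J_{\sV_2}$, the containment $\sV_1 \subseteq \cH^{J_{\sV_2}}$ combined with the density of $\sV_1 + i\sV_1$ in $\cH$ shows that $\sV_1$ is dense in $\cH^{J_{\sV_2}}$; since $\sV_1$ is closed, this forces $\sV_1 = \cH^{J_{\sV_2}} = \sV_2$.

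I do not anticipate a serious obstacle; the only care required is to track the domains of the unbounded operators $T_{\sV_2}$ and $\Delta_{\sV_2}^{it}$ when applying the intertwining relation on elements of $\sV_1$, but this is automatic because $\sV_1 \subseteq \sV_2 \subseteq \cD(T_{\sV_2})$ and the modular group preserves $\sV_1$ by hypothesis, so every vector and its image lie in the natural domain of $T_{\sV_2}$.
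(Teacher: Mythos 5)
Your reduction of (b) to (a) via symplectic complements is correct and is exactly what the paper does: unitaries preserving a real subspace preserve its symplectic complement, $\Delta_{\sV_1'}^{it}=\Delta_{\sV_1}^{-it}$, and Lemma~\ref{lem:incl}(i),(iv),(vii) finish the translation. The problem is your proof of case (a), which is where all the content lies (the paper simply cites \cite[Prop.~3.10]{Lo08} for it). Your key intertwining relation $T_{\sV_2}\,\Delta_{\sV_2}^{it}=\Delta_{\sV_2}^{-it}\,T_{\sV_2}$ is false. Since $J_{\sV_2}$ is \emph{antilinear} and $J_{\sV_2}\Delta_{\sV_2}J_{\sV_2}=\Delta_{\sV_2}^{-1}$, one gets $J_{\sV_2}\Delta_{\sV_2}^{it}J_{\sV_2}=(\Delta_{\sV_2}^{-1})^{-it}=\Delta_{\sV_2}^{it}$, i.e.\ $J_{\sV_2}$ \emph{commutes} with the modular group, and hence so does $T_{\sV_2}=J_{\sV_2}\Delta_{\sV_2}^{1/2}$ (this is precisely the computation in the proof of Lemma~\ref{lem:UtV}(a)). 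With the correct relation your comparison reads $T_{\sV_2}(\Delta_{\sV_2}^{it}v)=\Delta_{\sV_2}^{it}T_{\sV_2}v=\Delta_{\sV_2}^{it}v$, which is a tautology and yields no information; the step "$\Delta_{\sV_2}^{2it}v=v$" does not follow.

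That the argument must be wrong is also visible from its conclusion: you end up proving $\Delta_{\sV_2}=\1$, but taking $\sV_1=\sV_2$ equal to any standard subspace with $\Delta_{\sV_2}\neq\1$ satisfies hypothesis (a) trivially (Lemma~\ref{lem:UtV}(a)), so no proof of (a) can force $\Delta_{\sV_2}=\1$. The genuine content of (a) requires a different idea: one shows that the restriction of $(\Delta_{\sV_2}^{-it/2\pi})$ to the cyclic subspace generated by $\sV_1$ satisfies the KMS condition with respect to $\sV_1$ (inherited from $\sV_2\supseteq\sV_1$ via Proposition~\ref{prop:9.5}), whence it coincides with the modular group of $\sV_1$ and $(\Delta_{\sV_1},J_{\sV_1})=(\Delta_{\sV_2},J_{\sV_2})$, giving $\sV_1=\sV_2$. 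This analytic-continuation step is what your algebraic shortcut cannot replace.
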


\begin{prf} That (a) implies $\sV_1 = \sV_2$ follows from 
\cite[Prop.~3.10]{Lo08}. From (b) we obtain by dualization 
$\sV_2^\prime  \subeq \sV_1^\prime$ with 
$\Delta_{\sV_1^\prime}^{it} \sV_2^\prime = \sV_2'$ for $t \in \R$, so that 
we obtain $\sV_1^\prime = \sV_2^\prime $ with (a), hence $\sV_1 = \sV_2$ also holds in this case. 
\end{prf}

\subsection{Standard subspaces and the KMS condition}\mlabel{ss:KMS}
As mentioned above, the bijection in Theorem \ref{thm:bij} is closely related to the  
 characterization of standard subspaces and their modular objects in terms of a KMS condition 
(\cite{Lo08}, \cite{NO19}). 

\begin{defn} Let $V$ be a real vector space 
and $\Bil(V)$ be the space of real bilinear maps $V \times V \to \C$.
A function $\psi \: \R \to \Bil(V)$ is said to be {\it positive 
definite} if the kernel 
$\psi(t-s)(v,w)$ on the product set $\R \times V$ is positive definite. 

We say that a positive definite 
function $\psi \: \R \to \Bil(V)$ satisfies the {\it KMS condition} 
for $\beta > 0$ if $\psi$ 
extends to a function $\oline{\cS_\beta} \to \Bil(V)$ which is pointwise 
continuous and pointwise holomorphic on the interior $\cS_\beta$, and satisfies 
\begin{equation}
  \label{eq:kms-def}
 \psi(i \beta+t) = \oline{\psi(t)}\quad \mbox{ for } \quad t \in \R.
\end{equation}
\end{defn}

In a similar fashion as Lemma~\ref{lem:8.2}(iv) characterizes 
the conjugation $J_\sV$ of a standard subspace $\sV$ in terms 
of the $J$-positivity of $\sV$ and $\sV'$, the following proposition 
characterizes the corresponding modular group in terms of a KMS condition. 

\begin{prop} \mlabel{prop:9.5}
Let $\sV \subeq \cH$ be a standard subspace and 
$U \: \R \to \U(\cH)$ be a continuous unitary one-parameter group. 
Then $U(t) = \Delta_\sV^{-it/2\pi}$ holds for all $t \in \R$ if and only if the 
positive definite function 
\[ \phi \:  \R \to \Bil(\sV),\quad 
\phi(t)(\xi,\eta) := \la \xi, U(t) \eta \ra \] 
satisfies the KMS condition for $\beta = 2\pi$.
\end{prop}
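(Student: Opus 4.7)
My plan is to prove both implications separately, leveraging Proposition~\ref{prop:standchar}.

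For the forward direction ($U(t)=\Delta_\sV^{-it/2\pi}\Rightarrow$ KMS), I would start from Proposition~\ref{prop:standchar}(iii): for each $\xi\in\sV$ the orbit map $\alpha^\xi(t):=U(t)\xi$ extends to a continuous map $\oline{\cS_\pi}\to\cH$, holomorphic on $\cS_\pi$, with $\alpha^\xi(\pi i)=J\xi$ and satisfying the $J$-equivariance $J\alpha^\xi(z)=\alpha^\xi(\pi i+\oline z)$ established in that proof. I would then define the candidate holomorphic extension of $\phi$ on $\oline{\cS_{2\pi}}$ by the ``midpoint split''
\[ \tilde\phi(z)(\xi,\eta) := \la \alpha^\xi(-\oline z/2),\, \alpha^\eta(z/2)\ra. \]
Since $\Im(-\oline z/2)=\Im(z/2)=\tfrac{1}{2}\Im z\in[0,\pi]$ for $z\in\oline{\cS_{2\pi}}$, both arguments lie in $\oline{\cS_\pi}$; the joint continuity of the inner product combined with continuity of $\alpha^\xi,\alpha^\eta$ gives continuity of $\tilde\phi$, and the antiholomorphic--holomorphic pairing in the inner product yields holomorphy of $\tilde\phi$ on $\cS_{2\pi}$. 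Agreement with $\phi$ on $\R$ reduces to $\la U(-t/2)\xi,U(t/2)\eta\ra=\la\xi,U(t)\eta\ra$, using that $U(t)$ is unitary for real $t$. The KMS symmetry at $z=t+2\pi i$ then follows by applying the $J$-equivariance to both arguments and exploiting antiunitarity of $J$, yielding $\tilde\phi(t+2\pi i)=\oline{\la U(-t/2)\xi,U(t/2)\eta\ra}=\oline{\phi(t)}$.

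For the converse (KMS $\Rightarrow U(t)=\Delta_\sV^{-it/2\pi}$), I would first extract domain information: for $\xi\in\sV$, the scalar function $f_\xi(t)=\la\xi,U(t)\xi\ra=\int e^{it\lambda}\,dP^\xi_U(\lambda)$ (spectral measure of the generator $H_U$ of $U$) satisfies KMS, and Fourier uniqueness yields the measure identity $e^{-2\pi\lambda}\,dP^\xi_U(\lambda)=d(r_*P^\xi_U)(\lambda)$, where $r(\lambda)=-\lambda$. Integrating gives $\int e^{-2\pi\lambda}\,dP^\xi_U=\|\xi\|^2<\infty$, so $\xi\in\cD(\Delta_U^{1/2})$ where $\Delta_U:=e^{-2\pi H_U}$; by the spectral-theoretic analogue of Proposition~\ref{prop:standchar}(ii)$\Rightarrow$(iii) applied to $\Delta_U$, the orbit $t\mapsto U(t)\xi$ extends to $\oline{\cS_\pi}$ with value $\Delta_U^{1/2}\xi$ at $z=\pi i$. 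KMS at $t=0$ then gives the Gram identity $\la\Delta_U^{1/2}\xi,\Delta_U^{1/2}\eta\ra=\la\xi,U(2\pi i)\eta\ra=\la\eta,\xi\ra=\la J\xi,J\eta\ra$ for $\xi,\eta\in\sV$. Upgrading this to the pointwise identity $\Delta_U^{1/2}\xi=J\xi$ on $\sV$ gives $\sV\subeq\Fix(J\Delta_U^{1/2})$, and by Proposition~\ref{prop:standchar}(ii)$\Leftrightarrow$(i) together with uniqueness of the modular operator attached to the standard subspace $\sV$, this forces $\Delta_U=\Delta_\sV$ and hence $U(t)=\Delta_\sV^{-it/2\pi}$.

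The main obstacle is the passage in the converse from the Gram identity $\la \Delta_U^{1/2}\xi,\Delta_U^{1/2}\eta\ra=\la J\xi,J\eta\ra$ to the pointwise equality $\Delta_U^{1/2}\xi=J\xi$ on $\sV$; the Gram identity alone determines $\Delta_U^{1/2}|_\sV$ only up to a real-linear isometry of $\cH$. The phase information must be pinned down by exploiting KMS at all $t\neq 0$, for instance by deriving the analytic $J$-equivariance $J\alpha^\xi(z)=\alpha^\xi(\pi i+\oline z)$ for the extended orbit map of $U$, and then comparing the resulting candidate extension $\tilde\phi$ with the one produced by the forward direction from $V(t):=\Delta_\sV^{-it/2\pi}$. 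Using density of $\sV+i\sV$ in $\cH$, this comparison forces $U(\pi i)\xi=J\xi=V(\pi i)\xi$ on $\sV$ and completes the argument.
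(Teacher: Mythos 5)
The paper itself does not prove this proposition: its ``proof'' is a pointer to \cite[Prop.~3.7]{Lo08} and \cite[Thm.~2.6]{NO19}, together with a remark about the ordering convention in the inner product. Your forward implication is a correct, self-contained argument in the spirit of Proposition~\ref{prop:standchar}: the midpoint split $\tilde\phi(z)(\xi,\eta)=\la \alpha^\xi(-\oline z/2),\alpha^\eta(z/2)\ra$ is well defined and holomorphic on $\cS_{2\pi}$, restricts to $\phi$ on $\R$, and the boundary identity at $\Im z=2\pi$ follows from the equivariance $\alpha^\xi(\pi i+\oline w)=J\alpha^\xi(w)$ of \eqref{eq:jequiv} together with $\la Ju,Jv\ra=\oline{\la u,v\ra}$. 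That half I accept.

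The converse, however, has a genuine gap, and it sits exactly where you flag it. From the KMS condition you correctly extract $\sV\subeq\cD(\Delta_U^{1/2})$ and the Gram identity $\la\Delta_U^{1/2}\xi,\Delta_U^{1/2}\eta\ra=\la\eta,\xi\ra$ for $\xi,\eta\in\sV$. By sesquilinear extension this says only that the quadratic forms of $\Delta_U$ and $\Delta_\sV$ agree on $\sV+i\sV=\cD(\Delta_\sV^{1/2})$; in infinite dimensions this does \emph{not} determine $\Delta_U$, since two distinct positive selfadjoint operators can have closed forms that coincide on a common dense form domain (the Dirichlet/Neumann phenomenon). Note also that writing $\la\eta,\xi\ra$ as $\la J\xi,J\eta\ra$ carries no information, because this identity holds for \emph{every} conjugation $J$; at that stage nothing in your argument ties $\Delta_U$ to the modular data of $\sV$.

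Your proposed repair does not close this gap. Deriving the equivariance $J\alpha^\xi(z)=\alpha^\xi(\pi i+\oline z)$ for the extended orbit maps of $U$ is circular: evaluated at $z=0$ it reads $J\xi=\alpha^\xi(\pi i)=\Delta_U^{1/2}\xi$, which is precisely the missing identity, and the KMS hypothesis makes no reference to $J_\sV$ whatsoever, so no step of your argument can produce it. Likewise, the ``comparison'' of the candidate extension of $\la\xi,U(\cdot)\eta\ra$ with the one built from $V(t)=\Delta_\sV^{-it/2\pi}$ cannot get started: the two extensions have different boundary values on $\R$ unless $U=V$ is already known, and a KMS function is not determined by its value at $t=0$. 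To finish one must exploit the full $t$-dependence of the KMS identity --- by Fourier uniqueness it is equivalent to the spectral-measure relation $e^{-2\pi\lambda}\,d\la\xi,P_U(\lambda)\eta\ra=d\la\eta,P_U(-\lambda)\xi\ra$ for $\xi,\eta\in\sV$, where $P_U$ is the spectral measure of the generator of $U$ --- and combine it with the standardness of $\sV$ (e.g.\ via Proposition~\ref{prop:lo081}) as in \cite[Prop.~3.7]{Lo08} or \cite[Thm.~2.6]{NO19}. As written, your converse establishes strictly less than the claim.
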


\begin{prf} (see also \cite[Thm.~2.6]{NO19}) 
In \cite[Prop.~3.7]{Lo08}, this characterization is stated for the function 
$\la U(t)\xi, \eta\ra$, but this should be 
$\la \xi, U(t) \eta\ra$ if the scalar product is linear in the second argument.
\end{prf}

\subsection{Hardy space and graph realizations} 
\mlabel{subsec:hardy}

Let $\Delta >  0$ be a positive selfadjoint operator on $\cH$. 
Then $\cD(\Delta^{1/2})$ is a dense subspace of $\cH$, and the map 
\[ \Psi \: \cD(\Delta^{1/2}) \to \Gamma(\Delta^{1/2}), \quad 
\xi \mapsto (\xi, \Delta^{1/2}\xi) \] 
is a complex linear bijection onto the closed 
graph of the selfadjoint operator $\Delta^{1/2}$ 
in the Hilbert space $\cH \oplus \cH$. 
We thus obtain on $\cD(\Delta^{1/2})$ the structure of a 
complex Hilbert space for which $\Psi$ is unitary. 

The operator $\Delta$ defines a unitary one-parameter group 
$(\Delta^{it})_{t \in \R}$, and we consider the 
$\cH$-valued Hardy space 
\begin{align*}
&  H^2(\cS_\pi,\cH)^{\Delta} \\
&:=\Big\{ f \in \Hol(\cS_\pi,\cH) \: 
(\forall z \in \cS_\pi)(\forall t \in \R) 
\ f(z + t) = \Delta^{-it/2\pi} f(z), \quad 
\sup_{0 < y < \pi} \|f(iy)\| < \infty \Big\}
\end{align*} 
of equivariant bounded holomorphic maps $\cS_\pi \to \cH$. 
For $\Delta^{-it/2\pi} = e^{itH}$, i.e., $H = -\frac{1}{2\pi} \log \Delta$,  
and the spectral measure $P_H$ of $H$, we have 
\[ \|\Delta^{y/2\pi}\xi\|^2 =  \|e^{-yH}\xi\|^2 
= \int_\R e^{-2\lambda y}\, dP_H^\xi(\lambda), \] 
so that the Monotone Convergence Theorem implies 
for $f \in H^2(\cS_\pi,\cH)^{\Delta}$ and $\xi := f(\pi i/2)$ 
\[ \int_\R e^{\pm \lambda \pi}\, dP_H^\xi(\lambda) < \infty, 
\quad \mbox{ so that } \quad \xi \in \cD(\Delta^{\pm 1/4}).\]
 Thus \cite[Lemma~A.2.5]{NO18} 
implies that $f$ extends to a continuous function 
on $\oline{\cS_\pi}$, also denoted~$f$. 
It satisfies 
\[ \sup_{0 < y < \pi} \|f(iy)\| = \max(\|f(0)\|, \|f(\pi i)\|).\] 
In particular, the  map 
\[ \Phi \: H^2(S_\pi, \cH)^{\Delta} \to \cH \oplus \cH, \quad 
\Phi(f) := (f(0), f(\pi i))\] 
is defined. To identify the range of 
$\Phi$, we use \cite[Lemma~A.2.5]{NO18} to see that 
$\xi = f(0)$ for some $f \in H^2(S_\pi, \cH)^{\Delta_V}$ if and only if 
$\xi \in \cD(\Delta^{1/2})$. Then 
$f(\pi i) = \Delta^{1/2} \xi$, and we conclude that 
\[ \Phi\big(H^2(S_\pi, \cH)^{\Delta}\big) = \Gamma(\Delta^{1/2})\] 
(cf.\ \cite[Prop.~3.4]{LLQR18}). 
As $\Phi$ is injective with closed range, it is an isomorphism of 
Banach spaces but not necessarily isometric. 

\cite[Prop.~3.2]{LLQR18} also contains observations which are very similar to the following 
lemma. 

\begin{lem} If $J$ is a conjugation on $\cH$, 
then 
\[ \tilde J(\xi,\eta) := (J\eta, J\xi) \]
defines a conjugation on $\cH \oplus \cH$, 
and $\tilde J$ maps $\Gamma(\Delta^{1/2})$ into itself 
if and only if the modularity relation $J\Delta J = \Delta^{-1}$ holds. 
\end{lem}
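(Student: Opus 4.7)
The plan splits naturally into two independent verifications, one for each claim.

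For the first claim that $\tilde J$ is a conjugation on $\cH\oplus\cH$, I would simply check the three defining properties directly. Antilinearity: for $c\in\C$, $\tilde J(c\xi,c\eta)=(J(c\eta),J(c\xi))=\bar c(J\eta,J\xi)=\bar c\,\tilde J(\xi,\eta)$. Isometry: $\|\tilde J(\xi,\eta)\|^2=\|J\eta\|^2+\|J\xi\|^2=\|\eta\|^2+\|\xi\|^2$. Involutivity: $\tilde J^2(\xi,\eta)=\tilde J(J\eta,J\xi)=(J^2\xi,J^2\eta)=(\xi,\eta)$. Each of these is a one-line computation using only that $J$ is an antiunitary involution.

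For the second claim, I would first rewrite the invariance condition as an operator identity and then compare it with the modular relation via functional calculus. A generic element of $\Gamma(\Delta^{1/2})$ has the form $(\xi,\Delta^{1/2}\xi)$ with $\xi\in\cD(\Delta^{1/2})$, and $\tilde J$ sends it to $(J\Delta^{1/2}\xi,J\xi)$. This vector lies in $\Gamma(\Delta^{1/2})$ precisely when $J\Delta^{1/2}\xi\in\cD(\Delta^{1/2})$ and $\Delta^{1/2}J\Delta^{1/2}\xi=J\xi$. Setting $\eta=J\xi$ and using $J^2=\1$, I would reformulate this as the densely defined operator equality $J\Delta^{1/2}J=\Delta^{-1/2}$.

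The equivalence with $J\Delta J=\Delta^{-1}$ then follows by squaring and invoking uniqueness of positive square roots. From $J\Delta^{1/2}J=\Delta^{-1/2}$ one gets $(J\Delta^{1/2}J)^2=J\Delta^{1/2}J\cdot J\Delta^{1/2}J=J\Delta J$ and $(\Delta^{-1/2})^2=\Delta^{-1}$, giving $J\Delta J=\Delta^{-1}$. Conversely, if $J\Delta J=\Delta^{-1}$, then $J\Delta^{1/2}J$ is positive selfadjoint (it is the $J$-conjugate of a positive selfadjoint operator), and its square equals $J\Delta J=\Delta^{-1}$; since $\Delta^{-1/2}$ is the unique positive selfadjoint square root of $\Delta^{-1}$, we conclude $J\Delta^{1/2}J=\Delta^{-1/2}$, which reinstates the required invariance.

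The only mild subtlety, and the point I would be most careful about, is the domain bookkeeping for unbounded operators: rather than formally manipulating $\Delta^{1/2}$ and $\Delta^{-1/2}$, I would phrase the equivalence in terms of the uniqueness of the positive square root, which automatically takes care of the correct maximal domains on both sides.
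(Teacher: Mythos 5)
Your proof is correct and follows essentially the same route as the paper: apply $\tilde J$ to a generic graph element $(\xi,\Delta^{1/2}\xi)$ and identify invariance of $\Gamma(\Delta^{1/2})$ with the operator identity $J\Delta^{1/2}J=\Delta^{-1/2}$, which is then equated with the modular relation. The only cosmetic difference is that you pass between $\Delta^{1/2}$ and $\Delta$ via uniqueness of the positive selfadjoint square root, whereas the paper uses the relation $\Delta^{-1/2}J=J\Delta^{1/2}$ directly; both handle the domain bookkeeping adequately.
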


\begin{prf} If the modularity relation holds, then 
we also have $\Delta^{-1/2} J= J \Delta^{1/2}$, so that 
$J \cD(\Delta^{1/2}) = \cD(\Delta^{-1/2}) = \cR(\Delta^{1/2})$, 
and therefore 
\[ \tilde J(\xi, \Delta^{1/2}\xi) 
= (J \Delta^{1/2}\xi, J \xi) 
= (\Delta^{-1/2}J \xi, J \xi)  \in \Gamma(\Delta^{1/2}) 
\quad \mbox{ for } \quad \xi \in \cD(\Delta^{1/2}).\] 
If, conversely, $\tilde J$ preserves $\Gamma(\Delta^{1/2})$, 
then 
\[ J \Delta^{1/2} \xi \in \cD(\Delta^{1/2}) 
\quad \mbox{ and } \quad 
\Delta^{1/2}  J \Delta^{1/2} \xi  = J \xi
\quad \mbox{ for } \quad \xi \in \cD(\Delta^{1/2}).\] 
This means that $J\xi \in \cD(\Delta^{-1/2})$ with 
$J \Delta^{1/2} \xi  = \Delta^{-1/2} J \xi$. 
As $J$ is an involution, $J\cD(\Delta^{1/2}) = \cD(\Delta^{-1/2})$ 
and $J \Delta^{1/2} J = \Delta^{-1/2}$. This implies 
$J\Delta J = \Delta^{-1}$. 
\end{prf}

If $J\Delta J  = \Delta^{-1}$, the preceding lemma shows that 
the closed subspace $\Gamma(\Delta^{1/2})$ of $\cH \oplus \cH$ is 
invariant under $\tilde J$. We also observe that the antilinear operator 
\[ T := J \Delta^{1/2} = \Delta^{-1/2} J\:  \cD(\Delta^{1/2}) \to 
\cD(\Delta^{1/2})\]
satisfies  
\[ \tilde J \Psi(\xi) 
= (J\Delta^{1/2}\xi, J \xi) 
= (T\xi, J \xi) = (T \xi , \Delta^{1/2}T \xi)
= \Psi(T\xi) \quad \mbox{ for } \quad 
\xi \in \cD(\Delta^{1/2}). \] 
For the standard subspace $\sV$ with $J_\sV = J$ and $\Delta_\sV = \Delta$, 
$T_\sV := T$ is the corresponding Tomita operator 
(Subsection~\ref{subsec:1.1}), and the relation 
\[  \tilde J \circ \Psi = \Psi \circ T_\sV \] 
implies that $\Psi(\sV) = \Gamma(\Delta^{1/2})^{\tilde J}$. In particular, 
$T_\sV$ is a conjugation for the complex Hilbert space structure on 
$\cD(\Delta^{1/2})\cong \Gamma(\Delta^{1/2})$, whose fixed point space is $\sV$. 

Next we observe that, as $J$ commutes with the unitary operators 
$\Delta^{it}$, $t \in \R$, 
\[  (\hat J f)(z) = J f(\pi i + \oline z)\] 
defines an isometric involution on the Hardy space $H^2(\cS_\pi, \cH)^\Delta$. 
For $f \in H^2(\cS_\pi,\cH)^{\Delta}$ and $\xi := f(0) \in \cD(\Delta^{1/2})$, we have 
\[ \tilde J \Phi(f) = (J f(\pi i), J f(0))  = \Phi(\hat J f),\] 
so that $\Phi$ intertwines the conjugations $\tilde J$ and $\hat J$. 
We conclude in particular that 
\begin{equation}
  \label{eq:psiphi}
\Phi^{-1} \Psi(\sV) = \{  f \in H^2(\cS_\pi,\cH)^{\Delta} \: 
\hat J(f) = f \}.
\end{equation}

\begin{lem}
For $f \in H^2(\cS_\pi,\cH)^{\Delta}$, the following conditions are equivalent: 
\begin{itemize}
\item[\rm(a)] $\hat J(f) = f$, i.e., $f(\pi i + \oline z) = J f(z)$ 
for $z \in \cS_\pi$. 
\item[\rm(b)] $f(z) \in \cH^J$ for $\Im z = \frac{\pi}{2}$. 
\item[\rm(c)] $f(0) \in \sV$. 
\item[\rm(d)] $f(\pi i) \in \sV'$. 
\item[\rm(e)] $f(\pi i) = J f(0)$. 
\end{itemize}
\end{lem}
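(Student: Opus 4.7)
The plan is to show that conditions (a)--(e) are all equivalent to the boundary relation (e), using three facts already established in the text: the isomorphism $\Phi \: H^2(\cS_\pi,\cH)^\Delta \to \Gamma(\Delta^{1/2})$, so that every $f$ in the Hardy space is determined by $f(0) \in \cD(\Delta^{1/2})$ together with $f(\pi i) = \Delta^{1/2}f(0)$; the holomorphic-extension formula $f(z) = \Delta^{-iz/2\pi}f(0)$ on $\oline{\cS_\pi}$; and the modular relation $J\Delta J = \Delta^{-1}$, whose spectral form $J\Delta^s J = \Delta^{-\bar s}$ yields $J\Delta^{1/4} = \Delta^{-1/4}J$, $J\Delta^{1/2} = \Delta^{-1/2}J$, and commutation of $J$ with $\Delta^{it}$ for real $t$.

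First I would dispatch (c)$\Leftrightarrow$(e) and (d)$\Leftrightarrow$(e). Since $\sV = \Fix(J\Delta^{1/2})$, the condition $f(0) \in \sV$ reads $\Delta^{1/2}f(0) = Jf(0)$, which is (e) via $f(\pi i) = \Delta^{1/2}f(0)$. For (d), first observe that $f(\pi i) \in \cR(\Delta^{1/2}) = \cD(\Delta^{-1/2})$ with $\Delta^{-1/2}f(\pi i) = f(0)$; then $f(\pi i) \in \sV' = \Fix(J\Delta^{-1/2})$ rewrites as $f(0) = Jf(\pi i)$, which is (e) after applying $J$.

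Next, for (b)$\Leftrightarrow$(e) I would evaluate at the midpoint: $f(i\pi/2) = \Delta^{1/4}f(0)$, and the condition $J f(i\pi/2) = f(i\pi/2)$ becomes $\Delta^{-1/4}Jf(0) = \Delta^{1/4}f(0)$, i.e.\ $Jf(0) = \Delta^{1/2}f(0) = f(\pi i)$, which is (e). That $J$-fixedness at $i\pi/2$ is equivalent to $J$-fixedness at every $x + i\pi/2$ follows from the equivariance $f(x+i\pi/2) = \Delta^{-ix/2\pi}f(i\pi/2)$ combined with $J\Delta^{-ix/2\pi} = \Delta^{-ix/2\pi}J$.

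Finally, (a)$\Rightarrow$(e) is obtained by setting $z=0$ in (a) and applying $J$. For the converse, $\hat J$ preserves the Hardy space as noted before the lemma, so by injectivity of $\Phi$ the equality $\hat J f = f$ reduces to checking $(\hat J f)(0) = f(0)$ and $(\hat J f)(\pi i) = f(\pi i)$, i.e.\ $(Jf(\pi i), Jf(0)) = (f(0), f(\pi i))$; both coordinates of this identity are equivalent to (e). The only mildly delicate point throughout is the commutation $J\Delta^{it} = \Delta^{it}J$ for real $t$, a standard consequence of antilinearity of $J$ and $J\Delta J = \Delta^{-1}$, which also underlies the fact that $\hat J$ preserves the Hardy space; apart from this, the proof is bookkeeping on the graph $\Gamma(\Delta^{1/2})$.
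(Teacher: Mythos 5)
Your proof is correct and uses essentially the same ingredients as the paper's: the graph realization $\Phi(f)=(f(0),f(\pi i))$ with $f(\pi i)=\Delta^{1/2}f(0)$, the extension formula $f(z)=\Delta^{-iz/2\pi}f(0)$, and the modular relation $J\Delta J=\Delta^{-1}$. The only difference is organizational — you route everything through (e) and replace the paper's appeals to uniqueness of analytic continuation (for (a)$\Leftrightarrow$(b)) and to Proposition~\ref{prop:standchar} (for (c)$\Leftrightarrow$(e)) by direct spectral computations with $\Delta^{1/4}$ and $\Delta^{1/2}$, which is equally valid.
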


\begin{prf}
The equivalence of (a) and (b) follows by uniqueness of analytic continuation 
from the line 
$\frac{\pi i}{2} + \R \subeq~\cS_\pi.$ 
The equivalence of (a) and (c) follows from 
$\Psi^{-1}\Phi(f) = f(0)$ and \eqref{eq:psiphi}. 
As $f(\pi i) = \Delta^{1/2} f(0)$ is contained in $\sV' = J\sV$ if and only if 
$J \Delta^{1/2} f(0) \in \sV$, which in turn is equivalent  to 
$f(0) \in \sV$, conditions (c) and (d) are also equivalent. 
The equivalence of (c) and (e) follows from Proposition~\ref{prop:standchar}. 
\end{prf}

The map $\ev_0 = \Psi^{-1}\Phi \: \Fix(\hat J) \to \sV$ is an isometry of 
real Hilbert spaces because $\hat J(f) = f$ implies $\|f(0)\| = \|f(\pi i)\|$. 
In this sense every standard subspace can be realized in a natural 
way as a ``real form'' of a Hardy space on the strip~$\cS_\pi$.

\section{Wedges in euclidean Jordan algebras} 
\mlabel{app:b}

We expect that the reader is familiar with the basic theory of simple euclidean Jordan algebras. We use \cite{FK94}  as a standard reference. 
For the basic definitions we refer to Section~\ref{sec:6}.
From now on $E$ is always a simple euclidean Jordan algebra with 
\begin{equation}
  \label{eq:dimrank}
 \dim(E) = n \quad \mbox{ and }\quad \rank(E) = r,
\end{equation}
and $c_1,\ldots ,c_r$ is a Jordan frame.
 We then obtain the \textit{Pierce decomposition}
\begin{equation}
  \label{eq:pierce}
E = \bigoplus_{j = 1}^r \R c_j \oplus \bigoplus_{i < j} E_{ij} \quad \mbox{with} 
\quad E_{ij} = \big\{ v \in E \:  c_i v = \shalf v, c_j v = \shalf v\big\}
\end{equation}
(\cite[\S IV.1]{FK94}). 
The set $E^\times$ of invertible elements of $E$ 
has $r+1$ connected components that can be described 
as follows. We fix a spectral decomposition 
$x = \sum_{j = 1}^r x_j \tilde c_j$ of an element $x \in E$, 
where $(\tilde c_1,\ldots, \tilde c_r)$ 
is a Jordan frame (\cite[Thm.~III.1.1]{FK94}). 
This means that, under the automorphism group $\Aut(E)$, 
the element $x$ is conjugate to $\sum_{j = 1}^r x_j c_j$. 
For $E = \Herm_r(\K)$, this corresponds to the conjugation 
of a hermitian matrix $x$ by $\U_r(\K)$ to a diagonal matrix, 
and for Minkowski space, it corresponds to conjugation 
of an element $x \in \R^{1,d-1}$ under $\OO_{d-1}(\R)$ to one 
with $x_2 = \cdots = x_d = 0$.

We define (cf.~\cite[p. 29]{FK94}): 
\begin{itemize}
\item the {\it index of $x$} by 
$\ind(x) := \sum_{j = 1}^r \sgn(x_j) \in \{r,r-2,\ldots, -r\}.$ 
\item the {\it determinant of $x$} by 
$\Delta(x) = \prod_{j=1}^r x_j$, and 
\item the {\it trace  of $x$} by 
$\tr(x) = \sum_{j=1}^r x_j$. 
\end{itemize}
Then the connected components of $E^\times$ are the subsets 
\[ E^\times_j := \{ x \in E^\times \: \ind(x) = j\}, \qquad 
j = r,r-2, \ldots, -r.\] 

For a multiplication operator 
\[ h := \sum_{j = 1}^r a_j L(c_j), \] 
the Pierce decomposition \eqref{eq:pierce} shows that the eigenvalues are 
$a_1, \ldots, a_r$ and $\frac{a_i + a_j}{2}$ for $i \not=j$. 
For $h \not=0$, it follows that the eigenvalues of $h$ are contained in $\{-1,0,1\}$ 
if and only if $a_j \in \{\pm 1\}$. 
Reordering the Jordan frame, we see that, up to applying an automorphism of 
$E$, any such element is conjugate to one of the form 
\begin{equation}
  \label{eq:hk}
 h := h_k := L(c_1 + \cdots + c_k) - L(c_{k+1} + \cdots + c_r) 
\quad \mbox{ for some } \quad k \in \{0,\ldots, r\}.
\end{equation}
Then 
\[ E_1(h) = \bigoplus_{j = 1}^k \R c_j \oplus \bigoplus_{i < j \leq k} E_{ij}, 
 \quad 
E_0(h) = \bigoplus_{i \leq k < j} E_{ij} 
\quad \mbox{ and }  \quad 
E_{-1}(h) = \bigoplus_{j = k+1}^r \R c_j \oplus \bigoplus_{k < i < j} E_{ij}, \] 
where $E_{\pm 1}(h)$ are Jordan subalgebras of $E$.  
Note that $E_1(-h) = E_{-1}(h)$. 
 
We now observe that the quadruple 
$(E,C,h,\tau := e^{\pi i h})$ satisfies the assumptions 
(A1-3) in Section~\ref{sec:3}. Here (A1) and (A2)  are obvious. 
To verify (A3), note that 
$e^{\R h} C = C$ follows from $e^{L(x)}C = C$ for every $x \in E$ 
(\cite[p.~48]{FK94}). Moreover, $\tau = e^{\pi i h} \in \Str(E)$ 
(\cite[Prop.~VIII.2.8]{FK94}) satisfies $\tau(e) = - e$, so that 
$\tau(C) = - C$. This proves (A3). 

In this context, the constructions of Section~\ref{sec:3} have 
a Jordan theoretic interpretation. 
The cones $C_+:=  C \cap E_1(h)$ is  the positive cone 
in the Jordan algebras $E_1(h)$ and $C_- = - C \cap E_{-1}(h)$ is 
the negative cone in $E_{-1}(h)$. The corresponding wedge is 
\begin{equation}
  \label{eq:w-def}
W := W(h) := C_+^0 \oplus  C_-^0 \oplus E_0(h).
\end{equation}
Note that $x \in W^c = E\setminus W$ 
if and only if $x_1 \not\in C_+^0$ or $x_{-1} \not\in C_-^0$. 
For the extremal situations $k = 0,r$, 
we obtain $W(h_r) = C^0$ and $W(h_0) = - C^0$. 

\begin{lem}
  \mlabel{lem:trace-hk} 
$\tr(h_k) = (2k-r)\frac{n}{r}$ 
for $n = \dim(E)$ and $r = \rank(E)$.
\end{lem}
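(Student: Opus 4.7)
The plan is to reduce the claim to the single identity $\tr(L(c_j)) = n/r$ for every primitive idempotent $c_j$ of a Jordan frame, and to extract this identity from the Pierce decomposition \eqref{eq:pierce}. Once that is established, linearity gives immediately
\[
\tr(h_k) = \sum_{j=1}^k \tr(L(c_j)) - \sum_{j=k+1}^r \tr(L(c_j))
= \bigl(k - (r-k)\bigr)\frac{n}{r} = (2k-r)\frac{n}{r}.
\]

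To compute $\tr(L(c_\ell))$ I would fix an index $\ell$ and read off the eigenvalues of $L(c_\ell)$ from the Pierce decomposition. Since $c_\ell c_\ell = c_\ell$ and $c_\ell c_j = 0$ for $j\neq \ell$, the restriction of $L(c_\ell)$ to $\bigoplus_j \R c_j$ contributes the single eigenvalue $1$. On each Pierce space $E_{ij}$ with $i<j$, the defining relations $c_i v = c_j v = \tfrac12 v$ show that $L(c_\ell)$ acts as $\tfrac12 \id$ if $\ell\in\{i,j\}$, and as $0$ otherwise. Thus
\[
\tr(L(c_\ell)) = 1 + \tfrac{1}{2}\sum_{\{i,j\}\ni \ell, i<j} \dim E_{ij}.
\]

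The step that needs a little input from Jordan theory is that all off-diagonal Pierce subspaces $E_{ij}$ of a \emph{simple} euclidean Jordan algebra share a common dimension $d$ (this is proved in \cite[Cor.~IV.2.6]{FK94}, using the transitivity of $\Aut(E)$ on Jordan frames and the fact that pairs of distinct primitive idempotents in a frame are mutually conjugate under the stabilizer of any third one). Granting this, there are exactly $r-1$ Pierce spaces $E_{ij}$ containing the index $\ell$, so
\[
\tr(L(c_\ell)) = 1 + \tfrac{d(r-1)}{2}.
\]
Comparing with the dimension count $n = r + d\binom{r}{2} = r\bigl(1 + \tfrac{d(r-1)}{2}\bigr)$, we obtain $\tr(L(c_\ell)) = n/r$, and the formula $\tr(h_k) = (2k-r)\tfrac{n}{r}$ follows. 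The main (minor) obstacle is really just invoking the uniform dimension of the Pierce spaces; everything else is direct bookkeeping on the decomposition.
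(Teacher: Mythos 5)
Your proof is correct. It uses the same basic ingredients as the paper — the Pierce decomposition \eqref{eq:pierce}, the eigenvalues $1,\tfrac12,0$ of $L(c_\ell)$ on its pieces, and the uniform dimension $d$ of the off-diagonal Pierce spaces (which the paper simply builds into its standing conventions for Section~\ref{sec:6}, so your appeal to \cite[Cor.~IV.2.6]{FK94} is consistent with the setup) — but it organizes the computation differently. The paper computes $\tr(h_k)$ head-on: the eigenvalue of $h_k$ on $E_{ij}$ is $\tfrac12(a_i+a_j)$ with $a_i=\pm1$, so only the $\tfrac{k(k-1)}{2}$ pairs inside $\{1,\dots,k\}$ and the $\tfrac{(r-k)(r-k-1)}{2}$ pairs inside $\{k+1,\dots,r\}$ contribute, and one then grinds through the quadratic algebra to reach $(2k-r)\bigl(1+(r-1)\tfrac{d}{2}\bigr)=(2k-r)\tfrac{n}{r}$. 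Your route isolates the cleaner intermediate statement $\tr(L(c_\ell))=1+\tfrac{d(r-1)}{2}=\tfrac{n}{r}$ for every primitive idempotent in the frame and then finishes by linearity of $L$ and of the trace, which replaces the paper's binomial bookkeeping with a single linear count and makes transparent why the answer is proportional to $2k-r$. Both arguments are complete; yours is arguably the more conceptual, the paper's the more self-contained one-line calculation.
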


\begin{prf}
From the Pierce decomposition \eqref{eq:pierce} it follows that 
\begin{equation}
  \label{eq:nform}
n = \dim E= r + \frac{r(r-1)}{2}d = r\Big(1 + (r-1)\frac{d}{2}\Big) 
\end{equation}
and 
\begin{align*}
  \tr(h_k) 
&= k - (r-k) + d\Big(\frac{k(k-1)}{2} -  \frac{(r-k)(r-k-1)}{2}\Big) \\
&= 2k-r + \frac{d}{2}(k^2 - k +  (r-k) - (r^2 - 2rk + k^2)) \\
&= 2k-r + \frac{d}{2}(r- 2k + 2rk - r^2) \\
&= 2k-r + \frac{d}{2}(2k-r)(r-1) 
= (2k-r)\Big(1 + (r-1)\frac{d}{2}\Big) 
= (2k-r)\frac{n}{r}.\qedhere
\end{align*}
\end{prf}

\begin{rem} We are interested in the parity of the numbers 
$\tr(h_k)$. First, we observe that $\frac{n}{r} \in \shalf \Z$ by 
\eqref{eq:nform}  
and that this is an integer if and only if $(r-1)d$ is even. 
This is equivalent to $d$ even or $r$ odd. 
In this case $\tr(h_k)$ is even if and only if $n$ is even. 
In the other case the parity of $\tr(h_k)$ depends on $k$ 
if $\frac{2n}{r}$ is odd. 

\end{rem}

\begin{lem} \mlabel{lem:j1} Let $p_1 \: E \to E_1(h), x \mapsto x_1$ 
denote the projection map. 
Then $p_1(C) \subeq C$ and $\rk p_1(x) \leq \rk x$ for $x \in C$. 
\end{lem}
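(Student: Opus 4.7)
My plan is to establish the two assertions sequentially, with the first being essentially immediate from what has already been proved. The inclusion $p_1(C)\subseteq C$ is the content of Lemma~\ref{lem:Project}(i), which asserts $p_1(C)=C_+\subseteq C$; alternatively, one rederives it from the identity
\[ p_1(x)=\lim_{t\to\infty}e^{-t}e^{th}x, \]
valid for every $x\in E$, together with the closedness of $C$ and the $e^{\R h}$-invariance and positive homogeneity of $C$.

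For the rank bound I would exploit the same limiting representation. The strategy has three ingredients: (i) for each $t\in\R$, the element $y_t:=e^{-t}e^{th}x$ lies in $C$ and has the same rank as $x$; (ii) the rank function is upper semicontinuous on $C$, i.e.\ the set $\{y\in C:\rk(y)\leq k\}$ is closed, since it is cut out inside $C$ by the vanishing of the elementary symmetric functions of the spectral values beyond the $k$-th (equivalently, by a Jordan-theoretic polynomial condition); (iii) letting $t\to\infty$ and combining (i) and (ii) gives $\rk(p_1(x))\leq \rk(x)$.

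The crucial step is the rank-invariance in (i). Since $e^{\R h}C=C$ by (A3), the map $e^{th}$ lies in the cone-automorphism group $G(C^0)$. Any element of $G(C^0)$ is a linear isomorphism of $E$ mapping $C$ onto $C$, hence permutes the faces of $C$ preserving dimensions. Combined with the bijection between the faces of $C$ and the Jordan idempotents $c$ via $F_c=P(c)C$, and the fact that $\dim F_c$ is a strictly increasing function of $\rk(c)$, this forces $e^{th}$ to send a rank-$k$ element of $C$ to a rank-$k$ element of~$C$; applying the argument to $e^{-th}$ gives the reverse inequality and hence equality. Together with the positive rescaling $e^{-t}$, which manifestly preserves rank, we get $\rk(y_t)=\rk(x)$ for all $t$.

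I expect the main obstacle to be the clean justification of this rank-preservation property of $G(C^0)$. It is standard in the theory of symmetric cones but has not been invoked so far in the paper; a short appeal to the face classification in \cite{FK94} should suffice, but one must take care to distinguish the action of $G(C^0)$ on $C$ (which preserves rank through the face lattice) from general linear maps (which need not).
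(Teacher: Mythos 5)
Your proposal is correct and follows essentially the same route as the paper: the paper's proof also writes $x_1=\lim_{t\to\infty}e^{-t}e^{th}x$ and observes that $C_{\leq m}=\{w\in C:\rk w\leq m\}$ is a closed cone invariant under $e^{\R h}$, which is exactly your combination of rank-preservation by $e^{th}\in G(C^0)$ with the closedness of the rank sublevel sets in $C$. You merely spell out the two facts the paper asserts without comment (invariance via the structure/face-lattice argument, closedness via vanishing of the higher symmetric functions of the spectral values), and both justifications are sound.
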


\begin{prf} Let $m := \rk x$ and $x \in C$. The subset
$C_{\leq m} := \{ w \in C \: \rk w \leq m\}$ is a closed (non-convex) cone  
invariant under $e^{\R h}$. Therefore 
$x_1 = \lim_{t \to \infty} e^{-t} e^{th} x \in C_{\leq m}.$
\end{prf}

For an element $x \in E$, we write $x = x_+ - x_-$ with $x_\pm \in C$ 
and $x_+ x_- = 0$ for the canonical decomposition of $x$ into positive and negative part 
which can be obtained from the spectral decomposition (\cite[Thm.~III.1.1]{FK94}). 

The following proposition and its corollary constitute 
the main result of this appendix. They are the 
key tool for the finer analysis of the support properties 
of the Fourier transforms $\hat\mu_s$ of the Riesz measures~$\mu_s$.

\begin{prop}
  \mlabel{prop:j2} 
Let $v = v_+ - v_-$ be the canonical decomposition of $v\in E$ into positive and negative part.  
Then $\Aut(E)v \subeq W(h_k)^c$ if and only if 
\[ \rk v_+ < k \quad \mbox{  or  } \quad \rk v_- < r-k.\] 
\end{prop}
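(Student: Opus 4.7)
The plan is to handle the two directions separately, using the transitivity of $\Aut(E)$ on Jordan frames (\cite[Cor.~IV.2.7]{FK94}) for the existence part, and the projection bound in Lemma~\ref{lem:j1} combined with self-duality of the positive cones in the euclidean Jordan subalgebras $E_{\pm 1}(h_k)$ for the obstruction part. Throughout, I use that elements of $\Aut(E)$ preserve rank and the canonical decomposition: $gv = gv_+ - gv_-$ with $gv_\pm \in C$, $(gv_+)(gv_-) = 0$, and $\rk(gv_\pm) = \rk v_\pm$.

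For the implication from the rank conditions to $\Aut(E)v \subseteq W(h_k)^c$, assume $\rk v_+ < k$ (the case $\rk v_- < r - k$ is handled symmetrically using the projection $p_{-1}$). Fix $g \in \Aut(E)$ and set $a := (gv_+)_1$ and $b := (gv_-)_1$. By Lemma~\ref{lem:j1} both lie in $C \cap E_1(h_k) = C_+$, and $\rk a \leq \rk(gv_+) = \rk v_+ < k$. Since $E_1(h_k)$ is a euclidean Jordan subalgebra of rank $k$, the spectral decomposition of $a$ inside $E_1(h_k)$ can be completed to a Jordan frame $f_1, \ldots, f_k$ of $E_1(h_k)$ with $a \cdot f_k = 0$. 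Setting $\lambda := f_k \in C_+ \setminus \{0\}$, one gets $(a, \lambda) = \tr(a\lambda) = 0$ while $(b, \lambda) \geq 0$, hence $((gv)_1, \lambda) = (a - b, \lambda) \leq 0$. By self-duality of $C_+$ inside $E_1(h_k)$, this forces $(gv)_1 \notin C_+^0$, so $gv \notin W(h_k)$.

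For the converse, assume $\rk v_+ \geq k$ and $\rk v_- \geq r - k$. Since $v_+ v_- = 0$ implies $\rk v_+ + \rk v_- \leq r$, both inequalities must be equalities and $v$ is invertible. Pick a spectral decomposition $v = \sum_{i=1}^{r} \nu_i \tilde c_i$ and set $J_\pm := \{i : \pm \nu_i > 0\}$, with $|J_+| = k$ and $|J_-| = r - k$. By transitivity of $\Aut(E)$ on Jordan frames, choose $g \in \Aut(E)$ and a bijection $\sigma$ of $\{1, \ldots, r\}$ with $g\tilde c_i = c_{\sigma(i)}$, $\sigma(J_+) = \{1, \ldots, k\}$, and $\sigma(J_-) = \{k+1, \ldots, r\}$. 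Then
\[
(gv)_1 = \sum_{i \in J_+} \nu_i c_{\sigma(i)} \in C_+^0, \qquad (gv)_{-1} = \sum_{i \in J_-} \nu_i c_{\sigma(i)} \in C_-^0,
\]
since each is a strictly positive/negative linear combination of a Jordan frame in $E_{\pm 1}(h_k)$. Hence $gv \in W(h_k)$, and the orbit meets $W(h_k)$.

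The main technical input is the identification of $C_+ = C \cap E_1(h_k)$ with the positive cone of the euclidean Jordan subalgebra $E_1(h_k)$ and its self-duality with respect to the restricted inner product; this follows because $L_E(x)|_{E_1(h_k)} = L_{E_1(h_k)}(x)$ is symmetric, so the restriction of the trace form makes $E_1(h_k)$ euclidean, and \cite[Thm.~III.2.1]{FK94} then supplies the self-duality characterization $x \in C_+^0 \Leftrightarrow (x, y) > 0$ for all nonzero $y \in C_+$ that is used in the first direction.
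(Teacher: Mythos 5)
Your proof is correct. The converse direction (producing a $g$ with $gv \in W(h_k)$ when $\rk v_+ = k$ and $\rk v_- = r-k$) is essentially the paper's: both diagonalize $v$ over a Jordan frame and use transitivity of $\Aut(E)$ on Jordan frames to sort the positive eigenvalues into $E_1(h_k)$ and the negative ones into $E_{-1}(h_k)$. Where you differ is the forward direction. The paper argues directly that $v \in W(h_k)$ forces $\rk v_+ \geq k$: since $p_1(v) \in C_+^0$ and $p_1(v_-) \in C_+$ by Lemma~\ref{lem:j1}, the identity $p_1(v_+) = p_1(v) + p_1(v_-) \in C_+^0 + C_+ \subeq C_+^0$ shows that $p_1(v_+)$ is invertible in the rank-$k$ algebra $E_1(h_k)$, hence of rank $k$, and Lemma~\ref{lem:j1} then gives $\rk v_+ \geq k$; the statement for the whole orbit follows from the $\Aut(E)$-invariance of ranks. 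You instead fix $g$, bound $\rk\, (gv_+)_1 < k$, complete its spectral decomposition to a Jordan frame of $E_1(h_k)$, and invoke self-duality of $C_+$ to exhibit a nonzero $f_k \in C_+$ with $((gv)_1, f_k) \leq 0$, so that $(gv)_1 \notin C_+^0$. Both routes rest on Lemma~\ref{lem:j1}; the paper's is slightly more economical (no self-duality and no frame completion needed, only the convexity fact $C_+^0 + C_+ \subeq C_+^0$ and the invertibility of interior points of the cone), while yours makes the obstruction explicit as a separating element of the dual cone. The identification of $C \cap E_1(h_k)$ with the positive cone of the euclidean Jordan subalgebra $E_1(h_k)$, which you flag at the end, is indeed used in both arguments and is taken for granted in the appendix.
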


\begin{prf} For $v \in W(h_k)$ we have $p_1(v) \in C_+^0$, so that 
$p_1(v_\pm) \in C$ yields 
\[ p_1(v_+) = p_1(v) + p_1(v_-) \in C_+^0 + C_+ \subeq C_+^0 \] 
is invertible in $E_1(h)$. 
Lemma~\ref{lem:j1} thus implies that $\rk v_+ \geq \rk p_1(v_+) = k$. 
Therefore $\rk v_+ < k$ entails $v \in W(h_k)^c$. 
For $g \in \Aut(E)$, we have $\rk(gv_+) = \rk(v_+)$, so that 
$\rk(v_+) < k$ implies $\Aut(E)v \subeq W(h_k)^c$. 
Likewise $\rk v_- < r-k$ implies that $\Aut(E)v \subeq W(h_k)^c$. 

Suppose, conversely, that $\Aut(E)v \subeq W(h_k)^c$. 
Then there exists a $g \in \Aut(E)$ with 
\[ gv = \sum_{j = 1}^r \nu_j c_j 
\in \sum_{j = 1}^r \R c_j \subeq E_1(h) \oplus E_{-1}(h)
\quad \mbox{ and } \quad 
\nu_1 \geq \cdots \geq \nu_r.\] 
As $gv \not\in W(h_k)$, we have 
$(gv)_1 = \sum_{j \leq k} \nu_j c_j \not \in C_+^0$ or 
$(gv)_{-1} = \sum_{j > k} \nu_j c_j \not \in C_-^0$. 
In the first case $\nu_k \leq 0$, so that $\rk v_+ < k$, 
and in the second case 
$\nu_{k+1} \geq 0$, so that $\rk v_- < r-k$.   
\end{prf}

By negation we immediately obtain: 
\begin{cor}
  \mlabel{cor:j2} 
For $v = v_+ - v_-$ as in {\rm Proposition~\ref{prop:j2}}, the following are equivalent: 
\begin{itemize}
\item[\rm(i)] $\Aut(E)v \cap W(h_k) \not=\eset$ 
for $W(h_k)$ as in \eqref{eq:w-def}. 
\item[\rm(ii)] $\rk(v_+) = k$ and $\rk(v_-) = r-k$. 
\item[\rm(iii)] $v$ is invertible of index $\ind(v) = 2k-r$. 
\end{itemize}
In particular, $W(h_k) \subeq E^\times_{2k-r}$. 
\end{cor}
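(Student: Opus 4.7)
The plan is to derive the corollary from Proposition~\ref{prop:j2} by logical negation, together with the basic fact that the positive and negative parts in the canonical Jordan decomposition have orthogonal supports, so that $\rk(v_+) + \rk(v_-) \le r$ with equality precisely when $v$ is invertible.

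First I would negate Proposition~\ref{prop:j2}: the condition $\Aut(E)v \not\subseteq W(h_k)^c$ is exactly (i), and its equivalent negated form is
\[
\rk(v_+) \ge k \quad \text{and} \quad \rk(v_-) \ge r-k.
\]
Next I would invoke the fact that $v_+$ and $v_-$ arise from a common spectral decomposition $v = \sum_{j=1}^r \nu_j \tilde c_j$ with $v_+ = \sum_{\nu_j>0} \nu_j \tilde c_j$ and $v_- = -\sum_{\nu_j<0}\nu_j \tilde c_j$; in particular $\rk(v_+) + \rk(v_-) \le r$. Combined with the two inequalities above, this forces equality $\rk(v_+) = k$ and $\rk(v_-) = r-k$, which is (ii). Conversely (ii) trivially implies the two inequalities, so (i) $\Leftrightarrow$ (ii).

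For (ii) $\Leftrightarrow$ (iii), I would note that $v$ is invertible if and only if all spectral values $\nu_j$ are nonzero, i.e.\ $\rk(v_+) + \rk(v_-) = r$, and in that case
\[
\ind(v) = \sum_{j=1}^r \sgn(\nu_j) = \rk(v_+) - \rk(v_-).
\]
Thus the pair $(\rk(v_+), \rk(v_-)) = (k, r-k)$ is equivalent to $v$ being invertible with $\ind(v) = 2k-r$. For the last assertion, any $x \in W(h_k)$ trivially satisfies $x \in \Aut(E)x \cap W(h_k)$, so (i) holds and hence (iii) holds, giving $x \in E^\times_{2k-r}$ and therefore $W(h_k) \subeq E^\times_{2k-r}$.

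No serious obstacle is expected here, since this is a clean logical rearrangement; the only small point to handle carefully is the additivity $\rk(v_+) + \rk(v_-) \le r$, which follows immediately from the spectral theorem for euclidean Jordan algebras (\cite[Thm.~III.1.1]{FK94}) applied to $v$.
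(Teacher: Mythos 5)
Your proof is correct and follows essentially the same route as the paper: (i)$\Leftrightarrow$(ii) by negating Proposition~\ref{prop:j2}, and (ii)$\Leftrightarrow$(iii) via $\rk(v_+)+\rk(v_-)=r$ and $\ind(v)=\rk(v_+)-\rk(v_-)$. The only difference is that you make explicit the inequality $\rk(v_+)+\rk(v_-)\leq r$ needed to upgrade the negated inequalities to equalities, a detail the paper leaves implicit.
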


\begin{prf} The equivalence of (i) and (ii) follows from Proposition~\ref{prop:j2}. 
For the equivalence of (ii) and (iii), we note that 
$v = v_+ - v_-$ is invertible if and only if $\rk(v_+) +\rk(v_-) = r$. 
Then $\ind(v) = \rk(v_+) - \rk(v_-) = 2k-r$ is equivalent to 
$\rk(v_+) = k$ and $\rk(v_-) = r-k$. 
\end{prf}

\begin{ex} (a) For $k = r$ we have $E = E_1(h)$ and $W(h_r) = C^0$. 
Therefore $\Aut(E)v \subeq W(h_r)^c$ is equivalent to $\rk v_+ < r$, 
which is equivalent to $v \not\in C^0$. 

\nin (b) For $r = 2$ and $k = 1$ (Lorentz boost on Minkowski space), 
we obtain by $W(h_1)$ a wedge domain in the Minkowski space $E$. 
Then $\Aut(E)W(h_1) = E^\times_0$ is the open subset of spacelike vectors 
whose complement is the closed double cone $\bigcap_{g \in \Aut(E)} g W(h_1)^c = C \cup - C$ (cf.\ Example~\ref{ex:lorentz}). 
\end{ex}

\end{document}